\newtheorem{thm}{Theorem}[section]
\newtheorem{lem}[thm]{Lemma}
\newtheorem{prop}[thm]{Proposition}
\newtheorem{cor}[thm]{Corollary}
\newtheorem*{thm*}{Theorem}
\theoremstyle{definition}
\newtheorem{hypsub}{H}[thm]
\newtheorem*{rem*}{Remark}
\newtheorem*{claim*}{Claim}
\newtheorem{assm}{Assumption}
\theoremstyle{plain}
\DeclareMathOperator{\Ima}{im}
\DeclareMathOperator{\dom}{dom}
\newcommand{\dd}{{\mathrm{d}}}
\DeclareMathOperator{\spec}{sp}
\DeclareMathOperator{\Span}{span}
\DeclareMathOperator{\rank}{rank}
\DeclareMathOperator{\dist}{dist}
\newcommand{\bbR}{{\mathbb{R}}}
\newcommand{\bbZ}{{\mathbb{Z}}}
\newcommand{\xddots}{%
  \raise 4pt \hbox {.}
  \mkern 6mu
  \raise 1pt \hbox {.}
  \mkern 6mu
  \raise -2pt \hbox {.}
}
\newcommand{\snext}{\widehat}
\newcommand{\sprev}{\widecheck}
\newcommand{\salt}{\widetilde}
\numberwithin{equation}{section}
\title{Localization and Cantor spectrum for quasiperiodic discrete Schr\"odinger operators with asymmetric, smooth, cosine-like sampling functions}
\author{Yakir Forman and Tom VandenBoom}
\date{}
\begin{document}

\maketitle

\begin{abstract}
    We prove Cantor spectrum and almost-sure Anderson localization for quasi-periodic discrete Schr\"odinger operators $H = \varepsilon\Delta + V$ with potential $V$ sampled with Diophantine frequency $\alpha$ from an asymmetric, smooth, cosine-like function $v \in C^2(\mathbb{T},[-1,1])$ for sufficiently small interaction $\varepsilon \leq \varepsilon_0(v,\alpha)$.  We prove this result via an inductive analysis on scales, whereby we show that locally the Rellich functions of Dirichlet restrictions of $H$ inherit the cosine-like structure of $v$ and are uniformly well-separated.  
\end{abstract}

\tableofcontents
\newpage

\section{Introduction and main results}

Fix a bounded real sequence $V = \{V_j\}_{j \in \mathbb{Z}}$ and $\varepsilon > 0$.  We consider the discrete Schr\"odinger operator $H = H(V,\varepsilon)$ whose pointwise action on sequences in $\mathbb{C}^\mathbb{Z}$ is given by
\begin{align}\label{eq:DSOdef}
	(H\psi)(n) &:= \left((\varepsilon\Delta + V)\psi\right)(n) \\
	\nonumber &:= \varepsilon (\psi(n+1) + \psi(n-1)) + V_n\psi(n).
\end{align}

Restricted to $\ell^2(\mathbb{Z})$, $H$ is a bounded self-adjoint linear operator.  $H$ is called Anderson localized if it exhibits an $\ell^2(\mathbb{Z})$ basis of exponentially decaying eigenvectors.  The Anderson model, where $V$ is sampled random i.i.d. from a nontrivial probability distribution, is almost surely Anderson localized; in this sense, localization is a standard measure of ``randomness'' of the potential.  That said, certain non-random potentials, e.g. $V_n = \cos(\theta + n\alpha)$, are likewise almost-surely Anderson localized for small $\varepsilon$ and almost every $\alpha$.  Proofs of this fact have historically relied on either the analyticity of cosine and subsequent uniform positivity of the Lyapunov exponent or its inherent symmetry.  We offer a new perturbative proof of almost-sure localization and Cantor spectrum for potentials sampled from any $C^2$-smooth Morse function with two monotonicity intervals along a Diophantine rotation on the circle.

We consider ``cosine-like" sampling functions from $\mathbb{T} = \mathbb{R}/\mathbb{Z}$ into $[-1,1]$ having two non-degenerate critical points; that is, we consider functions $f \in C^2(\mathbb{T}, [-1,1])$ having two monotonicity intervals and satisfying a Morse condition
\begin{align*}
d_0 \leq |\partial_\theta f| + |\partial_\theta^2 f| \leq D_0.
\end{align*}
We say $\alpha \in [0,1] \setminus \mathbb{Q}$ is $(C,\tau)$-Diophantine when
\begin{align}\label{eq:dioph}
	\|n\alpha\|_{\mathbb{T}} \geq \frac{C}{|n|^{\tau}}, \quad n \in \mathbb{Z}.
\end{align}
In this case we write $\alpha \in DC_{C,\tau}$.  It is well-known that Lebesgue almost every $\alpha$ is Diophantine for some $(C, \tau)$.  We consider potentials $V = V(\theta_*)$ generated by sampling a cosine-like $v$ along an irrational rotation by $\alpha$ starting at $\theta_*$, i.e.
\begin{align*}
	V_n(\theta_*) = v(\theta_* + n\alpha),
\end{align*}
and denote by $H(\theta_*) = H(V(\theta_*),\varepsilon)$.

\begin{thm*}[Main Theorem]
	Let $v \in C^2(\mathbb{T},[-1,1])$ be a function with two monotonicity intervals satisfying a Morse condition $$d_0 \leq |\partial_\theta v| + |\partial_\theta^2 v| \leq D_0$$ and let $\alpha \in DC_{C,\tau}$.  Then there exists $\varepsilon_0 = \varepsilon_0(v,\alpha)$ such that, for $\varepsilon < \varepsilon_0$, 
	\begin{enumerate}
		\item The spectrum $\Sigma = \Sigma(v,\alpha)$ of $H(\theta_*)$ is a Cantor set, and
		\item For Haar almost every $\theta_* \in \mathbb{T}$, $H(\theta_*)$ is \textit{Anderson localized}, i.e. has pure-point spectrum with exponentially decaying eigenfunctions. 
	\end{enumerate}
\end{thm*}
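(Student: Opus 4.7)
The plan is to establish both conclusions simultaneously by a multi-scale inductive analysis of the Dirichlet restrictions $H_\Lambda(\theta_*)$ of $H(\theta_*)$ to intervals $\Lambda \subset \mathbb{Z}$, along geometrically growing length scales $L_n$. My central inductive hypothesis, adapted from the Sinai-style perturbative framework but avoiding any appeal to analyticity or a reflection symmetry of $v$, would be that at each scale $n$ and for every phase $\theta_*$ outside a small exceptional set, the eigenvalues of $H_{[0,L_n-1]}(\theta_*)$ assemble into Rellich branches $E_k^{(n)}(\theta_*)$ which themselves inherit a cosine-like structure: each $E_k^{(n)}$ is $C^2$ in $\theta_*$ with exactly two monotonicity intervals and a quantitative Morse bound $d_n \leq |\partial_{\theta_*} E_k^{(n)}| + |\partial_{\theta_*}^2 E_k^{(n)}| \leq D_n$, with distinct branches separated in energy by at least $\eta_n > 0$. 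At scale $n=0$ the Rellich function is simply $v$, so the hypothesis is the assumption of the theorem.

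To execute the inductive step $n \to n+1$, I would partition $[0, L_{n+1}-1]$ into consecutive blocks of length $L_n$ and observe that each block is a shifted copy of the scale-$n$ problem at phase $\theta_* + j\alpha$ for some integer shift $j$. The Diophantine bound \eqref{eq:dioph} prevents the translates $\{j\alpha\}$ from clustering in $\mathbb{T}$, which, combined with the Morse separation of the scale-$n$ Rellich functions, shows that for most $\theta_*$ the block eigenvalue lists overlap only at a few well-understood resonant pairs. A Schur complement reduction, using the inductively small off-diagonal hopping between blocks, should then express the eigenvalues of $H_{[0,L_{n+1}-1]}(\theta_*)$ as perturbations of the block eigenvalues, with corrections bounded by the inductively estimated Green's-function decay from one block to the next. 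The dominant block at each branch would govern the $\theta_*$-dependence of the perturbed eigenvalue, so each scale-$(n+1)$ Rellich branch inherits the two-monotonicity Morse structure from the dominant scale-$n$ branch up to exponentially small corrections.

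Granting this induction, both conclusions should follow along essentially standard lines. For Cantor spectrum, the uniform-in-$n$ separation $\eta_n$ between Rellich branches guarantees open gaps in $\bigcup_{\theta_*,k} E_k^{(n)}(\theta_*)$ at every scale; taking $n \to \infty$ and using the fact that $\Sigma$ contains no isolated points (a consequence of minimality of the Diophantine rotation) forces $\Sigma$ to be a Cantor set. For almost-sure Anderson localization, I would convert the Rellich separation at scale $n$ into uniform exponential off-diagonal decay of the resolvents $(H_{\Lambda_n}(\theta_*) - E)^{-1}$ off a bad set of phases whose Lebesgue measure decays with $n$; a Borel--Cantelli argument eliminates the bad set for Haar-a.e. $\theta_*$, and a Shnol-type conversion of Green's-function decay into eigenfunction decay then yields pure point spectrum with exponentially decaying eigenfunctions.

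The main obstacle will be preserving the cosine-like Morse hypothesis through the inductive step with constants $(d_n, D_n, \eta_n)$ that do not degenerate too fast in $n$. Near resonant phases, the $\theta_*$-dependence of two colliding Rellich branches is dominated by the off-diagonal tunneling amplitude rather than by the underlying Morse structure of $v$, producing avoided crossings whose local curvature could in principle destroy the Morse condition on the next scale. Excising these resonant phases into a controlled bad set, verifying that the accumulated bad sets remain summably small in Lebesgue measure, and showing that the Morse constants survive the update, will be the technical crux; the Diophantine assumption on $\alpha$ and the exactly-two-monotonicity structure of $v$ are expected to supply just enough transversality to carry this through, which is precisely why the argument can succeed in the asymmetric, merely $C^2$ setting without any appeal to analyticity or to a reflection symmetry of $v$.
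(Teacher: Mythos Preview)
Your overall architecture --- a multiscale induction in which Rellich branches of Dirichlet restrictions inherit a cosine-like Morse structure from $v$, combined with Borel--Cantelli elimination of bad phases and a Schnol argument --- matches the paper's strategy. But there is a genuine gap in how you handle double resonances, and it propagates into both conclusions.

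You propose to ``excise these resonant phases into a controlled bad set'' and otherwise let each scale-$(n+1)$ branch inherit its Morse structure from a single ``dominant'' scale-$n$ block. This is backwards. At a double resonance there is no dominant block: two branches $\mathbf{E}_\pm$ with opposite-signed derivatives hybridize, and the resulting pair $\mathbf{E}_\vee > \mathbf{E}_\wedge$ are genuinely new objects whose local shape is governed by the tunneling amplitude, not by either parent alone. The paper does \emph{not} excise these phases from the induction; it analyzes them head-on (Section~3, Theorem~\ref{t:DRRelFns}), proving via a Cauchy interlacing argument (Lemma~\ref{l:localsepLem}) that $\mathbf{E}_\vee$ and $\mathbf{E}_\wedge$ are each Morse with a single critical point and are \emph{uniformly} separated by a gap of order $\nu\sigma/(2D_0+\nu)$. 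Without this, your induction cannot continue through the double-resonant regions, and your Morse constants $(d_n,D_n)$ have no mechanism for surviving the avoided crossing.

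Your Cantor-spectrum argument inherits the same defect. The claim that ``uniform-in-$n$ separation $\eta_n$ between Rellich branches guarantees open gaps'' is not right: at each scale the Rellich curves have overlapping ranges covering an interval, so pointwise separation of distinct curves does not by itself produce gaps in $\bigcup_{\theta,k} E_k^{(n)}(\theta)$. In the paper, spectral gaps come \emph{precisely from} the double resonances --- the uniform vertical separation between $\mathbf{E}_\vee$ and $\mathbf{E}_\wedge$ established in Proposition~\ref{pr:unifmLocalSep} is what opens a gap, and the quantitative lower bound on its size (larger than $2\rho_{k+1}$) is what guarantees it persists through all subsequent scales. One then shows (Lemma~\ref{l:Jgap}) that every Rellich curve has a double-resonant descendant, so gaps are dense. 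Excising the double-resonant phases would discard exactly the mechanism that produces Cantor spectrum.
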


Our approach to the proof of the Main Theorem is an inductive analysis of scales.  Multiscale analysis techniques are a well-established approach to localization in arbitrary spatial dimensions, cf. \cite{CKM87, DK89, EK16, FMSS85, FroSpe83, FroSpeWit90, GK01}, and are quite general in their scope; for a recent survey of multiscale analysis techniques, see \cite{Schlag21}.  In one spatial dimension, as in our model, one has access to additional tools (like the Lyapunov exponent and transfer matrices) which can simplify the analysis.  Indeed, uniform positivity of the Lyapunov exponent often lends itself to almost-sure \textit{nonperturbative} localization results \cite{BinKinVod16,BouGol00}.  Such results were initially established by Jitomirskaya for the Almost Mathieu operator \cite{Jit99}, and this strategy has been well-applied in the analytic regime due to the Hermann ``subharmonicity trick,'' see e.g. \cite{Bou05, BouGol00, BouGolSch01, BouSch00, SorSpe91}, as well as in the random i.i.d. regime due to F\"urstenberg's Theorem, see e.g. \cite{BDFGVWZ19-1,BDFGVWZ19-2,DFS20,GK17,JitZhu19}.

For smooth sampling functions, proving uniform positivity of the Lyapunov exponent becomes a substantial challenge.  In this case, only perturbative strategies have thus far been successful.  Fr\"ohlich-Spencer-Wittwer established a  perturbative approach to proving almost-sure Anderson localization for symmetric cosine-like smooth potentials $v$ with sufficiently small interaction for almost every irrational $\alpha$ \cite{FroSpeWit90}, with additional work in this direction pioneered by Sinai \cite{Sin87}.  Positive Lyapunov exponents and Cantor spectrum were verified via perturbative methods in the general asymmetric cosine-like case by Wang-Zhang \cite{WanZha15, WanZha16}.

\subsection{What this paper accomplishes}

Our result will naturally be compared to the aforementioned works of Fr\"ohlich-Spencer-Wittwer \cite{FroSpeWit90}, Sinai \cite{Sin87}, and Wang-Zhang \cite{WanZha15, WanZha16}.  Our main accomplishments relative to these prior works are, respectively:
\begin{enumerate}
	\item Removing the crucial symmetry assumption on the sampling function $v$ from \cite{FroSpeWit90} in part by an inductive analysis of differences of inverse functions; cf. Section 4.
	\item Establishing the existence of open spectral gaps via a novel Cauchy interlacing argument (cf. Theorem \ref{t:DRRelFns}) and a multiscale induction scheme which preserves the size of these gaps (cf. Proposition \ref{pr:induction}).
	\item Eliminating double-resonances, i.e., those phases $\theta$ for which the frequency $\alpha$ is problematically recurrent relative to the sampling function $v$ infinitely often (cf. Lemma \ref{l:badsets}).
\end{enumerate}
Each of these steps requires new ideas not present in the above works; we describe the difficulties which must be overcome, and our approaches to overcoming them, below.

\subsubsection{Removing symmetry}

In \cite{FroSpeWit90}, the authors prove an analogue to the Main Theorem under the additional assumption of even symmetry of the sampling function $v$ defining the potential: $v(\theta) = v(-\theta)$.  This assumption is crucial to their analysis because the symmetry is inherited by the Rellich functions (i.e., parametrized eigenvalues) $\mathbf{E}^\Lambda$ of any Dirichlet restriction $H^\Lambda$ of $H$ to an interval $\Lambda \subset \mathbb{Z}$; specifically, if $\Lambda = [c - L, c+L]$, then
\begin{align*}
	\mathbf{E}^\Lambda(\theta) = \mathbf{E}^\Lambda(-\theta - 2c\alpha), \quad \theta \in \mathbb{T}.
\end{align*}
In particular, it follows that there is antisymmetry in the derivative:
\begin{align*}
	\partial_\theta \mathbf{E}^\Lambda(\theta) = -(\partial_\theta \mathbf{E}^\Lambda)(-\theta - 2c\alpha).
\end{align*}
These symmetries come with two crucial benefits.  First, Rellich curves of $H^\Lambda$ have predetermined critical points when
\begin{align*}
	\|2\theta_c + 2c\alpha\|_\mathbb{T} = 0.
\end{align*}
These critical points are common to any Rellich curves of $H^\Lambda$.  This fact, alongside opposite-signed second derivatives (cf. \eqref{eq:feynman2}), allows one to conclude a uniform local eigenvalue separation from the classical pointwise eigenvalue separation for eigenvalues with eigenvectors localized on a common support  (cf. Lemma \ref{l:evalsep}).  The uniform local separation of Rellich curves guarantees that different Rellich curves cannot resonate with one another, which in turn allows one to, e.g., apply calculus and the Diophantine condition to single Rellich curves at every step of the induction.

The second important consequence of the symmetry of the Rellich functions is that the difference of local inverse functions -- which plays a crucial role in identifying double resonances (cf. Figure \ref{f:doubleres}) -- is effectively independent of the scale and the energy, cf. \cite[Lemma 5.7]{FroSpeWit90}.  This means that, in the even setting, one can define the bad sets of double resonant phases uniformly in the energy.  In our construction, the difference of local inverses of Rellich functions $\mathbf{E}_{s,+}^{-1} - \mathbf{E}_{s,-}^{-1}$ depends on the parent Rellich function, and thus so too do our bad sets of phases.  By carefully controlling the number of double resonances and child Rellich functions coming from each parent (cf. equation \eqref{eq:counting} below), we can discretize this energy dependence and again construct a uniform bad set of phases at each scale, whose limit superior we eventually eliminate to prove Anderson localization.

\subsubsection{Opening gaps}

Sinai's paper \cite{Sin87} is likewise foundational in establishing an approach to proving perturbative localization results in the quasiperiodic setting; indeed, general aspects of our argument, like the idea illustrated in Figure \ref{f:RelCollection} below, echo some of the ideas developed therein.  However, Sinai's paper suffers from fundamental flaws as written, each of which our approach overcomes.  

Sinai's approach to studying quasiperiodic operators is \textit{a priori} a natural one: for fixed frequency $\alpha$ having best rational approximants $\alpha_s = p_s/q_s$ with $q_s$ having controlled growth rate $q_{s+1} \lesssim (s+1)^2q_s$, approximate the whole-line operator $H$ via the $q_s$-periodic operators $H^{(s)}_\varepsilon(\theta)$.  In Sinai's construction, the transition between steps $s$ and $s+1$ involves approximating $H_\varepsilon^{(s+1)}(\theta)$ with shifts $H^{(s)}_\varepsilon(\theta + n\alpha_{s+1})$.  By the definition of the periodic approximants $\alpha_s$, one has
\begin{align*}
	|\alpha_s - \alpha_{s+1}| = \frac{1}{q_sq_{s+1}} \gtrsim \frac{1}{(s+1)^2q_s^2}
\end{align*}
and so the general error incurred in an inductive step $s \mapsto s+1$ is, at best, \textit{polynomial in $q_s^{-1}$}.  In contrast, in this inductive step there are double-resonant eigenfunctions whose centers of localization are separated at distances $q_s$.  The separation between the resultant Rellich functions is \textit{exponentially small in $q_s$}, comparable to $\varepsilon^{q_s}$.  Thus, after making an inductive step using periodic approximations, the uniform local separation between Rellich curves -- even coming from previous scales! -- is completely destroyed, taking along with it the ability to consider only self-resonances of a given Rellich curve.  

Critiques aside, Sinai's intuition regarding the importance of the gaps between children of resonant Rellich curves is informative.  Describing these gaps in our regime is perhaps the most novel contribution in this work; we do so by proving the Rellich functions at scale $s+1$ are simultaneously interlaced by a pair of auxiliary curves which are $C^1$ close to the parent curves from scale $s$, which must be monotone with opposite-signed derivatives; cf. Lemma \ref{l:localsepLem}.

\subsubsection{Eliminating double resonances}

Compared to the works of Fr\"ohlich-Spencer-Wittwer and Sinai, relatively recent progress on understanding localization for smooth quasiperiodic discrete Schr\"odinger operators was made by Wang and Zhang \cite{WanZha15}.  Therein, the authors show perturbatively that the Lyapunov exponent of the associated transfer matrices is arbitrarily close to $|\log\varepsilon|$, and they likewise prove a Large Deviation Theorem.  Their proof again proceeds by studying an auxiliary transfer matrix which is in some sense an asymptotic approximation to the Schr\"odinger transfer matrix as $\varepsilon$ decreases to zero.  In a follow-up paper \cite{WanZha16}, the authors use the characterization of spectral energies as corresponding to non-uniform hyperbolicity of the associated transfer matrix to prove Cantor spectrum in the model.

In the one-dimensional setting, uniformly positive Lyapunov exponents and a Large Deviation Theorem are key ingredients to proving Anderson localization; however, alone they are insufficient to identify a full-measure set of phases for which localization holds.  This final ingredient for localization, often called ``elimination of double resonances,'' requires a careful understanding of the sets of double resonant phases at each inductive step.  These bad sets of phases depend crucially on the energies for which they are resonant.

In our construction, given a Morse, two-monotonicity interval Rellich curve $\mathbf{E}_{s}$ at scale $s$, we can identify the bad sets of phases for $\mathbf{E}_s$ and control their sizes using the Morse condition.  One can control all of the bad sets simultaneously, then, by controlling the number of Rellich children $\mathbf{E}_{s+1}$ one constructs at each step.  Here again the Diophantine condition and the Morse, two-monotonicity interval structure of each $\mathbf{E}_s$ are crucial in order to separate the double-resonant energy regions, in turn allowing us to define at most $1/\rho^3_{s-1}$ children of $\mathbf{E}_s$, each having bad sets of phases of size at most $\rho_s^{1/5}$ (cf. the proof of Lemma \ref{l:badsets}).  Choosing our scales appropriately, we will have summability of the cumulative size of all bad sets at all scales, and the Borel-Cantelli lemma will apply.  We sketch the first step of our inductive procedure below.

\subsection{Idea of the proof}

For an interval $\Lambda = [a,b] \subset \mathbb{Z}$, we denote by $H^{\Lambda}$ the restriction of $H$ to $\Lambda$ with Dirichlet boundary conditions, $\psi(a-1) = \psi(b+1) = 0$.  Letting $|\Lambda| := b-a+1$, one can identify $H^{\Lambda}$ with the $|\Lambda| \times |\Lambda|$ matrix
\begin{align*}
	H^{\Lambda} &= \begin{bmatrix}
		V_a & \varepsilon & & & & \\
		\varepsilon & V_{a+1} & \varepsilon & & &  \\
		& \ddots & \ddots & \ddots & & \\
		& & \varepsilon & V_{b-1} & \varepsilon \\
		& & & \varepsilon & V_b
	\end{bmatrix}.
\end{align*}
Notice that the base dynamics of our model imply
\begin{align*}
	H^\Lambda(\theta + n\alpha) = H^{\Lambda + n}(\theta)
\end{align*}
for any $n \in \mathbb{Z}$, $\theta \in \mathbb{T}$; furthermore, by the regularity of our sampling function $v$, any one-parameter family of Dirichlet restrictions $H^\Lambda(\theta)$ exhibits $|\Lambda|$ Rellich functions (i.e., parametrized eigenvalues $\mathbf{E}(\theta)$) which are necessarily simple and $C^2$.

Generalized eigenvalues of the whole-line operator $H$ (i.e., energies $E$ with solutions $\psi$ to $H\psi = E\psi$ growing at most polynomially) are limit points of the eigenvalues of the finite-volume operators $H^\Lambda$ as $\Lambda$ grows to $\mathbb{Z}$; furthermore, they are spectrally dense in the spectrum of $H$ \cite{Schnol1981,Simon1981JFA}.  By the Poisson formula
\begin{align*}
	\psi(n) = \varepsilon R^\Lambda_{\theta,E}(n,a)\psi(a-1) + \varepsilon R^{\Lambda}_{\theta,E}(n,b)\psi(b+1), \quad n \in \Lambda,\, E \notin \spec(H^\Lambda(\theta))
\end{align*}
exponential off-diagonal decay of Green's functions $R^\Lambda_{\theta,E} = (H^\Lambda(\theta) - E)^{-1}$ can be favorably leveraged against the at-most polynomial growth of generalized eigenfunctions $\psi$ on $\Lambda$.  If we can inductively construct intervals of increasing length on which we have Green's function decay at (or near) generalized eigenvalues, we can thus prove exponential decay of generalized eigenfunctions, hence Anderson localization.

The initial step in our induction involves constructing intervals $\Lambda_1 = \Lambda_1(\theta_*, E_*)$ from the degenerate interval $\Lambda_0 = \{0\}$ in a way which is stable in $\theta_*$ and $E_*$.  At this zeroth scale, the eigenvalue of $H^{\Lambda_0}(\theta_*)$ is precisely the value of the sampling function $v(\theta_*)$.  The difficult aspect of the induction is identifying how the structural assumptions on $v$ are reflected by the Rellich functions of $H^{\Lambda_s}$ at future scales $s \geq 1$.

In \textit{non-resonant} situations where $|V_m - E| \geq \rho \gg \varepsilon$ is large for all $m \in \Lambda$, the Green's function exhibits off-diagonal decay at a rate of $\varepsilon/\rho$ on $\Lambda$ by a classical Neumann series argument (cf. Lemma \ref{l:initGrnDec}).  One thus focuses on those cases when $|V_m - E|$ is small:
\begin{align*}
	\mathcal{S}_0(\theta_*, E_*) = \{ m \in \mathbb{Z} : |V_m(\theta_*) - E_*| \lesssim \rho\}.
\end{align*}
If an element $m \in \mathcal{S}_0$ is separated by some relatively large distance $L$ from any other element of $\mathcal{S}_0$, we call it \textit{simple resonant} (for $v$, $\rho$, and $L$).  For simple resonant $m$, one can build an interval $\Lambda^{(1)}_1 = [m-L/2, m+L/2]$ so that any eigenvector of the associated Dirichlet restriction $H^{\Lambda_1^{(1)}}(\theta)$ with eigenvalue near $E_*$ will be localized near $m$, again by the Poisson formula \eqref{eq:poisson}.  By classical perturbation theory (cf. Lemma \ref{l:approxevect} and Proposition \ref{pr:AL1}), it follows that $H^{\Lambda^{(1)}_1}$ has a single Rellich function $\mathbf{E}_1(\theta)$ near $E_*$ for $\theta$ near $\theta_*$, and thus $\mathbf{E}_1$ will be well-approximated in $C^2$ by the sampling function $\mathbf{E}_0 := v$ (cf. Proposition \ref{pr:interscaleapprox1}).  

The obstruction to localization, then, are \textit{double resonances}, where (at least) two resonant sites $m_j$ are within $L$ of one another.  These double resonances are unavoidable; an illustration of such a resonance can be seen in Figure \ref{f:doubleres}.

\begin{figure}[h!]
	\begin{center}
		\begin{tikzpicture}[x=0.7pt,y=0.7pt,yscale=-0.6,xscale=0.7]
			
			\draw [line width=2.25]    (58,30.5) .. controls (195,36.5) and (326,411.5) .. (455,411.5) ;
			\draw [line width=2.25]    (455,411.5) .. controls (569,412.5) and (586,30.5) .. (650,32.5) ;
			\draw   (587,138.5) .. controls (587,133.83) and (584.67,131.5) .. (580,131.5) -- (368.48,131.5) .. controls (361.81,131.5) and (358.48,129.17) .. (358.48,124.5) .. controls (358.48,129.17) and (355.15,131.5) .. (348.48,131.5)(351.48,131.5) -- (211,131.5) .. controls (206.33,131.5) and (204,133.83) .. (204,138.5) ;
			\draw [color={rgb, 255:red, 155; green, 155; blue, 155 }  ,draw opacity=1 ] [dash pattern={on 0.84pt off 2.51pt}]  (53,139.5) -- (611.5,139.5) ;
			\draw  (-21,220.5) -- (693,220.5)(58.78,0.5) -- (58.78,434.5) (686,215.5) -- (693,220.5) -- (686,225.5) (53.78,7.5) -- (58.78,0.5) -- (63.78,7.5)  ;
			\draw [line width=1.5]    (53,102.5) -- (53,176.5) ;
			\draw [shift={(53,176.5)}, rotate = 270] [color={rgb, 255:red, 0; green, 0; blue, 0 }  ][line width=1.5]    (0,6.71) -- (0,-6.71)   ;
			\draw [shift={(53,102.5)}, rotate = 270] [color={rgb, 255:red, 0; green, 0; blue, 0 }  ][line width=1.5]    (0,6.71) -- (0,-6.71)   ;
			\draw [color={rgb, 255:red, 155; green, 155; blue, 155 }  ,draw opacity=1 ] [dash pattern={on 4.5pt off 4.5pt}]  (53,102.5) -- (689,102.5) ;
			\draw [color={rgb, 255:red, 155; green, 155; blue, 155 }  ,draw opacity=1 ] [dash pattern={on 4.5pt off 4.5pt}]  (53,176.5) -- (688,176.5) ;
			\draw [color={rgb, 255:red, 155; green, 155; blue, 155 }  ,draw opacity=1 ] [dash pattern={on 4.5pt off 4.5pt}]  (169,102.5) -- (169,221.5) ;
			\draw [color={rgb, 255:red, 155; green, 155; blue, 155 }  ,draw opacity=1 ] [dash pattern={on 4.5pt off 4.5pt}]  (225,176.5) -- (225,221.5) ;
			\draw [line width=1.5]    (169,231.5) -- (225,231.5) ;
			\draw [shift={(225,231.5)}, rotate = 180] [color={rgb, 255:red, 0; green, 0; blue, 0 }  ][line width=1.5]    (0,6.71) -- (0,-6.71)   ;
			\draw [shift={(169,231.5)}, rotate = 180] [color={rgb, 255:red, 0; green, 0; blue, 0 }  ][line width=1.5]    (0,6.71) -- (0,-6.71)   ;
			\draw [color={rgb, 255:red, 155; green, 155; blue, 155 }  ,draw opacity=1 ] [dash pattern={on 4.5pt off 4.5pt}]  (584,177) -- (584,221.5) ;
			\draw [color={rgb, 255:red, 155; green, 155; blue, 155 }  ,draw opacity=1 ] [dash pattern={on 4.5pt off 4.5pt}]  (605,102) -- (605,221.5) ;
			\draw [line width=1.5]    (584,232.5) -- (605,232.5) ;
			\draw [shift={(605,232.5)}, rotate = 180] [color={rgb, 255:red, 0; green, 0; blue, 0 }  ][line width=1.5]    (0,6.71) -- (0,-6.71)   ;
			\draw [shift={(584,232.5)}, rotate = 180] [color={rgb, 255:red, 0; green, 0; blue, 0 }  ][line width=1.5]    (0,6.71) -- (0,-6.71)   ;
			
			\draw (349,105.4) node [anchor=north west][inner sep=0.75pt]    {$n\alpha $};
			\draw (335,368.4) node [anchor=north west][inner sep=0.75pt]    {$v$};
			\draw (616,129.9) node [anchor=north west][inner sep=0.75pt]    {$E_{n}( v)$};
			\draw (24.5,157.07) node [anchor=north west][inner sep=0.75pt]  [rotate=-269.54]  {$J_{0,n}$};
			\draw (170,246.4) node [anchor=north west][inner sep=0.75pt]    {$I_{0,n,-}$};
			\draw (575,245.4) node [anchor=north west][inner sep=0.75pt]    {$I_{0,n,+}$};

		\end{tikzpicture}
	\end{center}
\caption{Double resonance of the sampling function $v$: if $\theta_{0,n,-} \in I_{0,n,-}$ is the value such that $v(\theta_{0,n,-}) = v(\theta_{0,n,-}+n\alpha) = E_n(v)$, then $0$ and $n$ are in $\mathcal{S}_0(\theta_{0,n,-}, E_n)$.}
\label{f:doubleres}
\end{figure}
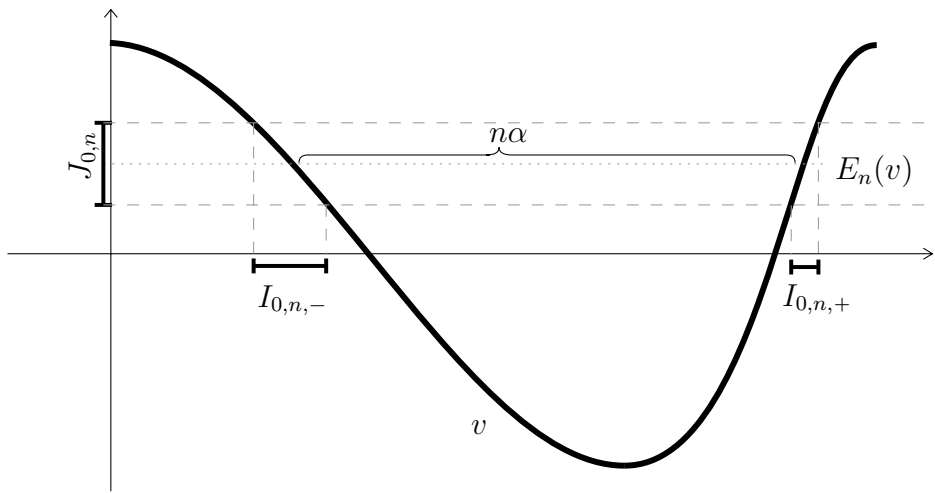
In our setting, the Diophantine condition and the structure of $v$ guarantee that at \textit{most} two resonant sites $m_1$ and $m_2$ can be nearby for appropriate choices of $\rho$ and $L$; furthermore, each of these nearby pairs $(m_1, m_2)$ must be well-separated from one another.   Indeed, let $v : \mathbb{T} \to [-1,1]$ be our Morse sampling function with two monotonicity intervals $\mathbf{I}_\pm$ and denote by $v_\pm := v|_{\mathbf{I}_\pm}$ the monotone restrictions of $v$.   By the Morse condition, one sees (cf. Lemma \ref{l:morsefnbd})
\begin{align}\label{eq:drinitIntro}
	|v_\pm(\theta) - v_\pm(\theta')| \gtrsim \|\theta - \theta'\|^2_\mathbb{T}, \quad \theta, \theta' \in \mathbf{I}_\pm.
\end{align}

By the Morse condition and \eqref{eq:drinitIntro}, double resonant sites on the orbit of $\theta_*$ cannot occur near critical points of $v$; we thus find a lower bound $\nu = \nu(\rho,L) \gg \rho$ on $|v'|$ for double resonant phases $\theta \in I_{0,n,\pm} := v_\pm^{-1}(J_{0,n})$.  What's more, given the two-monotonicity structure of $v$, there can be at most $4L$ such double-resonant intervals $I_{0,n,\pm}$, $|n| \leq L$ (cf. Figure \ref{f:doubleres}).  Thus, the region of double-resonant phases is of size at most $4L\rho/\nu$.  It also follows from the Diophantine condition \eqref{eq:dioph} and the two-monotonicity interval structure of $v$ that the sequence $V_m = v(\theta_* + m\alpha)$ can only recur to a neighborhood of $E$ for at most two nearby values of $m$:  
\begin{align*}
	\min_{m_1,m_2,m_3 \in \mathcal{S}_0}\{|m_1 - m_2|, |m_2 - m_3|\} \gtrsim \rho^{-1/2\tau}
\end{align*}
Thus, if we choose $L$ of order, e.g.,
\begin{align*}
	L^6 \sim \rho^{-1/2\tau}, 
\end{align*}
then resonant sites $m \in \mathcal{S}_0$ which are not simple resonant appear in distinct pairs $(m_1, m_2)$ with $|m_1 - m_2| \leq L$ such that each pair is separated from other pairs by at least $L^6$.  

Given the separation of these pairs, one can build an interval $\Lambda^{(2)}_1 = [m_1 - L^2/2, m_1 + L^2/2]$ such that, by classical perturbation theory (cf. Lemma \ref{l:approxevect} and Proposition \ref{pr:AL2}), $H^{\Lambda^{(2)}_1}$ has two Rellich functions $\mathbf{E}_{1,\vee} > \mathbf{E}_{1,\wedge}$ near $E_*$ for $\theta$ near $\theta_*$.  These new Rellich functions, however, must necessarily deviate significantly from the parent function $v$ near their the crossing point $E_n(v)$.  One of the primary technical thrusts of this paper is demonstrating that these new functions locally 1) retain the crucial Morse and two-monotonicity structural properties of the function $v$, and 2) separate with a stable, quantifiable gap between them:
\begin{thm*}[cf. Theorem \ref{t:DRRelFns}]
In our setting, double resonances of a Rellich function $\mathbf{E}_s$ of $H^{\Lambda_s}$ resolve as a pair of uniformly locally separated Morse Rellich functions $\mathbf{E}_{s+1,\vee} > \mathbf{E}_{s+1,\wedge}$ of $H^{\Lambda_{s+1}^{(2)}}$ with at most one critical point, cf. Figure \ref{f:DRRelFn}.
\end{thm*}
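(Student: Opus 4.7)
My plan is to reduce the study of eigenvalues of $H^{\Lambda_{s+1}^{(2)}}$ near $E_*$ to the spectral analysis of an effective $2 \times 2$ matrix whose eigenvalue formula will encode directly the asserted interlacing, gap, and Morse structure. Write $\Lambda_{s+1}^{(2)}$ as the union of two scale-$s$ windows $\Lambda_s^{(j)}$ centered at the resonant sites $m_j$, $j = 1, 2$, together with a complement on which $|V - E|$ is bounded below by a scale-$s$ non-resonance threshold. On this complement, exponential off-diagonal Green's function decay (as in Lemma \ref{l:initGrnDec}) permits a Schur complement reduction of the eigenvalue equation $(H^{\Lambda_{s+1}^{(2)}}(\theta) - E)\psi = 0$ for $E$ near $E_*$ to a $2 \times 2$ matrix equation on the span of approximate localized eigenvectors at $m_1, m_2$. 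The resulting effective matrix $M(\theta)$ has diagonal entries $a(\theta), b(\theta)$ agreeing with the shifted parent Rellich values $\mathbf{E}_s(\theta + m_1\alpha), \mathbf{E}_s(\theta + m_2\alpha)$ modulo errors exponentially small in the window radius $L^2$, and an off-diagonal entry $t(\theta)$ of the same exponentially small scale.

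The auxiliary curves of the theorem are precisely these diagonal entries. By the Morse bound \eqref{eq:drinitIntro} applied within a single monotonicity interval of $\mathbf{E}_s$, a double resonance at separation scale $L$ forces the two shifted phases $\theta + m_j\alpha$ to lie in \emph{opposite} monotonicity intervals $\mathbf{I}_\pm$ of the parent $\mathbf{E}_s$; hence the auxiliary curves are strictly monotone with opposite-signed derivatives throughout the double-resonant window, with $|a'| + |b'|$ bounded below by a multiple of the Morse constant $d_0$.

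The $2 \times 2$ eigenvalue formula
\[\mathbf{E}_{s+1,\vee/\wedge}(\theta) = \frac{a(\theta) + b(\theta)}{2} \pm \sqrt{\left(\frac{a(\theta) - b(\theta)}{2}\right)^2 + t(\theta)^2}\]
then immediately yields the Cauchy interlacing $\mathbf{E}_{s+1,\wedge} \leq \min\{a, b\} \leq \max\{a, b\} \leq \mathbf{E}_{s+1,\vee}$, a uniform spectral gap $\mathbf{E}_{s+1,\vee} - \mathbf{E}_{s+1,\wedge} \geq 2|t|$, and, by direct differentiation using the opposite-signed monotonicity of $a, b$ together with the smallness of $t, t'$, exactly one critical point of each child in the window with second derivative of the correct sign -- that is, the Morse property is inherited. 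Away from the auxiliary-curve crossing, $|a - b|$ dominates $|t|$ and each child is essentially monotone along the dominant auxiliary curve, so the two-monotonicity-interval structure persists globally across the window.

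The main obstacle is controlling the $C^2$-regularity of the Schur complement entries $a, b, t$ uniformly in $\theta$, since constructing the effective matrix requires inverting a large $\theta$-dependent operator on the non-resonant complement. Here the inductive hypothesis of uniform local separation between prior-scale Rellich curves (cf.\ Lemma \ref{l:localsepLem}) is essential: it ensures the Schur complement remains well-conditioned throughout the local window, so that derivatives of $M(\theta)$ can be controlled via standard resolvent identities and transferred into the required derivative bounds on the children, preserving both the Morse lower bound $d_0$ and the upper bound $D_0$ at the new scale (possibly with mildly deteriorated constants absorbed into the inductive parameters).
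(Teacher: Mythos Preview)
Your Schur-complement/effective $2\times 2$ picture is a plausible heuristic, but it diverges from the paper's argument and has a real gap at the key step. The paper does \emph{not} reduce to a $2\times 2$ effective matrix. Instead, the interlacing curves are obtained by an exact application of the Cauchy Interlacing Theorem to rank-one compressions of the full operator: with $P_\pm = \psi_\pm\psi_\pm^\top$ the projections onto the parent eigenvectors and $Q_\pm = I - P_\pm$, the compressed operators $Q_\pm H^{\snext\Lambda}Q_\pm$ each have a unique resonant eigenvalue $\lambda_\mp$ which interlaces $\snext{\mathbf E}_\wedge \le \lambda_\mp \le \snext{\mathbf E}_\vee$ \emph{exactly}, and is shown to be $C^1$-close to the parent curve $\mathbf E_\mp$ (hence monotone with opposite-signed derivative). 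This is Lemma~\ref{l:localsepLem}; it is not an inductive hypothesis from prior scales as you suggest, but the new ingredient being proved. The Morse lower bound on $\partial_\theta^2\snext{\mathbf E}_\bullet$ near the crossing is obtained separately by expanding the Feynman formula \eqref{eq:feynman2} on the full operator and showing that the term $-2\langle\snext\psi_\wedge,V'\snext\psi_\vee\rangle^2/(\snext{\mathbf E}_\wedge-\snext{\mathbf E}_\vee)$ dominates (Proposition~\ref{pr:2ndDerivLwrBds}).

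The concrete gap in your approach is the uniform separation. Your inequality $\snext{\mathbf E}_\vee(\theta)-\snext{\mathbf E}_\wedge(\theta)\ge 2|t(\theta)|$ is \emph{pointwise}, whereas the theorem asserts $\inf_\theta \snext{\mathbf E}_\vee(\theta)-\sup_{\theta}\snext{\mathbf E}_\wedge(\theta)>0$, which can involve different values of $\theta$. To use your bound you would need a uniform \emph{lower} bound on the tunneling amplitude $|t(\theta)|$; you only assert $t$ is exponentially small (an upper bound), and lower bounds on tunneling are notoriously delicate. The paper sidesteps this entirely: the pointwise separation $\snext{\mathbf E}_\vee(\theta)-\snext{\mathbf E}_\wedge(\theta)\ge\snext\sigma$ comes from the general eigenvalue-separation Lemma~\ref{l:evalsep} (simultaneous localization on $\Lambda_{\mathrm{loc}}$), and this is upgraded to the uniform gap by combining it with the transversality of the exact interlacing curves $\lambda_\pm$ via a short convex-combination argument (Proposition~\ref{pr:unifmLocalSep}). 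A secondary issue: your effective matrix $M$ depends implicitly on $E$ through the Schur complement, so the $2\times 2$ eigenvalue formula defines the children only implicitly, and the clean interlacing and differentiation you invoke require justification that the paper's exact-compression route avoids.
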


The gap demonstrated in the above Theorem serves two purposes: First, it ensures that any Rellich function $\mathbf{E}_1$ can only resonate with itself at future scales, which ultimately enables our induction.  Second, the size of the gap is sufficiently large to remain open through each step of the inductive procedure; the ubiquity of these gaps yields Cantor spectrum.  

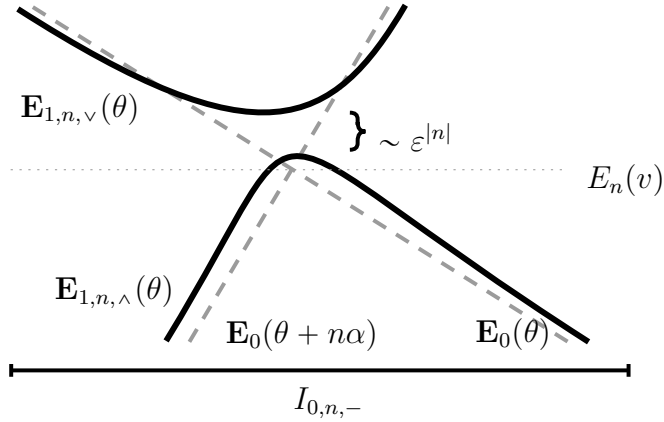
\begin{figure}[h!]

	\begin{center}
	\begin{tikzpicture}[x=0.6pt,y=0.6pt,yscale=-0.6,xscale=0.7]
		
		\draw [color={rgb, 255:red, 155; green, 155; blue, 155 }  ,draw opacity=1 ][line width=1.5]  [dash pattern={on 5.63pt off 4.5pt}]  (78.5,21) -- (560.5,373) ;
		\draw [color={rgb, 255:red, 155; green, 155; blue, 155 }  ,draw opacity=1 ][line width=1.5]  [dash pattern={on 5.63pt off 4.5pt}]  (219.5,373) -- (402.5,13) ;
		\draw [color={rgb, 255:red, 0; green, 0; blue, 0 }  ,draw opacity=1 ][line width=2.25]    (65,23) .. controls (283,186) and (348,150) .. (414,19) ;
		\draw [line width=2.25]    (198.5,372) .. controls (342,98) and (251.5,128) .. (579,373) ;
		\draw  [line width=1.5]  (364,171) .. controls (368.67,171) and (371,168.67) .. (371,164) -- (371,161.78) .. controls (371,155.11) and (373.33,151.78) .. (378,151.78) .. controls (373.33,151.78) and (371,148.45) .. (371,141.78)(371,144.78) -- (371,139) .. controls (371,134.33) and (368.67,132) .. (364,132) ;
		\draw [color={rgb, 255:red, 155; green, 155; blue, 155 }  ,draw opacity=1 ] [dash pattern={on 0.84pt off 2.51pt}]  (58.5,192) -- (558.5,192) ;
		\draw [line width=1.5]    (59,403) -- (615,403) ;
		\draw [shift={(615,403)}, rotate = 180] [color={rgb, 255:red, 0; green, 0; blue, 0 }  ][line width=1.5]    (0,6.71) -- (0,-6.71)   ;
		\draw [shift={(59,403)}, rotate = 180] [color={rgb, 255:red, 0; green, 0; blue, 0 }  ][line width=1.5]    (0,6.71) -- (0,-6.71)   ;
		
		\draw (387.08,140.63) node [anchor=north west][inner sep=0.75pt]  [rotate=-359.58,xslant=0.02]  {$\sim \varepsilon ^{|n|}$};
		\draw (250,346.4) node [anchor=north west][inner sep=0.75pt]    {$\mathbf{E}_0( \theta +n\alpha )$};
		\draw (65,108.4) node [anchor=north west][inner sep=0.75pt]    {$\mathbf{E}_{1,n,\lor }( \theta )$};
		\draw (475,345.4) node [anchor=north west][inner sep=0.75pt]    {$\mathbf{E}_0( \theta )$};
		\draw (96,300.4) node [anchor=north west][inner sep=0.75pt]    {$\mathbf{E} _{1,n,\land }( \theta )$};
		\draw (575,183.4) node [anchor=north west][inner sep=0.75pt]    {$E_{n}( v)$};
		\draw (311,418.4) node [anchor=north west][inner sep=0.75pt]    {$I_{0,n,-}$};

	\end{tikzpicture}
	\end{center}
	\caption{The resolution of a double resonance of $\mathbf{E}_0 = v$ into a pair of uniformly locally well-separated Rellich curves of a Dirichlet restriction $H^{\Lambda_1^{(2)}}$ of $H$.  The interval $\Lambda_1^{(2)}$ depends on the crossing value $E_n(v)$.}

\label{f:DRRelFn}
\end{figure}

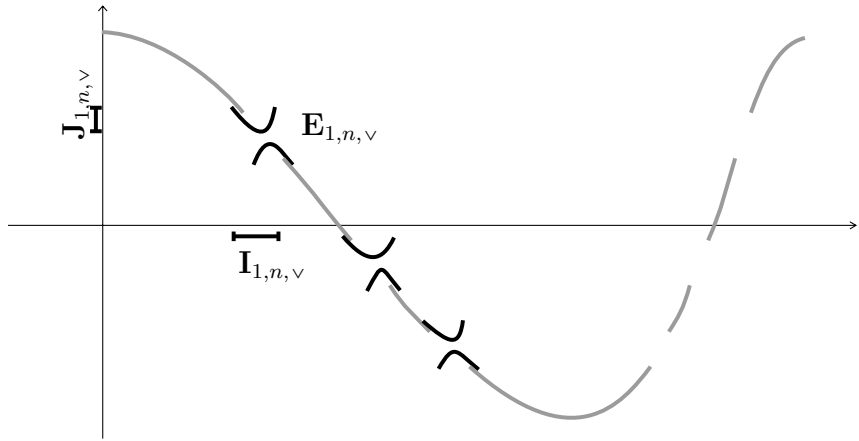
\begin{figure}[h!b]
\begin{center}
	
	\begin{tikzpicture}[x=0.6pt,y=0.6pt,yscale=-0.45,xscale=0.55]
		
		\draw  (-21,307.94) -- (951,307.94)(87.6,0.5) -- (87.6,607) (944,302.94) -- (951,307.94) -- (944,312.94) (82.6,7.5) -- (87.6,0.5) -- (92.6,7.5)  ;
		\draw [line width=1.5]    (79.74,143.04) -- (79.74,176) ;
		\draw [shift={(79.74,176)}, rotate = 270] [color={rgb, 255:red, 0; green, 0; blue, 0 }  ][line width=1.5]    (0,6.71) -- (0,-6.71)   ;
		\draw [shift={(79.74,143.04)}, rotate = 270] [color={rgb, 255:red, 0; green, 0; blue, 0 }  ][line width=1.5]    (0,6.71) -- (0,-6.71)   ;
		\draw [line width=1.5]    (237.66,323.31) -- (289,323.31) ;
		\draw [shift={(289,323.31)}, rotate = 180] [color={rgb, 255:red, 0; green, 0; blue, 0 }  ][line width=1.5]    (0,6.71) -- (0,-6.71)   ;
		\draw [shift={(237.66,323.31)}, rotate = 180] [color={rgb, 255:red, 0; green, 0; blue, 0 }  ][line width=1.5]    (0,6.71) -- (0,-6.71)   ;
		\draw [color={rgb, 255:red, 0; green, 0; blue, 0 }  ,draw opacity=1 ][line width=1.5]    (235,142) .. controls (273,201) and (280,173) .. (285,142) ;
		\draw [color={rgb, 255:red, 0; green, 0; blue, 0 }  ,draw opacity=1 ][line width=1.5]    (260.5,223) .. controls (277.5,169) and (290.5,203) .. (305.5,223) ;
		\draw [color={rgb, 255:red, 155; green, 155; blue, 155 }  ,draw opacity=1 ][line width=1.5]    (294.5,214) .. controls (343,281) and (327.5,264) .. (372.5,329) ;
		\draw [color={rgb, 255:red, 155; green, 155; blue, 155 }  ,draw opacity=1 ][line width=1.5]    (86.5,37) .. controls (158.45,39.58) and (228.5,119) .. (248.5,150) ;
		\draw [color={rgb, 255:red, 0; green, 0; blue, 0 }  ,draw opacity=1 ][line width=1.5]    (362.5,323) .. controls (390.5,363) and (407.5,361) .. (421.5,325) ;
		\draw [color={rgb, 255:red, 0; green, 0; blue, 0 }  ,draw opacity=1 ][line width=1.5]    (390.5,400) .. controls (409.5,357) and (404.5,364) .. (428.5,399) ;
		\draw [color={rgb, 255:red, 155; green, 155; blue, 155 }  ,draw opacity=1 ][line width=1.5]    (416.5,392) .. controls (435.5,427) and (442.5,431) .. (462.5,457) ;
		\draw [color={rgb, 255:red, 0; green, 0; blue, 0 }  ,draw opacity=1 ][line width=1.5]    (454.5,442) .. controls (485.5,475) and (496.5,480) .. (500.5,441) ;
		\draw [color={rgb, 255:red, 0; green, 0; blue, 0 }  ,draw opacity=1 ][line width=1.5]    (472.5,509) .. controls (490.5,471) and (492.5,484) .. (518.5,510) ;
		\draw [color={rgb, 255:red, 155; green, 155; blue, 155 }  ,draw opacity=1 ][line width=1.5]    (508,506) .. controls (627.5,636) and (682.5,560) .. (715.5,506) ;
		\draw [color={rgb, 255:red, 155; green, 155; blue, 155 }  ,draw opacity=1 ][line width=1.5]    (736.5,457) .. controls (746.5,438) and (753.5,425) .. (760.5,392) ;
		\draw [color={rgb, 255:red, 155; green, 155; blue, 155 }  ,draw opacity=1 ][line width=1.5]    (781.5,329) .. controls (798.5,281) and (800.5,262) .. (812.5,214) ;
		\draw [color={rgb, 255:red, 155; green, 155; blue, 155 }  ,draw opacity=1 ][line width=1.5]    (830.5,150) .. controls (842.5,110) and (859.5,54) .. (892.46,45.22) ;
		
		\draw (38.4,191.6) node [anchor=north west][inner sep=0.75pt]  [rotate=-270.02]  {$\mathbf{J}_{1,n,\lor }$};
		\draw (240.32,340.36) node [anchor=north west][inner sep=0.75pt]    {$\mathbf{I}_{1,n,\lor }$};
		\draw (312,147.4) node [anchor=north west][inner sep=0.75pt]    {$\mathbf{E}_{1,n,\lor }$};

	\end{tikzpicture}
\end{center}
	\caption{A cartoon output of the first inductive step: a collection $\mathcal{E}_1$ of local Rellich functions of various Dirichlet restrictions of $H$, whose domains (and their relevant translates, e.g. $\mathbf{I}_{1,n,\vee} + n\alpha$) cover the circle $\mathbb{T}$.  The curves in black come from double resonances, and the curves in gray are simple resonant.  Note that different curves need not agree on the overlap of their domains.}  
\label{f:RelCollection}
\end{figure}

From the initial scale function $\mathbf{E}_0 = v$, we thus construct a collection $\mathcal{E}_1$ of well-separated Rellich functions of certain Dirichlet restrictions of $H$ whose domains cover the circle $\mathbb{T}$ with the same structural properties as $\mathbf{E}_0$, cf. Figure \ref{f:RelCollection}.
The inductive argument proceeds on each such constructed function: by properly defining our parameters at scale $s$ relative to our initial choice of $\varepsilon =: \delta_0$, we can ensure that the size $\delta_s$ of the resonant eigenvectors near the edges of $\Lambda_s$ is much smaller than the eigenvalue separation $\rho_s$, allowing the procedure above to iterate; we choose these parameters in such a way so as to ensure the bad sets of phases have summable measures (cf. Lemma \ref{l:badsets}).  The Borel-Cantelli lemma guarantees that the collection $B$ of $\theta_* \in \mathbb{T}$ which are double-resonant infinitely often has zero measure; our full-measure set of localized phases is the complement $\Theta = \mathbb{T} \setminus B$.

The ultimate output of the induction is a tree of cosine-like Rellich curves, cf. Figure \ref{f:Rellichtree}.  Each fixed energy $E_* \in \mathbb{R}$ admits a path of ``$E_*$-relevant'' Rellich functions through this tree; this path either terminates at finite depth if $E_*$ is non-spectral or proceeds without end.  Given $\theta_* \in \Theta$, we consider the infinite path determined by some generalized eigenvalue $E_*= E(\theta_*)$ of $H(\theta_*)$; by our construction of $\Theta$, this path eventually consists \textit{only of simple-resonant children}.  The resulting stability of the approximate eigenfunctions $\psi_s$ will yield exponential localization of the corresponding generalized eigenfunction, showing that the generalized eigenvalue is an honest eigenvalue with decaying eigenfunction.  The spectral density of generalized eigenvalues guaranteed by Schnol's lemma completes the localization argument.  
Moreover, our construction ensures that each node of the tree \textit{has some double-resonant descendant}. Since each double resonance opens a spectral gap which will remain open for all future scales, this observation guarantees that spectral gaps open arbitrarily close to any fixed energy $E_* \in \mathbb R$; thus the spectrum is a Cantor set.

\begin{figure}
\begin{forest}
	my edge label/.style={
		edge label={node [midway,above,sloped, font=\tiny] {}}},
	for tree={%
		fit=band},
	before drawing tree={
		tikz+={
			\node [anchor=mid east, font=\sffamily] (m) at (current bounding box.west |- .mid) {$\mathcal{E}_0$};
		},
		tempcounta'=0,
		for tree={
			if={%
				>OR>{level}{tempcounta}%
			}{%
				tempcounta'+=1,
				tikz+/.process={ Rw {tempcounta} {
						\node [anchor=mid east, font=\sffamily] at (m.east |- .mid) {$\mathcal{E}_{#1}$};
					}%
				},
			}{},
		},
	},
	[$\mathbf{E}_0$
		[$\mathbf{E}_{1,i_0}^{(1)}$
				[$\mathbf{E}_{2,i_1}^{(1)}$
					[\text{...}
					]
					[\text{...}
					]
				]
				[\text{...}
				]
				[$\mathbf{E}_{2,n_1,\vee}^{(2)}$
					[\text{...}
					]
					[\text{...}
					]
				]
				[$\mathbf{E}_{2,n_1,\wedge}^{(2)}$
					[\text{...}
					]
					[\text{...}
					]
				]
				[\text{...}
				]
		]
		[\text{...}]
		[$\mathbf{E}_{1,n_0,\vee}^{(2)}$
			[$\mathbf{E}_{2,i_1'}^{(1)}$
				[\text{...}
				]
				[\text{...}
				]
			]
			[\text{...}
			]
		]
		[$\mathbf{E}_{1,n_0,\wedge}^{(2)}$
			[\text{...}
			]
			[\text{...}]
		]
		[\text{...}]
	]
\end{forest}
\caption{After completing the induction, we have a tree $\mathcal{E}$ of Rellich functions $\mathbf{E}_s : \mathbf{I}_s \to \mathbf{J}_s$.  Any energy $E_* \in \mathbb{R}$ admits a path through this tree.  The superscript $(j)$, $j \in \{1,2\}$, of each child indicates the resonance type relative to the parent.}
\label{f:Rellichtree}
\end{figure}

\subsubsection{A brief roadmap of the paper}

The article proceeds as follows: in the remainder of this section, we will establish some basic notation and foundational lemmas for use throughout the paper; the proofs of these lemmas can all be found in Appendix B.  Sections 2, 3, and 4 provide the crucial infrastructure for our induction.  In Sections 2 and 3, we develop the spectral tools to analyze descendants of a given Rellich curve in the simple- and double-resonant settings, respectively, at every scale.  The by-now classical but necessary details for proving Green's function decay on nonresonant intervals are included in Appendix A.  Section 4 consists of technical machinery allowing us to classify energy regions $J_{s,i}$ in the codomain of a cosine-like function $\mathbf{E}_s$ as simple- or double-resonant.  Section 5 applies the content of Sections 2, 3, and 4 in a multiscale induction scheme: given a Rellich function $\mathbf{E}_s$, we use the restrictions of $\mathbf{E}_s$ to the resonance regions $J_{s,i}$ to construct Rellich children $\mathbf{E}_{s+1}$ according to the recipes in Sections 2 and 3.    Section 6 characterizes the spectrum of $H$ in terms of the complete collection of Rellich functions constructed in Section 5, proving Cantor spectrum via the permanent stability of the gaps coming from double resonances; finally, we eliminate double resonances and prove Anderson localization as outlined above.

\subsection{Notation and preliminaries}

In this subsection, we establish some notation and collect the foundational lemmas which will serve as the starting point for our discussion; the proofs of each of these lemmas can be found in Appendix B.

Throughout the paper we use $I$ to denote intervals of phases $\theta$, $J$ to denote intervals of energies $E$, $\Lambda$ to denote intervals of integers, and we denote by $B_r(x)$ the open ball of radius $r$ centered at $x$.  We reserve a handful of parameters for our multiscale inductive procedure; these parameters are:
\begin{enumerate}
    \item[] $L$: Lengths of spacial intervals $\Lambda$.
    \item[] $\rho$: Resonance scale for phase and energy intervals $I$ and $J$.
    \item[] $\delta$: The size of a localized eigenfunction at the edge of an interval $\Lambda$.
    \item[] $\nu$: Local lower bound on the magnitude of a first derivative of a Rellich function.
    \item[] $(\ell,\gamma)$: Green's function decay parameters, see below.
\end{enumerate}
Due to the nature of the induction, we will occasionally consider up to three scales simultaneously, a ``current'' scale, a ``next'' scale, and a ``previous'' scale; we use hats $\snext{\cdot}$ or checks $\sprev{\cdot}$ above the corresponding parameters when we intend to suggest application to the next or previous scales, respectively.  With this convention, one should keep in mind the following general scale principles, which we clarify more precisely as necessary:
\begin{align*}
    \delta \ll \rho \ll \sprev{\rho} \ll \nu, \\
    \gamma \sim |\log\varepsilon|, \\
    L \ll \snext\ell \ll \snext{L}. 
\end{align*}
\indent We now fix the notation for our objects of study.  Let $V \in \ell^\infty(\mathbb{Z},\mathbb{R})$ be a real-valued potential.  Restricted to $\ell^2(\mathbb{Z})$, the linear operator $H(V,\varepsilon)$ is self-adjoint and bounded (with $\|H\| \leq \|V\|_\infty + 2\varepsilon$).  For an interval $\Lambda = [a,b] \subset \mathbb{Z}$ we denote by $H^{\Lambda}$ the restriction of $H$ to $\Lambda$ with Dirichlet boundary conditions, $\psi(a-1) = \psi(b+1) = 0$.  Letting $|\Lambda| := b-a+1$, one identifies $H^{\Lambda}$ with a $|\Lambda| \times |\Lambda|$ matrix by making an appropriate change of basis.  With this identification, we will frequently treat vectors $\sprev{\psi}$ defined only on a subset $\sprev{\Lambda} \subset \Lambda$ as though they are vectors in $\mathbb{C}^{|\Lambda|}$; in this situation, we abuse notation and conflate $\sprev{\psi}$ with its extension to $\Lambda$ given by $\sprev{\psi}(m) = 0$, $m \in \Lambda \setminus \sprev{\Lambda}$.

Consider a partition of $\Lambda$ given by $\mathcal{P} = \cup_{j=0}^p \Lambda_j$, $\Lambda_j = [a_j,b_j]$, where $b_{j} + 1 = a_{j+1}$ for all $0 \leq j \leq p-1$ and $a_0 = a, b_p = b$.  We define the partitioned operators $H^\Lambda_\mathcal{P}$ and $\Gamma^\Lambda_{\mathcal{P}}$ by
\begin{align*}
	H^\Lambda_\mathcal{P} &:= \bigoplus_{j} H^{\Lambda_j}, \\
	\Gamma^\Lambda_\mathcal{P} &:= H^\Lambda - H^\Lambda_\mathcal{P}.
\end{align*}
  We fix once and for all the trivial partition $\mathcal{P}_0$ with $a_j = b_j = j$, and note that in this special case
\begin{align*}
	H^\Lambda_{\mathcal{P}_0} &= V^\Lambda, \\
	\Gamma^\Lambda_{\mathcal{P}_0} &= \varepsilon\Delta^\Lambda.
\end{align*}

We denote by $\spec(A)$ the spectrum of an operator $A$.  For $E \notin \spec(H^{\Lambda})$, we denote by $R^{\Lambda}(E) := (H^{\Lambda}-E)^{-1}$ the resolvent operator and by $R^{\Lambda}_E(m,n) := \langle \delta_m, R^\Lambda(E)\delta_n\rangle$ the associated Green's function.  Given a partition $\mathcal{P}$ of $\Lambda$, we define $R^\Lambda_\mathcal{P}$ analogously, with $H^\Lambda_\mathcal{P}$ replacing $H^\Lambda$.  We will make frequent use of the resolvent identity
\begin{align*}
	R^\Lambda(E) - R^\Lambda_\mathcal{P}(E) &= -R^\Lambda(E) \,\Gamma_\mathcal{P}^\Lambda \,R^\Lambda_\mathcal{P}(E) .
\end{align*}
Let $E \notin \spec H^\Lambda$; given $\ell \in \mathbb{N}, \gamma>0$, we say that an interval $\Lambda \subset \mathbb{Z}$ satisfies the \textit{Green's function decay property for $(\ell,\gamma)$} if, for all 
$m,n\in \Lambda$ with $|m-n|\geq\ell$, $$\log|R^{\Lambda}(m,n)| \leq -\gamma|m-n|.$$

\subsubsection{Perturbation theory for differentiable one-parameter self-adjoint operator families}

We now recall some important lemmas for use throughout the paper; the proofs of these lemmas are appended.

We begin with a straightforward consequence of rank-nullity:

\begin{lem}\label{l:PQclose}
	Let $\chi$ and $P$ each be orthogonal projections on a finite-dimensional vector space.  If
	\begin{align*}
		\|(I-\chi)P\| < 1
	\end{align*}
	then $\rank(\chi) \geq \rank(P)$.
\end{lem}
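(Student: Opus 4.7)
The plan is to show injectivity of $\chi$ restricted to the range of $P$, and then conclude via rank-nullity as the paper suggests.

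First I would pick an arbitrary $v \in \Ima(P)$ with $\chi v = 0$. Since $v$ is in the range of the projection $P$, we have $Pv = v$, and since $\chi v = 0$, we also have $(I-\chi)v = v$. Combining these gives
\begin{align*}
    v = (I-\chi)v = (I-\chi)Pv,
\end{align*}
from which the operator norm bound yields
\begin{align*}
    \|v\| = \|(I-\chi)Pv\| \leq \|(I-\chi)P\|\,\|v\|.
\end{align*}
Since $\|(I-\chi)P\| < 1$ by hypothesis, this forces $\|v\| = 0$, i.e., $v=0$. Hence $\chi|_{\Ima(P)}$ has trivial kernel.

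Finally, by rank-nullity applied to the linear map $\chi|_{\Ima(P)} : \Ima(P) \to \Ima(\chi)$, the image $\chi(\Ima(P))$ is a subspace of $\Ima(\chi)$ of dimension $\dim \Ima(P) = \rank(P)$, so $\rank(\chi) = \dim \Ima(\chi) \geq \rank(P)$, as desired. The argument is essentially routine, and there is no serious obstacle; the only thing to be careful about is noticing that the hypothesis $\|(I-\chi)P\|<1$ is exactly what one needs to rule out a nonzero vector in $\Ima(P) \cap \ker(\chi)$.
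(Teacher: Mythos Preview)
Your proof is correct and essentially the same as the paper's: both hinge on the fact that a nonzero vector in $\ker(\chi)\cap\Ima(P)$ would force $\|(I-\chi)P\|\geq 1$. The paper phrases this as a contrapositive (if $\rank(\chi)<\rank(P)$, dimension counting produces such a vector), while you argue directly that the hypothesis forces $\ker(\chi)\cap\Ima(P)=\{0\}$ and then apply rank-nullity; the content is identical.
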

We apply this lemma in the context of spectral projections for real symmetric matrices.  For a symmetric matrix $A$ having normalized eigenpairs $(\lambda_j, \psi_j)$, $\|\psi_j\| = 1$, and a Borel set $J \subset \mathbb{R}$, we denote by $\chi_{J}(A)$ the spectral projection
\begin{align*}
	\chi_J(A) := \sum_{\lambda_j \in J} \psi_j \psi_j^\top
\end{align*}  
onto the direct sum of those eigenspaces of $A$ with $\lambda_j \in J$.  We have the following fundamental perturbative result relating eigenvectors of a symmetric matrix $A$ having quantitatively separated spectrum to approximations thereof:

\begin{lem}\label{l:approxevect}
	Let $A$ be a real symmetric matrix, and let $E_* \in \bbR$ and $\delta > 0$.  Suppose there exists a nonzero orthogonal projection $P$ such that
	\begin{align*}
		\|(A - E_*)P\| \leq \delta.
	\end{align*}
	Then $\chi_{\overline B_{\delta}(E_*)}(A) \neq 0$, and for any $\rho > \delta$, %
	we have
	\begin{align}\label{eq:approxevect1}
		\left\|(I - \chi_{ B_{\rho}(E_*)}(A))P\right\| \leq \delta/\rho.
	\end{align}
	In particular, for any unit vector $\phi$ in the image of $P$, the spectral renormalization
	\begin{align*}
		\psi := \frac{\chi_{ B_{\rho}(E_*)}(A)\phi}{\|\chi_{ B_{\rho}(E_*)}(A)\phi\|}
	\end{align*}
	is well-approximated by $\phi$:
	\begin{align}\label{eq:approxevect2}
		\left\|\phi - \psi\right\| \leq \sqrt{2}\delta/\rho.
	\end{align}
\end{lem}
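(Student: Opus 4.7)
The approach is to work in an orthonormal eigenbasis $\{\psi_j\}$ of $A$ with eigenvalues $\{\lambda_j\}$. For any unit vector $\phi \in \Ima(P)$, expand $\phi = \sum_j c_j \psi_j$ with $\sum |c_j|^2 = 1$. The hypothesis $\|(A-E_*)P\| \leq \delta$ yields the pointwise bound
\begin{align*}
    \sum_j |c_j|^2 (\lambda_j - E_*)^2 = \|(A-E_*)\phi\|^2 \leq \delta^2.
\end{align*}
Every assertion of the lemma will follow by partitioning this sum according to whether $\lambda_j$ lies inside or outside the relevant ball.

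First, for the nonvanishing claim, pick any unit $\phi \in \Ima(P)$ (which exists because $P \neq 0$). If every $\lambda_j$ with $c_j \neq 0$ satisfied $|\lambda_j - E_*| > \delta$, then the displayed bound would exceed $\delta^2$, a contradiction; thus some eigenvalue of $A$ lies in $\overline{B_\delta(E_*)}$, so $\chi_{\overline{B_\delta(E_*)}}(A) \neq 0$. Next, writing $\chi := \chi_{B_\rho(E_*)}(A)$, the same expansion gives
\begin{align*}
    \rho^2 \|(I-\chi)\phi\|^2 = \rho^2 \sum_{|\lambda_j - E_*| \geq \rho} |c_j|^2 \leq \sum_j |c_j|^2 (\lambda_j - E_*)^2 \leq \delta^2,
\end{align*}
so $\|(I-\chi)\phi\| \leq \delta/\rho$ for every unit $\phi \in \Ima(P)$. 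Taking the operator norm over unit vectors $v$ and setting $w = Pv$ (with $\|w\| \leq 1$), the bound transfers to $\|(I-\chi)P\| \leq \delta/\rho$, proving \eqref{eq:approxevect1}.

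For the approximation statement \eqref{eq:approxevect2}, note that $\|\chi\phi\|^2 = 1 - \|(I-\chi)\phi\|^2 \geq 1 - \delta^2/\rho^2 > 0$ (so $\psi$ is well-defined), and since $\chi$ is a self-adjoint projection,
\begin{align*}
    \langle \phi, \psi \rangle = \frac{\langle \phi, \chi\phi\rangle}{\|\chi\phi\|} = \frac{\|\chi\phi\|^2}{\|\chi\phi\|} = \|\chi\phi\|.
\end{align*}
Therefore $\|\phi - \psi\|^2 = 2(1 - \|\chi\phi\|)$, and the algebraic identity $1 - \|\chi\phi\| = (1 - \|\chi\phi\|^2)/(1 + \|\chi\phi\|) \leq 1 - \|\chi\phi\|^2 \leq \delta^2/\rho^2$ yields the stated bound $\|\phi - \psi\| \leq \sqrt{2}\,\delta/\rho$.

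There is no real obstacle here; the result is a standard spectral-projection perturbation fact. The only subtle point is packaging the final estimate so as to recover the sharp constant $\sqrt{2}$ rather than $2$, which is accomplished by exploiting the projection identity $\langle \phi, \psi\rangle = \|\chi\phi\|$ and factoring $1 - \|\chi\phi\|^2$ before estimating.
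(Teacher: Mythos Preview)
Your proof is correct and follows essentially the same approach as the paper: both arguments use the spectral decomposition to bound $\|(I-\chi)\phi\|^2$ via $(\lambda_j-E_*)^2\ge\rho^2$ on the complementary spectral subspace, and both obtain \eqref{eq:approxevect2} from the identity $\|\phi-\psi\|^2=2(1-\|\chi\phi\|)$ together with $1-\|\chi\phi\|\le 1-\|\chi\phi\|^2$. Your version is slightly more explicit in passing from the per-vector bound to the operator-norm bound \eqref{eq:approxevect1}, but the substance is identical.
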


For a Borel set $J \subset \mathbb{R}$, we denote the \textit{partial resolvent}
\begin{align*}
	R_\perp(E;A,J) := \sum_{\lambda_j \notin J} \frac{\chi_{\{\lambda_j\}}(A)}{\lambda_j - E}
\end{align*}
and sometimes abbreviate the special case
\begin{align*}
	R_\perp(\lambda_j;A) := \sum_{\lambda_k \neq \lambda_j} \frac{\chi_{\{\lambda_k\}}(A)}{\lambda_k - \lambda_j}.
\end{align*}
Note that, by definition,
\begin{align}\label{eq:projdecomp}
	R_\perp(E;A,J)(A - E) = I - \chi_{J}(A)
\end{align}
is the spectral projection off of the eigenspaces of $A$ associated to $\lambda_j \in J$.  Furthermore, one has that
\begin{align*}
	\|R_\perp(\lambda_j;A)\| &= \frac{1}{\dist(\lambda_j, \spec A \setminus \{\lambda_j\})}.
\end{align*}

The partial resolvents $R_\perp$ are of great importance in computing derivatives of eigenvectors for smooth one-parameter families of symmetric matrices:

\begin{lem}[Feynman formulae]\label{l:feynman}
	Let $A(\theta)$ denote a one-parameter family of symmetric real matrices, and suppose that $A$ is $C^2$ at $\theta_*$, and that $(E_*,\psi_*)$ is a simple eigenpair of $A(\theta_*)$.   Then there is a family $(\mathbf{E},\psi)(\theta)$ of locally simple eigenpairs in a neighborhood of $\theta_*$, normalized so that $\|\psi(\theta)\| = 1$, which is twice differentiable at $\theta_*$.  Furthermore, one has
	\begin{align}\label{eq:feynman1}
		\partial_\theta \mathbf{E}(\theta_*) &= \langle \psi, A'(\theta_*) \psi \rangle,
	\end{align}
	\begin{align}\label{eq:feynmanEvec}
		\partial_\theta \psi(\theta_*) &= -R_\perp(A,E_*)A'(\theta_*)\psi_*
	\end{align}
	and
	\begin{align}\label{eq:feynman2}
		\partial_\theta^2 \mathbf{E}(\theta_*) &= \langle \psi, A''(\theta_*)\psi \rangle + 2\langle \partial_\theta \psi, A'(\theta_*)\psi\rangle.
	\end{align}
	Finally, one has that
	\begin{align}\label{eq:orthderiv}
		\langle \partial_\theta \psi , \psi \rangle = 0.
	\end{align}
\end{lem}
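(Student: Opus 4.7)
The plan is to first construct a local $C^2$ eigenpair family $(\mathbf{E},\psi)(\theta)$ in a neighborhood of $\theta_*$, and then read off each of the four identities by differentiating the eigenvalue equation $A(\theta)\psi(\theta) = \mathbf{E}(\theta)\psi(\theta)$ and the normalization $\|\psi(\theta)\|^2 = 1$ at $\theta_*$.

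For $\mathbf{E}$, I would apply the implicit function theorem to the characteristic polynomial $p(\theta,\lambda) := \det(A(\theta) - \lambda I)$: simplicity of $E_*$ gives $\partial_\lambda p(\theta_*, E_*) \neq 0$, so the $C^2$ regularity of $A$ at $\theta_*$ transfers to a locally simple $C^2$ eigenvalue branch $\mathbf{E}(\theta)$ with $\mathbf{E}(\theta_*) = E_*$. For the eigenvector, I would invoke the Riesz projector $P(\theta) := \frac{1}{2\pi i}\oint_{|z - E_*| = r}(zI - A(\theta))^{-1}\,dz$, with $r > 0$ small enough that the contour encloses only $\mathbf{E}(\theta)$ for $\theta$ near $\theta_*$. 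Then $P(\theta)$ inherits $C^2$ regularity from $A$ and has rank one, and since $P(\theta_*)\psi_* = \psi_*$, the formula $\psi(\theta) := P(\theta)\psi_*/\|P(\theta)\psi_*\|$ produces a $C^2$ unit eigenvector near $\theta_*$.

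The identities then follow by direct differentiation. Identity \eqref{eq:orthderiv} comes straight out of differentiating $\|\psi\|^2 = 1$ at $\theta_*$. Differentiating the eigenvalue equation at $\theta_*$ yields $A'(\theta_*)\psi_* + (A(\theta_*) - E_*)\partial_\theta\psi(\theta_*) = \partial_\theta\mathbf{E}(\theta_*)\psi_*$; pairing with $\psi_*$ and using self-adjointness of $A(\theta_*)$ to annihilate the middle term gives \eqref{eq:feynman1}. Applying the partial resolvent $R_\perp(E_*; A(\theta_*))$ to the same differentiated equation, using \eqref{eq:projdecomp} together with \eqref{eq:orthderiv} to collapse the left side to $\partial_\theta\psi(\theta_*)$, and using the fact that $\psi_*$ lies in the kernel of $R_\perp(E_*; A(\theta_*))$ to kill the $\partial_\theta\mathbf{E}(\theta_*)\psi_*$ term on the right, yields \eqref{eq:feynmanEvec}. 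Finally, differentiating \eqref{eq:feynman1} once more and using symmetry of $A'(\theta)$ to identify the two cross terms gives \eqref{eq:feynman2}.

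The only step with any real content is the construction of the smooth local eigenpair: globally, eigenvalues of a self-adjoint family can cross and eigenvectors can fail to extend across such crossings, but the simplicity of $E_*$ localizes the problem so that the implicit function theorem and Riesz projectors apply cleanly. Once that family is in hand, the four identities are elementary consequences of differentiating the eigenvalue equation, and the key structural fact driving \eqref{eq:feynmanEvec} is precisely \eqref{eq:projdecomp} — that $R_\perp(E_*;A(\theta_*))$ is exactly the inverse of $A(\theta_*) - E_*$ on the orthogonal complement of $\psi_*$.
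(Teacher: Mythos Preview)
Your proof is correct, and the derivation of the four identities by differentiating the eigenvalue equation and the normalization is essentially the same as the paper's. The only genuine difference is in how the smooth local eigenpair is constructed: the paper applies the implicit function theorem once to the joint map
\[
F(E,\phi,\theta) = \begin{bmatrix} (A(\theta)-E)\phi \\ \langle\phi,\phi\rangle - 1 \end{bmatrix},
\]
checking via a Schur complement that the Jacobian in $(E,\phi)$ is nonsingular exactly when $E_*$ is simple. Your route instead separates the two pieces --- implicit function theorem on the characteristic polynomial for $\mathbf{E}(\theta)$, and a Riesz contour projector for $\psi(\theta)$. Both are standard; the paper's version is slightly more economical (one application of IFT handles eigenvalue, eigenvector, and normalization simultaneously), while your Riesz-projector construction makes the $C^2$ dependence of the spectral projection on $\theta$ more explicit and is perhaps closer to how one would argue in infinite dimensions. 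A minor difference in bookkeeping: you obtain \eqref{eq:orthderiv} first by differentiating $\|\psi\|^2=1$ and then use it to derive \eqref{eq:feynmanEvec}, whereas the paper derives \eqref{eq:feynmanEvec} first (by uniqueness of the implicit solution) and reads off \eqref{eq:orthderiv} as a consequence of $R_\perp$ mapping into $\psi_*^\perp$.
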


\subsubsection{Spectral analysis of discrete Schr\"odinger operators}

We recall some classical lemmas regarding eigenfunctions of discrete Schr\"odinger operators.  The first, which is sometimes referred to as the ``Poisson formula," relates the value of an eigenfunction at a particular site $n$ to Green's functions of Dirichlet restrictions of $H$:

\begin{lem}[Poisson Formula]\label{l:poisson}
	Let $\psi \in \mathbb{C}^\mathbb{Z}$ satisfy the formal Schr\"odinger difference equation $H\psi = E\psi$, and let $\Lambda = [a,b] \subset \mathbb{Z}$ be an interval.  Then%
	\begin{align}\label{eq:poisson}
		\psi(n) = \varepsilon R^\Lambda_{\theta,E}(n,a)\psi(a-1) + \varepsilon R^{\Lambda}_{\theta,E}(n,b)\psi(b+1), \quad n \in \Lambda
	\end{align}
	provided $E \notin \spec(H^\Lambda)$.
	
	The same relation holds if we replace $H$ with $H^{\hat\Lambda}$ for some interval $\hat\Lambda = [\hat a, \hat b] \supsetneq \Lambda$ and let $\psi \in \mathbb C^{|\hat\Lambda|}$ satisfy $H^{\hat\Lambda}\psi=E\psi$. In this case, we use the convention that $\psi(\hat a-1) = \psi(\hat b + 1) = 0$.
\end{lem}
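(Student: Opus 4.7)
The plan is to treat the restriction $\psi|_\Lambda$ as the solution to an inhomogeneous Dirichlet problem on $\Lambda$, where the inhomogeneity is a boundary defect supported on the two endpoints $\{a,b\}$, and then invert $H^\Lambda - E$ against this defect using the resolvent $R^\Lambda(E)$. No substantive obstacle arises; the only care required is to track what happens at the two endpoints, where the tridiagonal matrix $H^\Lambda$ ceases to agree with $H$.

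First I would compute $(H^\Lambda - E)\psi|_\Lambda$ pointwise. For any interior site $n \in (a,b) \cap \mathbb{Z}$, the nearest-neighbor action of $H^\Lambda$ on $\psi|_\Lambda$ reads off exactly the same three entries of $\psi$ as the action of $H$, so $(H^\Lambda\psi|_\Lambda)(n) = (H\psi)(n) = E\psi(n)$ by hypothesis. At the left endpoint $n = a$, the Dirichlet convention in the definition of $H^\Lambda$ effectively replaces $\psi(a-1)$ with $0$, producing the defect
\[
(H^\Lambda\psi|_\Lambda)(a) - E\psi(a) = -\varepsilon\psi(a-1);
\]
symmetrically at $n = b$ one obtains $-\varepsilon\psi(b+1)$. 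Assembling these computations yields the identity
\[
(H^\Lambda - E)\psi|_\Lambda = -\varepsilon\psi(a-1)\,\delta_a - \varepsilon\psi(b+1)\,\delta_b.
\]
Since $E \notin \spec(H^\Lambda)$, the resolvent $R^\Lambda(E)$ is defined, and applying it to both sides and then pairing with $\delta_n$ produces \eqref{eq:poisson} (with signs absorbed into the convention for $R^\Lambda(E)$).

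For the second assertion, with $H^{\hat\Lambda}$ in place of $H$ and $\Lambda = [a,b] \subsetneq \hat\Lambda = [\hat a,\hat b]$, the argument is structurally identical: only nearest-neighbor couplings enter the boundary defect, so the same formula in terms of $\psi(a-1)$ and $\psi(b+1)$ results. The stated convention $\psi(\hat a - 1) = \psi(\hat b + 1) = 0$ is then immediate, since these out-of-$\hat\Lambda$ values are precisely the Dirichlet boundary values already enforced by the definition of $H^{\hat\Lambda}$; thus whenever $a = \hat a$ or $b = \hat b$ the corresponding Poisson term simply vanishes, and the formula remains correct. The identity is algebraic, so there is no analytic hurdle; the mild subtlety lies only in carefully handling the edge case in which $\Lambda$ meets the boundary of $\hat\Lambda$.
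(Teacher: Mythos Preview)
Your proposal is correct and follows exactly the paper's own argument: compute the boundary defect $(H^\Lambda - E)\psi|_\Lambda$ supported on $\{a,b\}$ and invert via $R^\Lambda(E)$. The paper's proof is the single-line version of what you wrote (and shares the same sign ambiguity you flagged).
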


The final result that we recall is a classical eigenvalue separation lemma (cf. e.g. \cite{FroSpeWit90, KirSim85}), which quantifiably separates the eigenvalues of simultaneously localized eigenvectors of a discrete Schr\"odinger operator:
\begin{lem}\label{l:evalsep}
	Let $\snext\psi_1$, $\snext\psi_2$ be distinct eigenfunctions for $H^{\snext\Lambda}$ with corresponding eigenvalues $\snext{E}_1$, $\snext{E}_2$.  Suppose there is an interval ${\Lambda} \subset \snext\Lambda$ supporting half of the mass of both $\snext\psi_1$ and $\snext\psi_2$, i.e., such that $\|\snext\psi_j\|_{{\Lambda}}^2 \geq \frac{1}{2}\|\snext\psi_j\|_{\snext\Lambda}^2$, $j = 1,2$.  Then
	\begin{align*}
		|\snext{E}_1 - \snext{E}_2| \geq \frac{\varepsilon}{6|{\Lambda}|^2}\left(\left(\frac{m_{{\Lambda}}}{\varepsilon}\right)^2 + 2 \right)^{-|{\Lambda}|} > \left(\frac{\varepsilon}{3}\right)^{3|{\Lambda}|}
	\end{align*}
	where $m_{{\Lambda}} := \max_{k \in {\Lambda}, j =1,2}|v(k) - \snext E_j|^2$.
	
	In particular, if $2\rho < (\varepsilon/3)^{3|\Lambda|}$, $|\snext{E}_1 - E_*| < \rho/2$, and $E_*$ is well-separated from any other eigenvalues of $H^{\snext\Lambda}$ (i.e., $\|R_\perp(E_*;H^{\snext\Lambda},\{\snext{E}_1,\snext{E}_2\})\| \leq \frac23\rho^{-1}$), we have the partial resolvent bound
	\begin{align}
		\|R_\perp(E; H^{\snext\Lambda}, \{\snext{E}_1\})\| \leq \rho^{-1}, \quad E \in B_{\rho/2}(E_*).
	\end{align}
\end{lem}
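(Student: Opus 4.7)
The statement is a classical eigenvalue separation estimate for one-dimensional Jacobi operators under a localization hypothesis; I would follow the strategy of Kirsch--Simon \cite{KirSim85} and Fr\"ohlich--Spencer--Wittwer \cite{FroSpeWit90}, combining a Wronskian identity with transfer-matrix propagation. The starting point is the discrete Wronskian $W_n := \snext\psi_1(n+1)\snext\psi_2(n) - \snext\psi_1(n)\snext\psi_2(n+1)$; a direct computation from the three-term recursions gives $\varepsilon(W_n - W_{n-1}) = (\snext E_1 - \snext E_2)\snext\psi_1(n)\snext\psi_2(n)$, which telescoped over $\Lambda = [a,b]$ yields
\[
(\snext E_1 - \snext E_2)\langle \snext\psi_1, \snext\psi_2\rangle_\Lambda = \varepsilon\big(W_b - W_{a-1}\big).
\]
Applied on all of $\snext\Lambda$ with Dirichlet boundary conditions $W_{\snext a - 1} = W_{\snext b} = 0$, the same identity recovers $\langle \snext\psi_1, \snext\psi_2\rangle_{\snext\Lambda} = 0$, so Cauchy--Schwarz and the half-mass hypothesis give the upper bound $|\langle \snext\psi_1, \snext\psi_2\rangle_\Lambda| = |\langle \snext\psi_1, \snext\psi_2\rangle_{\snext\Lambda\setminus\Lambda}| \leq \tfrac{1}{2}$.

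Next I would control the boundary values via the transfer matrix formalism. Writing $\Psi_j(n) := (\snext\psi_j(n-1), \snext\psi_j(n))^\top$ with $\Psi_j(n+1) = M_n(\snext E_j)\Psi_j(n)$, the unimodular $M_n$ satisfy $\|M_n\|^2 \leq (m_\Lambda/\varepsilon)^2 + 2$ on $\Lambda$. A pigeonhole argument on the half-mass hypothesis produces an interior site $m_0 \in \Lambda$ at which $\|\Psi_j(m_0)\|^2 \leq 2/|\Lambda|$; propagating outward by at most $|\Lambda|$ steps (and using $\|M_n^{-1}\| = \|M_n\|$) then controls
\[
|\snext\psi_j(a-1)|^2 + |\snext\psi_j(b+1)|^2 \leq C\big((m_\Lambda/\varepsilon)^2 + 2\big)^{|\Lambda|}/|\Lambda|.
\]
Via the identity $(H^\Lambda - \snext E_j)\snext\psi_j|_\Lambda = -\varepsilon(\snext\psi_j(a-1)\delta_a + \snext\psi_j(b+1)\delta_b)$ this realizes each $\snext\psi_j|_\Lambda$ as a quantitatively controlled approximate eigenvector of $H^\Lambda$.

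The main obstacle is extracting an actual lower bound on $|\snext E_1 - \snext E_2|$ from these ingredients: the Wronskian identity is symmetric in that both sides can vanish simultaneously, and a direct application of Lemma \ref{l:approxevect} to $H^\Lambda$ only localizes each $\snext\psi_j|_\Lambda$ in a spectral ball around $\snext E_j$ without a priori forcing those balls to be distinct. I expect the cleanest resolution to come via a generalized eigenvalue analysis on the $2 \times 2$ matrices $A_{ij} := \langle \snext\psi_i|_\Lambda, H^\Lambda \snext\psi_j|_\Lambda\rangle$ and $G_{ij} := \langle \snext\psi_i|_\Lambda, \snext\psi_j|_\Lambda\rangle$, where the self-adjointness-based expressions for the off-diagonal entries in terms of the $\snext E_j$ and boundary Wronskians, combined with the transfer-matrix bound above and a min--max argument, yield the threshold $\frac{\varepsilon}{6|\Lambda|^2}((m_\Lambda/\varepsilon)^2+2)^{-|\Lambda|}$.

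Finally, the ``in particular'' statement is a short spectral-decomposition calculation: the main bound combined with $|\snext E_1 - E_*| < \rho/2$ gives $|\snext E_2 - E| > \rho$ for every $E \in B_{\rho/2}(E_*)$, and the separation hypothesis gives $\dist(E, \spec(H^{\snext\Lambda})\setminus\{\snext E_1,\snext E_2\}) > \rho$ on the same ball; writing $R_\perp(E;H^{\snext\Lambda},\{\snext E_1\})$ as the sum of $R_\perp(E;H^{\snext\Lambda},\{\snext E_1,\snext E_2\})$ and the $\{\snext E_2\}$-eigenspace contribution --- two operators with spectrally disjoint ranges --- its operator norm equals the maximum of the two summands, both bounded by $\rho^{-1}$.
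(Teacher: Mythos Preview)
Your ingredients are right, but you have correctly identified---and not closed---the central gap. The Wronskian identity you wrote is symmetric, so the bound $|\langle \snext\psi_1,\snext\psi_2\rangle_\Lambda|\leq \tfrac12$ points the wrong way for a lower bound on $|\snext E_1-\snext E_2|$; your proposed $2\times 2$ generalized-eigenvalue route is only sketched (``I expect\dots''), and it is not at all clear the min--max on $(A,G)$ produces the stated constant, since $G$ can be nearly singular (e.g.\ $G_{11}=G_{22}=\tfrac12$, $|G_{12}|$ close to $\tfrac12$). So as written the argument is incomplete.

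The paper's resolution is different and avoids the obstacle entirely: it works with the \emph{difference} $\snext\psi_1-\snext\psi_2$ on $\Lambda$, bounded from above and below. Normalize the two-component initial data $\vec\psi_k=(\snext\psi_k(c),\snext\psi_k(c-1))^\top$ at the left endpoint $c$ of $\Lambda$ to be unit vectors with $\langle\vec\psi_1,\vec\psi_2\rangle\geq 0$; then $|W(\snext\psi_1,\snext\psi_2)(c)|=\sin\theta$ for the angle $\theta$ between them, so $\tfrac12\|\vec\psi_1-\vec\psi_2\|^2\leq |W(c)|$. Green's identity over $[c,\sup\snext\Lambda]$ (using the Dirichlet condition at the right endpoint of $\snext\Lambda$, so only $W(c)$ survives) gives $|W(c)|\leq \varepsilon^{-1}|\snext E_1-\snext E_2|\,\|\snext\psi_1\|\|\snext\psi_2\|$. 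A transfer-matrix telescoping estimate (together with the bound on $\|\vec\psi_1-\vec\psi_2\|$) then bounds $\|\snext\psi_1-\snext\psi_2\|_\Lambda$ from above by a multiple of $(|\snext E_1-\snext E_2|/\varepsilon)^{1/2}\|\snext\psi_1\|^{1/2}\|\snext\psi_2\|^{1/2}$. For the lower bound, orthogonality on $\snext\Lambda$, the half-mass hypothesis, and AM--GM give $\|\snext\psi_1-\snext\psi_2\|_\Lambda^2\geq \|\snext\psi_1\|\|\snext\psi_2\|$. Comparing the two bounds and cancelling $\|\snext\psi_1\|\|\snext\psi_2\|$ yields the stated separation. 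This replaces your pigeonhole/approximate-eigenvector step by a direct difference estimate anchored at a normalized base point---that normalization is the idea you are missing. Your treatment of the ``in particular'' clause is fine.
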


\subsection{Acknowledgments}

The authors would like to express their deep and sincere gratitude to Michael Goldstein and Wilhelm Schlag for their guidance and numerous helpful conversations throughout the duration of this project.
The work of Y.F. was supported in part by a fellowship award through the National Defense Science and Engineering Graduate (NDSEG) Fellowship Program, sponsored by the Air Force Research Laboratory (AFRL), the Office of Naval Research (ONR) and the Army Research Office (ARO).
\newpage

\section{Simple resonance}\label{sec:SR}

In this section, we study the situation of simple resonance, where an interval $\snext\Lambda \subset \mathbb{Z}$ contains precisely one resonant ``core" subinterval ${\Lambda} \subset \snext\Lambda$ such that $H^{\Lambda}(\theta_*)$ has a unique eigenvalue near a fixed $E_*$.  Under smallness and stability assumptions on the boundary values of the corresponding eigenvector, alongside off-diagonal Green's function decay on the maximal connected components of $\snext\Lambda \setminus {\Lambda}$ (the ``shoulders'' of $\snext\Lambda$), we prove that $H^{\snext\Lambda}$ has a unique resonant eigenpair child which is well-approximated by the parent eigenpair of $H^{{\Lambda}}$.

Fix $v \in C^2(\mathbb{T}, [-1,1])$ with $\|\partial_\theta v\|_\infty+\|\partial_\theta^2 v\|_\infty \leq D_0$, $(\theta_*, E_*) \in \mathbb{T} \times [-2,2]$, and $0 < \varepsilon < 1/7$.  Let $\snext\Lambda \subset \mathbb{Z}$ be an interval with subinterval ``core'' ${\Lambda}$ of length ${L} = |{\Lambda}|$.  We denote the left and right shoulders of $\snext\Lambda$ by ${\Lambda}_{l/r}$, respectively, and denote the corresponding partition of $\snext\Lambda = {\Lambda}_l \cup {\Lambda} \cup {\Lambda}_r$ by $\mathcal{P}$.  Fix constants $\rho, \delta, \gamma, \ell > 0$ satisfying the following relations:
\begin{align}
\label{eq:eps1} \delta < \rho^3/2 &< \rho/16, \\
\label{eq:gam1} \log 7 < \gamma &\leq |\log\varepsilon|
\end{align}
\begin{assm}\label{as:SR}
Suppose the following hold for $|\theta - \theta_*| < \rho/8D_0$:
\begin{enumerate}
\item (Simple resonance) The operator $H^{{\Lambda}}(\theta)$ has a Rellich pair $({\mathbf{E}},{\psi})(\theta)$ such that 
${\mathbf E}(\theta_*) = E_*$.
\item (Eigenvector decay) The unit eigenvector ${\psi}(\theta)$ of $H^{{\Lambda}}(\theta)$ has $\mathcal{P}$-boundary values bounded by $\delta/\varepsilon$; i.e. one has
\begin{align}\label{eq:SRloc}
\|\Gamma^{\snext\Lambda}_{\mathcal{P}} {\psi}\| \leq 2\delta.
\end{align}
Furthermore, for any Rellich pair $(\snext E,\snext\psi)(\theta)$ of $H^{\snext\Lambda}(\theta)$ with $|\snext E-E_*| < \frac32\rho$, the unit eigenvector $\snext\psi$ also has $\mathcal{P}$-boundary values no larger than $\delta/\varepsilon$; i.e. one has
\begin{align}\label{eq:SRloc2}
    \|\Gamma_\mathcal{P}^{\snext\Lambda} \snext\psi\| \leq 4\delta.
\end{align}
\item (Green's function decay on the shoulders of $\Lambda$)
For $m, n \in {\Lambda}_{l/r}$ with $|m-n| \geq \ell$, one has
\begin{align}\label{eq:SRdec}
\log|R^{{\Lambda}_{l/r}}_{\theta,E}(m,n)| \leq -\gamma|m-n|
\end{align}
for $|E-E_*| < \frac32\rho$.
\item (Eigenvalue separation) ${\mathbf{E}}(\theta)$ is the unique eigenvalue of $H^{\snext\Lambda}_\mathcal{P}(\theta)$ in $B_{7\rho/4}(E_*)$
\begin{align}\label{eq:SRsep2}
\chi_{\{{\mathbf{E}}(\theta)\}}(H^{\snext\Lambda}_\mathcal{P}(\theta)) &= \chi_{B_{7\rho/4}(E_*)}(H^{\snext\Lambda}_\mathcal{P}(\theta)).
\end{align}
In particular, we have the partial resolvent bound
\begin{align}
\label{eq:SRsep}
\|R_\perp(E; H^{\snext\Lambda}_\mathcal{P}(\theta), \{{\mathbf{E}}(\theta)\})\| \leq 4\rho^{-1}, \quad E \in B_{3\rho/2}(E_*).
\end{align}
\item (Eigenvector stability) The unit eigenvector ${\psi}(\theta)$ of $H^{{\Lambda}}(\theta)$ has stably small $\mathcal{P}$-boundary values:
\begin{align}\label{eq:SRevecstab}
    \|\Gamma_\mathcal{P}^{\snext\Lambda}(\partial_\theta {\psi})\| \leq 25D_0\delta\rho^{-1}.
\end{align}
\end{enumerate}
\end{assm}
\noindent The remainder of this section proceeds under Assumption \ref{as:SR}.

\subsection{The resonant eigenpair of $H^{\snext\Lambda}$}

Under Assumption \ref{as:SR}, we first show the existence of a unique eigenpair of $H^\Lambda$ which is localized near near ${\Lambda}$.  Specifically, denoting the localization region $\Lambda_{\textnormal{loc}}$ and modified shoulders $\snext\Lambda_{l/r}$ by
\begin{align*}
    \Lambda_{\text{loc}} &:= [\inf {\Lambda} - \ell, \sup {\Lambda} + \ell] \cap \snext\Lambda, \\
    \snext\Lambda_{l/r} &:= {\Lambda}_{l/r} \setminus \Lambda_{\text{loc}},
\end{align*}
we have the following:

\begin{prop}[Simple resonant eigenpair]\label{pr:AL1}
Under Assumption \ref{as:SR}, we have:
\begin{enumerate}
\item Any unit eigenvector $\snext\psi$ of $H^{\snext\Lambda}$ with corresponding eigenvalue $\snext{E}$ with $|\snext{E} - E_*| < \frac32\rho$ is Anderson localized on $\Lambda_{\textnormal{loc}}$:
\begin{align*}
\|\snext\psi\|_{\Lambda_{\textnormal{loc}}} &\geq 2/3, \\
\log|\snext\psi(j)| &\leq -\gamma \dist(j, {\Lambda}), \quad j \in \snext\Lambda_{l/r}
\end{align*}

\item There exists a Rellich pair $(\snext{\mathbf{E}}, \snext\psi)$ for $H^{\snext\Lambda}$ with $\snext{\mathbf{E}}: B_{\rho/8D_0(\theta_*)} \to B_{\rho/4}(E_*)$ and eigenvector $\snext\psi$ satisfying
\begin{align} \label{eq:SReval}
|\snext{\mathbf E} - \mathbf E| &\leq 2\delta, \\
\label{eq:SRevec}
\snext\psi &= {\psi} + \phi
\end{align}
where $\|\phi\| \leq %
3\delta/\rho$.

\item Uniformly for $\theta$ with $|\theta - \theta_*| < \rho/8D_0$ and $|E - E_*| < \frac54\rho$, there are no other Rellich functions of $H^{\snext\Lambda}$ within $\rho/4$ of $\snext{\mathbf{E}}$:
\begin{align}\label{eq:rperpbd}
\left\|R_\perp\left(E;H^{\snext\Lambda}(\theta),\{\snext{\mathbf{E}}(\theta)\}\right)\right\| \leq 4\rho^{-1}.
\end{align}
In particular, this applies to all $|E-\snext{\mathbf E}(\theta)| \leq \rho$.
\end{enumerate}
\end{prop}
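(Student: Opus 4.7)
The plan is to handle (1) via the Poisson formula and shoulder Green's-function decay, and (2)--(3) together via two applications of the approximate-eigenvector machinery of Lemma \ref{l:approxevect}: a forward application to ${\psi}$ producing $(\snext{\mathbf E},\snext\psi)$, and a reverse application forcing any other candidate resonant eigenvector to project onto ${\psi}$, so that orthogonality yields the uniqueness underlying \eqref{eq:rperpbd}.

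For (1), fix any eigenpair $(\snext E,\snext\psi)$ of $H^{\snext\Lambda}$ with $\snext E \in B_{3\rho/2}(E_*)$. On the left shoulder ${\Lambda}_l = [a_l,b_l]$, the identity $a_l = \min\snext\Lambda$ together with the Dirichlet condition $\snext\psi(a_l - 1) = 0$ kills one term of the Poisson formula (Lemma \ref{l:poisson}), leaving
\[
\snext\psi(j) = \varepsilon R^{{\Lambda}_l}_{\theta, \snext E}(j, b_l)\, \snext\psi(b_l+1), \qquad j \in {\Lambda}_l,
\]
which applies since hypothesis (iii) implies $\snext E \notin \spec H^{{\Lambda}_l}$. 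For $j \in \snext\Lambda_l$ the condition $|j - b_l| \geq \ell$ and the shoulder decay \eqref{eq:SRdec} give $|\snext\psi(j)| \leq \varepsilon e^{-\gamma|j-b_l|} \leq e^{-\gamma\,\dist(j,{\Lambda})}$ (using $\gamma \leq |\log\varepsilon|$), establishing the pointwise decay claim; summing the resulting geometric series with $\gamma \geq \log 7$ and $\varepsilon < 1/7$ and doubling for the right shoulder forces $\|\snext\psi\|_{\Lambda_{\textnormal{loc}}} \geq 2/3$.

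Turning to (2), since ${\psi}$ is an honest eigenvector of $H^{\snext\Lambda}_\mathcal{P}$ with eigenvalue ${\mathbf E}$, hypothesis (ii) yields $\|(H^{\snext\Lambda} - {\mathbf E}){\psi}\| = \|\Gamma_\mathcal{P}^{\snext\Lambda} {\psi}\| \leq 2\delta$. Lemma \ref{l:approxevect} with $A = H^{\snext\Lambda}$, target ${\mathbf E}$, and rank-one projection $P = {\psi}{\psi}^\top$ then produces an eigenvalue $\snext{\mathbf E}$ of $H^{\snext\Lambda}$ within $2\delta$ of ${\mathbf E}$, yielding \eqref{eq:SReval}. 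The Feynman formula \eqref{eq:feynman1} together with $\|\partial_\theta v\|_\infty \leq D_0$ bounds $|{\mathbf E}(\theta) - E_*| \leq D_0|\theta - \theta_*| \leq \rho/8$, and combining with $|\snext{\mathbf E} - {\mathbf E}| \leq 2\delta < \rho/16$ places $\snext{\mathbf E}$ in $B_{\rho/4}(E_*)$.

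For (3), I will prove the sharper statement $\spec(H^{\snext\Lambda}) \cap B_{3\rho/2}(E_*) = \{\snext{\mathbf E}\}$, from which \eqref{eq:rperpbd} follows because $\dist(B_{5\rho/4}(E_*), \mathbb{R} \setminus B_{3\rho/2}(E_*)) = \rho/4$. Let $(\snext E', \snext\psi')$ be any eigenpair of $H^{\snext\Lambda}$ with $\snext E' \in B_{3\rho/2}(E_*)$; hypothesis (ii) gives $\|(H^{\snext\Lambda}_\mathcal{P} - \snext E')\snext\psi'\| = \|\Gamma_\mathcal{P}^{\snext\Lambda}\snext\psi'\| \leq 4\delta$, and a first application of Lemma \ref{l:approxevect} combined with the isolation \eqref{eq:SRsep2} (using $4\delta < \rho/4$) forces $|\snext E' - {\mathbf E}| \leq 4\delta$; hence $\|(H^{\snext\Lambda}_\mathcal{P} - {\mathbf E})\snext\psi'\| \leq 8\delta$. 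A second application centered at ${\mathbf E}$ with $\rho' = \rho$---valid because \eqref{eq:SRsep2} and $|{\mathbf E} - E_*| \leq \rho/8$ ensure $B_\rho({\mathbf E})$ contains only ${\mathbf E}$ in $\spec H^{\snext\Lambda}_\mathcal{P}$---then yields $\|\snext\psi' - \tilde\psi\| \leq 8\sqrt{2}\,\delta/\rho$ for some $\tilde\psi \in \{\pm{\psi}\}$. Two orthogonal such eigenvectors $\snext\psi_1,\snext\psi_2$ would then satisfy $\sqrt 2 = \|\snext\psi_1 \pm \snext\psi_2\| \leq 16\sqrt 2\,\delta/\rho$ (with sign chosen so the two $\tilde\psi_j$ cancel), contradicting $\delta < \rho/16$. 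With uniqueness in hand, applying \eqref{eq:approxevect2} to the forward construction with $\rho' = \rho$ refines the existence statement to $\|\snext\psi - {\psi}\| \leq 2\sqrt 2\,\delta/\rho \leq 3\delta/\rho$, and Lemma \ref{l:feynman} promotes $\snext{\mathbf E}$ to a $C^2$ Rellich function on $B_{\rho/8D_0}(\theta_*)$. The main obstacle is threading the constant $16\sqrt 2$ in the uniqueness contradiction so that the relation $\delta < \rho/16$ from \eqref{eq:eps1} is just barely sufficient to close the argument.
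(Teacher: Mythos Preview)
Your proof is correct and follows the paper's argument for items (1) and (2). The one genuine difference is in establishing uniqueness for item (3): the paper invokes Lemma~\ref{l:PQclose} directly, setting $P = \chi_{B_{3\rho/2}(E_*)}(H^{\snext\Lambda})$ and $\chi = \chi_{B_{3\rho/2}(E_*)}(H^{\snext\Lambda}_\mathcal{P})$, computing $\|(I-\chi)P\| \leq 4\rho^{-1}\|\Gamma_\mathcal{P}^{\snext\Lambda}P\| \leq 16\delta\rho^{-1} < 1$ via the partial resolvent bound \eqref{eq:SRsep}, and concluding $\rank(P) \leq \rank(\chi) = 1$ by rank--nullity. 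Your route instead pushes each candidate eigenvector through Lemma~\ref{l:approxevect} twice to pin it near $\pm\psi$ and then derives a contradiction from orthogonality. Both close with exactly the same numerical constraint $16\delta/\rho < 1$, so neither is quantitatively sharper; the paper's version is a one-line rank argument that recycles verbatim in the double-resonant case (Proposition~\ref{pr:AL2}, where $\rank(\chi)=2$), while your version is more hands-on but would require a slightly longer orthogonality argument to generalize there.
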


\begin{proof}
Suppose $(\snext E, \snext\psi)$ is a Rellich pair for $H^{\snext\Lambda}$ such that $|\snext E - E_*| < \frac32\rho$ normalized so that $\|\snext\psi\| = 1$.  Then, denoting ${\Lambda}_l = [{a}_l, {b}_l]$, we have for $j \in \snext{\Lambda}_l$
\begin{align*}
\log|\snext\psi(j)| &= \log|\varepsilon R_{\theta, \snext E}^{{\Lambda}_l}({b}_l, j) \snext\psi({b}_l + 1)|\\
&\leq \log|\varepsilon| -\gamma|j-{b}_l| \\
&\leq -\gamma(|j-{b}_l|+1) \\
&= -\gamma \dist(j, {\Lambda})
\end{align*}
by the Poisson formula \eqref{eq:poisson} and \eqref{eq:SRdec}.  The case $j \in \snext\Lambda_r$ is entirely analogous.

Using this decay, we have that
\begin{align*}
    \|\snext\psi\|_{\snext{\Lambda}_{l/r}} &\leq \sum_{j\in \snext{\Lambda}_{l/r}} e^{-\gamma \dist(j,{\Lambda})} \\
    &\leq \frac{e^{-\gamma}}{1-e^{-\gamma}} < \frac{1}{6}.
\end{align*}
Consequently, we have that
\begin{align*}
    \|\snext\psi\|_{\Lambda_{\textnormal{loc}}} \geq 2/3
\end{align*}
for any resonant eigenvector $\snext\psi$.

Denoting by $P = \chi_{B_{3\rho/2}(E_*)}(H^{\snext\Lambda})$ and by $\chi = \chi_{B_{3\rho/2}(E_*)}(H^{\snext\Lambda}_{\mathcal{P}})$, suppose $(\snext{\mathbf E},\snext\psi)$ is a resonant eigenpair of $H^{\snext\Lambda}$ with $|\snext{\mathbf E} - E_*| < 3\rho/2$.  Then we have
\begin{align*}
    \|(I - \chi)P\| &= \|R_\perp(\mathbf E; H^{\snext\Lambda}_\mathcal{P}, {\mathbf E})(H^{\snext\Lambda}_\mathcal{P} - \snext{\mathbf E})P\| \\
    &\leq 4\rho^{-1}\|\Gamma^{\snext\Lambda}_\mathcal{P} P\| \leq 16\delta\rho^{-1}.
\end{align*}
Lemma \ref{l:PQclose} and assumption \eqref{eq:SRsep2} imply $H^{\snext\Lambda}$ has at most one such resonant eigenvector. %

The existence of a Rellich pair $(\snext{\mathbf{E}}, \snext\psi)$ with $\snext{\mathbf{E}} : B_{\rho/8D_0}(\theta_*) \to B_{\rho/4}(E_*)$ and satisfying \eqref{eq:SReval} follows from Lemma \ref{l:approxevect} after noting that
\begin{align}\label{eq:pertevalpf}
\|(H^{\snext\Lambda}(\theta) - {\mathbf{E}}(\theta)){\chi}\| \leq \|\Gamma^{\snext\Lambda}_{\mathcal{P}}{\psi}\| &\leq 2\delta
\end{align}
and, by \eqref{eq:feynman1},
\begin{align*}
|{\mathbf{E}}(\theta) - E_*| &\leq D_0|\theta - \theta_*| %
< \rho/8.
\end{align*}
Thus, $\snext{\mathbf{E}}$ is the unique eigenvalue of $H^{\snext\Lambda}$ with $|\snext{\mathbf{E}} - E_*| < 3\rho/2$, and \eqref{eq:rperpbd} holds.
Finally, since $\snext\psi$ is the unique unit vector (up to a sign) in the image of $\chi_{B_{\rho}({\mathbf E})}(H^{\snext\Lambda})$, \eqref{eq:approxevect2} implies \eqref{eq:SRevec}.
\end{proof}

\subsection{Local Rellich function structure}

In the simple resonant setting, the resonant Rellich pair $(\snext{\mathbf{E}},\snext\psi)$ of $H^{\snext\Lambda}$ is well-approximated by the resonant Rellich pair $({\mathbf{E}},{\psi})$ of $H^{{\Lambda}}$:

\begin{prop}\label{pr:interscaleapprox1}
Uniformly for $\theta \in B_{\rho/8D_0}(\theta_*)$, there is a constant $C = C(v)$ depending only on $v$ so that
\begin{align}\label{eq:evalapprox1}
\left|\partial_\theta^k (\snext{\mathbf{E}} - {\mathbf{E}})\right| \leq C\frac{\delta}{\rho^k}, \quad 0 \leq k \leq 2
\end{align}
and
\begin{align}\label{eq:evecapprox1}
\|\partial_\theta^k(\snext\psi - {\psi})\| \leq C\frac{\delta}{\rho^{k+1}}, \quad 0 \leq k \leq 1.
\end{align}
\end{prop}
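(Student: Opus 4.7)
The plan is to extend the zeroth-order bounds $|\snext{\mathbf{E}} - \mathbf{E}| \leq 2\delta$ and $\|\snext\psi - \psi\| \leq 3\delta/\rho$ furnished by Proposition \ref{pr:AL1} to derivatives, using the Feynman formulas of Lemma \ref{l:feynman} together with the partial-resolvent bound \eqref{eq:rperpbd} and the stability hypothesis \eqref{eq:SRevecstab}. Throughout, I extend $\psi$ by zero from $\Lambda$ to $\snext\Lambda$ so that $\snext A_\mathcal{P} \psi = \mathbf{E}\psi$, where $\snext A := H^{\snext\Lambda}$, $\snext A_\mathcal{P} := H^{\snext\Lambda}_\mathcal{P}$, and $\Gamma := \snext A - \snext A_\mathcal{P}$ is independent of $\theta$. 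I write $V'$ and $V''$ for the diagonal operators with entries $\partial_\theta v(\theta + j\alpha)$ and $\partial_\theta^2 v(\theta+j\alpha)$, each of norm at most $D_0$ by the Morse condition; these are the only nontrivial $\theta$-derivatives of $\snext A$ and $\snext A_\mathcal{P}$.

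The first derivative of the eigenvalue follows immediately from \eqref{eq:feynman1}: the difference $\partial_\theta(\snext{\mathbf{E}} - \mathbf{E}) = \langle \snext\psi - \psi, V'\snext\psi\rangle + \langle \psi, V'(\snext\psi - \psi)\rangle$ is bounded by $2D_0\|\snext\psi - \psi\| \leq 6D_0\delta/\rho$ via Cauchy--Schwarz.

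The heart of the argument is the first derivative of the eigenvector. Differentiating $(\snext A - \snext{\mathbf{E}})\snext\psi = 0$ and $(\snext A_\mathcal{P} - \mathbf{E})\psi = 0$, subtracting, and using $\partial_\theta\Gamma = 0$ yields
\[
(\snext A - \snext{\mathbf{E}})\,\partial_\theta(\snext\psi - \psi) = (\partial_\theta\mathbf{E} - V')(\snext\psi - \psi) + \partial_\theta(\snext{\mathbf{E}} - \mathbf{E})\snext\psi - \Gamma\partial_\theta\psi - (\mathbf{E} - \snext{\mathbf{E}})\partial_\theta\psi.
\]
Applying the partial resolvent $R_\perp(\snext{\mathbf{E}};\snext A,\{\snext{\mathbf{E}}\})$, whose norm is at most $4/\rho$ by \eqref{eq:rperpbd}, each right-hand term contributes $O(\delta/\rho^2)$: the first from $\|\snext\psi - \psi\| \leq 3\delta/\rho$, the second from the $k=1$ eigenvalue bound just proved, the third from \eqref{eq:SRevecstab}, and the last from $|\mathbf{E} - \snext{\mathbf{E}}| \leq 2\delta$ combined with the Feynman estimate $\|\partial_\theta\psi\| \leq 4D_0/\rho$ (valid since $H^\Lambda$ is a direct summand of $\snext A_\mathcal{P}$, whence \eqref{eq:SRsep} gives $\|R_\perp(\mathbf{E}; H^\Lambda, \{\mathbf{E}\})\| \leq 4/\rho$). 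This controls the component of $\partial_\theta(\snext\psi - \psi)$ orthogonal to $\snext\psi$. The rank-one component along $\snext\psi$ is handled by \eqref{eq:orthderiv}, which gives $\langle \snext\psi, \partial_\theta\snext\psi\rangle = 0$; the scalar then reduces to $-\langle \snext\psi - \psi, \partial_\theta\psi\rangle$, bounded by $(3\delta/\rho)(4D_0/\rho) = 12D_0\delta/\rho^2$.

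Finally, the second derivative of the eigenvalue comes from \eqref{eq:feynman2} applied to both pairs. The $V''$-contribution is bilinear in $\snext\psi - \psi$ and $\snext\psi, \psi$ and bounded by $2D_0\|\snext\psi - \psi\| = O(\delta/\rho)$; the $V'$-contribution splits as $2\langle \partial_\theta(\snext\psi - \psi), V'\snext\psi\rangle + 2\langle \partial_\theta\psi, V'(\snext\psi - \psi)\rangle$, which by the $k=1$ eigenvector estimate and $\|\partial_\theta\psi\| \leq 4D_0/\rho$ is $O(\delta/\rho^2)$, as required. The main obstacle is thus the first-derivative eigenvector estimate, in which one must avoid naively comparing partial resolvents on different spaces, absorb the boundary term $\Gamma\partial_\theta\psi$ via the nonstandard stability hypothesis \eqref{eq:SRevecstab}, and split off the rank-one component using the orthogonality relation \eqref{eq:orthderiv}.
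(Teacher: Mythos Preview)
Your proposal is correct and follows essentially the same approach as the paper: both extend the zeroth-order bounds from Proposition~\ref{pr:AL1} via the Feynman formulas, derive the identity for $(H^{\snext\Lambda}-\snext{\mathbf E})\partial_\theta(\snext\psi-\psi)$ by differentiating and subtracting the two eigenvalue equations, control the orthogonal part with the partial-resolvent bound \eqref{eq:rperpbd} and the stability hypothesis \eqref{eq:SRevecstab}, and handle the $\snext\psi$-component using \eqref{eq:orthderiv}. The only differences are cosmetic algebraic regroupings (e.g.\ whether $(\partial_\theta\mathbf E - V')$ or $(\partial_\theta\snext{\mathbf E}-V')$ multiplies $\snext\psi-\psi$), and your sign on the $\Gamma\partial_\theta\psi$ term is in fact the correct one.
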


\begin{proof}
By 
the previous proposition, 
we have
\begin{align*}
|\snext{\mathbf{E}} - {\mathbf{E}}| &\leq 2\delta, \\
\|\snext\psi - {\psi}\| &\leq 3\frac{\delta}{\rho}.
\end{align*}
Thus, by the Feynman formula \eqref{eq:feynman1} and Cauchy-Schwarz,
\begin{align*}
\left|\partial_\theta (\snext{\mathbf{E}} - {\mathbf{E}})\right| &= \left| \langle \snext\psi, V'\snext\psi \rangle - \langle {\psi}, V'{\psi}\rangle \right| \\
&\leq 2\|V'\|_\infty\|\snext\psi - {\psi}\| \\
&\leq 6D_0 \frac{\delta}{\rho}.
\end{align*}

By \eqref{eq:feynmanEvec}, \eqref{eq:SRsep}, and \eqref{eq:rperpbd}, we immediately get
\begin{align*}
\|\partial_\theta \psi\| \leq \frac{4D_0}{\rho}, \quad
\|\partial_\theta \snext\psi\| \leq \frac{4D_0}{\rho}.
\end{align*}
By differentiating the relation $H^{\snext\Lambda}\snext\psi = \snext{\mathbf{E}}\snext\psi$, we likewise find that
\begin{align*}
(H^{\snext\Lambda} - \snext{\mathbf{E}})(\partial_\theta (\snext\psi - {\psi})) &= (\partial_\theta \snext{\mathbf{E}} - V')\snext\psi - (\partial_\theta {\mathbf{E}} - V'){\psi} + (\Gamma^{\snext\Lambda}_\mathcal{P} + ({\mathbf{E}} - \snext{\mathbf{E}}))\partial_\theta{\psi} \\
&= (\partial_\theta \snext{\mathbf{E}} - V')(\snext\psi - {\psi}) + (\partial_\theta (\snext{\mathbf{E}} - {\mathbf{E}})){\psi} + (\Gamma^{\snext\Lambda}_\mathcal{P} + ({\mathbf{E}} - \snext{\mathbf{E}}))\partial_\theta{\psi}
\end{align*}
and so, by the triangle inequality, the assumption \eqref{eq:SRevecstab}, and the above estimates,
\begin{align*}
\|(H^{\snext\Lambda} - \snext{\mathbf{E}})(\partial_\theta (\snext\psi - {\psi}))\| &\leq 6D_0  \frac{\delta}{\rho} + 6D_0  \frac{\delta}{\rho} + 25D_0\frac{\delta}{\rho} + 8D_0 \frac{\delta}{\rho} \\
&\leq 45D_0\frac{\delta}{\rho}
\end{align*}
Denote by $\snext{\chi}$ the spectral projection onto $\snext\psi$.  Then, since $R_\perp^{\snext\Lambda}(\snext{\mathbf E})(H^{\snext\Lambda} - \snext{\mathbf E}) = I - \snext\chi$, we have
\begin{align*}
\|\partial_\theta (\snext\psi - {\psi})\| &\leq \|R_\perp^{\snext\Lambda}(\snext{\mathbf E})\|\|(H^{\snext\Lambda} - \snext{\mathbf E})(\partial_\theta (\snext\psi - {\psi}))\| + |\langle \partial_\theta(\snext\psi - {\psi}), \snext\psi \rangle| \\
&= \|R_\perp^{\snext\Lambda}(\snext{\mathbf E})\|\|(H^{\snext\Lambda} - \snext{\mathbf E})(\partial_\theta (\snext\psi - {\psi}))\| + |\langle \snext\psi - {\psi}, \partial_\theta\psi \rangle| \\
&\leq 180D_0\frac{\delta}{\rho^2} + 12D_0 \frac{\delta}{\rho^2} \\
&\leq 200 D_0 \frac{\delta}{\rho^2}
\end{align*}
where in the second line we have used that $\langle \partial_\theta \psi, \psi \rangle = 0$ (and similarly for $\snext{\psi}$).

Finally, by \eqref{eq:feynmanEvec}, \eqref{eq:feynman2}, and Cauchy-Schwarz, we have
\begin{align*}
\left|\partial_\theta^2(\snext{\mathbf{E}} - {\mathbf{E}}) \right| &\leq \left| \langle \snext\psi, V''\snext\psi \rangle - \langle {\psi}, V''{\psi}\rangle\right| + 2\left|\langle \partial_\theta\snext\psi, V'\snext\psi \rangle - \langle \partial_\theta{\psi}, V'{\psi}\rangle \right| \\
&\leq 2\|V''\|_\infty\|\snext\psi - {\psi}\| + 2\|V'\|_\infty\left(\|\partial_\theta (\snext\psi - {\psi})\| + \|\partial_\theta \snext\psi\|\|\snext\psi - {\psi}\|\right) \\
&\leq 6D_0\frac{\delta}{\rho} + 2D_0\left(200D_0 \frac{\delta}{\rho^2} + 12D_0 \frac{\delta}{\rho^2} \right) \\
&\leq 500D_0^2 \frac{\delta}{\rho^2}.\end{align*}
Taking $C = C(v) = 500D_0^2$ proves the proposition.
\end{proof}

\newpage
\section{Double resonance}\label{sec:DR}

In this section, we study the more complicated situation of double resonance, where an interval $\snext\Lambda \subset \mathbb{Z}$ contains precisely two resonating subintervals ${\Lambda}_\pm \subset \snext\Lambda$ such that $H^{{\Lambda}_\pm}(\theta_*)$ each has a unique eigenvalue ${\mathbf{E}}_\pm$ near a fixed $E_{**}$; in our setting, these eigenvalues will have relatively large, opposite signed derivatives: $\pm \partial_\theta {\mathbf{E}}_\pm \geq \nu > 0$.  Under smallness and stability assumptions on the boundary values of the corresponding eigenvector, alongside off-diagonal Green's function decay on the maximal connected components of $\snext\Lambda \setminus ({\Lambda}_- \cup {\Lambda}_+)$, we prove that $H^{\snext\Lambda}$ has precisely two well-separated resonant Rellich children (cf. Figure \ref{f:DRRelFn}).

Fix $v \in C^2(\mathbb{T},[-1,1])$ with $\|\partial_\theta v\|_\infty + \|\partial_\theta^2 v\|_\infty \leq D_0$, $(\theta_{**}, E_{**}) \in \mathbb{T} \times [-2,2]$, and $0 < \varepsilon < 1/7$.   Let $\snext\Lambda \subset \mathbb{Z}$ be an interval containing two subintervals ${\Lambda}_-, {\Lambda}_+$  with $\dist({\Lambda}_-,{\Lambda}_+) \geq \max_\pm\{|{\Lambda}_\pm|\}$ and denote the corresponding partition $\snext\Lambda =: {\Lambda}_l \cup {\Lambda}_- \cup {\Lambda}_c \cup {\Lambda}_+ \cup {\Lambda}_r$ by $\mathcal{P}$; despite the notation, we do not insist ${\Lambda}_-$ be left of ${\Lambda}_+$. Let ${\Lambda} = {\Lambda}_- \cup {\Lambda}_c \cup {\Lambda}_+$, ${L} = |{\Lambda}|$, %
and fix constants $\sprev{\rho}, \delta, \gamma, \ell, \nu > 0$ satisfying the following relations:
\begin{align}
\label{eq:eps2}
\delta < \sprev{\rho}^3/2 &< \frac{\sprev{\rho}^2\nu}{2400D_0} < 1/16, \\
\label{eq:gam2}
\log 7 \leq \gamma &\leq |\log\varepsilon|, \\
\label{eq:rho2}
8\delta \leq \frac{\sprev{\rho}^2\nu^3}{192D_0^3} &\leq \frac{\sprev{\rho}}{48D_0^2}.
\end{align}

\begin{assm}\label{as:DR}
Suppose the following hold for $|\theta - \theta_{**}| < {\sprev{\rho}}/8D_0$:
\begin{enumerate}
\item (Double resonance) Each operator $H^{{\Lambda}_\pm}(\theta_{**})$ has a unique eigenpair $({\mathbf{E}}_\pm,{\psi}_\pm)(\theta_{**})$ such that
${\mathbf{E}}_+(\theta_{**})={\mathbf{E}}_-(\theta_{**})=E_{**}$. 
\item (Eigenvector decay) The unit eigenvector ${\psi}_\pm(\theta)$ of $H^{{\Lambda}_\pm}(\theta)$ has $\mathcal{P}$-boundary values no larger than $\delta/\varepsilon$; i.e. one has
\begin{align}\label{eq:DRloc1}
\|\Gamma_\mathcal{P}^{\snext\Lambda} {\psi}_\pm\| \leq 2\delta, \quad j = 1,2.
\end{align}
Furthermore, for any resonant eigenpair $(\snext{E},\snext\psi)$ of $H^{\snext{\Lambda}}$ with $|\snext{E}-E_{**}| < \frac32\sprev{\rho}$, the unit eigenvector $\snext{\psi}$ also has $\mathcal{P}$-boundary values no larger than $\delta/\varepsilon$; i.e., one has
\begin{align}\label{eq:DRloc2}
    \|\Gamma_\mathcal{P}^{\snext\Lambda} \snext\psi\| \leq 4\delta.
\end{align}
\item (Green's function decay off ${\Lambda}_- \cup {\Lambda}_+$)
For $m, n \in {\Lambda}_{l/c/r}$ with $|m-n| \geq \ell$ one has
\begin{align}\label{eq:DRdec}
\log|R^{{\Lambda}_{l/c/r}}_{\theta,E}(m,n)| \leq -\gamma|m-n|
\end{align}
for $|E-E_{**}| < \frac32\sprev{\rho}$.
\item (Eigenvalue separation) ${\mathbf{E}}_\pm(\theta)$ are the only eigenvalues of $H^{\snext{\Lambda}}_\mathcal{P}(\theta)$ in $B_{7\sprev{\rho}/4}(E_{**})$:
\begin{align}\label{eq:DRsep2}
\chi_{\{{\mathbf{E}}_-(\theta)\}\cup\{{\mathbf{E}}_+(\theta)\}}(H^{\snext{\Lambda}}_\mathcal{P}(\theta)) = \chi_{B_{7\sprev{\rho}/4}(E_{**})}(H^{\snext{\Lambda}}_\mathcal{P}(\theta)).
\end{align}
In particular, we have
\begin{align}
\label{eq:DRsep1}
\|R_\perp(E;H^{\snext\Lambda}_\mathcal{P}(\theta),\{{\mathbf{E}}_-(\theta)\} \cup \{{\mathbf{E}}_+(\theta)\})\| &\leq {4\sprev{\rho}^{-1}}, \quad E \in B_{3\sprev{\rho}/2}(E_{**}).
\end{align}
\item (Eigenvector stability) The unit eigenvectors ${\psi}_\pm(\theta)$ of $H^{{\Lambda}_\pm}(\theta)$ have stably small $\mathcal{P}$-boundary values:
\begin{align}\label{eq:DRevecstab}
    \|\Gamma_\mathcal{P}^{\snext\Lambda}(\partial_\theta {\psi}_\pm)\| \leq 25D_0\delta\sprev{\rho}^{-1}.
\end{align}
\item (Transversality of Rellich functions) The eigenpairs ${\mathbf{E}}_\pm$ have large, opposite-signed derivatives: %
\begin{align}\label{eq:DRtransv}
\pm \partial_\theta {\mathbf{E}}_\pm(\theta) \geq \nu.
\end{align}
\end{enumerate}
\end{assm}
\noindent The remainder of this section proceeds under Assumption \ref{as:DR}.

\subsection{The resonant eigenpairs of $H^{\snext{\Lambda}}$}

Denote by
\begin{align*}
    \Lambda_{\text{loc}} &:= [\inf {\Lambda}_\pm - \ell, \sup {\Lambda}_\pm + \ell] \cap \snext\Lambda, \\
    {\snext\Lambda}_{l/r} &:= {\Lambda}_{l/r} \setminus \Lambda_{\text{loc}}.
\end{align*}
Under Assumption \ref{as:DR}, the operator $H^{\snext{\Lambda}}$ has precisely two resonant eigenpairs $(\snext{\mathbf{E}}_\bullet, \snext{\psi}_\bullet)$, $\bullet \in \{\vee,\wedge\}$, and $\snext{\psi}_\bullet$ are simultaneously localized on $\Lambda_{\text{loc}}$:

\begin{prop}[Double resonant eigenpairs]\label{pr:AL2}
Under Assumption \ref{as:DR} above, we have:
\begin{enumerate}
\item Any unit eigenvector $\snext\psi$ of $H^{\snext\Lambda}$ with corresponding eigenvalue $\snext{E}$ with $|\snext{E} - E_{**}| < \frac32\sprev{\rho}$ is Anderson localized on $\Lambda_{\textnormal{loc}}$:
\begin{align*}
\|\snext\psi\|_{\Lambda_{\textnormal{loc}}} &\geq 2/3, \\
\log|\snext\psi(j)| &\leq -\gamma \dist(j, {\Lambda}), \quad j \in {\snext\Lambda}_{l/r}.
\end{align*}
\item There exist two Rellich pairs $(\snext{\mathbf{E}}_\bullet, \snext{\psi}_\bullet)$, $\bullet \in \{\vee,\wedge\}$ for $H^{\snext{\Lambda}}$ with 
$\snext{\mathbf{E}}_\bullet : B_{\sprev{\rho}/8D_0}(\theta_{**}) \to B_{\sprev{\rho}/4}(E_{**})$.  
We normalize so that $\snext{\mathbf{E}}_\vee > \snext{\mathbf{E}}_\wedge$.  The eigenvectors satisfy 
\begin{align}
\label{eq:psi+}\snext{\psi}_\vee &= A{\psi}_+ + B {\psi}_- + \phi_{\vee}  \\
\label{eq:psi-}\snext{\psi}_\wedge &= B{\psi}_+ - A{\psi}_- + \phi_{\wedge}
\end{align}
where $A(\theta)$ and $B(\theta)$ satisfy $A^2 + B^2 = 1$ and $\|\phi_\bullet\| \leq 24\delta{\sprev{\rho}^{-1}}$.
 
\item Uniformly for $\theta$ with $|\theta - \theta_{**}| < \sprev{\rho}/8D_0$ and $|E - E_{**}| < \frac54\sprev{\rho}$, there are no other Rellich functions within $\sprev{\rho}/4$ of $E$:
\begin{align}\label{eq:rperpperpbd}
\|R_{\perp}(E; H^{\snext\Lambda}(\theta), \{\snext{\mathbf{E}}_\vee(\theta)\} \cup \{\snext{\mathbf{E}}_\wedge(\theta)\})\| \leq {4\sprev{\rho}^{-1}}.
\end{align}
\end{enumerate}
\end{prop}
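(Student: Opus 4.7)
The plan is to closely follow the blueprint of Proposition~\ref{pr:AL1}, accommodating the new feature that the approximate resonant eigenspace of $H^{\snext\Lambda}$ is rank~$2$ rather than rank~$1$. For part~(1), I would apply the Poisson formula (Lemma~\ref{l:poisson}) on each non-resonant wing ${\Lambda}_{l/c/r}$ to express any resonant $\snext\psi$ in terms of Green's function entries against its boundary values; Dirichlet boundary conditions on $\snext\Lambda$ kill one term in the outer wings, while in ${\Lambda}_c$ one keeps both contributions and decays toward the nearer endpoint. The Green's function decay \eqref{eq:DRdec} together with \eqref{eq:gam2} then yields $\log|\snext\psi(j)| \leq -\gamma\,\dist(j,{\Lambda})$ on ${\snext\Lambda}_{l/r}$, verbatim as in Proposition~\ref{pr:AL1}(1); summing the geometric series bounds the mass outside $\Lambda_{\textnormal{loc}}$ by $e^{-\gamma}/(1-e^{-\gamma}) < 1/6$, whence $\|\snext\psi\|_{\Lambda_{\textnormal{loc}}}^2 \geq 2/3$.

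For parts~(2) and~(3), I would work with $P := \chi_{B_{7\sprev\rho/4}(E_{**})}(H^{\snext\Lambda}_\mathcal{P})$ and $\snext\chi := \chi_{B_{3\sprev\rho/2}(E_{**})}(H^{\snext\Lambda})$. By the separation hypothesis \eqref{eq:DRsep2}, $P$ coincides with the orthogonal projection onto $\Span({\psi}_+, {\psi}_-)$, which is rank~$2$ since ${\psi}_+\perp{\psi}_-$ by disjoint support. To obtain at least two resonant eigenpairs of $H^{\snext\Lambda}$, I use that $|{\mathbf E}_\pm - E_{**}| \leq \sprev\rho/8$ on the stated $\theta$-range (by Lemma~\ref{l:feynman} and $D_0$-Lipschitzness) together with $\|\Gamma_\mathcal{P}^{\snext\Lambda}{\psi}_\pm\| \leq 2\delta$ from \eqref{eq:DRloc1}, so Lemma~\ref{l:approxevect} yields $\|(I-\snext\chi)P\| < 1$ and Lemma~\ref{l:PQclose} then gives $\rank(\snext\chi) \geq 2$. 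Conversely, for any unit eigenvector $\snext\psi$ of $H^{\snext\Lambda}$ with eigenvalue $\snext{E} \in B_{3\sprev\rho/2}(E_{**})$, the identity
\[(I-P)\snext\psi = -R_\perp(\snext{E}; H_\mathcal{P}^{\snext\Lambda}, \{{\mathbf E}_-,{\mathbf E}_+\})\,\Gamma_\mathcal{P}^{\snext\Lambda}\snext\psi\]
together with \eqref{eq:DRsep1} and \eqref{eq:DRloc2} gives $\|(I-P)\snext\psi\| \leq 16\delta/\sprev\rho$. Three mutually orthogonal such eigenvectors would force $\sum_j\|P\snext\psi_j\|^2 \leq \rank(P) = 2$ and hence $3(1-(16\delta/\sprev\rho)^2) \leq 2$, contradicting \eqref{eq:eps2}; so there are exactly two resonant eigenpairs, which I label so that $\snext{\mathbf E}_\vee > \snext{\mathbf E}_\wedge$.

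The explicit form \eqref{eq:psi+}--\eqref{eq:psi-} then comes from expanding $\snext\psi_\bullet = \tilde A_\bullet\psi_+ + \tilde B_\bullet\psi_- + (I-P)\snext\psi_\bullet$ with $\tilde A_\bullet = \langle\psi_+,\snext\psi_\bullet\rangle$, $\tilde B_\bullet = \langle\psi_-,\snext\psi_\bullet\rangle$. Orthonormality of $\{\snext\psi_\vee,\snext\psi_\wedge\}$ combined with ${\psi}_+\perp{\psi}_-$ forces the $2\times 2$ coefficient matrix to be approximately orthogonal, with rows of norm in $[\sqrt{1-(16\delta/\sprev\rho)^2},1]$. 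Rescaling the first row to unit length defines $A, B$ with $A^2 + B^2 = 1$, and choosing the sign of $\snext\psi_\wedge$ to fix the reflection orientation yields its coefficients as $(B, -A)$; the renormalization contribution is $O(\delta^2/\sprev\rho^2)$, which combines with the original bound $16\delta/\sprev\rho$ on $(I-P)\snext\psi_\bullet$ to yield $\|\phi_\bullet\| \leq 24\delta/\sprev\rho$ under \eqref{eq:eps2}. Finally, \eqref{eq:rperpperpbd} is immediate from the uniqueness established above: all remaining eigenvalues of $H^{\snext\Lambda}$ lie outside $B_{3\sprev\rho/2}(E_{**})$, hence at distance at least $\sprev\rho/4$ from any $E \in B_{5\sprev\rho/4}(E_{**})$.

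I expect the main obstacle to be the bookkeeping in the eigenvector expansion: unlike the simple-resonant case, one juggles two parent Rellich functions ${\mathbf E}_\pm$ that need not coincide with $E_{**}$, and one must carefully extract the orthonormal $(A,B),(B,-A)$ structure from a $2\times 2$ coefficient matrix that is only approximately orthogonal. Tracking the sign of $\snext\psi_\wedge$ consistently with $\snext{\mathbf E}_\vee>\snext{\mathbf E}_\wedge$, and verifying that the scale relations \eqref{eq:eps2}--\eqref{eq:rho2} are just strong enough to close the constants $3/2$, $4/\sprev\rho$, and $24\delta/\sprev\rho$ at every step, is where the argument actually requires attention; the dynamical and spectral inputs beyond Proposition~\ref{pr:AL1} are otherwise minimal.
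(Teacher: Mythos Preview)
Your approach is correct and matches the paper for part~(1). For parts~(2)--(3) you take a somewhat different route in producing the second eigenpair and its $(B,-A)$ coefficients. The paper first locates one eigenpair $(\snext{\mathbf E}_\vee,\snext\psi_\vee)$ via Lemma~\ref{l:approxevect}, reads off $(A,B)$ from \eqref{eq:psi+}, and then \emph{compresses} $H^{\snext\Lambda}$ by $P_\vee^c := I - \snext\psi_\vee\snext\psi_\vee^\top$: since $\snext\psi_\vee$ is a genuine eigenvector, $P_\vee^c H^{\snext\Lambda} P_\vee^c$ restricted to $\Ima P_\vee^c$ is exactly $H^{\snext\Lambda}$ with the $\snext{\mathbf E}_\vee$-eigenspace removed, and one checks $\|(P_\vee^c H^{\snext\Lambda} P_\vee^c - E_{**})\psi_\wedge\| < \sprev\rho/4$ for $\psi_\wedge := B\psi_+ - A\psi_-$, which locates $\snext\psi_\wedge$ already in the form \eqref{eq:psi-}. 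Your route---establishing $\rank(\snext\chi)\geq 2$ in one stroke via Lemmas~\ref{l:PQclose}--\ref{l:approxevect} applied to the rank-$2$ projection onto $\Span(\psi_+,\psi_-)$, then extracting the orthonormal $(A,B),(B,-A)$ structure from the approximately orthogonal $2\times 2$ coefficient matrix---is more symmetric and avoids the compression, at the price of the post-processing step you correctly flag as the bookkeeping obstacle. Both arguments close with the same constants.

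One small gap to patch: as written, your rank argument only places both eigenvalues in $B_{3\sprev\rho/2}(E_{**})$, not in the stated codomain $B_{\sprev\rho/4}(E_{**})$. Run the same Lemma~\ref{l:PQclose} argument with the smaller ball: since $\psi_+\perp\psi_-$ one has $\|(H^{\snext\Lambda}-E_{**})P\| \leq \sprev\rho/8 + 2\sqrt{2}\,\delta$, so $\|(I-\chi_{B_{\sprev\rho/4}(E_{**})}(H^{\snext\Lambda}))P\| \leq \tfrac12 + 8\sqrt{2}\,\delta/\sprev\rho < 1$ under \eqref{eq:eps2}, forcing both Rellich functions into $B_{\sprev\rho/4}(E_{**})$ as claimed.
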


\begin{proof}
The decay of any resonant eigenvector follows from the Green's function decay \eqref{eq:DRdec} and the Poisson formula \eqref{eq:poisson} identically to the single resonant case, so here we refer the reader to the proof of Proposition \ref{pr:AL1}.

Using this decay, we have that
\begin{align*}
    \|\snext\psi\|_{{\snext\Lambda}_{l/r}} &\leq \sum_{j \in \Lambda_\pm} e^{-\gamma \dist(j, {\Lambda})} \\
    &\leq \frac{e^{-\gamma}}{1-e^{-\gamma}} < \frac{1}{6}.
\end{align*}
Consequently, we have that
\begin{align*}
    \|\snext\psi\|_{\Lambda_{\text{loc}}} \geq 2/3
\end{align*}
for any resonant unit eigenvector $\snext\psi$.

We fix $\theta \in B_{\sprev{\rho}/8D_0}(\theta_{**})$ and will suppress its notation. 
Denoting by $P = \chi_{B_{3\sprev{\rho}/2}(E_{**})}(H^{\snext{\Lambda}})$ and by $\chi = \chi_{B_{3\sprev{\rho}/2}(E_{**})}(H^{\snext{\Lambda}}_{\mathcal{P}})$, suppose $(\snext{E},\snext\psi)$ is a resonant eigenpair of $H^{\snext{\Lambda}}$ with $|\snext{E} - E_{**}| < 3\sprev{\rho}/2$.  Then we have
\begin{align*}
    \|(I - \chi)P\| &= \|R_\perp(\snext{E}; H^{\snext{\Lambda}}_\mathcal{P}, B_{\sprev{\rho}}(E_{**}))(H^{\snext{\Lambda}}_\mathcal{P} - \snext{E})P\| \\
    &\leq 4{\sprev{\rho}^{-1}}\|\Gamma^{\snext\Lambda}_\mathcal{P} P\| \leq 16\delta{\sprev{\rho}^{-1}}.
\end{align*}
Lemma \ref{l:PQclose} and the assumption \eqref{eq:DRsep2} imply $H^{\snext{\Lambda}}$ has at most two resonant eigenpairs $(\snext{E},\snext\psi)$ satisfying $|\snext{E} - E_{**}| < 3\sprev{\rho}/2$.
If such an eigenpair exists, since $\|(H^{\snext{\Lambda}}_{\mathcal P}-\snext{E})P\| \leq 4\delta$, and $\{{\psi}_+,{\psi}_-\}$ forms an orthonormal basis for the image of $\chi_{B_{\sprev{\rho}}}(\snext{E})$, Lemma \ref{l:approxevect} implies $\snext\psi$ must be of the form \eqref{eq:psi+}.

To see the existence of two eigenpairs $(\snext{\mathbf{E}}_\bullet,\snext{\psi}_\bullet)$ with $E_\bullet : B_{\sprev{\rho}/8D_0}(\theta_{**}) \to B_{\sprev{\rho}/4}(E_{**})$, $\bullet \in \{\vee,\wedge\}$, first note that,
\begin{align*}
\|(H^{\snext\Lambda} - {\mathbf{E}}_\pm){\psi}_\pm\| &\leq \|\Gamma^{\Lambda}_{\mathcal{P}}{\psi}_\pm\| \leq 2\delta < \sprev{\rho}/8
\end{align*}
and, by \eqref{eq:feynman1}, 
\begin{align*}
    |{\mathbf{E}}_\pm - E_{**}| \leq D_0|\theta - \theta_{**}| %
    < \sprev{\rho}/8, 
\end{align*}
so at least one such eigenpair (e.g., $(\snext{\mathbf{E}}_\vee,\snext{\psi}_\vee)$) must exist by Lemma \ref{l:approxevect}.  Let $A$ and $B$ be defined as in \eqref{eq:psi+}, and denote by ${\psi}_\vee := A{\psi}_+ + B{\psi}_-$ and by ${\psi}_\wedge := B{\psi}_+ - A{\psi}_-$.

Denote by 
$P^c_\vee := I - \snext{\psi}_\vee\snext{\psi}_\vee^\top$ and by $\chi^c_\vee := I- {\psi}_\vee{\psi}_\vee^\top$.  By \eqref{eq:psi+}, we have $\|P_\vee^c - \chi_\vee^c\| \leq 48\delta{\sprev{\rho}^{-1}}$; thus, letting $\snext{H}_\wedge := P_\vee^c H^{\snext{\Lambda}} P_\vee^c$ and ${H}_\wedge := \chi_\vee^c H^{\snext{\Lambda}} \chi_\vee^c$, we have 
\begin{align*}
    \|(\snext{H}_\wedge - E_{**}){\psi}_\wedge\| &\leq \|(\snext{H}_\wedge - {H}_\wedge){\psi}_\wedge\| + \|({H}_\wedge - E_{**}){\psi}_\wedge\| \\
    &\leq 2\|H^{\snext{\Lambda}}\|\|P_\vee^c - \chi_\vee^c\| + \|\chi_\vee^c (H^{\snext{\Lambda}} - E_{**}){\psi}_\wedge\| \\
    &\leq 192\delta{\sprev{\rho}^{-1}} + |A|(|{\mathbf{E}}_- - E_{**}| + 2\delta) + |B|(|{\mathbf{E}}_+ - E_{**}| + 2\delta) \\
    &\leq 192\delta{\sprev{\rho}^{-1}} + \sqrt{2}(\sprev{\rho}/8 + 2\delta) \\
    &< \sprev{\rho}/4
\end{align*}
where in the penultimate line we have used that $|A| + |B| \leq \sqrt{2}$.  Thus, $\snext{H}_\wedge$ (and consequently $H^{\snext{\Lambda}}$) must have an eigenpair $(\snext{\mathbf{E}}_\wedge,\snext{\psi}_\wedge)$ where $\snext{\psi}_\wedge$ satisfies \eqref{eq:psi-}.
\end{proof}

\subsection{Local Rellich function structure}

Keeping in mind the eigenvalue separation Lemma \ref{l:evalsep}, we fix a separation constant
\begin{align} \label{eq:sepconst}
    \snext\sigma &\leq  \left(\varepsilon/3\right)^{3|\Lambda_{\textnormal{loc}}|}.
\end{align}

We also define the crossed parent curves
\begin{align*}
    {\mathbf{E}}_\vee := \max\{{\mathbf{E}}_+, {\mathbf{E}}_-\}, \quad 
    {\mathbf{E}}_\wedge := \min\{{\mathbf{E}}_+, {\mathbf{E}}_-\}.
\end{align*}

In this subsection, we will prove a precise formulation of the heuristic demonstrated in Figure \ref{f:DRRelFn}.  In order to do so, we will introduce two new parameters: an intermediate parameter $\eta \ll \sprev{\rho}$, representing the distance from $\theta_{**}$ where $\snext{\mathbf{E}}_\bullet$ begins to deviate from ${\mathbf{E}}_\bullet$, and a second resonance parameter $\rho \ll \eta$ with respect to which, away from $B_\eta(\theta_{**})$, the parent curves ${\mathbf{E}}_\bullet$ are simple-resonant.
\begin{thm}\label{t:DRRelFns}
Under the assumptions above, the two Rellich pairs $(\snext{\mathbf{E}}_\bullet,\snext{\psi}_\bullet)$, $\bullet \in \{\vee,\wedge\}$, for $H^{\snext{\Lambda}}$ from Proposition \ref{pr:AL2} satisfy the following:
\begin{enumerate}
    \item The Rellich functions $\snext{\mathbf{E}}_\bullet$ are Morse with precisely one critical point in $B_{\sprev{\rho}/8D_0}(\theta_{**})$, with Morse constants
    \begin{align*}
        d &= \nu/12, \\
        D &= 2D_0(1+D_0\snext\sigma^{-1})
    \end{align*}
    \item The Rellich functions $\snext{\mathbf{E}}_\bullet$ are uniformly separated on $B_{\sprev{\rho}/8D_0}(\theta_{**})$:
    \begin{align*}
        \inf_{B_{\sprev{\rho}/8D_0}(\theta_{**})} \snext{\mathbf{E}}_\vee(\theta) - \sup_{B_{\sprev{\rho}/8D_0}(\theta_{**})} \snext{\mathbf{E}}_\wedge(\theta) \geq \frac{\nu\snext\sigma}{2D_0 + \nu}
    \end{align*}
    \item Fix $\eta < \frac{\nu^2\sprev{\rho}}{100D_0^3}$ and $\rho < \frac89\nu\eta$.  There is a constant $C = C(v)$ depending only on $v$ so that, for $\theta \in B_{\sprev{\rho}/8D_0}(\theta_{**}) \setminus B_{\eta}(\theta_{**})$, 
    \begin{align}\label{eq:evalapproxDRedge}
        \left|\partial_\theta^k (\snext{\mathbf{E}}_\bullet - {\mathbf{E}}_\bullet)\right| \leq C\frac{\delta}{\rho^k}, \quad 0 \leq k \leq 2.
    \end{align}
\end{enumerate}
\end{thm}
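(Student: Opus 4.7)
My plan is to establish the three items in the order (3), (1), (2): item (3) reduces the analysis away from the avoided crossing to the simple-resonance machinery of Section~\ref{sec:SR}, item (1) then follows by combining item (3) with the Feynman formulas and the pointwise eigenvalue separation of Lemma~\ref{l:evalsep}, and item (2)---the uniform gap---is the main obstacle and rests on a Cauchy-interlacing argument with monotone auxiliary curves.

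For item (3), transversality \eqref{eq:DRtransv} and ${\mathbf{E}}_\pm(\theta_{**})=E_{**}$ give $|{\mathbf{E}}_+(\theta)-{\mathbf{E}}_-(\theta)|\geq 2\nu|\theta-\theta_{**}|$, so for $\theta\in B_{\sprev\rho/8D_0}(\theta_{**})\setminus B_\eta(\theta_{**})$ the parent gap exceeds $2\nu\eta>\tfrac{9}{4}\rho$ under the hypothesis $\rho<\tfrac{8}{9}\nu\eta$. In a neighborhood of such $\theta$ this places us in the setting of Assumption~\ref{as:SR} for each parent ${\mathbf{E}}_\pm$ viewed as an isolated simple resonance at scale $\rho$: eigenvector decay, Green's function decay, and eigenvector stability are inherited from Assumption~\ref{as:DR}, while the eigenvalue separation \eqref{eq:SRsep2} follows from the transversal parent gap (the other parent being more than $9\rho/4$ away). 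Proposition~\ref{pr:interscaleapprox1} then yields \eqref{eq:evalapproxDRedge}.

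For item (1), Lemma~\ref{l:evalsep} applied to the simultaneously localized pair $(\snext\psi_\vee,\snext\psi_\wedge)$---both of mass $\geq 2/3$ on $\Lambda_{\textnormal{loc}}$ by Proposition~\ref{pr:AL2}---gives the pointwise separation $|\snext{\mathbf{E}}_\vee(\theta)-\snext{\mathbf{E}}_\wedge(\theta)|\geq\snext\sigma$. Combined with \eqref{eq:rperpperpbd}, this yields the uniform resolvent bound $\|R_\perp(\snext{\mathbf{E}}_\bullet;H^{\snext\Lambda},\{\snext{\mathbf{E}}_\bullet\})\|\leq\snext\sigma^{-1}$; the Feynman formulas \eqref{eq:feynmanEvec} and \eqref{eq:feynman2} then bound $\|\partial_\theta\snext\psi_\bullet\|\leq D_0/\snext\sigma$ and $|\partial_\theta^2\snext{\mathbf{E}}_\bullet|\leq D$, producing the upper Morse constant. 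For the lower constant $d=\nu/12$: outside $B_\eta(\theta_{**})$, item (3) together with the transversality of the parents gives $|\partial_\theta\snext{\mathbf{E}}_\bullet|\geq\nu/2$; inside $B_\eta(\theta_{**})$, substituting the decomposition \eqref{eq:psi+}--\eqref{eq:psi-} into \eqref{eq:feynman2} produces at the critical point a repulsive contribution of order $(AB)^2(\partial_\theta{\mathbf{E}}_+-\partial_\theta{\mathbf{E}}_-)^2/\snext\sigma\gtrsim\nu^2/\snext\sigma$ from the companion eigenvalue, dominating $\nu/12$. Uniqueness of the critical point follows since outside $B_\eta(\theta_{**})$ the derivative $\partial_\theta\snext{\mathbf{E}}_\bullet$ has opposite signs on the two sides of $\theta_{**}$, so by IVT there is exactly one transition.

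Item (2) is the main technical obstacle, since pointwise separation of $\snext\sigma$ does not prohibit $\snext{\mathbf{E}}_\vee$ and $\snext{\mathbf{E}}_\wedge$ from drifting relative to each other at distinct points. The plan is to construct two auxiliary monotone $C^1$ curves $\phi_\pm$ that are $C^1$-close to the shifted parents ${\mathbf{E}}_\pm\mp c$ for a carefully chosen $c\sim\snext\sigma$, having opposite-signed derivatives of magnitude at least $\nu-O(\delta/\rho)$, and to verify the simultaneous interlacing $\snext{\mathbf{E}}_\wedge(\theta)\leq\phi_\pm(\theta)\leq\snext{\mathbf{E}}_\vee(\theta)$ throughout $B_{\sprev\rho/8D_0}(\theta_{**})$. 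Away from $B_\eta(\theta_{**})$ this interlacing follows from item (3) with the shift $c$ absorbing the approximation error; within $B_\eta(\theta_{**})$ it follows from the pointwise gap $\snext\sigma$ together with the slope bound $|\partial_\theta{\mathbf{E}}_\pm|\leq D_0$, the calibration forcing the shift to satisfy $c\geq\tfrac12\cdot\tfrac{\nu\snext\sigma}{2D_0+\nu}$ (the factor $\nu/(2D_0+\nu)$ arises from balancing the transversal slope $\nu$ of the auxiliary curves against the upper slope $D_0$ of the parents). Since $\phi_+$ is monotone increasing and $\phi_-$ monotone decreasing, they cross at a unique point $\theta^\sharp$ with common value $y^\sharp$; the interlacing then yields $\inf\snext{\mathbf{E}}_\vee\geq y^\sharp+c$ and $\sup\snext{\mathbf{E}}_\wedge\leq y^\sharp-c$, completing the proof. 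The delicate step---and the heart of the ``novel Cauchy interlacing argument'' of the introduction---is verifying the interlacing uniformly in $B_\eta(\theta_{**})$ where the parent curves cross and items (1) and (3) provide no direct comparison.
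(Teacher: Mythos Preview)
Your treatment of item (3) is correct and mirrors the paper's Lemma~\ref{l:DRedgeSR}. The two substantive gaps are in items (1) and (2).

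\textbf{Item (1), the lower Morse bound inside $B_\eta(\theta_{**})$.} You write that the repulsive term in \eqref{eq:feynman2} is of order $(AB)^2(\partial_\theta{\mathbf{E}}_+-\partial_\theta{\mathbf{E}}_-)^2/\snext\sigma$. But the actual denominator in that term is the gap $\snext{\mathbf{E}}_\vee-\snext{\mathbf{E}}_\wedge$, and $\snext\sigma$ is a \emph{lower} bound for it; dividing by a lower bound gives you an \emph{upper} bound on the repulsion, not the lower bound you need. The paper's Proposition~\ref{pr:2ndDerivLwrBds} closes this by first using the near-criticality condition $|\partial_\theta\snext{\mathbf{E}}_\vee|\leq\nu/12$ to force both $A^2$ and $B^2$ to be bounded below (the eigenvector is genuinely mixed), and then using this mixing to show that the gap at such a point is in fact $O(\delta/\sprev\rho)$-small---this is the missing upper bound on the denominator that makes the repulsion large.

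\textbf{Item (2), the uniform gap.} Your plan is to take $\phi_\pm$ ``$C^1$-close to the shifted parents ${\mathbf{E}}_\pm\mp c$'' and verify the interlacing $\snext{\mathbf{E}}_\wedge\leq\phi_\pm\leq\snext{\mathbf{E}}_\vee$ by hand, splitting into inside and outside $B_\eta$. This does not work as stated, for two reasons. First, inside $B_\eta$ there is no mechanism that places the parent curves (or small shifts of them) between the two child curves: the pointwise gap $\snext\sigma$ between the children says nothing about where ${\mathbf{E}}_\pm$ sit relative to them, and near $\theta_{**}$ the only information available is $|\snext{\mathbf{E}}_\bullet-{\mathbf{E}}_\bullet|\leq 4\delta$, which is too weak to pin down the sign. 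Second, even granting the interlacing, your conclusion ``$\inf\snext{\mathbf{E}}_\vee\geq y^\sharp+c$'' does not follow: two monotone curves crossing at $(\theta^\sharp,y^\sharp)$ only give $\inf\max(\phi_+,\phi_-)=\sup\min(\phi_+,\phi_-)=y^\sharp$, hence a trivial gap of zero. The paper's argument is genuinely different: the auxiliary curves $\lambda_\mp$ are defined as eigenvalues of the \emph{compressions} ${Q}_\pm H^{\snext\Lambda}{Q}_\pm$ where ${Q}_\pm=I-{\psi}_\pm{\psi}_\pm^\top$, and the interlacing $\snext{\mathbf{E}}_\wedge\leq\lambda_\mp\leq\snext{\mathbf{E}}_\vee$ is then a consequence of the Cauchy Interlacing Theorem for compressions---an operator-theoretic fact that holds uniformly, with no case analysis. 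One then shows separately (Lemma~\ref{l:localsepLem}) that $\lambda_\pm$ is $C^1$-close to ${\mathbf{E}}_\pm$, hence monotone with $\pm\partial_\theta\lambda_\pm\geq\nu/2$. The uniform gap finally comes from combining the interlacing bound $\inf\snext{\mathbf{E}}_\vee-\sup\snext{\mathbf{E}}_\wedge\geq(\nu/2)|\theta_\vee-\theta_\wedge|$ with the pointwise bound $\snext\sigma-D_0|\theta_\vee-\theta_\wedge|$ via a convex combination, not from the interlacing alone.
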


\subsubsection{Lower bounds on the second derivative}

By Proposition \ref{pr:AL2}, there exist two Rellich pairs $(\snext{\mathbf{E}}_\bullet, \snext{\psi}_\bullet)$ for $H^{\snext{\Lambda}}$ with $\snext{\mathbf{E}}_\bullet : B_{\sprev{\rho}/8D_0}(\theta_{**}) \to B_{
\sprev{\rho}/4}(E_{**})$.  We will prove lower bounds on the second derivative near $\theta_{**}$:
\begin{prop}\label{pr:2ndDerivLwrBds}
Let $\eta < \frac{\nu^2\sprev{\rho}}{100D_0^3}$. 
There exists a constant $C = C(v)$ depending only on $v$ such that, for $\theta \in B_\eta(\theta_{**})$,
\begin{align*}
\left|\partial_\theta \snext{\mathbf{E}}_\vee(\theta)\right| \leq \nu/12 &\implies \partial_\theta^2 \snext{\mathbf{E}}_\vee (\theta) \geq C\frac{\nu^{5/2} \sprev{\rho}}{\delta}, \\
\left|\partial_\theta \snext{\mathbf{E}}_\wedge(\theta)\right| \leq \nu/12 &\implies -\partial_\theta^2 \snext{\mathbf{E}}_\wedge (\theta) \geq C\frac{\nu^{5/2} \sprev{\rho}}{\delta}.
\end{align*}
\end{prop}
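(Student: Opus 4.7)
The plan is to combine the Feynman formulas \eqref{eq:feynmanEvec}--\eqref{eq:feynman2} with the decomposition \eqref{eq:psi+}--\eqref{eq:psi-} of the child eigenvectors $\snext\psi_\bullet$ into the parent eigenvectors $\psi_\pm$, exploiting throughout the \emph{exact} cancellation $\langle\psi_+,V'\psi_-\rangle=0$ that arises because $V'$ is diagonal while $\psi_\pm$ are supported on disjoint intervals $\Lambda_\pm$ (and similarly for $V''$). The first step is to show that both mixing amplitudes $A,B$ are bounded below when $|\partial_\theta\snext{\mathbf{E}}_\vee|\leq\nu/12$. Applying \eqref{eq:feynman1} to \eqref{eq:psi+} and using the disjoint-support cancellation gives
\begin{align*}
\partial_\theta\snext{\mathbf{E}}_\vee = A^2\,\partial_\theta{\mathbf{E}}_+ + B^2\,\partial_\theta{\mathbf{E}}_- + O(D_0\delta/\sprev{\rho});
\end{align*}
the transversality \eqref{eq:DRtransv}, together with $A^2+B^2=1$ and the smallness hypothesis, then forces $|AB|\gtrsim \nu/D_0$.

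Next, I would use \eqref{eq:feynmanEvec}--\eqref{eq:feynman2} and split the partial resolvent as
\begin{align*}
R_\perp(\snext{\mathbf{E}}_\vee;H^{\snext\Lambda}) = \frac{\snext\psi_\wedge\snext\psi_\wedge^\top}{\snext{\mathbf{E}}_\wedge - \snext{\mathbf{E}}_\vee} + R_\perp'',
\end{align*}
with $\|R_\perp''\|\leq 4/\sprev{\rho}$ by \eqref{eq:rperpperpbd}. This isolates the dominant level-repulsion contribution
\begin{align*}
\partial_\theta^2\snext{\mathbf{E}}_\vee = \frac{2\,|\langle\snext\psi_\wedge,V'\snext\psi_\vee\rangle|^2}{\snext{\mathbf{E}}_\vee - \snext{\mathbf{E}}_\wedge} + O(D_0 + D_0^2/\sprev{\rho}),
\end{align*}
whose numerator, by the same disjoint-support cancellation, equals $(AB)^2(\partial_\theta{\mathbf{E}}_+ - \partial_\theta{\mathbf{E}}_-)^2 + O(D_0^2\delta/\sprev{\rho})$ and is thus $\gtrsim \nu^4/D_0^2$.

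The main obstacle is a sharp upper bound on the gap $\snext{\mathbf{E}}_\vee - \snext{\mathbf{E}}_\wedge$. My plan is to apply $H^{\snext\Lambda}$ to $A\snext\psi_\vee + B\snext\psi_\wedge$, which by \eqref{eq:psi+}--\eqref{eq:psi-} and $A^2+B^2=1$ equals $\psi_+ + A\phi_\vee + B\phi_\wedge$, and pair the resulting identity with $\psi_-$. The disjoint-support identities $\langle\psi_-,\psi_+\rangle = \langle\psi_-,\Gamma_\mathcal{P}^{\snext\Lambda}\psi_+\rangle = 0$ produce
\begin{align*}
AB(\snext{\mathbf{E}}_\vee - \snext{\mathbf{E}}_\wedge) = A\langle(H^{\snext\Lambda}-\snext{\mathbf{E}}_\vee)\psi_-,\phi_\vee\rangle + B\langle(H^{\snext\Lambda}-\snext{\mathbf{E}}_\wedge)\psi_-,\phi_\wedge\rangle.
\end{align*}
Writing $(H^{\snext\Lambda}-\snext{\mathbf{E}}_\bullet)\psi_- = (\mathbf{E}_- - \snext{\mathbf{E}}_\bullet)\psi_- + \Gamma_\mathcal{P}^{\snext\Lambda}\psi_-$, using the smallness of $|\mathbf{E}_- - \snext{\mathbf{E}}_\bullet|$ on $B_\eta(\theta_{**})$ afforded by the restriction $\eta < \nu^2\sprev{\rho}/(100D_0^3)$, together with \eqref{eq:DRloc1}, the bound $\|\phi_\bullet\|\leq 24\delta/\sprev{\rho}$ from \eqref{eq:psi+}--\eqref{eq:psi-}, and the lower bound $|AB|\gtrsim \nu/D_0$, yields $\snext{\mathbf{E}}_\vee - \snext{\mathbf{E}}_\wedge \lesssim D_0\delta^2/(\sprev{\rho}\,\nu)$. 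Inserting this into the level-repulsion expression and absorbing the lower-order $O(D_0^2/\sprev{\rho})$ error via the scale relations \eqref{eq:eps2}--\eqref{eq:rho2} yields the claimed lower bound $\partial_\theta^2\snext{\mathbf{E}}_\vee \geq C\nu^{5/2}\sprev{\rho}/\delta$. The case for $\snext{\mathbf{E}}_\wedge$ is identical up to the sign flip from $\snext{\mathbf{E}}_\wedge - \snext{\mathbf{E}}_\vee < 0$. The crucial subtlety throughout is distinguishing between exact disjoint-support cancellations and approximate ones with $O(\delta/\sprev{\rho})$ errors; only the exact cancellations are sharp enough to give the $1/\delta$-type dependence in the gap bound required to match the stated inequality.
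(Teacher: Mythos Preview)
Your overall architecture coincides with the paper's: expand $\partial_\theta^2\snext{\mathbf{E}}_\vee$ via \eqref{eq:feynman2}, split off the rank-one piece $\snext\psi_\wedge\snext\psi_\wedge^\top/(\snext{\mathbf{E}}_\wedge-\snext{\mathbf{E}}_\vee)$ from $R_\perp$, and show that the numerator $\langle\snext\psi_\wedge,V'\snext\psi_\vee\rangle^2$ is large while the gap $\snext{\mathbf{E}}_\vee-\snext{\mathbf{E}}_\wedge$ is small. The disjoint-support cancellations you highlight are exactly the mechanism the paper uses.

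There is, however, a genuine quantitative gap in your gap estimate. From your identity
\[
AB(\snext{\mathbf{E}}_\vee-\snext{\mathbf{E}}_\wedge)=A\langle(H^{\snext\Lambda}-\snext{\mathbf{E}}_\vee)\psi_-,\phi_\vee\rangle+B\langle(H^{\snext\Lambda}-\snext{\mathbf{E}}_\wedge)\psi_-,\phi_\wedge\rangle,
\]
writing $(H^{\snext\Lambda}-\snext{\mathbf{E}}_\bullet)\psi_-=(\mathbf{E}_--\snext{\mathbf{E}}_\bullet)\psi_-+\Gamma_{\mathcal P}^{\snext\Lambda}\psi_-$, the $\Gamma$-terms are indeed $O(\delta^2/\sprev\rho)$, but the $(\mathbf{E}_--\snext{\mathbf{E}}_\bullet)$-terms are only $O(\sprev\rho)\cdot O(\delta/\sprev\rho)=O(\delta)$: the restriction $\eta<\nu^2\sprev\rho/(100D_0^3)$ controls $|\mathbf{E}_-(\theta)-E_{**}|\leq D_0\eta$, but at this stage you only know $|\snext{\mathbf{E}}_\bullet(\theta)-E_{**}|\leq\sprev\rho/4$ from Proposition~\ref{pr:AL2}, not $O(\delta)$. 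So the honest output of your identity is $|AB|\,(\snext{\mathbf{E}}_\vee-\snext{\mathbf{E}}_\wedge)\lesssim\delta$, not $\delta^2/\sprev\rho$.

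This matters because your stated $|AB|\gtrsim\nu/D_0$ is then too weak: it yields gap $\lesssim D_0\delta/\nu$ and, with numerator $\gtrsim\nu^4/D_0^2$, a second-derivative lower bound $\gtrsim\nu^5/(D_0^3\delta)$, which does \emph{not} dominate $\nu^{5/2}\sprev\rho/\delta$ once $\nu\ll D_0$ (as it is at every scale $k\geq 1$). The fix is easy and already implicit in your first-derivative step: since $A^2+B^2=1$, one of $A^2,B^2$ is $\geq 1/2$, and your transversality argument forces the other to be $\gtrsim\nu/D_0$; hence $|AB|\gtrsim\sqrt{\nu/D_0}$. With this sharpening your gap bound becomes $\lesssim\delta\sqrt{D_0/\nu}$ and the numerator $\gtrsim\nu^3/D_0$, giving $\partial_\theta^2\snext{\mathbf{E}}_\vee\gtrsim\nu^{7/2}/(D_0^{3/2}\delta)$, which (using $\sprev\rho<\nu/(1200D_0)$ from \eqref{eq:eps2}) comfortably exceeds $C\nu^{5/2}\sprev\rho/\delta$.

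For comparison, the paper obtains the gap bound by a more direct test: from $|\langle\psi_+,(H^{\Lambda_+}-\snext{\mathbf{E}}_\vee)\snext\psi_\vee\rangle|\lesssim\delta/\sprev\rho$ one reads off $|A|\,|\mathbf{E}_+-\snext{\mathbf{E}}_\vee|\lesssim\delta/\sprev\rho$ (and similarly for $|B|\,|\mathbf{E}_+-\snext{\mathbf{E}}_\wedge|$), and then uses $\min\{|A|,|B|\}\gtrsim\sqrt{\nu/D_0}$. The paper also tracks the ratio $r=|\partial_\theta\mathbf{E}_+|/|\partial_\theta\mathbf{E}_-|$ to upgrade the numerator to $\gtrsim\nu^2$, independent of $D_0/\nu$. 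Your pairing identity is a nice alternative route to the gap and, once corrected as above, actually yields a slightly sharper final bound than the paper's.
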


We prove this Proposition via a series of lemmas.  Suppose %
that $|\partial_\theta{\mathbf{E}}_+(\theta_{**})| \geq |\partial_\theta {\mathbf{E}}_-(\theta_{**})|$ (the argument is similar for the opposite case), and define $1 \leq r \leq D_0/\nu$ such that
\begin{align*}
|\partial_\theta{\mathbf{E}}_+(\theta_{**})| = r|\partial_\theta {\mathbf{E}}_-(\theta_{**})|.
\end{align*}
\begin{lem}
Uniformly for $\theta \in B_{\eta}(\theta_{**})$, 
\begin{align*}
\left| \partial_\theta \left({\mathbf{E}}_+ + r{\mathbf{E}}_-\right)(\theta)\right| \leq 
8D_0^3\frac{\eta}{\sprev{\rho}\nu} %
\end{align*}
\end{lem}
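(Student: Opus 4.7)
The proof has a clean structure: at the base point $\theta_{**}$ the derivative vanishes exactly, and we only need a uniform bound on second derivatives to propagate this to a neighborhood via the mean value theorem. First, I would observe that by the transversality assumption \eqref{eq:DRtransv}, $\partial_\theta {\mathbf E}_+(\theta_{**}) > 0$ and $\partial_\theta {\mathbf E}_-(\theta_{**}) < 0$, so the equality $|\partial_\theta {\mathbf E}_+(\theta_{**})| = r|\partial_\theta {\mathbf E}_-(\theta_{**})|$ rearranges to
\begin{align*}
\partial_\theta({\mathbf E}_+ + r{\mathbf E}_-)(\theta_{**}) = 0.
\end{align*}
Combined with the Morse bound $|\partial_\theta v| \leq D_0$ applied via \eqref{eq:feynman1}, this also forces $1 \leq r \leq D_0/\nu$.

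The main work is a uniform second-derivative bound on ${\mathbf E}_\pm$ over $B_{\sprev\rho/8D_0}(\theta_{**})$. By the Feynman formula \eqref{eq:feynman2},
\begin{align*}
|\partial_\theta^2 {\mathbf E}_\pm| \leq \|V''\|_\infty + 2\|V'\|_\infty\|\partial_\theta \psi_\pm\| \leq D_0 + 2D_0\|\partial_\theta \psi_\pm\|,
\end{align*}
and by \eqref{eq:feynmanEvec}, $\|\partial_\theta\psi_\pm\| \leq D_0\|R_\perp({\mathbf E}_\pm; H^{\Lambda_\pm})\|$. The partial-resolvent bound is the one subtle point: since $H^{\Lambda_\pm}$ is a summand of $H^{\snext\Lambda}_\mathcal{P}$, the eigenvalue-separation assumption \eqref{eq:DRsep2} guarantees that every other eigenvalue of $H^{\Lambda_\pm}$ lies outside $B_{7\sprev\rho/4}(E_{**})$. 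Meanwhile, \eqref{eq:feynman1} and the shrunken neighborhood $B_{\sprev\rho/8D_0}(\theta_{**})$ imply $|{\mathbf E}_\pm(\theta) - E_{**}| \leq \sprev\rho/8$, so $\dist({\mathbf E}_\pm,\spec(H^{\Lambda_\pm})\setminus\{{\mathbf E}_\pm\}) \geq 3\sprev\rho/2$, giving $\|R_\perp({\mathbf E}_\pm;H^{\Lambda_\pm})\| \leq \sprev\rho^{-1}$. Hence
\begin{align*}
|\partial_\theta^2 {\mathbf E}_\pm| \leq D_0 + 2D_0^2/\sprev\rho \leq 3D_0^2/\sprev\rho
\end{align*}
on $B_{\sprev\rho/8D_0}(\theta_{**})$, using that $\sprev\rho \ll 1$ and $D_0 \geq 1$.

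To conclude, I would combine the bound above with $r \leq D_0/\nu$ and $1 \leq D_0/\nu$ to obtain
\begin{align*}
\left|\partial_\theta^2({\mathbf E}_+ + r{\mathbf E}_-)\right| \leq (1+r)\cdot 3D_0^2/\sprev\rho \leq 6D_0^3/(\nu\sprev\rho).
\end{align*}
Since the first derivative vanishes at $\theta_{**}$, the mean value theorem gives the claimed estimate
\begin{align*}
\left|\partial_\theta({\mathbf E}_+ + r{\mathbf E}_-)(\theta)\right| \leq \eta\cdot 6D_0^3/(\nu\sprev\rho) < 8D_0^3 \eta/(\sprev\rho\nu)
\end{align*}
for $\theta \in B_\eta(\theta_{**})$, noting $\eta < \sprev\rho/8D_0$ from the hypothesis on $\eta$, so the whole neighborhood lies inside the region where the second-derivative bound holds. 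There is no real obstacle; the only point requiring care is recognizing that the separation needed for the partial-resolvent bound on $H^{\Lambda_\pm}$ is inherited from the block-diagonal separation in \eqref{eq:DRsep2}.
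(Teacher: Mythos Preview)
Your proof is correct and follows essentially the same route as the paper's: both establish that $\partial_\theta(\mathbf{E}_+ + r\mathbf{E}_-)$ vanishes at $\theta_{**}$ by the definition of $r$ and the opposite signs in \eqref{eq:DRtransv}, then use the Feynman formula \eqref{eq:feynman2} together with the eigenvalue separation to bound $|\partial_\theta^2\mathbf{E}_\pm| \leq D_0 + 2D_0^2\sprev\rho^{-1}$, and finally integrate (equivalently, apply the mean value theorem) over $B_\eta(\theta_{**})$ using $1+r \leq 2D_0/\nu$. The only cosmetic difference is that the paper cites \eqref{eq:DRsep1} directly for the partial-resolvent bound on $H^{\Lambda_\pm}$, whereas you derive it more explicitly from \eqref{eq:DRsep2} via the block-diagonal structure; both yield the same estimate.
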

\begin{proof}
By equations \eqref{eq:feynman2} and \eqref{eq:DRsep1}, we have the uniform bound
\begin{align*}
\left|\partial_\theta^2 {\mathbf{E}}_\pm\right| \leq D_0 + 2D_0^2\|R_\perp^{{\Lambda}_\pm}({\mathbf{E}}_\pm)\| \leq D_0 + 2D_0^2{\sprev{\rho}^{-1}}.
\end{align*}
The claimed bound follows by integrating; specifically, since $\partial_\theta {\mathbf{E}}_+$ and $\partial_\theta {\mathbf{E}}_-$ have opposite signs, we have 
\begin{align*}
\partial_\theta {\mathbf{E}}_+(\theta_{**}) + r\partial_\theta {\mathbf{E}}_-(\theta_{**}) = 0,
\end{align*}
and so
\begin{align*}
\left| \partial_\theta \left({\mathbf{E}}_+ + r{\mathbf{E}}_-\right)(\theta)\right| &= \left| \int_{\theta_{**}}^\theta \partial_\theta^2 ({\mathbf{E}}_+ + r{\mathbf{E}}_-) \dd t\right| \\
&\leq \int_{\theta_{**}}^{\theta} \left|\partial_\theta^2 {\mathbf{E}}_+\right| |\dd t| + r\int_{\theta_{**}}^{\theta} \left|\partial_\theta^2 {\mathbf{E}}_-\right| |\dd t| \\
&\leq (1+r)(D_0 + 2D_0^2{\sprev{\rho}^{-1}})\eta \\
&\leq 8D_0^3\frac{\eta}{\sprev{\rho}\nu} \qedhere
\end{align*}
\end{proof}

We now prove Proposition \ref{pr:2ndDerivLwrBds} for $\snext{\mathbf{E}}_\vee$, noting that the case $\snext{\mathbf{E}}_\wedge$ is entirely analogous.  To begin, we use Proposition \ref{pr:AL2} to relate $\partial_\theta \snext{\mathbf{E}}_\vee$ to the Rellich functions from the previous scale:
\begin{lem}
With the notation from Proposition \ref{pr:AL2} and $r$ as above, 
if $|\partial_\theta \snext{\mathbf{E}}_\vee| \leq \nu/12$, then
\begin{align*}
A^2 \geq \frac{5}{12r}, \quad
B^2 \geq \frac{1}{4}.
\end{align*}
\end{lem}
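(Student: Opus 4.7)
The plan is to apply the Feynman formula \eqref{eq:feynman1} to $\snext{\mathbf{E}}_\vee$, expand $\snext{\psi}_\vee$ using the decomposition \eqref{eq:psi+}, exploit the fact that $\psi_+$ and $\psi_-$ are orthogonal (indeed, supported on disjoint sets $\Lambda_+, \Lambda_-$) to eliminate the cross term involving the diagonal multiplication operator $V'$, and then use the preceding lemma to turn the hypothesis $|\partial_\theta \snext{\mathbf{E}}_\vee| \leq \nu/12$ into a sharp constraint on the quantity $A^2(r+1) - 1$.

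Carrying this out, Feynman gives $\partial_\theta \snext{\mathbf{E}}_\vee = \langle \snext{\psi}_\vee, V'\snext{\psi}_\vee\rangle$. Since $V'(\theta)$ is diagonal with entries $v'(\theta+n\alpha)$ and $\psi_+$, $\psi_-$ (extended by zero to $\snext{\Lambda}$) have disjoint supports, the cross term $\langle \psi_+, V'\psi_-\rangle$ vanishes. Applying Feynman again to each $H^{\Lambda_\pm}$ yields
\begin{equation*}
\partial_\theta \snext{\mathbf{E}}_\vee = A^2 \partial_\theta \mathbf{E}_+ + B^2 \partial_\theta \mathbf{E}_- + \mathcal{E},
\end{equation*}
where $\mathcal{E}$ collects the contributions of $\phi_\vee$. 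Cauchy-Schwarz together with $\|\phi_\vee\| \leq 24\delta/\sprev{\rho}$ from Proposition \ref{pr:AL2}, the bound $|v'| \leq D_0$, and the smallness \eqref{eq:eps2} keep $|\mathcal{E}|$ well below $\nu/20$.

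Next, writing $a := \partial_\theta \mathbf{E}_+$, $b := \partial_\theta \mathbf{E}_-$, and $y := -b \geq \nu$, the preceding lemma provides $a = ry + \epsilon_1$ with $|\epsilon_1| \leq 8D_0^3 \eta/(\sprev{\rho}\nu) \leq 2\nu/25$ by the hypothesis $\eta < \nu^2\sprev{\rho}/(100D_0^3)$. Substituting $b = -y$ and $a = ry + \epsilon_1$ into the preceding display, and using $A^2 + B^2 = 1$, rearranges it as
\begin{equation*}
y\bigl(A^2(r+1) - 1\bigr) = \partial_\theta \snext{\mathbf{E}}_\vee - \mathcal{E} - A^2 \epsilon_1.
\end{equation*}
Combining $|\partial_\theta \snext{\mathbf{E}}_\vee| \leq \nu/12$, $|A| \leq 1$, and $y \geq \nu$ yields an explicit bound $|A^2(r+1) - 1| \leq C$ for a numerical constant $C$ that can be arranged strictly below $1/5$.

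The two desired conclusions then follow by algebra: $A^2 \geq (1-C)/(r+1)$ implies $A^2 \geq 5/(12r)$ for every $r \geq 1$ (a direct calculation shows the numerical threshold for the inequality to turn over lies near $r = 15/16$), while $B^2 = 1 - A^2 \geq 1 - (1+C)/(r+1)$ implies $B^2 \geq 1/4$ for every $r \geq 1$. The principal obstacle is pure bookkeeping: the constant $\nu/12$ in the hypothesis is sharp enough that the perturbative errors from $\mathcal{E}$ and $\epsilon_1$ must together remain strictly below roughly $\nu/10$. Ensuring this closes the argument is precisely the purpose of the smallness constants arranged in \eqref{eq:eps2}, \eqref{eq:rho2}, and the hypothesis on $\eta$; no new conceptual input beyond Feynman's formula and the disjoint-support orthogonality of $\psi_\pm$ is needed.
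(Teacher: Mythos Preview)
Your approach is essentially identical to the paper's: apply \eqref{eq:feynman1}, expand via \eqref{eq:psi+}, use disjoint supports of $\psi_\pm$ to kill the cross term, and invoke the preceding lemma to bound the combination $A^2(r+1)-1$ (equivalently the paper's $B^2-rA^2$, which is its negative). The only difference is bookkeeping.

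One numerical slip: the claim ``$C<1/5$ suffices'' is not quite right. At $r=1$ the inequality $(1-C)/(r+1)\geq 5/(12r)$ reads $(1-C)/2\geq 5/12$, i.e.\ $C\leq 1/6$, not $C<1/5$; your stated threshold ``near $r=15/16$'' is also off. Fortunately the parameters \eqref{eq:eps2} and $\eta<\nu^2\sprev{\rho}/(100D_0^3)$ actually deliver $|\mathcal{E}|+|\epsilon_1|<\nu/12$, whence $C<1/6$, and the paper's proof records exactly this bound. With that correction the argument closes.
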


\begin{proof}
We simply apply the Feynman formula \eqref{eq:feynman1} and make the substitutions \eqref{eq:psi+} and \eqref{eq:psi-}, using the fact that ${\psi}_\pm$ are disjointly supported.  Specifically,
\begin{align*}
\partial_\theta \snext{\mathbf{E}}_\vee &= \langle \snext{\psi}_\vee, V'\snext{\psi}_\vee \rangle \\
&= A^2\langle {\psi}_+, V'{\psi}_+ \rangle + B^2 \langle {\psi}_-, V'{\psi}_- \rangle + 2\langle \phi_\vee, V'\snext{\psi}_\vee\rangle - \langle \phi_\vee,V'\phi_\vee \rangle \\
&= A^2 \partial_\theta {\mathbf{E}}_+ + B^2\partial_\theta {\mathbf{E}}_- + 2\langle \phi_\vee, V'\snext{\psi}_\vee\rangle - \langle \phi_\vee,V'\phi_\vee \rangle \\
&= (B^2-rA^2)\partial_\theta {\mathbf{E}}_- + \xi,
\end{align*}
where the error term $\xi$ has
\begin{align*}
|\xi| \leq 96D_0\frac{\delta}{\sprev{\rho}} + 8D_0^3\frac{\eta}{\sprev{\rho}\nu} < \frac{1}{12}\nu
\end{align*}
by Cauchy-Schwarz, Proposition \ref{pr:AL2}, equations \eqref{eq:eps2} and \eqref{eq:rho2}, and the previous lemma.

By assumption, we have $|\partial_\theta {\mathbf{E}}_-| \geq \nu$; thus, if $|\partial_\theta \snext{\mathbf{E}}_\vee| \leq \nu/12$, we have
\begin{align*}
\frac{\nu}{12} > |B^2 - rA^2|\nu - |\xi|,
\end{align*}
and so
\begin{align*}
|B^2 - rA^2| < \frac{1}{12} + \frac{|\xi|}{\nu} < \frac{1}{6}.
\end{align*}
It follows that
\begin{align*}
(1+r)A^2 = 1 - (B^2 -rA^2) \geq 5/6;
\end{align*}
the inequalities on $A^2$ and $B^2$ follow.
\end{proof}

We now proceed to prove Proposition \ref{pr:2ndDerivLwrBds}.

\begin{proof}[Proof of Proposition \ref{pr:2ndDerivLwrBds}]
Let $\theta \in B_\eta(\theta_{**})$.  By expanding the Feynman-type formula \eqref{eq:feynman2}, we have
\begin{align} \label{eq:2ndderiv}
\partial_\theta^2 \snext{\mathbf{E}}_\vee &= \langle \snext{\psi}_\vee, V''\snext{\psi}_\vee \rangle -\frac{2 \langle \snext{\psi}_\wedge, V'\snext{\psi}_\vee\rangle^2}{\snext{\mathbf{E}}_\wedge - \snext{\mathbf{E}}_\vee} - 2\langle R^\Lambda_\perp(\snext{\mathbf{E}}_\vee;H^{\snext{\Lambda}}(\theta), \{\snext{\mathbf{E}}_\vee\}\cup\{\snext{\mathbf{E}}_\wedge\})V'\snext{\psi}_\vee, V'\snext{\psi}_\vee \rangle.
\end{align}
The first term is bounded by $D_0$.  Furthermore, 
since $|\snext{\mathbf{E}}_\vee(\theta)-E_{**}|<\sprev{\rho}/4$, 
the third term is bounded by $8D_0^2{\sprev{\rho}^{-1}}$ by Proposition \ref{pr:AL2}. 
We will show that the second term is large.

By Proposition \ref{pr:AL2} and the previous lemma,
\begin{align*}
\langle \snext{\psi}_\wedge, V'\snext{\psi}_\vee\rangle^2 &= 
\left(AB\left(\langle {\psi}_+,V'{\psi}_+\rangle - \langle {\psi}_-,V'{\psi}_-\rangle\right) + \langle \phi_\wedge,V'\snext{\psi}_\vee\rangle+\langle\snext{\psi}_\wedge,V'\phi_\vee\rangle-\langle\phi_\wedge,V'\phi_\vee\rangle\right)^2\\
&\geq (AB)^2\left(\partial_\theta( {\mathbf{E}}_+ - {\mathbf{E}}_-)\right)^2 -288D_0^2\delta/\sprev{\rho} \\
&\geq \frac{5}{48r}(1+r)^2\nu^2 -288D_0^2\delta/\sprev{\rho} \\
&\geq \frac{5}{48}\nu^2 -288D_0^2\delta/\sprev{\rho} \geq \frac{1}{16}\nu^2,
\end{align*}
where the last inequality follows from \eqref{eq:eps2}.  

It remains to show that the denominator $\snext{\mathbf{E}}_\wedge - \snext{\mathbf{E}}_\vee$ is small.  By Proposition \ref{pr:AL2}, %
\begin{align*}
|A||{\mathbf{E}}_+ - \snext{\mathbf{E}}_\vee| &\leq  |{\mathbf{E}}_+ - \snext{\mathbf{E}}_\vee|\left(|\langle {\psi}_+, \snext{\psi}_\vee \rangle| + |\langle {\psi}_+, \phi_\vee \rangle| \right) \\
&\leq |\langle {\psi}_+, (H^{{\Lambda}_+}-\snext{\mathbf{E}}_\vee)\snext{\psi}_\vee \rangle| + |{\mathbf{E}}_+ - \snext{\mathbf{E}}_\vee||\langle {\psi}_+, \phi_\vee \rangle| \\
&\leq 40\frac{\delta}{\sprev{\rho}}.
\end{align*}
Similarly, $|B||{\mathbf{E}}_+ - \snext{\mathbf{E}}_\wedge| \leq 40\delta/\sprev{\rho}$.  Since
\begin{align*}
\min\{|A|,|B|\} \geq \frac{1}{2\sqrt{r}} \geq \sqrt{\frac{\nu}{D_0}},
\end{align*}
it follows that
\begin{align*}
|\snext{\mathbf{E}}_\vee - \snext{\mathbf{E}}_\wedge| \leq |\snext{\mathbf{E}}_\wedge -{\mathbf{E}}_+| + |{\mathbf{E}}_+ - \snext{\mathbf{E}}_\vee| \leq 80\frac{\delta}{\sprev{\rho}}\sqrt{\frac{D_0}{\nu}}.
\end{align*}

Combining the above estimates, we get
\begin{align*}
\frac{2 \langle \snext{\psi}_\wedge, V'\snext{\psi}_\vee\rangle^2}{|\snext{\mathbf{E}}_\wedge - \snext{\mathbf{E}}_\vee|} &\geq \frac{1}{640\sqrt{D_0}}\frac{\sprev{\rho}\nu^{5/2}}{\delta},
\end{align*} 
and so
\begin{align*}
|\partial_\theta^2 \snext{\mathbf{E}}_\vee| &\geq C\frac{\sprev{\rho}\nu^{5/2}}{\delta}
\end{align*}
with $C = C(v) = (800\sqrt{D_0})^{-1}$, e.g..  The bound for $\partial_\theta^2 \snext{\mathbf{E}}_\wedge$ is similar.  By \eqref{eq:rho2}, the middle term 
in \eqref{eq:2ndderiv} dominates; the sign of 
$\partial_\theta^2\snext{\mathbf{E}}_\vee$ then must match the sign of $\snext{\mathbf{E}}_\vee-\snext{\mathbf{E}}_\wedge$, 
which is positive for $\partial_\theta^2 \snext{\mathbf{E}}_\vee$.
\end{proof}

\subsubsection{Uniform local separation of Rellich functions}

By Proposition \ref{pr:AL2}, $\snext{\psi}_\vee$ and $\snext{\psi}_\wedge$ are simultaneously localized on the interval $\Lambda_{\text{loc}}$, so Lemma \ref{l:evalsep} guarantees Rellich function separation 
\begin{equation} \label{eq:ptwisesep}
\snext{\mathbf{E}}_\vee(\theta) - \snext{\mathbf{E}}_\wedge(\theta) \geq \snext\sigma 
\end{equation}
for each $\theta\in B_{\sprev{\rho}/8D_0}$.  In fact, the Rellich functions are uniformly separated:

\begin{prop}\label{pr:unifmLocalSep}
The two resonant Rellich functions $\snext{\mathbf{E}}_\bullet : B_{\sprev{\rho}/8D_0}(\theta_{**}) \to  B_{\sprev{\rho}/4}(E_{**})$, $\bullet \in \{\vee,\wedge\}$ of $H^{\snext{\Lambda}}$ are uniformly separated for all $\theta \in B_{\sprev{\rho}/8D_0}(\theta_{**})$; specifically,
\begin{align*}
\inf_{B_{\sprev{\rho}/8D_0}(\theta_{**})} \snext{\mathbf{E}}_\vee(\theta) - \sup_{B_{\sprev{\rho}/8D_0}(\theta_{**})} \snext{\mathbf{E}}_\wedge(\theta) \geq \frac{\nu\snext\sigma}{2D_0 + \nu}
\end{align*}
\end{prop}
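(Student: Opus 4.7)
Let $\theta_1, \theta_2 \in \overline{B_{\sprev{\rho}/8D_0}(\theta_{**})}$ be extremizers attaining (on the closure) the infimum of $\snext{\mathbf{E}}_\vee$ and the supremum of $\snext{\mathbf{E}}_\wedge$ respectively, and set $x := |\theta_1 - \theta_2|$.  The plan is to derive two complementary lower bounds on $\snext{\mathbf{E}}_\vee(\theta_1) - \snext{\mathbf{E}}_\wedge(\theta_2)$, and then minimize their maximum over $x \geq 0$.

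\emph{Bound 1 (pointwise separation plus Lipschitz).}  First I would apply the pointwise separation \eqref{eq:ptwisesep} at $\theta_1$ to get $\snext{\mathbf{E}}_\wedge(\theta_1) \leq \snext{\mathbf{E}}_\vee(\theta_1) - \snext\sigma$, and then use the uniform Lipschitz bound $|\partial_\theta \snext{\mathbf{E}}_\wedge| \leq \|V'\|_\infty \leq D_0$ (Feynman \eqref{eq:feynman1}) to replace $\snext{\mathbf{E}}_\wedge(\theta_1) \geq \snext{\mathbf{E}}_\wedge(\theta_2) - D_0 x$.  Chaining yields
\[
\snext{\mathbf{E}}_\vee(\theta_1) - \snext{\mathbf{E}}_\wedge(\theta_2) \geq \snext\sigma - D_0 x.
\]

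\emph{Bound 2 (Cauchy-interlacing against parent curves).}  For the second bound I would invoke a Feshbach / Schur-complement reduction onto the two-dimensional subspace $W = \mathrm{span}(\psi_+, \psi_-)$.  Because $\psi_\pm$ are disjointly supported on $\Lambda_\pm$ separated by $\Lambda_c$, one has the \emph{exact} identities $\langle \psi_\pm, H^{\snext{\Lambda}}\psi_\pm\rangle = {\mathbf{E}}_\pm$ and $\langle \psi_+, H^{\snext{\Lambda}}\psi_-\rangle = 0$; combined with $\|\Gamma \psi_\pm\| \leq 2\delta$ from \eqref{eq:DRloc1} and the resolvent bound \eqref{eq:DRsep1} off $W$, the Schur complement produces an effective $2\times 2$ symmetric matrix
\[
F(\theta;E) = \begin{pmatrix} {\mathbf{E}}_+(\theta) + \sigma_+(\theta;E) & \tau(\theta;E) \\ \tau(\theta;E) & {\mathbf{E}}_-(\theta) + \sigma_-(\theta;E) \end{pmatrix}
\]
with $|\sigma_\pm|, |\tau| \leq 8\delta^2/\sprev\rho$, such that $\snext{\mathbf{E}}_\vee(\theta), \snext{\mathbf{E}}_\wedge(\theta)$ are its eigenvalues at the self-consistent value $E = \snext{\mathbf{E}}_\bullet(\theta)$.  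Writing $a := {\mathbf{E}}_+ + \sigma_+, b := {\mathbf{E}}_- + \sigma_-$, the elementary $2\times 2$ eigenvalue formula gives the interlacing
\[
\snext{\mathbf{E}}_\vee(\theta) \geq \max\{a(\theta),\,b(\theta)\}, \qquad \snext{\mathbf{E}}_\wedge(\theta) \leq \min\{a(\theta),\,b(\theta)\}.
\]
Using the eigenvector-stability bound \eqref{eq:DRevecstab} together with the scale relations \eqref{eq:eps2}, \eqref{eq:rho2}, the corrections $\sigma_\pm$ satisfy $|\partial_\theta \sigma_\pm| \leq \nu/2$, so $a,b$ inherit at least half of the parental transversality \eqref{eq:DRtransv}: $\partial_\theta a \geq \nu/2$ and $\partial_\theta b \leq -\nu/2$.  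Hence $a$ and $b$ cross transversally at a single point $\hat\theta_{**}$ with crossing value $\hat E_{**}$, and $\max\{a,b\}(\theta) \geq \hat E_{**} + (\nu/2)|\theta - \hat\theta_{**}|$ with a symmetric upper bound for $\min\{a,b\}$.  The triangle inequality then yields
\[
\snext{\mathbf{E}}_\vee(\theta_1) - \snext{\mathbf{E}}_\wedge(\theta_2) \geq (\nu/2)(|\theta_1 - \hat\theta_{**}| + |\theta_2 - \hat\theta_{**}|) \geq (\nu/2)\,x.
\]

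\emph{Optimization and main obstacle.}  Combining the two bounds gives $\snext{\mathbf{E}}_\vee(\theta_1) - \snext{\mathbf{E}}_\wedge(\theta_2) \geq \max\{\snext\sigma - D_0 x,\,(\nu/2) x\}$; the minimum over $x \geq 0$ of this maximum occurs at the crossing $x^\ast = 2\snext\sigma/(2D_0 + \nu)$, attaining the value $\nu\snext\sigma/(2D_0 + \nu)$, precisely the claimed bound.  The hard part is Bound 2: rigorously executing the Feshbach/Cauchy-interlacing step with its implicit $E$-dependence, and in particular verifying the derivative control $|\partial_\theta \sigma_\pm| \leq \nu/2$ by carefully tracking $\partial_\theta(\Gamma\psi_\pm)$ (using \eqref{eq:DRevecstab}) through the non-resonant resolvent under the scale constraints.
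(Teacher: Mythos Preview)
Your overall architecture matches the paper's exactly: the same two complementary lower bounds on $\snext{\mathbf{E}}_\vee(\theta_1)-\snext{\mathbf{E}}_\wedge(\theta_2)$, and the same optimization (the paper phrases it as a convex combination rather than a max--min, but the arithmetic is identical). Bound~1 is verbatim the paper's first estimate. The difference is in how Bound~2 is produced. The paper (Lemma~\ref{l:localsepLem}) applies the Cauchy Interlacing Theorem to the \emph{rank-one} compressions $Q_\pm H^{\snext\Lambda}Q_\pm$, where $Q_\pm = I - \psi_\pm\psi_\pm^\top$: each has a single resonant eigenvalue $\lambda_\mp(\theta)$ which, by interlacing, satisfies the \emph{exact} inequality $\snext{\mathbf{E}}_\wedge \leq \lambda_\mp \leq \snext{\mathbf{E}}_\vee$, and a direct Feynman-formula computation gives $|\partial_\theta(\lambda_\pm - {\mathbf{E}}_\pm)| \leq C\delta/\sprev\rho^2 \ll \nu/2$. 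Since $\lambda_\pm$ are honest eigenvalues of fixed ($E$-independent) operators, there is no self-consistency to unwind.

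Your Feshbach route is more delicate than you acknowledge, and the ``implicit $E$-dependence'' you flag is not just a derivative-tracking issue but affects the interlacing step itself. Because $\sigma_\pm$ depends on $E$, the diagonal entries $a,b$ that bound $\snext{\mathbf{E}}_\vee$ from below (evaluated at $E=\snext{\mathbf{E}}_\vee(\theta)$) are not the same functions that bound $\snext{\mathbf{E}}_\wedge$ from above (evaluated at $E=\snext{\mathbf{E}}_\wedge(\theta)$); your triangle-inequality step requires a \emph{single} pair of transversal curves sitting between both branches simultaneously. You would also need to argue that $\snext{\mathbf{E}}_\vee$ is the \emph{larger} eigenvalue of $F(\theta;\snext{\mathbf{E}}_\vee)$ before invoking the $2\times 2$ inequality $\mu_+\geq\max(a,b)$. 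Both can be repaired (e.g.\ define $\lambda_+$ as the fixed point of $E=a(\theta;E)$ and use monotonicity of the Schur complement in $E$), but the paper's rank-one compression buys exact, $E$-free interlacing in a single stroke and is the cleaner path.
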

The proof of this proposition involves a novel argument utilizing the Cauchy Interlacing Theorem:
\begin{thm}[Cauchy Interlacing Theorem]\label{t:cauchinter}
Let $A$ be an $n\times n$ Hermitian matrix, let $m \leq n$, and let $P$ be an $m \times n$ matrix such that $PP^* = I_{m \times m}$.  Let $B = PAP^*$ be a compression of $A$, and denote the (ordered) eigenvalues of $A$ (respectively $B$) by $\alpha_1 \leq \alpha_2 \leq \dots \leq \alpha_n$ (resp., $\beta_1 \leq \beta_2 \leq \dots \leq \beta_m$).  Then
\begin{align*}
\alpha_k \leq \beta_k \leq \alpha_{k+n-m}.
\end{align*}
\end{thm}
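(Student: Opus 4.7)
The plan is to invoke the Courant--Fischer min--max variational characterization of eigenvalues of Hermitian matrices and to leverage the fact that the hypothesis $PP^* = I_{m\times m}$ makes $P^* : \mathbb{C}^m \to \mathbb{C}^n$ an isometric embedding. Once that observation is made, the theorem reduces to the general principle that restricting the family of test subspaces in an extremal formula increases a minimum and decreases a maximum.

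First I would record the key structural fact. The identity $PP^* = I_{m\times m}$ is equivalent to $P^*$ having orthonormal columns, so $\|P^*w\|^2 = \langle w, PP^*w\rangle = \|w\|^2$ for every $w \in \mathbb{C}^m$; in particular $P^*$ is injective, and the map $T \mapsto P^*T$ carries $k$-dimensional subspaces of $\mathbb{C}^m$ bijectively onto $k$-dimensional subspaces of $\mathbb{C}^n$ contained in the range of $P^*$. Moreover, the quadratic form identity
\begin{equation*}
\langle w, Bw\rangle = \langle w, PAP^*w\rangle = \langle P^*w, A P^*w\rangle, \qquad w \in \mathbb{C}^m,
\end{equation*}
transports the Rayleigh quotient of $B$ on $T$ to that of $A$ on the subspace $P^*T \subset \mathbb{C}^n$, via the isometry $P^*|_T$.

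Next I would invoke the two dual forms of Courant--Fischer:
\begin{align*}
\beta_k &= \min_{\dim T = k}\, \max_{\substack{w \in T \\ \|w\|=1}}\langle w, Bw\rangle = \max_{\dim T = m-k+1}\, \min_{\substack{w \in T \\ \|w\|=1}} \langle w, Bw\rangle, \\
\alpha_k &= \min_{\dim S = k}\, \max_{\substack{v \in S \\ \|v\|=1}} \langle v, Av\rangle, \\
\alpha_{k+n-m} &= \max_{\dim S = m-k+1}\, \min_{\substack{v \in S \\ \|v\|=1}} \langle v, Av\rangle,
\end{align*}
where $T$ ranges over subspaces of $\mathbb{C}^m$ and $S$ over subspaces of $\mathbb{C}^n$. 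Using the substitution $S = P^*T$ together with the Rayleigh-quotient identity above, the formulae for $\beta_k$ become optimizations of $\langle v, Av\rangle$ over the restricted class of subspaces of $\mathbb{C}^n$ of the form $P^*T$.

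For the lower bound $\alpha_k \leq \beta_k$, I would compare the two min--max expressions: $\beta_k$ is the minimum of $\max_{v \in S} \langle v, Av\rangle$ over the sub-collection $\{P^*T : \dim T = k\}$ of $k$-dimensional subspaces of $\mathbb{C}^n$, whereas $\alpha_k$ is the minimum over \emph{all} such subspaces; restricting the feasible set of a minimum can only increase its value. The upper bound $\beta_k \leq \alpha_{k+n-m}$ follows symmetrically from the max--min formulae applied to $(m-k+1)$-dimensional subspaces: $\beta_k$ maximizes over a sub-collection while $\alpha_{k+n-m}$ maximizes over the full collection, and restricting a maximum can only decrease it. Combining these yields the interlacing inequalities. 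The only subtle point is correctly pairing the two dual Courant--Fischer formulae for $B$ with the appropriate formulae for $A$ so that the test subspaces have matching dimensions; once this is set up, the argument is entirely a containment-of-feasible-sets comparison, with no quantitative estimate required.
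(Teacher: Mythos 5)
Your proposal is correct and is essentially the same argument as the paper's: both rest on the Courant--Fischer min--max principle, using that $P^*$ is an isometry so that Rayleigh quotients of $B$ on a subspace $T$ equal those of $A$ on $P^*T$, and then comparing against the extremal formulas for $\alpha_k$ and $\alpha_{k+n-m}$. The only cosmetic difference is that the paper exhibits the explicit witness subspaces $P^*\Span\{w_1,\dots,w_k\}$ and $P^*\Span\{w_k,\dots,w_m\}$ built from eigenvectors of $B$, whereas you phrase the same comparison as a containment of feasible sets in the dual min--max and max--min formulas.
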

The interlacing theorem is proven via a standard Min-Max argument, cf. Appendix B.  We will use the interlacing theorem to compare $H^{\snext{\Lambda}}$ to a pair of auxiliary operators, which we now define.  Let ${P}_\pm := {\psi}_\pm{\psi}_\pm^\top$, let ${Q}_\pm := I - {P}_\pm$, and consider the auxiliary operators
\begin{align*}
{H}_\pm &= {Q}_\pm H^{\snext{\Lambda}} {Q}_\pm + {\mathbf{E}}_\pm {P}_\pm.
\end{align*}

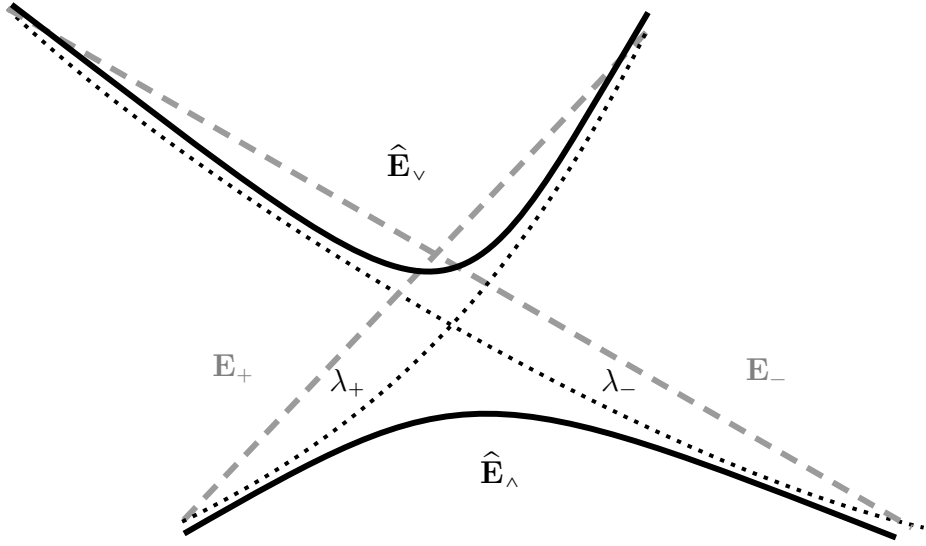
\begin{figure}
    \centering
    \begin{tikzpicture}[x=0.75pt,y=0.75pt,yscale=-1,xscale=1]

\draw [line width=1.5]  [dash pattern={on 1.69pt off 2.76pt}]  (100,30) .. controls (181,110) and (406,256) .. (562,292) ;
\draw [color={rgb, 255:red, 155; green, 155; blue, 155 }  ,draw opacity=1 ][line width=2.25]  [dash pattern={on 6.75pt off 4.5pt}]  (100,30) -- (556,292) ;
\draw [color={rgb, 255:red, 155; green, 155; blue, 155 }  ,draw opacity=1 ][line width=2.25]  [dash pattern={on 6.75pt off 4.5pt}]  (422,42) -- (188,289) ;
\draw [line width=1.5]  [dash pattern={on 1.69pt off 2.76pt}]  (188,289) .. controls (289,240) and (357,171) .. (422,42) ;
\draw [line width=2.25]    (102,28) .. controls (337,207) and (319,207) .. (423,32) ;
\draw [line width=2.25]    (189,295) .. controls (330,214) and (337,214) .. (548,297) ;

\draw (398,210.4) node [anchor=north west][inner sep=0.75pt]    {$\lambda _{-}$};
\draw (471,205.4) node [anchor=north west][inner sep=0.75pt]  [color={rgb, 255:red, 128; green, 128; blue, 128 }  ,opacity=1 ]  {${\mathbf{E}}_{-}$};
\draw (202,203.4) node [anchor=north west][inner sep=0.75pt]  [color={rgb, 255:red, 128; green, 128; blue, 128 }  ,opacity=1 ]  {${\mathbf{E}}_{+}$};
\draw (261,210.4) node [anchor=north west][inner sep=0.75pt]    {$\lambda _{+}$};
\draw (290,97.4) node [anchor=north west][inner sep=0.75pt]    {$\snext{\mathbf{E}}_{\lor }$};
\draw (337,252.4) node [anchor=north west][inner sep=0.75pt]    {$\snext{\mathbf{E}}_{\land }$};

\end{tikzpicture}
    \caption{A cartoon illustration of the interlacing argument in Lemma \ref{l:localsepLem}; the dashed lines represent the current scale Rellich functions ${\mathbf{E}}_\pm$, the full lines are the next scale eigenvalues $\snext{\mathbf{E}}_\bullet$, $\bullet \in \{\vee,\wedge\}$, and the dotted lines are the monotone interlacing curves $\lambda_\pm$ well-approximating ${\mathbf{E}}_\pm$.  The differences between $\lambda_\pm$ and ${\mathbf{E}}_\pm$ are exaggerated for emphasis.}
    \label{f:auxcurves}
\end{figure}

\begin{lem}\label{l:localsepLem}
The compressions ${Q}_\pm H^{\snext{\Lambda}} {Q}_\pm$ have unique eigenvalues $\lambda_\mp: B_{\sprev{\rho}/8D_0}(\theta_{**}) \to B_{\sprev{\rho}/4}(E_{**})$ near $E_{**}$.  These eigenvalues interlace $\snext{\mathbf{E}}_\bullet$, $\bullet \in \{\vee,\wedge\}$: 
\begin{align*}
\snext{\mathbf{E}}_\wedge(\theta) \leq \lambda_\mp(\theta) \leq \snext{\mathbf{E}}_\vee(\theta)
\end{align*}
and are uniformly $C^1$-close to previous-scale eigenvalues; that is, there exists a constant $C = C(v)$ depending only on $v$ so that
\begin{align*}
|\partial_\theta^k(\lambda_\pm - {\mathbf{E}}_\pm)| &\leq C\frac{\delta}{\sprev{\rho}^{2k}}, \quad k=0,1.
\end{align*}
In particular, $\lambda_\pm$ are monotone having different signed derivatives, and 
\begin{align*}
\pm\partial_\theta \lambda_\pm \geq \nu/2
\end{align*}
uniformly in $B_{\sprev{\rho}/8D_0}(\theta_{**})$.
\end{lem}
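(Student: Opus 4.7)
My plan is to pass to the auxiliary operator $H_\pm := Q_\pm H^{\snext\Lambda} Q_\pm + \mathbf{E}_\pm P_\pm$ rather than work directly with the compression. Because $\mathrm{range}(P_\pm) \perp \mathrm{range}(Q_\pm)$, the spectrum of $H_\pm$ splits into the single eigenvalue $\mathbf{E}_\pm$ (with eigenvector $\psi_\pm$) together with that of $Q_\pm H^{\snext\Lambda} Q_\pm$ on the range of $Q_\pm$. A short calculation using $H^{\snext\Lambda}_{\mathcal{P}}\psi_\pm = \mathbf{E}_\pm\psi_\pm$ yields the identity
\begin{equation*}
H_\pm - H^{\snext\Lambda}_{\mathcal{P}} \;=\; Q_\pm\,\Gamma^{\snext\Lambda}_{\mathcal{P}}\,Q_\pm,
\end{equation*}
so $H_\pm$ is a perturbation of $H^{\snext\Lambda}_{\mathcal{P}}$ of norm at most $2\varepsilon$. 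By \eqref{eq:DRsep2}, the only eigenvalues of $H^{\snext\Lambda}_{\mathcal{P}}$ in $B_{7\sprev\rho/4}(E_{**})$ are $\mathbf{E}_\pm$, separated from the rest of the spectrum by at least $13\sprev\rho/8$. Applying Lemma \ref{l:approxevect} (with the approximate eigenvector $\psi_\mp$, noting $\|(H_\pm - \mathbf{E}_\mp)\psi_\mp\| = \|Q_\pm\Gamma^{\snext\Lambda}_{\mathcal{P}}\psi_\mp\|\leq 2\delta$) together with Lemma \ref{l:PQclose} forces $H_\pm$ to have exactly one eigenvalue $\lambda_\mp$ in $B_{\sprev\rho/4}(E_{**})$ distinct from $\mathbf{E}_\pm$, with $|\lambda_\mp - \mathbf{E}_\mp|\leq 2\delta$. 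Its eigenvector $\phi_\mp$ must lie in $\mathrm{range}(Q_\pm)$, so $\lambda_\mp$ is the unique eigenvalue of the compression $Q_\pm H^{\snext\Lambda} Q_\pm$ near $E_{**}$.

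For the interlacing inequality, I will invoke the Cauchy Interlacing Theorem \ref{t:cauchinter} applied to the rank-$(|\snext\Lambda|-1)$ compression: exactly one eigenvalue of $Q_\pm H^{\snext\Lambda} Q_\pm$ falls between each pair of consecutive eigenvalues of $H^{\snext\Lambda}$. By \eqref{eq:rperpperpbd}, $\snext{\mathbf{E}}_\wedge$ and $\snext{\mathbf{E}}_\vee$ are consecutive eigenvalues of $H^{\snext\Lambda}$, so exactly one compression eigenvalue lies in $[\snext{\mathbf{E}}_\wedge,\snext{\mathbf{E}}_\vee]\subset B_{\sprev\rho/4}(E_{**})$; uniqueness from the previous paragraph forces it to be $\lambda_\mp$.

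For the derivative estimates, I apply the Feynman formula \eqref{eq:feynman1} to the smooth family $H_\pm(\theta)$ at $(\lambda_\mp,\phi_\mp)$, writing $\partial_\theta\lambda_\mp = \langle\phi_\mp,(\partial_\theta H_\pm)\phi_\mp\rangle$. Expanding $\partial_\theta H_\pm$ via $\partial_\theta Q_\pm = -(\psi_\pm'\psi_\pm^\top + \psi_\pm(\psi_\pm')^\top)$ and exploiting the orthogonalities $\phi_\mp\perp\psi_\pm$ and $\psi_\pm\perp\psi_\pm'$ (the latter from \eqref{eq:orthderiv}), all contributions besides $\langle\phi_\mp,V'\phi_\mp\rangle$ collapse into products of $\psi_\pm'$ (of norm $\leq 4D_0/\sprev\rho$ by the parent Feynman formula) with the boundary quantity $\langle \Gamma^{\snext\Lambda}_{\mathcal{P}}\psi_\pm,\phi_\mp\rangle$ (of size $\leq 2\delta$ by \eqref{eq:DRloc1}), yielding an error of size $O(D_0\delta/\sprev\rho)$. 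For the main term, $\langle\phi_\mp,V'\phi_\mp\rangle$ differs from $\partial_\theta\mathbf{E}_\mp=\langle\psi_\mp,V'\psi_\mp\rangle$ by at most $2D_0\|\phi_\mp-\psi_\mp\|$; a second application of Lemma \ref{l:approxevect} combined with the \emph{exact} orthogonality $\psi_\mp\perp\psi_\pm$ (which forces the projection of $\psi_\mp$ onto the resonant eigenspace of $H_\pm$ to lie entirely along $\phi_\mp$) gives $\|\phi_\mp-\psi_\mp\|\lesssim \delta/\sprev\rho$. Combining these yields $|\partial_\theta(\lambda_\mp-\mathbf{E}_\mp)|\lesssim\delta/\sprev\rho^2$; the parameter relation \eqref{eq:rho2} then makes this correction smaller than $\nu/2$, so the transversality $\pm\partial_\theta\lambda_\pm\geq\nu/2$ is immediate from the hypothesis $\pm\partial_\theta\mathbf{E}_\pm\geq\nu$.

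The main obstacle is the uniqueness of $\lambda_\mp$ inside $B_{\sprev\rho/4}(E_{**})$: the Cauchy Interlacing Theorem alone does not preclude the neighboring compression eigenvalues $\beta_{k\pm1}$ from also landing in this ball, since it only pins them to the intervals below $\snext{\mathbf{E}}_\wedge$ or above $\snext{\mathbf{E}}_\vee$, whose far endpoints are merely constrained by the distant spectrum of $H^{\snext\Lambda}$. Working through $H_\pm$ instead leverages the much larger resonance-free gap inherited from \eqref{eq:DRsep2}, which is stable under the perturbation $Q_\pm\Gamma^{\snext\Lambda}_{\mathcal{P}} Q_\pm$ of norm $\leq 2\varepsilon$ and thereby delivers uniqueness.
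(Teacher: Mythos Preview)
Your proof is correct and follows the same architecture as the paper's: pass to the auxiliary operator $H_\pm = Q_\pm H^{\snext\Lambda}Q_\pm + \mathbf{E}_\pm P_\pm$, count eigenvalues via comparison with a reference operator, invoke Cauchy interlacing, and then use the Feynman formula for the derivative. The tactical choices differ in two places, and yours are arguably cleaner. First, you compare $H_\pm$ to $H^{\snext\Lambda}_{\mathcal P}$ through the exact identity $H_\pm - H^{\snext\Lambda}_{\mathcal P} = Q_\pm\Gamma^{\snext\Lambda}_{\mathcal P}Q_\pm$ (a perturbation of size $O(\varepsilon)$), whereas the paper compares $H_\pm$ to $H^{\snext\Lambda}$ and obtains $\|H^{\snext\Lambda}-H_\pm\|\leq 4\delta$; both comparisons deliver the required eigenvalue counting, but your identity is tidier and makes the link to the partitioned spectrum in \eqref{eq:DRsep2} more transparent. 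Second, in the derivative step you exploit $\phi_\mp\perp\psi_\pm$ directly to reduce the cross term to $\langle\phi_\mp,\psi_\pm'\rangle\langle\Gamma^{\snext\Lambda}_{\mathcal P}\psi_\pm,\phi_\mp\rangle$, which is $O(D_0\delta/\sprev\rho)$; the paper instead expands $P_\pm'$ via \eqref{eq:feynmanEvec} and bounds $\|P_\pm H^{\snext\Lambda}Q_\pm\varphi_\mp\|$ through $\|\varphi_\mp-\psi_\mp\|$, incurring an extra factor of $\sprev\rho^{-1}$ and arriving at $O(D_0\delta/\sprev\rho^2)$. Your route thus actually produces a bound one power of $\sprev\rho$ sharper than the lemma asserts, though either suffices for the application.
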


\begin{proof}
Since ${Q}_\pm$ and ${P}_\pm$ are orthogonal projections and $(H^{\snext{\Lambda}} - {\mathbf{E}}_\pm){P}_\pm = \Gamma_\mathcal{P}^{\snext\Lambda} {P}_\pm$, we have
\begin{align*}
H^{\snext{\Lambda}} - {H}_\pm &= {Q}_\pm H^{\snext{\Lambda}} {P}_\pm + {P}_\pm H^{\snext{\Lambda}} {Q}_\pm + {P}_\pm(H^{\snext{\Lambda}} - {\mathbf{E}}_\pm){P}_\pm \\
&= {Q}_\pm(H^{\snext{\Lambda}} - {\mathbf{E}}_\pm) {P}_\pm + {P}_\pm (H^{\snext{\Lambda}} - {\mathbf{E}}_\pm) {Q}_\pm + {P}_\pm(H^{\snext{\Lambda}} - {\mathbf{E}}_\pm){P}_\pm \\
&= {Q}_\pm \Gamma_\mathcal{P}^{\snext\Lambda} {P}_\pm + {P}_\pm \Gamma_\mathcal{P}^{\snext\Lambda} {Q}_\pm + {P}_\pm\Gamma_\mathcal{P}^{\snext\Lambda} {P}_\pm \\
&= \Gamma_\mathcal{P}^{\snext\Lambda} {P}_\pm + {P}_\pm \Gamma_\mathcal{P}^{\snext\Lambda} {Q}_\pm
\end{align*}
By \eqref{eq:DRloc1}, we have $\|\Gamma_\mathcal{P}^{\snext\Lambda}{P}_\pm\| \leq 2\delta$, and so
\begin{align*}
\|H^{\snext{\Lambda}} - {H}_\pm\| \leq 4\delta.
\end{align*}
It follows from the Min-Max principle that any eigenvalue of $H^{\snext{\Lambda}}$ must be within $4\delta$ of an eigenvalue of ${H}_\pm$, and conversely.  In particular, ${H}_\pm$ each have at least two eigenvalues in $B_{\frac14\sprev{\rho}+4\delta}(E_{**})$ and at most two eigenvalues in $B_{\frac32\sprev{\rho}-4\delta}(E_{**})$.  One of those eigenvalues must be ${\mathbf{E}}_\pm$ by definition; the other is an eigenvalue of ${Q}_\pm H^{\snext{\Lambda}} {Q}_\pm$.  

Denote by $\lambda_\mp$ the unique eigenvalue of ${Q}_\pm H^{\snext{\Lambda}} {Q}_\pm$ in $B_{\frac14\sprev{\rho}+4\delta}(E_{**})$.  By Cauchy Interlacing, the eigenvalues of ${Q}_\pm H^{\snext{\Lambda}} {Q}_\pm$ interlace those of $H^{\snext{\Lambda}}$.  Since ${Q}_\pm H^{\snext{\Lambda}} {Q}_\pm$ has precisely one eigenvalue in $B_{\frac14\sprev{\rho}+4\delta}(E_{**})$, it must lie between $\snext{\mathbf{E}}_\vee$ and $\snext{\mathbf{E}}_\wedge$; that is,
\begin{align*}
\snext{\mathbf{E}}_\wedge(\theta) \leq \lambda_\mp(\theta) \leq \snext{\mathbf{E}}_\vee(\theta).
\end{align*}

We will show now that $\lambda_\mp$ is $C^1$-close to ${\mathbf{E}}_\mp$.  By \eqref{eq:DRloc1}, we have
\begin{align*}
\|({Q}_\pm H^{\snext{\Lambda}} {Q}_\pm - {\mathbf{E}}_\mp){\psi}_\mp\| &= \|{Q}_\pm(H^{\snext{\Lambda}} - {\mathbf{E}}_\mp){\psi}_\mp\| \\
&= \|{Q}_\pm\Gamma_\mathcal{P}^{\snext\Lambda} {\psi}_\mp\| \leq 2\delta.
\end{align*} 
Let $\varphi_\mp$ denote the unit eigenvector of ${H}_\pm$ corresponding to $\lambda_\mp$.
Since $\lambda_\mp$ is the unique eigenvalue of ${Q}_\pm H^{\snext{\Lambda}} {Q}_\pm$ in $B_{\frac32\sprev{\rho}-4\delta}(E_{**}) \supset B_{\frac54\sprev{\rho}-4\delta}({\mathbf{E}}_\mp)$, it follows from Lemma \ref{l:approxevect} that we have
\begin{align*}
    |\lambda_\mp - {\mathbf{E}}_\mp| \leq 2\delta
\end{align*}
and (up to a choice of sign for $\varphi_\pm$) 
\begin{align*}
\|\varphi_\mp - {\psi}_\mp\| \leq \frac{2\sqrt{2}\delta}{\frac54\sprev{\rho} - 4\delta} \leq \frac{4\delta}{\sprev{\rho}}.
\end{align*}

By the Feynman formula \eqref{eq:feynman1}, we have
\begin{align*}
\partial_\theta \lambda_\mp &= \langle \varphi_\mp, \partial_\theta({Q}_\pm H^{\snext{\Lambda}}{Q}_\pm) \varphi_\mp \rangle \\
&= \langle \varphi_\mp, {Q}_\pm V' {Q}_\pm \varphi_\mp \rangle - 2\langle \varphi_\mp, {P}_\pm' H^{\snext{\Lambda}} {Q}_\pm\varphi_\mp \rangle.
\end{align*}
Since ${Q}_\pm{\psi}_\mp = {\psi}_\mp$, we have
\begin{align*}
|\langle \varphi_\mp, {Q}_\pm V'{Q}_\pm\varphi_\mp\rangle - \partial_\theta {\mathbf{E}}_\mp| &= |\langle \varphi_\mp, {Q}_\pm V'{Q}_\pm\varphi_\mp\rangle - \langle {\psi}_\mp, {Q}_\pm V'{Q}_\pm {\psi}_\mp \rangle| \\
&= |\langle \varphi_\mp - {\psi}_\mp, {Q}_\pm V'{Q}_\pm\varphi_\mp\rangle + \langle {\psi}_\mp, {Q}_\pm V'{Q}_\pm (\varphi_\mp -{\psi}_\mp) \rangle| \\
&\leq 2\|V'\|\|\varphi_\mp - {\psi}_\mp\| \\
&\leq 8D_0\frac{\delta}{\sprev{\rho}}.
\end{align*}
It remains to show that $|\langle \varphi_\mp, {P}_\pm'H^{\snext{\Lambda}} {Q}_\pm \varphi_\mp \rangle|$ is small.  By \eqref{eq:feynmanEvec} we have
\begin{align*}
-{P}_\pm' &= R_\perp^{{\Lambda}_\pm}({\mathbf{E}}_\pm)V' {P}_\pm + {P}_\pm V' R_\perp^{{\Lambda}_\pm}({\mathbf{E}}_\pm)
\end{align*}
On the one hand, we have
\begin{align*}
\|{P}_\pm H^{\snext{\Lambda}} {Q}_\pm \varphi_\mp \| &= \|{P}_\pm(H^{\snext{\Lambda}} - {\mathbf{E}}_\mp) {Q}_\pm \varphi_\mp \| \\
&= \|{P}_\pm(H^{\snext{\Lambda}} - {\mathbf{E}}_\mp) {Q}_\pm (\varphi_\mp - {\psi}_\mp) + {P}_\pm\Gamma_\mathcal{P}^{\snext\Lambda} {\psi}_\mp \| \\
&\leq 24\frac{\delta}{\sprev{\rho}},
\end{align*}
and so
\begin{align*}
|\langle \varphi_\mp, R_\perp^{{\Lambda}_\pm}V'{P}_\pm H^{\snext\Lambda}{Q}_\pm\varphi_\mp \rangle| &= |\langle V' R_\perp^{{\Lambda}_\pm}\varphi_\mp, {P}_\pm H^{\snext\Lambda}{Q}_\pm\varphi_\mp \rangle| \\
&\leq \|V'\|\|R_\perp^{{\Lambda}_\pm}\| \|{P}_\pm H^{\snext\Lambda}{Q}_\pm\varphi_\mp\| \\
&\leq 96D_0\frac{\delta}{\sprev{\rho}^2}.
\end{align*}
On the other hand, since ${P}_\pm {\psi}_\mp = 0$, we have
\begin{align*}
|\langle \varphi_\mp, {P}_\pm V' R_\perp^{{\Lambda}_\pm} H^{\snext{\Lambda}} {Q}_\pm \varphi_\mp \rangle | &= |\langle \varphi_\mp - {\psi}_\mp, {P}_\pm V' R_\perp^{{\Lambda}_\pm} H^{\snext{\Lambda}} {Q}_\pm \varphi_\mp \rangle | \\
&\leq \|\varphi_\mp - {\psi}_\mp\| \|V'\|\|R_\perp^{{\Lambda}_\pm}\| \|H^{\snext{\Lambda}} {Q}_\pm \varphi_\mp\| \\
&\leq 48D_0\frac{\delta}{\sprev{\rho}^2}
\end{align*}
Bringing it all together, it follows that
\begin{align*}
|\partial_\theta \lambda_\mp - \partial_\theta {\mathbf{E}}_\mp| &\leq 400D_0\frac{\delta}{\sprev{\rho}^2};
\end{align*}
the other claims follow.
\end{proof}

\begin{proof}[Proof of Proposition \ref{pr:unifmLocalSep}]
By the previous lemma, we have separated the Rellich functions $\snext{\mathbf{E}}_\vee$ and $\snext{\mathbf{E}}_\wedge$ by two transverse curves.  We use the quantitative transversality to now derive the size of the gap.

Let $\theta_\bullet$ denote the minimizing/maximizing values of $\theta$ for $\snext{\mathbf{E}}_\bullet$, $\bullet \in \{\vee,\wedge\}$, and let $h = |\theta_\vee - \theta_\wedge|$.  
By \eqref{eq:ptwisesep}, we have
\begin{align*}
\snext{\mathbf{E}}_\vee(\theta_\vee) - \snext{\mathbf{E}}_\wedge(\theta_\wedge) \geq \snext{\mathbf{E}}_\vee(\theta_\vee) - \snext{\mathbf{E}}_\wedge(\theta_\vee) - |\snext{\mathbf{E}}_\wedge(\theta_\vee) - \snext{\mathbf{E}}_\wedge(\theta_\wedge)| \geq \snext\sigma - D_0h,
\end{align*}
which is an effective bound for small $h$.
On the other hand, %
we have by transversality of the bounding curves $\lambda_\pm$
\begin{align*}
\snext{\mathbf{E}}_\vee(\theta_\vee) - \snext{\mathbf{E}}_\wedge(\theta_\wedge) \geq \frac{\nu}{2}|\theta_\vee - \theta_\wedge| = \frac{\nu h}2,
\end{align*}
which is an effective bound for large $h$.
Taking a convex combination of the two bounds yields
\begin{align*}
\snext{\mathbf{E}}_\vee(\theta_\vee) - \snext{\mathbf{E}}_\wedge(\theta_\wedge) &\geq \frac{\nu}{2D_0 + \nu}(\snext\sigma - D_0h) + \frac{2D_0}{2D_0 + \nu}\frac{\nu h}{2} \\
&\geq \frac{\nu\snext\sigma}{2D_0 + \nu},
\end{align*}
which was the claim.
\end{proof}

\subsubsection{Simple resonance of $\snext{\mathbf{E}}_\vee$ and $\snext{\mathbf{E}}_\wedge$ away from $\theta_{**}$}

In this section, we will show the following:
\begin{lem}\label{l:DRedgeSR}
Let $\theta_{*} \in B_{\sprev{\rho}/8D_0}(\theta_{**})\setminus B_{\eta}(\theta_{**})$, and suppose $E_{*} = {\mathbf{E}}_\pm(\theta_{*})$.  Then $H^{\snext{\Lambda}}$ satisfies Assumption \ref{as:SR} for $(\theta_*, E_*)$ with ${\Lambda} = {\Lambda}_\pm$, $\rho, \delta$ as above, and Green's function decay parameters $\salt{\ell} \geq 16|\log\varepsilon||\log\rho|$ and $\salt{\gamma} = \gamma - 6|\log\varepsilon||\log\rho|/\salt{\ell}$ as in Lemma \ref{l:NRdecay}.

In particular, 
for $\bullet \in \{\vee,\wedge\}$, 
there exists a constant $C = C(v)$ depending only on $v$ such that we have
\begin{align*}
    \left|\partial_\theta^k(\snext{\mathbf{E}}_\bullet - {\mathbf{E}}_\bullet)\right| \leq C\frac{\delta}{\rho^k}, \quad 0 \leq k \leq 2, 
\end{align*}
and 
\begin{align*}
    \|R_\perp(E;H^{\snext{\Lambda}}(\theta),\{\snext{\mathbf{E}}_\bullet(\theta)\})\| \leq 4\rho^{-1}
\end{align*}
uniformly for $\theta \in B_{\sprev{\rho}/8D_0}(\theta_{**})\setminus B_{\eta}(\theta_{**})$ 
and $E \in B_\rho(\snext{\mathbf{E}}_\bullet(\theta))$.
\end{lem}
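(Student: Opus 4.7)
The plan is to verify each of the five hypotheses of Assumption \ref{as:SR} for $H^{\snext\Lambda}$ with core $\Lambda = \Lambda_\pm$ and partition $\mathcal{P}'$ obtained by merging the non-core pieces of the DR partition $\mathcal{P}$ into new left and right shoulders; once this is done, the stated conclusions follow immediately from Propositions \ref{pr:AL1} and \ref{pr:interscaleapprox1}. Conditions (1), (2), and (5) of Assumption \ref{as:SR} transfer essentially verbatim from their DR analogues: the parent Rellich pair $(\mathbf{E}_\pm, \psi_\pm)$ supplies (1), the boundary value bounds \eqref{eq:DRloc1}--\eqref{eq:DRloc2} supply (2), and \eqref{eq:DRevecstab} supplies (5), with the SR scale $\rho \leq \sprev\rho$ only improving the $\sprev\rho^{-1}$ factors.

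The substantive work is in verifying (3) and (4), and the key new input is the $\eta$-separation from $\theta_{**}$. By transversality \eqref{eq:DRtransv} together with $\mathbf{E}_\pm(\theta_{**}) = E_{**}$, integration yields $|\mathbf{E}_+(\theta_*) - \mathbf{E}_-(\theta_*)| \geq 2\nu|\theta_* - \theta_{**}| \geq 2\nu\eta$, so for $\theta \in B_{\rho/8D_0}(\theta_*)$ and $E \in B_{3\rho/2}(E_*)$ we obtain
\begin{align*}
|\mathbf{E}_\mp(\theta) - E| \geq 2\nu\eta - \tfrac{\rho}{8} - \tfrac{3\rho}{2} \gtrsim \rho,
\end{align*}
where we use $\rho < \tfrac{8}{9}\nu\eta$. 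Combined with \eqref{eq:DRsep2} (which keeps the remaining eigenvalues of $H^{\Lambda_\mp}$ at distance $\gtrsim \sprev\rho$ from $E$), this produces a resolvent bound $\|R^{\Lambda_\mp}(E)\| \lesssim \rho^{-1}$ uniformly in the relevant $(\theta, E)$-window.

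For condition (3), we combine the DR Green's function decay \eqref{eq:DRdec} on $\Lambda_l$ and $\Lambda_c$ (at rate $\gamma$ and threshold $\ell$) with off-diagonal decay on $\Lambda_\mp$ extracted from the above resolvent bound, and glue via Lemma \ref{l:NRdecay} to obtain decay on the enlarged SR-shoulder $\Lambda_l \cup \Lambda_\mp \cup \Lambda_c$ at the advertised threshold $\salt\ell \geq 16|\log\varepsilon||\log\rho|$ and rate $\salt\gamma = \gamma - 6|\log\varepsilon||\log\rho|/\salt\ell$. Condition (4) then follows as a corollary: boundedness of $R^{\Lambda_l \cup \Lambda_\mp \cup \Lambda_c}(E)$ on $B_{3\rho/2}(E_*)$ precludes eigenvalues there, while \eqref{eq:DRsep2} places all eigenvalues of the opposite SR-shoulder at distance $\gtrsim \sprev\rho \gg \rho$ from $E_*$; hence $\mathbf{E}_\pm$ is the unique eigenvalue of $H^{\snext\Lambda}_{\mathcal{P}'}$ in $B_{7\rho/4}(E_*)$, giving both \eqref{eq:SRsep2} and the partial resolvent estimate \eqref{eq:SRsep}.

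The principal obstacle is condition (3): we must carefully convert the mere $O(\rho^{-1})$ resolvent bound on the interior subinterval $\Lambda_\mp$ into genuine exponential off-diagonal decay compatible with the existing decay on $\Lambda_l$ and $\Lambda_c$, and ensure that the output rate $\salt\gamma$ degrades only mildly from $\gamma$. This is precisely the content of the gluing Lemma \ref{l:NRdecay} in Appendix A, whose output parameters $(\salt\ell, \salt\gamma)$ are designed to match this use case; once invoked, the remaining verifications are cosmetic and the conclusions of the lemma follow by direct application of Propositions \ref{pr:AL1} and \ref{pr:interscaleapprox1} to $(H^{\snext\Lambda}, \mathcal{P}', \theta_*, E_*)$.
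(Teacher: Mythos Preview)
Your proposal is correct and follows essentially the same route as the paper. The paper likewise transfers items (1), (2), (5) of Assumption~\ref{as:SR} directly from their DR analogues, uses the transversality estimate $|\mathbf{E}_-(\theta_*)-\mathbf{E}_+(\theta_*)|\ge 2\nu\eta$ together with $\rho<\tfrac{8}{9}\nu\eta$ to separate $E_*$ from the opposite-sign eigenvalue, and then invokes Lemma~\ref{l:NRdecay} to obtain $(\salt\ell,\salt\gamma)$ decay on the enlarged shoulder. The only organizational difference is that the paper first observes that the full opposite shoulder $\overline\Lambda_r=\Lambda_c\cup\Lambda_+\cup\Lambda_r$ itself satisfies Assumption~\ref{as:SR} at the coarser scale $\sprev\rho$, so that Proposition~\ref{pr:AL1} furnishes a single eigenvalue $\overline{\mathbf{E}}_+$ of $H^{\overline\Lambda_r}$ with $|\overline{\mathbf{E}}_+-\mathbf{E}_+|\le 2\delta$; the separation $|E_*-\overline{\mathbf{E}}_+|>2\rho$ then yields both item~(4) and the resolvent bound feeding Lemma~\ref{l:NRdecay} in one stroke, whereas you bound $\|R^{\Lambda_\mp}\|$ first and then glue.
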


\begin{proof}
Items 1, 2, and 5 are immediate by the definition of ${\Lambda}={\Lambda}_\pm$ and $E_* = {\mathbf{E}}_\pm(\theta_*)$.

Suppose that ${\Lambda} = {\Lambda}_- < {\Lambda}_+$ (the other cases are completely analogous), and consider now the interval $\overline{\Lambda}_r  := {\Lambda}_c \cup {\Lambda}_+ \cup {\Lambda}_r$.  To verify items 3 and 4, we need to verify that $\overline{\Lambda}_r$ has the Green's function decay property, and that $H^{\overline{\Lambda}_r}$ has no eigenvalues in $B_{7\rho/4}(E_*) = B_{7\rho/4}({\mathbf{E}}_-(\theta_*))$.

To see the eigenvalue separation, notice that, by Assumption \ref{as:DR}, $H^{\overline{\Lambda}_r}$ satisfies Assumption \ref{as:SR} %
with $\sprev{\rho}$ replacing $\rho$.
Thus, $H^{\overline{\Lambda}_r}$ has a unique eigenvalue $\overline{\mathbf{E}}_+(\theta): B_{\sprev{\rho}/8D_0}(\theta_*) \to B_{3\sprev{\rho}/2}(E_{**})$ such that
\begin{align*}
    |\overline{\mathbf{E}}_+(\theta) - {\mathbf{E}}_+(\theta)| \leq 2\delta.
\end{align*}
The necessary separation follows from the transversality of ${\mathbf{E}}_\pm$ and the definition of $\rho$; indeed, since $\eta > 9\rho/8\nu$, for $|\theta - \theta_*| < \rho/8D_0$ we have
\begin{align*}
    |E_* - \overline{\mathbf{E}}_+(\theta)| &\geq |{\mathbf{E}}_-(\theta_*) - {\mathbf{E}}_+(\theta_*)| - D_0|\theta_*-\theta| - |{\mathbf{E}}_+(\theta) - \overline{\mathbf{E}}_+(\theta)| \\
    &\geq 2\nu\eta - \rho/8 - 2\delta > 2\rho.
\end{align*}

It remains to verify Item 3 for $\salt{\gamma}$ and $\salt{\ell}$.  The Green's function $R^{\Lambda_l}$ has $(\ell,\gamma)$ decay (and thus $(\salt{\ell}, \salt{\gamma})$ decay) by assumption.  On the other hand, 
for $|E-E_*|<\frac32\rho$, the bound we just established implies $\|R^{\overline{\Lambda}_r}_{\theta,E}\| \leq 2\rho^{-1}$; thus, 
the interval $\overline{\Lambda}_r$ satisfies Assumption \ref{as:NR}, and so $R^{\overline{\Lambda}_r}$ also has $(\salt{\ell}, \salt{\gamma})$ decay by Lemma \ref{l:NRdecay}.
\end{proof}

\subsubsection{Proof of Theorem \ref{t:DRRelFns}}

\begin{proof}[Proof of Theorem \ref{t:DRRelFns}]
First, by \eqref{eq:feynman1} and \eqref{eq:feynman2}, we have
\begin{align*}
    |\partial_\theta \snext{\mathbf{E}}_\bullet| + |\partial_\theta^2 \snext{\mathbf{E}}_\bullet| \leq D_0 + D_0 + 2D_0^2\|R^\Lambda_\perp(\snext{\mathbf{E}}_\bullet)\|. 
\end{align*}
By Proposition \ref{pr:AL2} and Lemma \ref{l:evalsep}, for all $\theta \in B_{\sprev{\rho}/8D_0}(\theta_{**})$, we have
\begin{align*}
    \|R^\Lambda_\perp(\snext{\mathbf{E}}_\bullet)\| \leq \snext\sigma^{-1}.
\end{align*}
The Morse upper bound $D$ for $\snext{\mathbf{E}}_\bullet$ follows immediately.

The Morse lower bound $d$ for $\snext{\mathbf{E}}_\bullet$ for $\theta \in B_{\sprev{\rho}/8D_0}(\theta_{**}) \setminus B_\eta(\theta_{**})$ follows from Lemma \ref{l:DRedgeSR} and the transversality assumption on $\partial_\theta {\mathbf{E}}_\pm$; specifically,
\begin{align*}
    |\partial_\theta \snext{\mathbf{E}}_\bullet| \geq |\partial_\theta {\mathbf{E}}_\bullet| - |\partial_\theta (\snext{\mathbf{E}}_\bullet - {\mathbf{E}}_\bullet)| \geq \nu - C\delta\sprev{\rho}^{-1} \geq \nu/12
\end{align*}
For $\theta \in B_\eta(\theta_{**})$, the Morse lower bound follows from Proposition \ref{pr:2ndDerivLwrBds} and the fact that
\begin{align*}
    C\frac{\sprev{\rho}\nu^{5/2}}{\delta} \geq \nu/12.
\end{align*}

That $\snext{\mathbf{E}}_\bullet$ can each have at most one critical point in $B_{\sprev{\rho}/8D_0}(\theta_{**})$ follows again from Proposition \ref{pr:2ndDerivLwrBds}, Lemma \ref{l:DRedgeSR}, and continuity of the second derivative.  In particular, by Lemma \ref{l:DRedgeSR}, $\snext{\mathbf{E}}_\bullet$ can only have critical points in $B_\eta(\theta_{**})$, and by Proposition \ref{pr:2ndDerivLwrBds}, the concavity of $\snext{\mathbf{E}}_\bullet$ at any such critical point is uniquely determined.  %

Items 2 and 3 were shown in Proposition \ref{pr:unifmLocalSep} and Lemma \ref{l:DRedgeSR}, respectively.
\end{proof}

\subsubsection{Approximation of $\snext{\mathbf{E}}_\vee$ and $\snext{\mathbf{E}}_\wedge$ by previous scale}
To conclude this section, we note that the Rellich functions $\snext{\mathbf{E}}_\vee,\snext{\mathbf{E}}_\wedge$ are uniformly close (on order $\delta$) to the previous-scale functions ${\mathbf{E}}_\vee,{\mathbf{E}}_\wedge$; this implies upper bounds on the size of the vertical gap between the two Rellich functions and on the horizontal deviation of the Rellich functions' critical points from the center of resonsance $\theta_{**}$.
Since $\delta \gg \snext\sigma$, these estimates are not fine enough to contribute meaningfully to the lower bounds on the second derivative and the size of the gap. Moreover, these results are not necessary to prove our Main Theorem; indeed, larger gaps only help localization and Cantor spectrum, as we outlined in the introduction. Nevertheless, we include these results to provide a more complete picture of the local Rellich function structure.

\begin{prop} \label{pr:drclose}
For all $\theta \in B_{\sprev{\rho}/8D_0}(\theta_{**})$, we have 
\begin{equation} \label{eq:drclose}
|\snext{\mathbf{E}}_\bullet(\theta) - {\mathbf{E}}_\bullet(\theta)| \leq 4\delta, \quad \bullet \in \{\vee,\wedge\}.
\end{equation}
\end{prop}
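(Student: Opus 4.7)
The plan is to construct a block-diagonal auxiliary operator $\tilde H$ that is $2\sqrt{2}\,\delta$-close to $H^{\snext{\Lambda}}$ in norm and whose resonant spectrum is exactly $\{{\mathbf{E}}_+,{\mathbf{E}}_-\}$, then to apply Weyl's inequality with an ordered eigenvalue pairing.

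Let $W:=\Span\{{\psi}_+,{\psi}_-\}$ with orthogonal projection $P_W$. The hypothesis $\dist({\Lambda}_-,{\Lambda}_+)\geq\max_\pm|{\Lambda}_\pm|$ together with the fact that $\Gamma^{\snext{\Lambda}}_{\mathcal{P}}{\psi}_\pm$ is supported strictly outside ${\Lambda}_\pm$ (one site into each neighboring block of $\mathcal{P}$) yields
\[
\langle{\psi}_\pm,H^{\snext{\Lambda}}{\psi}_\pm\rangle={\mathbf{E}}_\pm,\qquad \langle{\psi}_+,H^{\snext{\Lambda}}{\psi}_-\rangle=0,
\]
so the compression $P_W H^{\snext{\Lambda}} P_W\big|_W$ equals $\diag({\mathbf{E}}_+,{\mathbf{E}}_-)$ exactly. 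Setting $\tilde H:=P_W H^{\snext{\Lambda}} P_W+(I-P_W)H^{\snext{\Lambda}}(I-P_W)$, the difference $H^{\snext{\Lambda}}-\tilde H$ is supported in the $(W,W^\perp)$ off-diagonal block. For $v=\alpha{\psi}_++\beta{\psi}_-\in W$ with $\|v\|=1$, the same disjoint-support observation gives $P_W\Gamma^{\snext{\Lambda}}_{\mathcal{P}}{\psi}_\pm=0$, so $(I-P_W)H^{\snext{\Lambda}}v=\alpha\,\Gamma^{\snext{\Lambda}}_{\mathcal{P}}{\psi}_++\beta\,\Gamma^{\snext{\Lambda}}_{\mathcal{P}}{\psi}_-$, which by Cauchy--Schwarz and \eqref{eq:DRloc1} has norm at most $2\delta(|\alpha|+|\beta|)\leq 2\sqrt{2}\,\delta$. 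Hence $\|H^{\snext{\Lambda}}-\tilde H\|\leq 2\sqrt{2}\,\delta$.

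Weyl's inequality then gives $|\mu_k(H^{\snext{\Lambda}})-\mu_k(\tilde H)|\leq 2\sqrt{2}\,\delta$ in ordered pairing. The final ingredient is identifying the two resonant eigenvalues of $\tilde H$ with $\{{\mathbf{E}}_+,{\mathbf{E}}_-\}$: by \eqref{eq:rperpperpbd} the only eigenvalues of $H^{\snext{\Lambda}}$ in $B_{3\sprev{\rho}/2}(E_{**})$ are $\snext{\mathbf{E}}_\vee,\snext{\mathbf{E}}_\wedge$, so Weyl's count forces $\tilde H$ to have at most two eigenvalues in $B_{\sprev{\rho}}(E_{**})$; since $|{\mathbf{E}}_\pm-E_{**}|\leq \sprev{\rho}/8$ by \eqref{eq:feynman1}, these are exhausted by $\{{\mathbf{E}}_+,{\mathbf{E}}_-\}$, excluding any eigenvalue of the complementary compression $(I-P_W)H^{\snext{\Lambda}}(I-P_W)$ from the resonant window. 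The monotone Weyl matching then pairs $\snext{\mathbf{E}}_\vee\leftrightarrow{\mathbf{E}}_\vee$ and $\snext{\mathbf{E}}_\wedge\leftrightarrow{\mathbf{E}}_\wedge$, yielding $|\snext{\mathbf{E}}_\bullet-{\mathbf{E}}_\bullet|\leq 2\sqrt{2}\,\delta<4\delta$. The principal technical step is this counting/matching argument: without the $R_\perp$ gap \eqref{eq:rperpperpbd}, one could not exclude a stray eigenvalue of $(I-P_W)H^{\snext{\Lambda}}(I-P_W)$ intruding into the resonant window and mispairing the Weyl correspondence.
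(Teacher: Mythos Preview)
Your proof is correct but takes a different route from the paper's. The paper applies its preliminary Lemmas \ref{l:PQclose} and \ref{l:approxevect} directly: from $\|(H^{\snext\Lambda}-{\mathbf{E}}_\pm)P_\pm\|\leq 2\delta$ it locates an eigenvalue in each of $\overline{B}_{2\delta}({\mathbf{E}}_\vee)$ and $\overline{B}_{2\delta}({\mathbf{E}}_\wedge)$, then splits into cases according to whether these balls are disjoint; in the overlapping case it uses the rank-two projection $P_++P_-$ together with Lemma \ref{l:PQclose} to force two eigenvalues into $[{\mathbf{E}}_\wedge-4\delta,{\mathbf{E}}_\vee+4\delta]$ and matches the larger/smaller to $\snext{\mathbf{E}}_\vee/\snext{\mathbf{E}}_\wedge$.

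Your auxiliary-operator-plus-Weyl argument is a clean alternative. It exploits the exact disjoint-support identities $\langle\psi_\pm,\Gamma_\mathcal{P}^{\snext\Lambda}\psi_\pm\rangle=\langle\psi_+,\Gamma_\mathcal{P}^{\snext\Lambda}\psi_-\rangle=0$ to make the $W$-compression exactly diagonal, which the paper's approach does not need; in return you avoid the case split and obtain a slightly better constant $2\sqrt{2}\,\delta$ (in fact $2\delta$, since $\Gamma_\mathcal{P}^{\snext\Lambda}\psi_+$ and $\Gamma_\mathcal{P}^{\snext\Lambda}\psi_-$ have disjoint supports under the hypothesis $\dist(\Lambda_-,\Lambda_+)\geq\max_\pm|\Lambda_\pm|\geq 2$). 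Your method is also closer in spirit to the paper's own Lemma \ref{l:localsepLem}, where a similar auxiliary operator is used for the interlacing step. Both arguments ultimately rely on Proposition \ref{pr:AL2} (equivalently \eqref{eq:rperpperpbd}) to ensure exactly two resonant eigenvalues, so that the ordered Weyl pairing lands on ${\mathbf{E}}_\vee,{\mathbf{E}}_\wedge$ rather than on a stray eigenvalue of the complementary compression.
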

\begin{proof}
Fix $\theta \in B_{\sprev{\rho}/8D_0}(\theta_{**})$. 
Letting ${P}_\pm = {\psi}_\pm{\psi}_\pm^\top$, we have $\|(H^{\snext{\Lambda}}-{\mathbf{E}}_\pm){P}_\pm\| = \|\Gamma^{\snext\Lambda}_{\mathcal P} {P}_\pm\| \leq 2\delta$ by \eqref{eq:DRloc1}. Thus, by Lemma \ref{l:approxevect}, 
$H^{\snext{\Lambda}}$ must have an eigenvalue in $\overline B_{2\delta}({\mathbf{E}}_\vee)$ and in $\overline B_{2\delta}({\mathbf{E}}_\wedge)$. If these two intervals are disjoint, \eqref{eq:drclose} most hold.
 
Otherwise, let ${P} = {P}_+ + {P}_-$, and consider that 
\begin{align*}
\left\|\left(H^{\snext{\Lambda}} - \frac{{\mathbf{E}}_\vee + {\mathbf{E}}_\wedge}2\right){P}\right\| &\leq \|\Gamma^{\snext\Lambda}_{\mathcal P}{P}\| + \frac{{\mathbf{E}}_\vee - {\mathbf{E}}_\wedge}2 \\ &\leq 4\delta + \frac{{\mathbf{E}}_\vee - {\mathbf{E}}_\wedge}2.
\end{align*}
Thus, by Lemmas \ref{l:PQclose} and \ref{l:approxevect}, $H^{\snext{\Lambda}}$ must have two eigenvalues in the interval $[{\mathbf{E}}_\wedge - 4\delta, {\mathbf{E}}_\vee + 4\delta]$. Since $H^{\snext{\Lambda}}$ must have an eigenvalue in $\overline B_{2\delta}({\mathbf{E}}_\vee)$, the greater of the two eigenvalues must be in $\overline B_{4\delta}({\mathbf{E}}_\vee)$; similarly, the lesser of the two eigenvalues must be in $\overline B_{4\delta}({\mathbf{E}}_\wedge)$. Thus \eqref{eq:drclose} holds.
\end{proof}

\begin{cor} \label{c:gapupperbound}
We have the following upper bound on the size of the gap between $\snext{\mathbf{E}}_\vee$ and $\snext{\mathbf{E}}_\wedge$:
    \begin{align*}
        \inf_{B_{\sprev{\rho}/8D_0}(\theta_{**})} \snext{\mathbf{E}}_\vee(\theta) - \sup_{B_{\sprev{\rho}/8D_0}(\theta_{**})} \snext{\mathbf{E}}_\wedge(\theta) \leq 8\delta.
    \end{align*}
\end{cor}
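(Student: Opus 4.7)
The plan is to evaluate everything at the crossing point $\theta_{**}$ and invoke Proposition \ref{pr:drclose} pointwise, using only that $\theta_{**}$ is a common value of the parent curves ${\mathbf{E}}_+$ and ${\mathbf{E}}_-$.

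By the double resonance assumption (item 1 of Assumption \ref{as:DR}), we have ${\mathbf{E}}_+(\theta_{**}) = {\mathbf{E}}_-(\theta_{**}) = E_{**}$. By the definition of the crossed parent curves, this forces
\[
{\mathbf{E}}_\vee(\theta_{**}) = {\mathbf{E}}_\wedge(\theta_{**}) = E_{**}.
\]
Applying Proposition \ref{pr:drclose} at $\theta = \theta_{**}$ to each of $\bullet \in \{\vee,\wedge\}$ then yields
\[
|\snext{\mathbf{E}}_\vee(\theta_{**}) - E_{**}| \leq 4\delta, \qquad |\snext{\mathbf{E}}_\wedge(\theta_{**}) - E_{**}| \leq 4\delta,
\]
so by the triangle inequality,
\[
\snext{\mathbf{E}}_\vee(\theta_{**}) - \snext{\mathbf{E}}_\wedge(\theta_{**}) \leq 8\delta.
\]

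To deduce the claimed uniform bound, I would finally observe the trivial sandwich
\[
\inf_{B_{\sprev{\rho}/8D_0}(\theta_{**})} \snext{\mathbf{E}}_\vee \leq \snext{\mathbf{E}}_\vee(\theta_{**}), \qquad \snext{\mathbf{E}}_\wedge(\theta_{**}) \leq \sup_{B_{\sprev{\rho}/8D_0}(\theta_{**})} \snext{\mathbf{E}}_\wedge,
\]
whence the left-hand side of the inequality in Corollary \ref{c:gapupperbound} is bounded above by $\snext{\mathbf{E}}_\vee(\theta_{**}) - \snext{\mathbf{E}}_\wedge(\theta_{**}) \leq 8\delta$.

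There is essentially no obstacle here; the entire content is the observation that the parent curves are forced to coincide at $\theta_{**}$, so the $4\delta$ pointwise approximation from Proposition \ref{pr:drclose} produces the $8\delta$ gap bound without needing any further information about derivatives or about the minimizers/maximizers of $\snext{\mathbf{E}}_\bullet$.
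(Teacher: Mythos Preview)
Your proof is correct and follows essentially the same approach as the paper: evaluate at $\theta_{**}$, use that ${\mathbf{E}}_\vee(\theta_{**}) = {\mathbf{E}}_\wedge(\theta_{**}) = E_{**}$, apply Proposition~\ref{pr:drclose} to each curve to get the two $4\delta$ bounds, and bound the $\inf-\sup$ by the pointwise difference at $\theta_{**}$.
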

\begin{proof}
We observe that 
\begin{align*}
 \inf_{B_{\sprev{\rho}/8D_0}(\theta_{**})} \snext{\mathbf{E}}_\vee(\theta) - \sup_{B_{\sprev{\rho}/8D_0}(\theta_{**})} \snext{\mathbf{E}}_\wedge(\theta) &\leq
\snext{\mathbf{E}}_\vee(\theta_{**}) - \snext{\mathbf{E}}_\wedge(\theta_{**}) \\&\leq (\snext{\mathbf{E}}_\vee(\theta_{**})-E_{**}) + (E_{**}-\snext{\mathbf{E}}_\wedge(\theta_{**})) \\&\leq 4\delta + 4\delta = 8\delta,
\end{align*}
where the last inequality follows from Proposition \ref{pr:drclose}.
\end{proof}

\begin{cor} \label{c:horizontaldist}
Let $\theta_c$ be a critical point of $\snext{\mathbf{E}}_\vee$ or $\snext{\mathbf{E}}_\wedge$. Then $|\theta_c-\theta_{**}|< 8\delta/\nu$.
\end{cor}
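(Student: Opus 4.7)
The plan is to sandwich $\snext{\mathbf{E}}_\bullet(\theta_c)$ between two comparisons with $E_{**}$, one coming from the fact that $\theta_c$ is an extremum and one coming from the transversality of the parent curves $\mathbf{E}_\pm$, using Proposition \ref{pr:drclose} to translate between parent and child curves. I will argue the $\bullet = \vee$ case; the case $\bullet = \wedge$ is entirely symmetric, replacing minima with maxima and $\mathbf{E}_\vee \geq E_{**} + \nu|\cdot - \theta_{**}|$ with $\mathbf{E}_\wedge \leq E_{**} - \nu|\cdot - \theta_{**}|$.

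First I would observe that $\theta_c$ is a global minimum of $\snext{\mathbf{E}}_\vee$ on $B_{\sprev{\rho}/8D_0}(\theta_{**})$. Indeed, by Theorem \ref{t:DRRelFns}, $\snext{\mathbf{E}}_\vee$ has precisely one critical point on that ball; by Proposition \ref{pr:2ndDerivLwrBds}, the second derivative at any such critical point is strictly positive, so $\theta_c$ is a local minimum. Since $\theta_c$ is the only critical point, $\partial_\theta \snext{\mathbf{E}}_\vee$ does not change sign on either side of $\theta_c$, and the local-min property forces $\snext{\mathbf{E}}_\vee$ to be monotone decreasing on the left of $\theta_c$ and monotone increasing on the right. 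In particular, $\snext{\mathbf{E}}_\vee(\theta_c) \leq \snext{\mathbf{E}}_\vee(\theta_{**})$.

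Next I would apply Proposition \ref{pr:drclose} twice. On one hand, $\snext{\mathbf{E}}_\vee(\theta_{**}) \leq \mathbf{E}_\vee(\theta_{**}) + 4\delta = E_{**} + 4\delta$. On the other hand, $\snext{\mathbf{E}}_\vee(\theta_c) \geq \mathbf{E}_\vee(\theta_c) - 4\delta$. Combining, I get the chain
\begin{align*}
\mathbf{E}_\vee(\theta_c) - 4\delta \leq \snext{\mathbf{E}}_\vee(\theta_c) \leq \snext{\mathbf{E}}_\vee(\theta_{**}) \leq E_{**} + 4\delta.
\end{align*}
The transversality hypothesis \eqref{eq:DRtransv}, integrated from $\theta_{**}$ to $\theta_c$, gives $\mathbf{E}_+(\theta_c) \geq E_{**} + \nu(\theta_c - \theta_{**})$ when $\theta_c > \theta_{**}$ and $\mathbf{E}_-(\theta_c) \geq E_{**} + \nu(\theta_{**}-\theta_c)$ when $\theta_c < \theta_{**}$; in either case $\mathbf{E}_\vee(\theta_c) \geq E_{**} + \nu|\theta_c - \theta_{**}|$. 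Plugging in yields $\nu|\theta_c - \theta_{**}| \leq 8\delta$, i.e.\ the desired bound $|\theta_c - \theta_{**}| \leq 8\delta/\nu$. To get the strict inequality, note that either $\theta_c = \theta_{**}$ (in which case the claim is trivial), or $\theta_c \neq \theta_{**}$, in which case the middle inequality $\snext{\mathbf{E}}_\vee(\theta_c) \leq \snext{\mathbf{E}}_\vee(\theta_{**})$ is strict because $\theta_c$ is the unique global minimum; this strictness propagates through the chain.

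No step is particularly delicate — the only care needed is in justifying that the unique critical point $\theta_c$ is in fact the global minimum of $\snext{\mathbf{E}}_\vee$ on $B_{\sprev{\rho}/8D_0}(\theta_{**})$ rather than just a local one, but this is immediate from the Morse structure in Theorem \ref{t:DRRelFns} combined with the sign of the second derivative at $\theta_c$ from Proposition \ref{pr:2ndDerivLwrBds}.
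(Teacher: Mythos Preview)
Your proof is correct and follows essentially the same approach as the paper: both use Proposition~\ref{pr:drclose} at $\theta_c$ and at $\theta_{**}$, together with the transversality lower bound $\mathbf{E}_\vee(\theta_c) \geq E_{**} + \nu|\theta_c - \theta_{**}|$, to squeeze out the desired inequality. The paper phrases this as a short proof by contradiction rather than a direct chain of inequalities, and is somewhat terser in taking for granted that $\theta_c$ is the global minimum of $\snext{\mathbf{E}}_\vee$; your explicit justification of that point via Theorem~\ref{t:DRRelFns} and Proposition~\ref{pr:2ndDerivLwrBds} is a welcome clarification but not a different idea.
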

\begin{proof}
Suppose $\theta_c$ is the critical point where $\snext{\mathbf{E}}_\vee$ attains its minimum. If $|\theta_c-\theta_{**}|\geq 8 \delta/\nu$, then we would have, by Proposition \ref{pr:drclose},  
\begin{align*}
\snext{\mathbf{E}}_\vee(\theta_c) &\geq {\mathbf{E}}_\vee(\theta_c) - 4\delta \\
&\geq {\mathbf{E}}_\vee(\theta_{**}) + \nu|\theta_c-\theta_{**}| - 4\delta \\
&\geq {\mathbf{E}}_\vee(\theta_{**}) + 4\delta \\
&\geq \snext{\mathbf{E}}_\vee(\theta_{**}),
\end{align*}
contradicting the fact that $\snext{\mathbf{E}}_\vee$ attains its minimum at $\theta_c$.

The proof procedes analogously if $\theta_c$ is the critical point where $\snext{\mathbf{E}}_\wedge$ attains its maximum.
\end{proof}

\newpage
\section{Resonance via inverse functions}\label{sec:invfn}

The present section describes carefully the double-resonant situation diagrammed in Figure \ref{f:doubleres} and contains the final pieces of machinery required to handle our multiscale inductive procedure.  The ultimate goal of this discussion is the construction of a covering of the codomain of our cosine-like function on whose components we can control recurrence; cf. Propositions \ref{pr:C2cover} and \ref{pr:SRDRsep} below.  

It is here that the ``cosine-like'' properties of our potential and its descendants are explicitly utilized.  Specifically, for a cosine-like function $f : I \to J$, the two-monotonicity interval structure allows us to describe double resonances as zeroes of a uniquely defined difference of inverse functions, and the Morse condition will yield upper bounds on the size of images of these inverse functions.  Combined with the Diophantine assumption on the frequency $\alpha$, we can then quantifiably separate the double resonances in terms of the Diophantine and Morse parameters.  The procedure requires somewhat careful assumptions; we demonstrate the robustness of these assumptions under $C^2$ perturbations at the end of the section.

Let $I_\pm \subset \mathbb{T}$ be two closed intervals with disjoint interiors, let $I := I_- \cup I_+$, 
and consider a $C^2$ function 
\begin{align*}
f : I \to J, \quad f_\pm := f|_{I_\pm}, \quad \pm \partial_\theta f_\pm \geq 0.
\end{align*}
\begin{assm}\label{as:C2fn}
Suppose the following hold:
\begin{enumerate}
\item The function $f$ is Morse on $I$: $$d \leq |\partial_\theta f| + |\partial_\theta^2 f| \leq D$$
\item Each function $f_\pm$ maps onto $J$: $$f_\pm(I_\pm) = J$$
\item There is a constant $0 < \nu < d/2$ such that $|\partial_\theta f|\geq\nu$ on the boundary points of $I$.
\end{enumerate}
\end{assm}
With these assumptions, we can define a function $T_f : J \to (I_+ - I_-)$ by
\begin{align*}
T_f(E) := f_+^{-1}(E) - f_-^{-1}(E).
\end{align*}
We also fix a constant $D_0$ such that
\begin{align*}
\sup_{\theta \in I}|\partial_\theta f(\theta)| \leq D_0 \leq D.
\end{align*}

\subsection{Preimages and crossing points}

We begin with a few lemmas about $f_\pm$ and $T_f$:

\begin{lem}\label{l:monotonederiv}
The derivative $\partial_\theta f_\pm$ is monotone on each connected component of
\begin{align*}
I_{\pm,< d/2} := \left\{\theta \in I_\pm : |\partial_\theta f_\pm(\theta)| < d/2 \right\}.
\end{align*}
In particular, any critical point of $f_\pm$ must lie at the boundary of $I_\pm$; and each connected component of  $I_{\pm,<\nu}$ must contain a critical point.
\end{lem}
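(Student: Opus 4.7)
The plan is to extract a lower bound on $|\partial_\theta^2 f_\pm|$ on $I_{\pm,<d/2}$ from the Morse condition, and then combine this with the global monotonicity of $f_\pm$ on $I_\pm$ and with the nonvanishing boundary hypothesis to control where $\partial_\theta f_\pm$ can be small.

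First I would observe that on $I_{\pm,<d/2}$ we have $|\partial_\theta f_\pm|<d/2$, so the Morse inequality $d\leq|\partial_\theta f_\pm|+|\partial_\theta^2 f_\pm|$ forces $|\partial_\theta^2 f_\pm|>d/2>0$. Since $\partial_\theta^2 f_\pm$ is continuous and nowhere zero on any connected component of $I_{\pm,<d/2}$, its sign is constant on each such component, so $\partial_\theta f_\pm$ is strictly monotone there. This is the first claim.

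For the second claim, I would argue by contradiction: suppose $\theta_0 \in I_\pm$ is an interior critical point of $f_\pm$. Then $\theta_0$ lies in the interior of some connected component $C_0$ of $I_{\pm,<d/2}$, and by the first claim $\partial_\theta f_\pm$ is strictly monotone on $C_0$ while vanishing at $\theta_0$. This strict monotonicity forces $\partial_\theta f_\pm$ to change sign across $\theta_0$, contradicting the assumption $\pm\partial_\theta f_\pm\geq 0$ coming from the monotonicity of $f_\pm$ on $I_\pm$. Hence $\theta_0$ must lie in $\partial I_\pm$.

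For the third claim, let $C$ be a connected component of $I_{\pm,<\nu}$; since $\nu<d/2$, $C$ lies inside a component of $I_{\pm,<d/2}$, so $\partial_\theta f_\pm$ is strictly monotone on $C$ with values in $(-\nu,\nu)$. I would classify the endpoints of the closure of $C$ in $I_\pm$ into three types: interior endpoints of $I_\pm$ (where continuity forces $|\partial_\theta f_\pm|=\nu$); endpoints that also lie in $\partial I$ (where assumption 3 combined with continuity still forces $|\partial_\theta f_\pm|=\nu$); and endpoints of $I_\pm$ shared with $I_\mp$ (where $\partial_\theta f_\pm$ may vanish). If both endpoints of $C$ were of the first two types, strict monotonicity would force the boundary values to be $+\nu$ and $-\nu$, and the intermediate value theorem would produce a zero of $\partial_\theta f_\pm$ in the interior of $C\subset\operatorname{int}(I_\pm)$, contradicting the second claim. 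Hence at least one endpoint of $C$ must be a shared boundary point with $I_\mp$ at which $\partial_\theta f_\pm$ vanishes, i.e., a critical point; since $I_\pm$ is closed and $\nu>0$, this critical point lies in $C$ itself.

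The main obstacle is the endpoint case analysis in the third claim: assumption 3 only provides $|\partial_\theta f|\geq \nu$ (not strict) at outer boundary points of $I$, so the borderline value $|\partial_\theta f_\pm|=\nu$ at such a boundary endpoint is not directly excluded. Eliminating the configuration in which both endpoints of $C$ are non-critical requires leveraging strict monotonicity of $\partial_\theta f_\pm$ from the first claim together with the second claim to forbid interior zeros, which is the delicate step linking the three pieces of the lemma together.
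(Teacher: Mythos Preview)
Your argument is correct and follows the same approach as the paper's: extract $|\partial_\theta^2 f_\pm|>d/2$ from the Morse condition to get strict monotonicity, exclude interior zeros via the sign constraint $\pm\partial_\theta f_\pm\geq 0$, and for the third claim show a component of $I_{\pm,<\nu}$ must reach an endpoint of $I_\pm$ lying in $I_+\cap I_-$. The only difference is cosmetic: for the third claim the paper observes directly that two non-critical endpoints would both carry the value $\pm\nu$ with the \emph{same} sign (by $\pm\partial_\theta f_\pm\geq 0$), immediately contradicting strict monotonicity, rather than routing through opposite signs, the intermediate value theorem, and the second claim as you do.
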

\begin{proof}
That $\partial_\theta f_\pm$ is monotone on each such connected component follows immediately from the Morse condition, since $|\partial_\theta^2 f| \geq d/2$ and $\partial_\theta^2 f$ is continuous.  Since $\pm\partial_\theta f_\pm$ is strictly monotone and nonnegative, zeros must lie on the boundary of $I_\pm$. 
By monotonicity of $\partial_\theta f_\pm$, any connected component of $I_{\pm,<\nu}$ must contain a boundary point of $I_\pm$; since that point cannot be a boundary point of $I$, it must belong to $I_+ \cap I_-$, i.e., it must be a critical point.
\end{proof}

\begin{lem}\label{l:morsefnbd}
For any $\theta$ and $\theta_*$ in $I_\pm$,
\begin{align*}
|f_\pm(\theta) - f_\pm(\theta_*)| \geq \frac{d}{12}|\theta - \theta_*|^{2}.
\end{align*} 
In particular, for any subinterval $J_0 \subset J$, 
\begin{align*}
\frac{|J_0|}{D_0} \leq |f_\pm^{-1}(J_0)| \leq \sqrt{\frac{12}{d}|J_0|}
\end{align*}
\end{lem}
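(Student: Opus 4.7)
The plan is to establish the quadratic lower bound first; the two estimates on $|f_\pm^{-1}(J_0)|$ then follow immediately, the upper bound $|J_0| \leq D_0|f_\pm^{-1}(J_0)|$ from the mean value theorem with $\|\partial_\theta f_\pm\|_\infty \leq D_0$, and the lower bound from applying the quadratic inequality to the endpoints of $f_\pm^{-1}(J_0)$, whose images are the endpoints of $J_0$.

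For the main inequality, WLOG take $f_+$ and $\theta_* < \theta$ in $I_+$, so $\partial_\theta f_+ \geq 0$ there; set $h := \theta - \theta_* \leq 1$ and aim to show $\int_{\theta_*}^\theta \partial_\theta f_+(s)\,ds \geq (d/12)h^2$. The key is to combine the Morse dichotomy---at each point, either $|\partial_\theta f_+| \geq d/2$ or $|\partial_\theta^2 f_+| \geq d/2$---with Lemma \ref{l:monotonederiv}, which provides monotonicity of $\partial_\theta f_+$ on each connected component of $I_{+,<d/2}$. I first show that $A := \{s \in [\theta_*,\theta]: \partial_\theta f_+(s) \geq (d/6)h\}$ is nonempty: otherwise $\partial_\theta f_+ < (d/6)h < d/2$ on all of $[\theta_*,\theta]$, so the interval lies in a single monotonicity component with $|\partial_\theta^2 f_+| \geq d/2$, forcing $|\partial_\theta f_+(\theta) - \partial_\theta f_+(\theta_*)| \geq (d/2)h$ and hence (using $\partial_\theta f_+ \geq 0$) an endpoint value $\geq (d/2)h > (d/6)h$, a contradiction. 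The same argument shows that each connected component of $A^c$ must abut an endpoint of $[\theta_*,\theta]$---an interior component would have $\partial_\theta f_+ = (d/6)h$ at both ends while being strictly monotone---and, by integrating $|\partial_\theta^2 f_+| \geq d/2$, has length at most $h/3$.

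Writing $L_1, L_2 \in [0,h/3]$ for the lengths of the at most two components of $A^c$ (adjacent to $\theta_*$ and $\theta$ respectively), I split $\int_{\theta_*}^\theta \partial_\theta f_+(s)\,ds$ into three contributions: on $A$, $\partial_\theta f_+ \geq (d/6)h$ gives $\geq (d/6)h(h - L_1 - L_2)$; on the left $A^c$-component, monotonicity with $\partial_\theta^2 f_+ \geq d/2$ and $\partial_\theta f_+(\theta_*) \geq 0$ yield $\partial_\theta f_+(s) \geq (d/2)(s - \theta_*)$, contributing $\geq (d/4)L_1^2$; similarly $(d/4)L_2^2$ on the right component. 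It remains to minimize the convex quadratic $g(L_1, L_2) := (d/4)(L_1^2 + L_2^2) + (d/6)h(h - L_1 - L_2)$ over $[0,h/3]^2$; since $\partial_{L_i} g = 0$ at $L_i = h/3$, the minimum occurs at the corner $(h/3, h/3)$ with value $dh^2/9$, comfortably exceeding the target $dh^2/12$. The conceptual hurdle is that pointwise decay estimates on $\partial_\theta f_+$ are insufficient because $|\partial_\theta^2 f_+| \leq D$ may far exceed $d$, so the derivative can drop from $d/2$ to zero at rate $D$; Lemma \ref{l:monotonederiv}'s global monotonicity on small-derivative regions is what yields the essential geometric control on the structure of $A^c$.
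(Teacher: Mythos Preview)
Your proof is correct and follows essentially the same strategy as the paper: decompose $[\theta_*,\theta]$ into a central region where $\partial_\theta f_+$ is bounded below and at most two end regions where Lemma~\ref{l:monotonederiv} gives monotonicity and hence a linear lower bound on $\partial_\theta f_+$, then integrate. The only real difference is bookkeeping: the paper takes the fixed threshold $d/2$ (so the end regions are exactly $[\theta_*,\theta]\cap I_{\pm,<d/2}$), bounds the middle contribution by $(d/2)|\theta_1-\theta_2| \geq (d/2)|\theta_1-\theta_2|^2$, and finishes with Cauchy--Schwarz on three squares to get $d/12$; you take the adaptive threshold $(d/6)h$, which forces the end lengths $L_i \leq h/3$ and lets you close by minimizing a convex quadratic, yielding the slightly sharper constant $d/9$.
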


\begin{proof}
Suppose without loss of generality that $\theta_* \leq \theta$.  Then
\begin{align*}
|f_\pm(\theta) - f_\pm(\theta_*)| &= \int_{[\theta_*,\theta]} |\partial_\theta f_\pm(t)| |\dd t|.
\end{align*}
Consider the set
\begin{align*}
I_{\pm,<d/2} &:= \{ \theta \in I_\pm : |\partial_\theta f_\pm(\theta)| < d/2\}.
\end{align*}
As in the previous lemma, $\partial_\theta f_\pm$ is monotone on each connected component of this set; furthermore, there are at most two connected components of $[\theta_*,\theta] \cap I_{\pm,<d/2}$, and, by monotonicity of $\partial_\theta f$ on $I_{\pm,<d/2}$, these components must lie at the edges of $[\theta_*, \theta]$.  Thus, we may write
\begin{align*}
[\theta_*, \theta] = [\theta_*, \theta_1) \cup [\theta_1, \theta_2] \cup (\theta_2, \theta],
\end{align*}
where $[\theta_*,\theta] \cap I_{\pm,<d/2} = [\theta_*, \theta_1) \cup (\theta_2, \theta]$, and $|\partial_\theta f_\pm|$ is increasing on $[\theta_*,\theta_1)$ and decreasing on $(\theta_2, \theta]$.

Since $|\partial_\theta f_\pm|$ is increasing on $[\theta_*, \theta_1)$, we have
\begin{align*}
|\partial_\theta f_\pm(t)| \geq \left|\partial_\theta f_\pm(t) - \partial_\theta f_\pm(\theta_*)\right| &= \int_{[\theta_*, t)}|\partial_\theta^2 f_\pm(s)||\dd s| \\
&\geq \frac{d}{2}\left|t-\theta_*\right|, \quad t \in [\theta_*, \theta_1).
\end{align*}
Similarly, since $|\partial_\theta f_\pm|$ is decreasing on $(\theta_2, \theta]$, 
\begin{align*}
|\partial_\theta f_\pm(t)| \geq \frac{d}{2}\left|t - \theta\right|, \quad t \in (\theta_2, \theta].
\end{align*}
Combining these observations and using that $|\theta_1 - \theta_2| < 1$, we get
\begin{align*}
|f_\pm(\theta) - f_\pm(\theta_*)| &\geq \frac{d}{4}\left(|\theta_* - \theta_1|^2 + |\theta_2 - \theta|^2\right) + \frac{d}{2}|\theta_1 - \theta_2| \\
&\geq \frac{d}{4}\left(|\theta_* - \theta_1|^2 + |\theta_1 - \theta_2|^2 + |\theta_2 - \theta|^2\right) \\
&\geq \frac{d}{12}|\theta_* - \theta|^2,
\end{align*}
where the last step is a standard inequality following from Cauchy-Schwarz.

Since $f_\pm^{-1}(J_0)$ are intervals, the upper bound on $|f_\pm^{-1}(J_0)|$ is immediate.  The lower bound follows from the observation that
\begin{align*}
|J_0| &= \int_{f^{-1}_\pm(J_0)} |\partial_\theta f_\pm(t)| |\dd t| \leq D_0|f^{-1}_\pm(J_0)|. \qedhere
\end{align*}
\end{proof}

We now turn our attention to the difference of inverse functions $T_f$.  

\begin{lem} \label{l:tfbound}
We have uniform bounds on $|\partial_\theta f_\pm|$ when $\|T_f\|_\mathbb{T}$ is large; specifically, if $E_* = f_\pm(\theta_\pm) \in J$, then
\begin{align*}
\|T_f(E_*)\|_\mathbb{T} \geq \frac{7\nu}d \implies |\partial_\theta f_\pm(\theta_\pm)| \geq \nu
\end{align*}
\end{lem}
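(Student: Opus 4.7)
The plan is to prove the contrapositive: if $|\partial_\theta f_\pm(\theta_\pm)| < \nu$ for either choice of sign, then $\|T_f(E_*)\|_\mathbb{T} < 7\nu/d$. Without loss of generality suppose $|\partial_\theta f_+(\theta_+)| < \nu$; the case of $f_-$ is symmetric. The strategy has three steps: (i) locate a shared critical point $\theta_c \in I_+\cap I_-$ near $\theta_+$, (ii) control the vertical deviation $|E_*-f(\theta_c)|$, and (iii) use Lemma \ref{l:morsefnbd} applied to $f_-$ to conclude that $\theta_-$ is also close to $\theta_c$.

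For step (i), since $\nu < d/2$, the point $\theta_+$ lies in a connected component of $I_{+,<\nu} \subset I_{+,<d/2}$. By Lemma \ref{l:monotonederiv}, this component is contained in $I_{+,<d/2}$, on which $\partial_\theta f_+$ is monotone, and it must contain a critical point of $f_+$. Since Assumption \ref{as:C2fn}(3) forbids critical points at $\partial I$, this critical point $\theta_c$ must lie in $I_+\cap I_-$, so in particular $f_-(\theta_c)=f_+(\theta_c)=:E_c$.

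For step (ii), observe that on the connected component of $I_{+,<d/2}$ containing $\theta_+$ and $\theta_c$, the Morse condition forces $|\partial_\theta^2 f_+|\geq d/2$ (since $|\partial_\theta f_+|<d/2$ there). Integrating from $\theta_c$, where $\partial_\theta f_+$ vanishes, yields $|\partial_\theta f_+(s)|\geq (d/2)|s-\theta_c|$ for $s$ between $\theta_c$ and $\theta_+$; evaluating at $s=\theta_+$ gives
\begin{equation*}
|\theta_+-\theta_c| \leq \tfrac{2\nu}{d}.
\end{equation*}
A second integration (using the bound $|\partial_\theta f_+|<\nu$ throughout the interval $[\theta_c,\theta_+]$) gives
\begin{equation*}
|E_*-E_c|=|f_+(\theta_+)-f_+(\theta_c)| \leq \nu\cdot|\theta_+-\theta_c| < \tfrac{2\nu^2}{d}.
\end{equation*}

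For step (iii), since $f_-(\theta_c)=E_c$ and $f_-(\theta_-)=E_*$, the interval $[E_c,E_*]$ (or its reverse) has $|E_*-E_c|<2\nu^2/d$, so Lemma \ref{l:morsefnbd} applied to $f_-$ yields
\begin{equation*}
|\theta_--\theta_c| \leq \sqrt{\tfrac{12}{d}|E_*-E_c|} < \sqrt{\tfrac{24}{d^2}}\,\nu=\tfrac{\sqrt{24}\,\nu}{d}.
\end{equation*}
Combining the two displays via the triangle inequality gives $|\theta_+-\theta_-|\leq (2+\sqrt{24})\nu/d < 7\nu/d$, hence $\|T_f(E_*)\|_\mathbb{T}\leq |\theta_+-\theta_-|<7\nu/d$, contradicting the hypothesis. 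The only real subtlety is step (i): one must use Assumption \ref{as:C2fn}(3) to confirm that the critical point picked out by Lemma \ref{l:monotonederiv} is indeed shared by both monotonicity branches (rather than sitting on the outer boundary of $I$), and this is what lets step (iii) go through.
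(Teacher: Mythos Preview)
Your proof is correct and follows essentially the same approach as the paper: both argue by contrapositive, use Lemma \ref{l:monotonederiv} to locate a shared critical point $\theta_c$, bound $|\theta_\pm-\theta_c|$ by $2\nu/d$ and $|E_*-E_c|$ by $2\nu^2/d$ via the Morse condition, and then apply Lemma \ref{l:morsefnbd} to the opposite branch to obtain the $\sqrt{24}\,\nu/d$ bound (the paper rounds this up to $5\nu/d$), summing to $7\nu/d$. The only cosmetic difference is that the paper starts from $|\partial_\theta f_-(\theta_-)|<\nu$ rather than $f_+$, which is immaterial by symmetry.
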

\begin{proof}
Suppose $|\partial_\theta f_-(\theta_-)|<\nu$ (the case $|\partial_\theta f_+(\theta_+)|<\nu$ is analogous). By Lemma \ref{l:monotonederiv}, there is a critical point $\theta_c$ of $f$ such that for all $\theta \in [\theta_-,\theta_c]$ (we assume $\theta_c>\theta_-$; the reverse case is exactly analogous), $|\partial_\theta f_-(\theta)|<\nu$. Since $|\partial_\theta^2f_-|\geq d/2$ on this interval, by the Mean Value Theorem, $$\nu > |\partial_\theta f_-(\theta)-0|\geq \frac{d}2|\theta_--\theta_c|,$$ and so $|\theta_--\theta_c|<2\nu/d$; similarly, 
$$|E_*-f(\theta_c)| < \nu|\theta_--\theta_c| < \frac{2\nu^2}d.$$ Let $J_0:=[f(\theta_c),E_*]$; by Lemma \ref{l:morsefnbd}, $$|f_+^{-1}(J_0)|\leq \sqrt{\frac{12}d|J_0|}<\frac{5\nu}d,$$
and so \begin{align*}\|T_f(E_*)\|_{\mathbb{T}} &\leq |f_-^{-1}(J_0)| + |f_+^{-1}(J_0)| < \frac{2\nu}d+\frac{5\nu}d = \frac{7\nu}d. \qedhere\end{align*}
\end{proof}

\begin{lem}\label{l:invdiffuniq}
For all $n$ and irrational $\alpha$, there exists at most one value $E_n = E(n,\alpha,f)$ such that
\begin{align*}
T_f(E_n) - n\alpha \mod 1 = 0.
\end{align*}
\end{lem}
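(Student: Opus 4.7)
The plan is to exploit the strict monotonicity of $T_f$ on $J$ together with a bound on its total variation, so that the equation $T_f(E_n) \equiv n\alpha \pmod{1}$ becomes a simple counting problem modulo $\mathbb{Z}$. The irrationality of $\alpha$ will handle the one degenerate edge case.

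First, since $f_+$ is strictly increasing and $f_-$ is strictly decreasing on their respective monotonicity intervals, the inverses $f_\pm^{-1} : J \to I_\pm$ are strictly monotone with opposite signs. Thus $T_f = f_+^{-1} - f_-^{-1}$ is strictly increasing on $J$. Writing $J = [E_-, E_+]$, I would then compute the total variation
\begin{align*}
T_f(E_+) - T_f(E_-) &= (f_+^{-1}(E_+) - f_+^{-1}(E_-)) + (f_-^{-1}(E_-) - f_-^{-1}(E_+)) \\
&= |I_+| + |I_-| \leq 1,
\end{align*}
where the last inequality uses that $I_+$ and $I_-$ have disjoint interiors in $\mathbb{T}$.

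With these two ingredients in hand, I would suppose toward contradiction that there exist distinct $E_1 < E_2$ in $J$ both satisfying $T_f(E_j) - n\alpha \equiv 0 \pmod{1}$. The difference $T_f(E_2) - T_f(E_1)$ is then a positive integer (positive by strict monotonicity, integer by the mod-$1$ conditions) while also bounded above by $T_f(E_+) - T_f(E_-) = |I_+| + |I_-| \leq 1$. This forces equality throughout: $T_f(E_2) - T_f(E_1) = 1$, $|I_+| + |I_-| = 1$, and $\{E_1, E_2\} = \{E_-, E_+\}$.

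The main obstacle is handling this saturated edge case. The condition $|I_+| + |I_-| = 1$ forces $I_+ \cup I_- = \mathbb{T}$, so $I$ has no boundary and the endpoints $E_\pm$ of $J$ must be critical values of $f$ (they cannot arise from boundary points of $I$ by Assumption \ref{as:C2fn}(3), which provides $|\partial_\theta f| \geq \nu > 0$ there). At a critical value, the preimages $f_+^{-1}$ and $f_-^{-1}$ coincide in $\mathbb{T}$ (both equal the corresponding critical point in $I_+ \cap I_-$), so $T_f(E_\pm) \in \mathbb{Z}$. The condition $T_f(E_\pm) \equiv n\alpha \pmod{1}$ then forces $n\alpha \in \mathbb{Z}$, and the irrationality of $\alpha$ yields $n = 0$---a degenerate self-resonance that is excluded by the paper's application.
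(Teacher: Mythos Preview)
Your proof is correct and follows essentially the same line as the paper's: both rest on the strict monotonicity of $T_f$ (the paper computes $\partial_E T_f \geq 2/D_0$ on the interior of $J$ via the inverse function theorem), combined with irrationality of $\alpha$ to dispose of the boundary case. Your version is in fact more explicit than the paper's, which asserts ``uniqueness follows immediately from strict monotonicity'' without spelling out the total-variation bound $|I_+|+|I_-|\leq 1$ or the analysis of the saturated case $I=\mathbb{T}$; your observation that this edge case forces $n=0$ (and is thus irrelevant since the lemma is only ever applied for $n\in\mathcal{N}(L^{(1)},\alpha,f)$, which excludes $0$) is the right way to close that gap.
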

\begin{proof}
First, note the function $T_f$ is strictly increasing on the interior of $J$, since, for any $E_* = f_\pm(\theta_\pm)$ such that $\partial_\theta f_\pm(\theta_\pm) \neq 0$, we have
\begin{align*}
\partial_E T_f(E_*) &= \frac{1}{\partial_\theta f_+(\theta_+)} - \frac{1}{\partial_\theta f_-(\theta_-)} \\
&= \frac{1}{|\partial_\theta f_+(\theta_+)|} + \frac{1}{|\partial_\theta f_+(\theta_-)|} \\
&\geq \frac{2}{D_0}
\end{align*}
by the inverse function theorem.  By Lemma \ref{l:monotonederiv}, $\partial_\theta f_\pm$ is nonzero on the interior of $I_\pm$, and monotonicity follows on the interior of the interval $J$.

Since $\alpha$ is irrational, $n\alpha \mod 1$ is distinct for all $n$.  Uniqueness of $E_n$ follows immediately from strict monotonicity of $T_f$.
\end{proof}
For $\alpha$ irrational and $l \in \mathbb{N}$, define
\begin{align*}
    \mathcal{N}(\alpha,f) := \{ n \in \mathbb{Z} : n\alpha \mod 1 \in T_f(J)\}.
\end{align*}
\begin{lem}\label{l:Naf}
There exists $\theta \in I_-$ such that $\theta+n\alpha \in I_+$ if and only if $n \in \mathcal{N}(\alpha,f)$.
\end{lem}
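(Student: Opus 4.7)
The plan is to prove both implications directly, interpreting $T_f(J)$ as a subset of $\mathbb{T}$ (consistent with $I_\pm$ lying on the circle). The $(\Leftarrow)$ direction is essentially tautological: if $n\alpha \equiv T_f(E) \pmod{1}$ for some $E\in J$, then $\theta := f_-^{-1}(E)\in I_-$ and $\theta_+ := f_+^{-1}(E)\in I_+$ satisfy $\theta_+-\theta = T_f(E)\equiv n\alpha\pmod{1}$, so $\theta+n\alpha \equiv \theta_+ \pmod{1}$ lies in $I_+$.

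For the $(\Rightarrow)$ direction, I will apply the intermediate value theorem to a single auxiliary function. Suppose $\theta \in I_-$ and $\theta_+ := \theta+n\alpha \pmod{1} \in I_+$, and fix lifts in $[0,1)$ so that $\theta_+-\theta$ is a definite real number. Define the continuous function
\[
F(E) := T_f(E) - (\theta_+-\theta),\qquad E\in J,
\]
and evaluate at the two natural candidates $E_- := f_-(\theta)\in J$ and $E_+ := f_+(\theta_+)\in J$, obtaining
\[
F(E_-) = f_+^{-1}(E_-)-\theta_+, \qquad F(E_+) = \theta - f_-^{-1}(E_+).
\]
By Lemma \ref{l:monotonederiv}, the critical points of $f_\pm$ lie only on $\partial I_\pm$, so $f_+^{-1}$ is strictly increasing and $f_-^{-1}$ is strictly decreasing on the interior of $J$. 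Consequently, when $E_-<E_+$ we have $F(E_-)<0$ and $F(E_+)>0$, with the inequalities reversed when $E_->E_+$; either way the IVT produces $E\in J$ with $F(E)=0$, i.e.\ $T_f(E) = \theta_+-\theta \equiv n\alpha\pmod{1}$. The degenerate case $E_- = E_+$ is immediate, since then $T_f(E_-) = \theta_+-\theta$ already.

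No substantive obstacle is expected; the only real care is the mod-$1$ bookkeeping, which is cleanly resolved by fixing representatives for $\theta$ and $\theta_+$ in $[0,1)$ before introducing $F$. After that the proof is a routine monotonicity-plus-IVT argument, using nothing beyond the strict monotonicity of $f_\pm^{-1}$ on $J$ already established.
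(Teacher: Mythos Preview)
Your proof is correct. The $(\Leftarrow)$ direction matches the paper exactly. For $(\Rightarrow)$, the paper takes a more direct route: having already established in Lemma~\ref{l:invdiffuniq} that $T_f$ is strictly increasing, it simply identifies $T_f(J)$ as the interval $[\inf I_+ - \sup I_-,\ \sup I_+ - \inf I_-]$ and observes that $(\theta+n\alpha)-\theta$ lies between these endpoints. You instead evaluate $T_f$ at the two specific energies $E_- = f_-(\theta)$ and $E_+ = f_+(\theta_+)$ and apply the IVT to $F = T_f - (\theta_+-\theta)$. Your argument is a bit longer but uses only the strict monotonicity of $f_\pm^{-1}$ rather than the monotonicity of $T_f$ itself; the paper's version is quicker because it leans on the previous lemma. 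One small point on the bookkeeping: lifting $\theta$ and $\theta_+$ to $[0,1)$ is risky if $I_-$ or $I_+$ happens to wrap past $0$, since then the lift would disconnect the interval; it is cleaner to lift each $I_\pm$ to a connected arc in $\mathbb{R}$ and take $\theta,\theta_+$ in those lifts, after which your argument runs without issue.
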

\begin{proof}
One direction is immediate: indeed, if $n \in \mathcal{N}(\alpha,f)$, then there exists $E = E_n \in J$ such that $T_f(E_n) - n\alpha \mod 1 = 0$; the relevant $\theta$ value is $f_-^{-1}(E_n)$.

Suppose that $\theta \in I_-$ and $\theta + n\alpha \in I_+$.  By monotonicity of $T_f$, $T_f(J) = [\inf I_+ - \sup I_-, \sup I_+ - \inf I_-]$.  Since
\begin{align*}
    \inf I_+ - \sup I_- < (\theta + n\alpha \mod 1) - \theta < \sup I_+ - \inf I_-,
\end{align*}
we get $n\alpha \mod 1 \in T_f(J)$.
\end{proof}

\subsection{Double resonance}

Let $\alpha \in DC_{C,\tau}$ and $L^{(1)} \in \mathbb{N}$ be fixed satisfying $$\frac{C}{2(L^{(1)})^\tau} \geq \frac{7\nu}d,$$ and denote by
\begin{align*}
  \mathcal{N}(L^{(1)},\alpha,f) := \{n \in \mathcal{N}(\alpha,f) : \; 0 < |n| \leq L^{(1)}\}
\end{align*}%
We can separate the distinct points $E_n$, $n \in \mathcal{N}$ developed in Lemma \ref{l:invdiffuniq} via the Diophantine condition:
\begin{lem}\label{l:DRensep}
For $n \neq m \in \mathcal{N}(L^{(1)},\alpha,f)$ such that $E_n$ and $E_m$ are in the interior of $J$, we have
\begin{align*}
|E_n - E_m| \geq \frac{C\nu}{2(L^{(1)})^\tau}
\end{align*}
\end{lem}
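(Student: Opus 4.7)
My strategy is to translate the Diophantine condition into a lower bound on $|T_f(E_n) - T_f(E_m)|$ and then promote this to a lower bound on $|E_n - E_m|$ via a uniform Lipschitz bound on $T_f$ over the interval $[E_n, E_m]$.

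The first step is straightforward: by the defining relations $T_f(E_n) - n\alpha \in \mathbb{Z}$ and $T_f(E_m) - m\alpha \in \mathbb{Z}$, one has $T_f(E_n) - T_f(E_m) = (n-m)\alpha + k$ for some integer $k$, and hence $|T_f(E_n) - T_f(E_m)| \geq \|(n-m)\alpha\|_\mathbb{T}$. Applying the Diophantine condition and $|n-m| \leq 2L^{(1)}$, this yields a lower bound on the $T_f$-gap of order $C/(L^{(1)})^\tau$.

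The main work is the conversion to an $E$-gap. Since $\partial_E T_f(E) = 1/|\partial_\theta f_+(\theta_+(E))| + 1/|\partial_\theta f_-(\theta_-(E))|$ (with $\theta_\pm(E) := f_\pm^{-1}(E)$), it suffices to show $|\partial_\theta f_\pm(\theta_\pm(E))| \geq \nu$ uniformly for all $E \in [E_n, E_m]$; the Mean Value Theorem then gives $|T_f(E_n) - T_f(E_m)| \leq (2/\nu)|E_n-E_m|$, combining with the previous step to close the argument. At the endpoints this is easy: since $\|T_f(E_n)\|_\mathbb{T} = \|n\alpha\|_\mathbb{T} \geq C/(L^{(1)})^\tau \geq 2 \cdot 7\nu/d$ (and similarly for $E_m$) by the hypothesis on $L^{(1)}$, Lemma \ref{l:tfbound} applies and gives $|\partial_\theta f_\pm(\theta_\pm(E_\bullet))| \geq \nu$ for $\bullet \in \{n,m\}$. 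Equivalently, the four preimage points $\theta_\pm(E_n), \theta_\pm(E_m)$ all lie outside the set $I_{\pm, <\nu} := \{|\partial_\theta f_\pm| < \nu\}$.

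The most delicate point, and the one I expect to be the main obstacle, is extending this derivative control to the entire interval between the preimages: \emph{a priori} $T_f(E)$ could drift close to an integer at some intermediate $E$, so one cannot directly re-apply Lemma \ref{l:tfbound}. I will resolve this by appealing to Lemma \ref{l:monotonederiv}: since $\nu < d/2$, each connected component of $I_{\pm,<\nu} \subseteq I_{\pm,<d/2}$ carries $\partial_\theta f_\pm$ monotone, and hence must contain a critical point, i.e.\ a boundary point of $I_\pm$. Consequently $I_\pm \setminus I_{\pm,<\nu}$ is a \emph{single connected} subinterval of $I_\pm$. Because $f_\pm$ is monotone, $\theta_\pm(E)$ traces monotonically between $\theta_\pm(E_n)$ and $\theta_\pm(E_m)$ as $E$ moves across $[E_n,E_m]$, and therefore remains in this good connected region; this delivers $|\partial_\theta f_\pm(\theta_\pm(E))| \geq \nu$ uniformly on $[E_n,E_m]$, completing the proof.
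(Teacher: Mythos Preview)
Your proof is correct and follows essentially the same strategy as the paper's: bound $|T_f(E_n)-T_f(E_m)|$ below via the Diophantine condition, then convert to an energy gap via a uniform bound $\partial_E T_f \leq 2/\nu$ on the interval $[E_n,E_m]$. The only difference is in how that uniform derivative bound is obtained on the \emph{interior} of $[E_n,E_m]$: the paper introduces the energy-space set $J_0 = \{E : \|T_f(E)\|_\mathbb{T} \geq C/2(L^{(1)})^\tau\}$, argues it is a connected subinterval of $J$ containing both $E_n$ and $E_m$, and then applies Lemma~\ref{l:tfbound} pointwise on $J_0$; you instead work in phase space, showing via Lemma~\ref{l:monotonederiv} that $I_\pm \setminus I_{\pm,<\nu}$ is a single interval and hence contains the entire segment between $\theta_\pm(E_n)$ and $\theta_\pm(E_m)$. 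These are two views of the same fact related by the monotone maps $f_\pm^{-1}$, and your phase-space version is arguably a bit more transparent since it invokes Lemma~\ref{l:monotonederiv} directly rather than going through the slightly delicate claim that $J_0$ is connected.
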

\begin{proof}
Letting $\|T_f(E_*)\|_\mathbb{T} \geq C/(L^{(1)})^\tau$ and $\theta_{\pm}$ denote the unique points in $I_\pm$ such that $f_\pm(\theta_{\pm}) = E_*$, we have by Lemma \ref{l:tfbound} that
\begin{align*}
|\partial_\theta f_\pm (\theta_{\pm})| \geq \nu.
\end{align*}
Provided $E_*$ is in the interior of $J$, it follows that
\begin{align*}
\partial_E T_f(E_*) &= \frac{1}{|\partial_\theta f_+(\theta_{+})|} + \frac{1}{|\partial_\theta f_-(\theta_{-})|} \leq \frac{2}{\nu}.
\end{align*}
By monotonicity of $T_f$ and the definition of $\|\cdot \|_\mathbb{T}$, the set
\begin{align*}
J_0(L^{(1)},\alpha,f) := \left\{E : \|T_f(E)\|_\mathbb{T} \geq \frac{C}{2(L^{(1)})^\tau} > 0\right\}
\end{align*}
is a connected subinterval of $J$.  By definition and the Diophantine condition, for $n \in \mathcal{N}(L^{(1)},\alpha,f)$ we have
\begin{align*}
\|T_f(E_n)\|_\mathbb{T} = \|n\alpha\|_\mathbb{T} \geq \frac{C}{|n|^\tau} \geq \frac{C}{(L^{(1)})^\tau}.
\end{align*} 
Thus, for any $n,m \in \mathcal{N}(L^{(1)},\alpha,f)$, the interval $(E_n, E_m)$ must lie inside of $J_{0}$ (and, by openness of $(E_n,E_m)$, the interior of $J$).  The result follows from the mean value theorem applied to $T_f$; specifically,
\begin{align*}
\frac{C}{(L^{(1)})^\tau} \leq \|(n-m)\alpha\|_\mathbb{T} &\leq |T_f(E_n) - T_f(E_m)| \leq \frac{2}{\nu}|E_n - E_m|. \qedhere
\end{align*}
\end{proof}
We can also separate these points from any critical values of $f$, if one exists:
\begin{lem} \label{l:DRensepfromcrit}
For $n \in \mathcal N(L^{(1)},\alpha,f)$ and $E_c$ a critical value of $f$, we have
\begin{equation*}
|E_n - E_c| \geq \frac{C^2d}{48(L^{(1)})^{2\tau}}.
\end{equation*}
\end{lem}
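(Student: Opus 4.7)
The plan is to exploit two facts: first, that the critical value $E_c$ is precisely where $T_f$ vanishes (since a critical point lies in $I_+\cap I_-$, where the two inverse branches agree); second, that the Diophantine condition forces $T_f(E_n)$ to be bounded away from $0$. The quadratic flatness of $f_\pm$ near a critical point, encoded in Lemma \ref{l:morsefnbd}, then converts a gap in $T_f$-values into a gap in energies (modulo a square root).

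First I would observe that by Lemma \ref{l:monotonederiv}, any critical point $\theta_c$ of $f$ lies in $I_+\cap I_-$, so $f_+(\theta_c)=f_-(\theta_c)=E_c$ and $f_\pm^{-1}(E_c)=\theta_c$; in particular $T_f(E_c)=0$. On the other hand, by the definition of $\mathcal{N}(L^{(1)},\alpha,f)$ together with Lemma \ref{l:invdiffuniq}, $T_f(E_n)\equiv n\alpha\pmod 1$ as real numbers, so
\[
|T_f(E_n)|\;\geq\;\|T_f(E_n)\|_\mathbb{T}\;=\;\|n\alpha\|_\mathbb{T}\;\geq\;\frac{C}{|n|^{\tau}}\;\geq\;\frac{C}{(L^{(1)})^{\tau}},
\]
where the only nontrivial inequality is $|r|\geq\|r\|_\mathbb{T}$ for $r\in\mathbb R$, which is immediate from the definition of $\|\cdot\|_\mathbb{T}$.

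Next, applying Lemma \ref{l:morsefnbd} at $\theta_c=f_\pm^{-1}(E_c)$ to each branch yields
\[
|f_\pm^{-1}(E_n)-\theta_c|^2\;\leq\;\frac{12}{d}|E_n-E_c|,
\]
so by the triangle inequality applied to the decomposition $T_f(E_n)=\bigl(f_+^{-1}(E_n)-\theta_c\bigr)-\bigl(f_-^{-1}(E_n)-\theta_c\bigr)$,
\[
|T_f(E_n)|\;\leq\;|f_+^{-1}(E_n)-\theta_c|+|f_-^{-1}(E_n)-\theta_c|\;\leq\;2\sqrt{\tfrac{12}{d}|E_n-E_c|}.
\]
Combining this upper bound with the Diophantine lower bound and squaring gives
\[
\frac{C^2}{(L^{(1)})^{2\tau}}\;\leq\;|T_f(E_n)|^2\;\leq\;\frac{48}{d}\,|E_n-E_c|,
\]
which rearranges to the claimed inequality.

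There is no genuine obstacle here once the correct pairing is identified: the Diophantine condition provides a quantitative distance from $T_f(E_n)$ to the nearest integer (hence to $0$), and the Morse quadratic-flatness estimate from Lemma \ref{l:morsefnbd} is precisely the tool needed to pull this distance back from the $T_f$-axis to the energy axis across the critical point, with the expected loss of a square root reflected in the exponent $2\tau$ and the factor of $d$ in the numerator.
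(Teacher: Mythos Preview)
Your proof is correct and follows essentially the same approach as the paper's: both use the Diophantine condition to bound $\|n\alpha\|_{\mathbb T}$ from below, observe that the preimages $f_\pm^{-1}$ of the energy interval $[E_c,E_n]$ meet at the critical point $\theta_c$ and together span an arc of length $\|n\alpha\|_{\mathbb T}$, and then invoke Lemma~\ref{l:morsefnbd} to convert preimage length into energy gap. The only cosmetic difference is that the paper uses pigeon-hole to pick a single branch with $|f_\pm^{-1}(J_c)|\ge \|n\alpha\|_{\mathbb T}/2$ and applies the Morse bound once, whereas you apply it to both branches and sum via the triangle inequality; the arithmetic comes out identically.
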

\begin{proof}
Let $\theta_\pm$ again denote the unique points in $I_\pm$ such that $f_\pm(\theta_\pm) = E_n$. Assume $E_c < E_n$ (the reverse case is analogous), and let $J_c = [E_c,E_n]$. By Lemma \ref{l:morsefnbd}, 
\begin{align*}
    |E_n - E_c| = |J_c| \geq \frac{d}{12}|f_\pm^{-1}(J_c)|^2.
\end{align*}
Since $f_-^{-1}(J_c) \cup f_+^{-1}(J_c) = [\theta_-,\theta_+]$, we have $|f_-^{-1}(J_c)|+|f_+^{-1}(J_c)| = \|n\alpha\|_{\mathbb T}$; thus $|f_\pm^{-1}(J_c)| \geq \|n\alpha\|_{\mathbb T}/2$ for some choice of sign. Then
\begin{align*}
    |E_n - E_c| &\geq \frac{d}{48}\|n\alpha\|_\mathbb{T}^2 \\
    &\geq \frac{C^2d}{24(L^{(1)})^{2\tau}}. \qedhere
\end{align*}
\end{proof}

We fix now a length scale $L^{(2)} \gg L^{(1)}$ and notions of resonance
\begin{align*}
\rho \ll \bar{\rho} \ll \sprev{\rho} < \min\left\{ \frac{d}{ D^2}\left(\frac{C}{24(L^{(1)})^{\tau}}\right)^2, \frac{C\nu}{12(L^{(1)})^\tau}, \frac{C\nu}{3(L^{(2)})^\tau}\right\}
\end{align*}
In order to handle technicalities that arise near the boundaries of the functions we consider, we fix notation for ``modified codomains" $\salt{\mathbf{J}}(f)$ of functions $f$ satisfying Assumption \ref{as:C2fn}.  Specifically, we denote
\begin{align*}
	\mu_l(f) &= \begin{cases} \rho & \text{ if $f$ attains its minimum at a critical point} \\
		-\frac98\sprev\rho & \text{ otherwise} \end{cases} \\
	\mu_r(f) &= \begin{cases} \rho & \text{ if $f$ attains its maximum at a critical point} \\
		-\frac98\sprev\rho & \text{ otherwise} \end{cases}
\end{align*}
and define 
\begin{equation} \label{eq:Jcheck}
	\salt{\mathbf{J}}(f):=[\inf J-\mu_l(f), \sup J+\mu_r(f)].
\end{equation}
Define $\salt{\salt{\mathbf J}}(f)$ similarly, with $\frac54$ replacing $\frac98$,
and denote $$\salt J := \{E \in \mathbb R : \overline{B}_\rho(E) \subset \salt{\mathbf J}(f)\} \subset \salt{\mathbf J}(f) \cap J$$ and $\salt{\salt J} = \salt{\salt{\mathbf J}}(f) \cap J$.

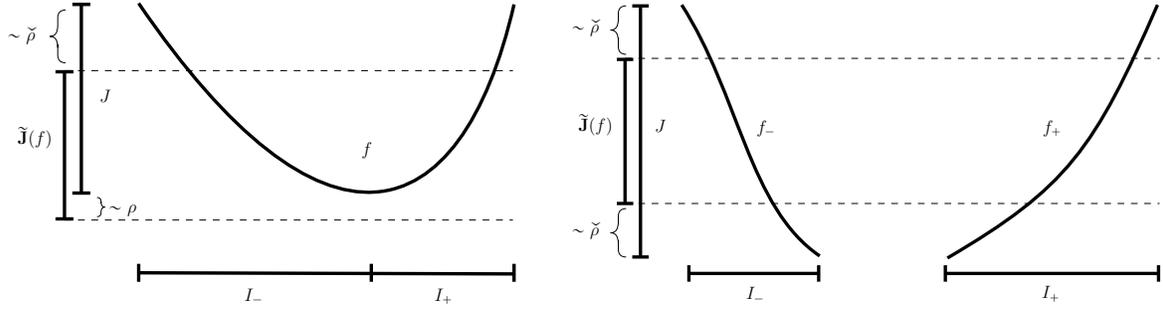
\begin{figure}
    \begin{subfigure}{.45\textwidth}
    \resizebox{\textwidth}{!}{%

\begin{tikzpicture}[x=0.75pt,y=0.75pt,yscale=-1,xscale=1]

\draw [line width=2.25]    (181,41) .. controls (344,280) and (467,250) .. (515,42) ;
\draw [line width=2.25]    (181,281) -- (515,282) ;
\draw [line width=2.25]    (181,273) -- (181,289) ;
\draw [line width=2.25]    (515,273) -- (515,289) ;
\draw [line width=2.25]    (388,273) -- (388,289) ;
\draw [line width=2.25]    (130,41) -- (130,209) ;
\draw [line width=2.25]    (115,101) -- (115,234) ;
\draw [line width=2.25]    (123,102) -- (107,102) ;
\draw [line width=2.25]    (123,233) -- (107,233) ;
\draw [line width=2.25]    (138,210) -- (122,210) ;
\draw [line width=2.25]    (138,42) -- (122,42) ;
\draw   (116,47) .. controls (111.33,47) and (109,49.33) .. (109,54) -- (109,60.5) .. controls (109,67.17) and (106.67,70.5) .. (102,70.5) .. controls (106.67,70.5) and (109,73.83) .. (109,80.5)(109,77.5) -- (109,87) .. controls (109,91.67) and (111.33,94) .. (116,94) ;
\draw   (144,231) .. controls (146.33,231) and (147.5,229.83) .. (147.5,227.5) -- (147.5,227.5) .. controls (147.5,224.17) and (148.67,222.5) .. (151,222.5) .. controls (148.67,222.5) and (147.5,220.83) .. (147.5,217.5)(147.5,219) -- (147.5,217.5) .. controls (147.5,215.17) and (146.33,214) .. (144,214) ;
\draw  [dash pattern={on 4.5pt off 4.5pt}]  (115,101) -- (514,101) ;
\draw  [dash pattern={on 4.5pt off 4.5pt}]  (115,234) -- (514,234) ;

\draw (274,293.4) node [anchor=north west][inner sep=0.75pt]    {$I_{-}$};
\draw (443,293.4) node [anchor=north west][inner sep=0.75pt]    {$I_{+}$};
\draw (378,163.4) node [anchor=north west][inner sep=0.75pt]    {$f$};
\draw (145,116.4) node [anchor=north west][inner sep=0.75pt]    {$J$};
\draw (72,149.4) node [anchor=north west][inner sep=0.75pt]    {$\salt{\mathbf{J}}(f)$};
\draw (63,61) node [anchor=north west][inner sep=0.75pt]    {$\sim \sprev{\rho }$};
\draw (153,219) node [anchor=north west][inner sep=0.75pt]    {$\sim \rho $};

\end{tikzpicture}

}%
\end{subfigure}
\hspace{.03\textwidth}
\begin{subfigure}{.52\textwidth}
\resizebox{\textwidth}{!}{%
\begin{tikzpicture}[x=0.75pt,y=0.75pt,yscale=-1,xscale=1]

\draw [line width=2.25]    (403.91,283.29) -- (596.08,283.29) ;
\draw [line width=2.25]    (403.91,275.87) -- (403.91,290.72) ;
\draw [line width=2.25]    (172.76,275.87) -- (172.76,290.72) ;
\draw [line width=2.25]    (596.08,275.87) -- (596.08,290.72) ;
\draw [line width=2.25]    (129.13,41) -- (129.13,269.37) ;
\draw [line width=2.25]    (115.2,89.27) -- (115.2,220.17) ;
\draw [line width=2.25]    (122.63,90.2) -- (107.77,90.2) ;
\draw [line width=2.25]    (122.63,220.17) -- (107.77,220.17) ;
\draw [line width=2.25]    (136.55,268.44) -- (121.7,268.44) ;
\draw [line width=2.25]    (136.55,41.93) -- (121.7,41.93) ;
\draw   (116.13,41.93) .. controls (111.46,41.93) and (109.13,44.26) .. (109.13,48.93) -- (109.13,53.74) .. controls (109.13,60.41) and (106.8,63.74) .. (102.13,63.74) .. controls (106.8,63.74) and (109.13,67.07) .. (109.13,73.74)(109.13,70.74) -- (109.13,78.56) .. controls (109.13,83.23) and (111.46,85.56) .. (116.13,85.56) ;
\draw  [dash pattern={on 4.5pt off 4.5pt}]  (115.2,220.17) -- (597,220.17) ;
\draw [line width=2.25]    (172.76,283.29) -- (290.66,283.29) ;
\draw [line width=2.25]    (289.73,275.87) -- (289.73,290.72) ;
\draw   (116.13,224.81) .. controls (111.46,224.81) and (109.13,227.14) .. (109.13,231.81) -- (109.13,236.62) .. controls (109.13,243.29) and (106.8,246.62) .. (102.13,246.62) .. controls (106.8,246.62) and (109.13,249.95) .. (109.13,256.62)(109.13,253.62) -- (109.13,261.44) .. controls (109.13,266.11) and (111.46,268.44) .. (116.13,268.44) ;
\draw  [dash pattern={on 4.5pt off 4.5pt}]  (115.2,89.27) -- (597,89.27) ;
\draw [line width=2.25]    (405.77,269.37) .. controls (513.45,206.24) and (538.52,170.97) .. (595.15,41) ;
\draw [line width=2.25]    (166.26,41) .. controls (218.25,121.76) and (226.6,223.88) .. (290.66,267.51) ;

\draw (223.24,294.19) node [anchor=north west][inner sep=0.75pt]    {$I_{-}$};
\draw (489.64,293.26) node [anchor=north west][inner sep=0.75pt]    {$I_{+}$};
\draw (140.76,142.94) node [anchor=north west][inner sep=0.75pt]    {$J$};
\draw (72,138.09) node [anchor=north west][inner sep=0.75pt]    {$\salt{\mathbf{J}}(f)$};
\draw (65.75,52.75) node [anchor=north west][inner sep=0.75pt]    {$\sim \sprev{\rho }$};
\draw (65.75,235.63) node [anchor=north west][inner sep=0.75pt]    {$\sim \sprev{\rho }$};
\draw (232.38,143.8) node [anchor=north west][inner sep=0.75pt]    {$f_{-}$};
\draw (490.42,143.8) node [anchor=north west][inner sep=0.75pt]    {$f_{+}$};

\end{tikzpicture}
}%
    \end{subfigure}
    \caption{An illustration of two different functions $f$ satisfying Assumption \ref{as:C2fn} alongside their modified codomains $\salt{\mathbf{J}}(f)$; note how the definition of $\salt{\mathbf{J}}$ depends on the presence of critical extrema.  The intervals $\salt{\salt{\mathbf{J}}}(f)$ are visually analogous.}
    \label{f:Jcheck}
\end{figure}

For $n \in \mathcal{N}(L^{(1)},\alpha,f)$, define the double-resonant interval
\begin{align*}
J_n^{\textnormal{DR}} := J_n^{\textnormal{DR}}(\bar{\rho},L^{(1)},\alpha,f) := \overline{B}_{\bar{\rho}}(E_n),
\end{align*}
and recall the special interval
\begin{align*}
J_0(L^{(1)},\alpha,f) = \left\{E : \|T_f(E)\|_\mathbb{T} \geq \frac{C}{2(L^{(1)})^\tau} > 0\right\}.
\end{align*}
\begin{lem}\label{l:DRnubd}
For all $n \in \mathcal{N}(L^{(1)},\alpha,f)$, $J_n^{\textnormal{DR}} \subset J_0(L^{(1)},\alpha,f)$.
\end{lem}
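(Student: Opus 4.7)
The plan is to establish Lipschitz continuity of $T_f$ on $J_0$ and then run a short continuity argument around $E_n$. First, by the definition of $\mathcal{N}(L^{(1)},\alpha,f)$ and the Diophantine condition \eqref{eq:dioph},
\begin{align*}
\|T_f(E_n)\|_{\mathbb{T}} = \|n\alpha\|_{\mathbb{T}} \geq \frac{C}{(L^{(1)})^\tau} \geq \frac{14\nu}{d},
\end{align*}
using the standing assumption $C/(2(L^{(1)})^\tau) \geq 7\nu/d$; thus $E_n$ lies comfortably in $J_0$. By Lemma \ref{l:tfbound}, at any $E \in J_0$ the preimages $\theta_\pm = f_\pm^{-1}(E)$ satisfy $|\partial_\theta f_\pm(\theta_\pm)| \geq \nu$, so exactly as in the proof of Lemma \ref{l:DRensep} the inverse function theorem yields $\partial_E T_f(E) = |\partial_\theta f_+(\theta_+)|^{-1} + |\partial_\theta f_-(\theta_-)|^{-1} \leq 2/\nu$ throughout $J_0$.

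Suppose for contradiction that some $E^* \in \overline{B}_{\bar\rho}(E_n)$ has $\|T_f(E^*)\|_{\mathbb{T}} < C/(2(L^{(1)})^\tau)$. By continuity of $\|T_f(\cdot)\|_\mathbb{T}$ and the intermediate value theorem, one can choose $E^{**}$ between $E_n$ and $E^*$ with $\|T_f(E^{**})\|_{\mathbb{T}} = C/(2(L^{(1)})^\tau)$ and with the closed interval from $E_n$ to $E^{**}$ entirely contained in $J_0$. Critically, $\overline{B}_{\bar\rho}(E_n)$ encounters no critical value of $f$: Lemma \ref{l:DRensepfromcrit} gives $|E_n - E_c| \geq C^2 d / (48 (L^{(1)})^{2\tau})$ for any critical value $E_c$, and the first term in the upper bound on $\sprev\rho$ (combined with $D \geq d$ and $\bar\rho \ll \sprev\rho$) forces this distance to exceed $\bar\rho$. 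Hence $T_f$ is smooth and $2/\nu$-Lipschitz along the entire interval between $E_n$ and $E^{**}$.

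Integrating this Lipschitz estimate and applying the triangle inequality for $\|\cdot\|_{\mathbb{T}}$,
\begin{align*}
\frac{C}{2(L^{(1)})^\tau} = \|T_f(E^{**})\|_{\mathbb{T}} \geq \|T_f(E_n)\|_{\mathbb{T}} - |T_f(E^{**}) - T_f(E_n)| \geq \frac{C}{(L^{(1)})^\tau} - \frac{2\bar\rho}{\nu},
\end{align*}
which rearranges to $\bar\rho \geq C\nu/(4(L^{(1)})^\tau)$ and directly contradicts the hypothesis $\bar\rho \ll \sprev\rho < C\nu/(12(L^{(1)})^\tau)$. Therefore $\overline{B}_{\bar\rho}(E_n) \subset J_0$, which is the claim.

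The main subtlety I expect is the global behavior of $T_f$: although $T_f$ is continuous on $J$, it has vertical tangents at critical values where the Lipschitz bound fails, so the continuity argument could in principle be derailed there. This is precisely what the term $\frac{d}{D^2}(C/(24(L^{(1)})^\tau))^2$ in the upper bound on $\sprev\rho$ is engineered to prevent, via the quantitative separation from critical values supplied by Lemma \ref{l:DRensepfromcrit}.
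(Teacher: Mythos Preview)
Your proof is correct, but it takes a different route from the paper's. The paper bypasses the Lipschitz bound entirely: for any $E_* \in \overline{B}_{\bar\rho}(E_n)$ it uses the Morse estimate of Lemma~\ref{l:morsefnbd} to write
\[
\|T_f(E_*) - T_f(E_n)\|_{\mathbb{T}} \leq |f_+^{-1}((E_*,E_n))| + |f_-^{-1}((E_*,E_n))| \leq 12\sqrt{\sprev\rho/d} \leq \frac{C}{2(L^{(1)})^\tau},
\]
the last inequality coming from the first term in the upper bound on $\sprev\rho$. This square-root-in-$|E_*-E_n|$ estimate is valid uniformly on $J$, including at critical values, so no continuity or contradiction argument is needed and the triangle inequality finishes the proof in one line.

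Your argument instead exploits the sharper linear Lipschitz bound $|\partial_E T_f| \leq 2/\nu$ on $J_0$ and closes with the second term $C\nu/(12(L^{(1)})^\tau)$ in the bound on $\sprev\rho$. This is fine, but note that your appeal to Lemma~\ref{l:DRensepfromcrit} is redundant: once the interval $[E_n,E^{**}]$ lies in $J_0$, Lemma~\ref{l:tfbound} already gives $|\partial_\theta f_\pm| \geq \nu > 0$ at the relevant preimages, so no critical value can occur there. The paper's approach is shorter and avoids the bootstrap of first trapping the interval in $J_0$; yours trades that brevity for a tighter derivative estimate that isn't actually needed here.
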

\begin{proof}
This is a corollary of monotonicity of $f_\pm$ and Lemma \ref{l:morsefnbd}; specifically, if $E_* \in J_n^{\textnormal{DR}}$, then $|E_* - E_n| \leq  3\sprev{\rho}$, and
\begin{align*}
\|T_f(E_*) - T_f(E_n)\|_\mathbb{T} \leq |f_+^{-1}((E_*,E_n))| + |f_-^{-1}((E_*,E_n))| \leq 12\sqrt{\frac{\sprev{\rho}}{d}} \leq \frac{C}{2(L^{(1)})^\tau}.
\end{align*}
In particular,
\begin{align*}
\|T_f(E_*)\|_\mathbb{T} \geq \|T_f(E_n)\|_\mathbb{T} - \|T_f(E_*) - T_f(E_n)\|_\mathbb{T} \geq \frac{C}{2(L^{(1)})^\tau},
\end{align*}
which was the claim.
\end{proof}

We define the complementary region
\begin{align*}
	J^{SR} := \salt{J} \setminus \bigcup_{n \in \mathcal{N}_{s+1}} B_{\bar\rho-3\rho}(E_n).
\end{align*}
We now divide it into simple-resonant intervals of comparable size to the double-resonant intervals:

\begin{prop} \label{pr:C2cover}
	$\salt{\salt{J}}$ can be covered by closed intervals $J_{i}^{(j)}$, $j \in \{1,2\}$, such that
	\begin{enumerate}
		\item $J_{n}^{(2)} = \overline{B}_{\bar{\rho}}(E_{n})$, where %
		$n \in \mathcal N(L^{(1)},\alpha,f)$. 
		\item $\bar{\rho} \leq |J_{i}^{(j)}| \leq 2\bar{\rho}$. 
		\item $J_{i} \cap J_{i'} \neq \emptyset \implies |J_{i}\cap J_{i'}| = 3\rho$. 
		\item For any $E \in J_{i}^{(j)}$, $\overline{B}_{\rho}(E) \subset \salt{\mathbf{J}}(f)$.  %
		\item The total number of such intervals $J_{i}^{(j)}$ does not exceed $5\lceil |J|\bar{\rho}^{-1}\rceil$.
	\end{enumerate}
	Denoting the corresponding preimages
	\begin{align*}
		I_{i,\pm}^{(j)} := 	f_\pm^{-1}(J_{i}^{(j)}),
		\quad I_{i}^{(j)} := I_{i,-}^{(j)} \cup I_{i,+}^{(j)},
	\end{align*}
	each function $f|_{I_{i}^{(j)}}: I_{i}^{(j)} \to J_{i}^{(j)}$ satisfies Assumption \ref{as:C2fn} with Morse constants $d,D$ and boundary derivative constant
	\begin{equation*}
		\salt\nu := \min\left\{\nu, \frac{3d}{8D_0}\bar{\rho}\right\}.
	\end{equation*}
\end{prop}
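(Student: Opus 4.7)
The plan is to construct the covering in two stages. First, place the double-resonance intervals $J_n^{(2)} := \overline{B}_{\bar\rho}(E_n)$ for each $n \in \mathcal{N}(L^{(1)},\alpha,f)$, where $E_n$ is the unique energy from Lemma \ref{l:invdiffuniq}. Second, subdivide the complementary region $\salt{\salt J} \setminus \bigcup_n \mathrm{int}(J_n^{(2)})$ with simple-resonance intervals $J_i^{(1)}$ of size in $[\bar\rho,2\bar\rho]$, each pair of adjacent intervals (DR or SR) overlapping in exactly $3\rho$. Items 1--3 of the proposition are then built into the construction; items 4, 5, and the Assumption \ref{as:C2fn}-inheritance claim require verification. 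The main obstacle I expect is item 3 of Assumption \ref{as:C2fn}, namely the boundary-derivative bound $\salt\nu$, which forces our endpoint choices to stay quantitatively away from critical values of $f$.

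For the first stage, Lemma \ref{l:DRensep} yields the separation $|E_n - E_m| \geq \frac{C\nu}{2(L^{(1)})^\tau} > 6\sprev\rho \gg 2\bar\rho + 3\rho$ by the hypothesis on $\sprev\rho$, so the $J_n^{(2)}$ are pairwise disjoint with gaps of order $\sprev\rho$. Lemma \ref{l:DRensepfromcrit} combined with $\sprev\rho < \frac{d}{D^2}\bigl(\frac{C}{24(L^{(1)})^\tau}\bigr)^2$ gives $|E_n - E_c| \geq \frac{C^2d}{48(L^{(1)})^{2\tau}} \geq 12D^2\sprev\rho \gg \bar\rho$ for every critical value $E_c$, so each $J_n^{(2)}$ sits inside $\salt{\mathbf J}(f)$ with comfortable $\rho$-buffer for item 4. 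The count of DR intervals is at most $\frac{2|J|(L^{(1)})^\tau}{C\nu} + 1 \leq \lceil |J|/\bar\rho \rceil$, using the separation above together with $\bar\rho < \sprev\rho < \frac{C\nu}{12(L^{(1)})^\tau}$.

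For the second stage, on each connected component of $\salt{\salt J} \setminus \bigcup_n \mathrm{int}(J_n^{(2)})$, a standard one-dimensional packing argument produces intervals of size in $[\bar\rho,2\bar\rho]$ with consecutive $3\rho$-overlap; this is feasible because $3\rho < \bar\rho$ and each component has length at least $\bar\rho$. Whenever a connected component abuts an endpoint of $\salt{\salt J}$ that coincides with a critical value of $f$, we anchor the first SR interval at that critical value, which is permitted because the corresponding critical point lies in $I_- \cap I_+$ and is thus interior to $I$ (not a boundary point). All other SR-interval endpoints then lie at distance at least $\bar\rho - 3\rho$ from any critical value, either because the component abuts a DR interval (well-separated from critical values by the previous paragraph) or a non-critical extremum (separated from $\salt{\salt J}$ by the $\tfrac54\sprev\rho$ contraction built into $\salt{\salt{\mathbf J}}(f)$). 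Since each SR interval covers at least $\bar\rho/2$ of previously uncovered length and the DR contribution is bounded above, item 5 follows by counting.

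It remains to verify Assumption \ref{as:C2fn} for each restriction $f|_{I_i^{(j)}} : I_i^{(j)} \to J_i^{(j)}$. The Morse inequality is inherited verbatim on $I_i^{(j)} \subset I$, and the surjectivity $f_\pm(I_{i,\pm}^{(j)}) = J_i^{(j)}$ follows from monotonicity of $f_\pm$ on $I_\pm$ together with $J_i^{(j)} \subset J$. For item 3, let $\theta_0$ be a boundary point of $I_i^{(j)}$ which is not a boundary of $I$; then $f_\pm(\theta_0)$ is an endpoint of $J_i^{(j)}$ which is \emph{not} a critical value, by the construction above. Suppose toward contradiction that $|\partial_\theta f_\pm(\theta_0)| < \salt\nu = \frac{3d}{8D_0}\bar\rho \leq \nu$. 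Then $\theta_0 \in I_{\pm,<\nu}$, and by Lemma \ref{l:monotonederiv} its connected component in $I_{\pm,<d/2}$ contains a critical point $\theta_c$, with $|\partial_\theta^2 f_\pm| \geq d/2$ throughout the segment $[\theta_c,\theta_0]$. The mean value theorem forces $|\theta_0 - \theta_c| < 2\salt\nu/d$ and, by monotonicity of $|\partial_\theta f_\pm|$ on this segment, $|f_\pm(\theta_0) - E_c| \leq \salt\nu \cdot |\theta_0 - \theta_c| < 2(\salt\nu)^2/d = \frac{9d\bar\rho^2}{32 D_0^2}$. But every non-critical endpoint of $J_i^{(j)}$ was placed at distance at least $\bar\rho - 3\rho \gg \frac{9d\bar\rho^2}{32 D_0^2}$ from any critical value, a contradiction. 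Hence $|\partial_\theta f_\pm(\theta_0)| \geq \salt\nu$, completing the proof.
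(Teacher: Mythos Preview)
Your proposal is correct and follows essentially the same strategy as the paper's proof: place the double-resonant balls $J_n^{(2)}$ first (using Lemmas \ref{l:DRensep} and \ref{l:DRensepfromcrit} to separate them from each other and from critical values), fill the complement with simple-resonant intervals of size in $[\bar\rho,2\bar\rho]$ and $3\rho$-overlaps, then verify the boundary-derivative bound via Lemma \ref{l:monotonederiv}. The paper runs the boundary-derivative step in the direct rather than contrapositive direction---using Lemma \ref{l:morsefnbd} to pass from $|f(\theta_*)-E_c|\geq \tfrac34\bar\rho$ to $|\theta_*-\theta_c|\geq \tfrac{3}{4D_0}\bar\rho$ and then integrating $|\partial_\theta^2 f|\geq d/2$---which delivers the constant $\tfrac{3d}{8D_0}\bar\rho$ without needing your auxiliary smallness inequality $\tfrac{9d\bar\rho^2}{32D_0^2}\ll \bar\rho-3\rho$.
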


\begin{proof}
If $[E_m + \bar\rho - 3\rho, E_n - \bar\rho + 3\rho]$ is a connected component of $J^{SR}$, then, by Lemma \ref{l:DRensep}, it has length at least 
\begin{align*}
	|E_n - E_m| - 2\bar\rho + 6\rho &\geq C\nu/2((L^{(1)})^{\tau}) - 2\bar\rho + 6\rho \\&\geq \bar\rho. 
\end{align*}
Similarly, if a connected component of $J^{SR}$ contains a critical point of $f$, it has length at least 
\begin{align*} C^2d/48((L^{(1)})^{2\tau}) - \bar\rho + 3\rho  &\geq \bar\rho \end{align*} by Lemma \ref{l:DRensepfromcrit}. 
We can cover each connected component of at least this size (by the above remarks, this includes all connected components except possibly those at the boundaries of $J^{SR}$ where $f$ does not attain a critical point) by at most $4\lceil |E_n - E_m|/\bar\rho \rceil$ closed intervals of size between $\bar\rho$ and $2\bar\rho$ overlapping with only their nearest neighbors by exactly $3\rho$. 
We thus construct 
a collection of at most $4\lceil |J|/\bar\rho \rceil$ closed intervals $J_{i}^{(1)}$. 
Together with the collection of intervals $$\{J_n^{(2)} := J_n^{DR} \;:\; n \in \mathcal N(L^{(1)},\alpha,f) \text{ s.t. } J_n^{DR} \subset \salt{J}\},$$ 
we have a total of at most $5\lceil |J|/\bar\rho \rceil$ intervals $J_{i}^{(j)}$ ($j \in \{1,2\}$) satisfying the conditions laid out in the proposition. 
Moreover, these intervals cover $\salt{J}$, with the possible exception of intervals of length at most $2\bar\rho$ at the boundaries of $\salt{J}$ where $f$ does not attain a critical point; 
since $2\bar\rho < \frac18\sprev\rho-\rho$, they thus cover $\salt{\salt{J}}$.

It is clear that restrictions of $f$ to $I_i^{(j)}$ satisfy Assumption \ref{as:C2fn}; all that remains is to compute the new boundary derivative constant. By Lemma \ref{l:monotonederiv}, if $|\partial_\theta f(\theta_*)| < \nu$ for some $\theta_* \in I$, $\theta_*$ belongs to a connected component $I_{\pm,<\nu} \subset I$ containing a critical point $\theta_c$ and on which $|\partial_\theta f| < \nu$. If $\theta_*$ is a boundary point of some $I_{i}^{(j)}$, then, by construction, 
\begin{align*}
    |f(\theta_*) - f(\theta_c)| &\geq \bar\rho-3\rho \\ &\geq \frac34\bar\rho;
\end{align*} 
and by Lemma \ref{l:morsefnbd}, 
\begin{align*}
    |\theta_* - \theta_c| &\geq \frac3{4D_0}\bar\rho.
\end{align*}
Since $|\partial_\theta f| < \nu < d/2$ on $I_{\pm,\nu}$, $|\partial_\theta^2 f| > d_s/2$ on this component; thus,
\begin{align*}
    |\partial_\theta f(\theta_*)| &\geq \int_{\theta_c}^{\theta_*} |\partial_\theta^2 f(\theta)| d\theta \\ &\geq \frac{3d}{8D_0}\bar\rho. \qedhere
\end{align*}
\end{proof}

We denote the two preimages of each crossing point $E_n$ by
\begin{align*}
\theta_{n,\pm} &:= f_\pm^{-1}(E_n), \quad n \in \mathcal{N}(L^{(1)},\alpha,f).
\end{align*}

\begin{prop}\label{pr:SRDRsep}
We have the following:
\begin{enumerate}
\item (Simple resonance): If $\theta_* \in I_i^{(1)}$, then, for any $0 \neq |n| \leq L^{(1)}$ such that $\theta_* \in I-n\alpha$, we have 
\begin{align*}
|f(\theta_*) - f(\theta_*+n\alpha)| > 3\rho,
\end{align*}
and for all $\theta \in B_{\rho/8D_0}(\theta_*) \cap I \cap (I - n\alpha)$,
\begin{align*}
|f(\theta) - f(\theta+n\alpha)| > 2\rho.
\end{align*}
\item (Double resonance): If $\theta_* = \theta_{n_0,-} \in I_{n_0}^{(2)}$ for $n_0 \in \mathcal{N}(L^{(1)},\alpha,f)$, then for any $|n| \leq L^{(2)}$, $n \notin\{0,n_0\}$ such that $\theta_* \in I-n\alpha$, we have 
\begin{align*}
|f(\theta_*) -f(\theta_*+n\alpha)| &> 3\sprev{\rho};
\end{align*}
and for all $\theta \in B_{\sprev{\rho}/8D_0}(\theta_*) \cap I \cap (I - n\alpha)$, 
\begin{align*}
|f(\theta) -f(\theta+n\alpha)| &> 2\sprev{\rho}
\end{align*}
and
\begin{align*}
\min\{-\partial_\theta f(\theta),\, \partial_\theta f(\theta + n_0\alpha) \} \geq \nu.
\end{align*}
\end{enumerate}
\end{prop}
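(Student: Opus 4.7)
The plan is to prove each part by contradiction following a common template. Set $E_* := f(\theta_*)$; since each $f_\pm$ maps monotonically onto $J$, the energy $E_*$ has exactly two preimages $\theta^\pm_* := f_\pm^{-1}(E_*)$ in $I$, one of which is $\theta_*$ itself. If the desired separation fails, so that $|f(\theta_*+n\alpha) - E_*| \leq 3\rho$ (resp.\ $\leq 3\sprev\rho$ in Part 2), then Lemma \ref{l:morsefnbd}, applied in whichever monotonicity interval $\theta_*+n\alpha$ inhabits, forces $\theta_*+n\alpha$ to lie within $O(\sqrt{\rho/d})$ (resp.\ $O(\sqrt{\sprev\rho/d})$) of either $\theta_*$ or the other preimage $\theta^{\textnormal{other}}_*$. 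This yields two subcases: a \emph{self-resonance} subcase where $\|n\alpha\|_{\mathbb T}$ is forced small, and a \emph{cross-resonance} subcase where $\|n\alpha - T_f(E_*)\|_{\mathbb T}$ is forced small.

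For Part 1, the self-resonance subcase contradicts the Diophantine bound $\|n\alpha\|_{\mathbb T} \geq C/(L^{(1)})^\tau$ using the chain $\rho \ll \sprev\rho < (d/D^2)(C/24(L^{(1)})^\tau)^2$.  In the cross-resonance subcase, Lemma \ref{l:invdiffuniq} identifies the unique crossing energy $E_n$ with $T_f(E_n) \equiv n\alpha \pmod 1$; the uniform estimate $\partial_E T_f \geq 2/D_0$ (from the inverse function theorem and $|\partial_\theta f| \leq D_0$) then gives $|E_* - E_n| \leq 3D_0\sqrt{\rho/d}$. This contradicts $f(\theta_*) \in J_i^{(1)}$ once $\rho$ is small enough that $3D_0\sqrt{\rho/d} < \bar\rho - 3\rho$, since Proposition \ref{pr:C2cover} guarantees $J_i^{(1)} \cap \bigcup_n B_{\bar\rho - 3\rho}(E_n) = \emptyset$.

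For Part 2, $E_* = E_{n_0}$ exactly and $\theta^{\textnormal{other}}_* = \theta_{n_0,+} \equiv \theta_* + n_0\alpha$ on the circle. Since $\|n_0\alpha\|_{\mathbb T} \geq C/(L^{(1)})^\tau \geq 14\nu/d$, a rescaling of the proof of Lemma \ref{l:tfbound} gives $|\partial_\theta f_\pm(\theta_{n_0,\pm})| \geq 2\nu$, which via the Morse bound $|\partial_\theta^2 f| \leq D$ extends (with the loss of a factor of $2$) to $|\partial_\theta f_\pm| \geq \nu$ on a neighborhood of radius $\nu/D$ around each of these two points. Linearizing $f$ there, the self-resonance subcase yields $|f(\theta_*+n\alpha) - E_*| \geq \nu \|n\alpha\|_{\mathbb T} \geq C\nu/(L^{(2)})^\tau > 3\sprev\rho$ by hypothesis; the cross-resonance subcase yields analogously $|f(\theta_*+n\alpha) - E_*| \geq \nu \|(n-n_0)\alpha\|_{\mathbb T}$, again contradicting $\leq 3\sprev\rho$ for $n - n_0 \neq 0$ via Diophantine and the hypothesized bound on $\sprev\rho$.

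The transversality conclusion $\min\{-\partial_\theta f(\theta),\,\partial_\theta f(\theta+n_0\alpha)\} \geq \nu$ on $B_{\sprev\rho/(8D_0)}(\theta_*)$ is precisely the Morse-perturbed lower bound just used, since $D\cdot\sprev\rho/(8D_0)$ is negligible against $\nu$ under the hypothesized relations. Finally, the second pointwise inequalities ($> 2\rho$ and $> 2\sprev\rho$ on the respective balls) follow from the first by the Lipschitz bound $|\partial_\theta f| \leq D_0$: moving each of $\theta$ and $\theta+n\alpha$ by at most $\rho/(8D_0)$ (resp.\ $\sprev\rho/(8D_0)$) costs at most $\rho/4$ (resp.\ $\sprev\rho/4$) in separation. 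The main obstacle will be verifying that the various constants arising from Lemma \ref{l:morsefnbd}'s square-root bounds, the linearization radii in Lemma \ref{l:tfbound}, and the Diophantine condition all fit cleanly inside the hypothesized parameter chain $\rho \ll \bar\rho \ll \sprev\rho$ with its explicit upper bounds.
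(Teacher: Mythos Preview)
Your overall structure—splitting into self- and cross-resonance subcases and handling the former via Lemma~\ref{l:morsefnbd} plus Diophantine, the latter via the crossing energy $E_n$—matches the paper. But there is a genuine gap in your Part~2 mechanism, and a less efficient detour in Part~1.

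\textbf{The gap in Part~2.} You propagate the pointwise bound $|\partial_\theta f_\pm(\theta_{n_0,\pm})| \geq 2\nu$ to a neighborhood of radius $\nu/D$ via $|\partial_\theta^2 f| \leq D$, and then ``linearize there'' both for the separation estimate and for the transversality conclusion. But nothing in Assumption~\ref{as:C2fn} ties $D$ to $D_0$; in the paper's inductive application one has $D = D_k \sim D_0^2/\sigma_k$, which is enormous, so $\nu/D$ is vastly smaller than both $\sprev\rho/(8D_0)$ and $\sqrt{36\sprev\rho/d}$ (the radius within which Lemma~\ref{l:morsefnbd} confines $\theta_*+n\alpha$). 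Your claim that $D\cdot\sprev\rho/(8D_0)$ is negligible against $\nu$ does not follow from the stated parameter chain: combining $\sprev\rho < C\nu/(12(L^{(1)})^\tau)$ with the Diophantine lower bound $C/(L^{(1)})^\tau \geq 14\nu/d$ gives no useful control on $D\sprev\rho$. The paper avoids $D$ entirely: for any $\theta$ with $|f(\theta) - E_{n_0}| \leq 3\sprev\rho$, the computation inside the proof of Lemma~\ref{l:DRnubd} (which is actually carried out for radius $3\sprev\rho$, not merely $\bar\rho$) shows $\|T_f(f(\theta))\|_{\mathbb T} \geq C/(2(L^{(1)})^\tau)$, and then Lemma~\ref{l:tfbound} gives $|\partial_\theta f(\theta)| \geq \nu$ directly. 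This pointwise-in-energy argument uses only $d$ and $D_0$, and it is what makes both the $\nu\|n\alpha\|_{\mathbb T}$ lower bound and the transversality conclusion go through on the full ball $B_{\sprev\rho/8D_0}(\theta_*)$.

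\textbf{The detour in Part~1.} In the cross-resonance subcase you bound $|E_* - E_n|$ via $\partial_E T_f \geq 2/D_0$, obtaining $|E_* - E_n| \lesssim D_0\sqrt{\rho/d}$; the contradiction with $J_i^{(1)}$ then needs roughly $\rho \lesssim \bar\rho^2 d/D_0^2$, stronger than the stated $\rho \ll \bar\rho$. The paper instead observes that since $\theta_{n,+} = \theta_{n,-} + n\alpha$ and $f_\pm$ are monotone with opposite signs, $f(\theta_*)$ and $f(\theta_*+n\alpha)$ lie on opposite sides of $E_n$, so $|f(\theta_*) - f(\theta_*+n\alpha)| \geq |f(\theta_*) - E_n| > \bar\rho - 3\rho$ directly. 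This monotonicity trick needs only $\bar\rho > 6\rho$ and bypasses the square-root loss.
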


\begin{proof}
We begin with the simple-resonant case, i.e. $\theta_* \in I_{i}^{(1)}$.  Suppose $\theta_* \in I_-$ (the case $\theta_* \in I_+$ is completely analogous).
If $\theta_* + n\alpha \in I_-$, $0 \neq |n| \leq L^{(1)}$, we can apply Lemma \ref{l:morsefnbd} to get that
\begin{align*}
|f(\theta_*) - f(\theta_*+n\alpha)| \geq \frac{d}{12}\|n\alpha\|^2_\mathbb{T} \geq \frac{d}{12}\left(\frac{C}{(L^{(1)})^{\tau}}\right)^2 > 3\sprev{\rho} > 3\rho.
\end{align*}
We thus suppose that $\theta_* + n\alpha \in I_+$; by Lemma \ref{l:Naf}, $n \in \mathcal{N}(L^{(1)},\alpha,f)$. %

Assume $\theta_* \leq \theta_{n,-}$.  Then $\theta_* + n\alpha \leq \theta_{n,+} = \theta_{n,-} + n\alpha$, and, by the monotonicity of $f$ on $I_\pm$, $f(\theta_*+n\alpha) \leq E_n$.  Thus,
\begin{align*}
|f(\theta_*) - f(\theta_*+n\alpha)| &= f(\theta_*) - f(\theta_*+n\alpha) \\
&\geq f(\theta_*) - E_n > \bar\rho-3\rho > 3\rho.
\end{align*}
If $\theta_* \geq \theta_{n,-}$, then similarly $f(\theta_* + n\alpha) \geq E_n$, and
\begin{align*}
|f(\theta_*) - f(\theta_*+n\alpha)| &= f(\theta_*+n\alpha) - f(\theta_*) \\
&\geq E_n - f(\theta_*) > 3\rho.
\end{align*}
The claim for $\theta \in B_{\rho/8D_0}(\theta_*) \cap I \cap (I-n\alpha)$ follows from the Mean Value Theorem and the uniform bound $|\partial_\theta f| \leq D_0$.  This concludes the simple resonant case.

We now consider the case $\theta_* = \theta_{n_0,-}$.  
Suppose $n$ with $0 \neq |n| \leq L^{(2)}$ satisfies $\theta_* + n\alpha \in I_-$ and $|f(\theta_* + n\alpha) - f(\theta_*)| \leq 3\sprev{\rho}$.  Then by Lemma \ref{l:DRnubd},
\begin{align*}
3\sprev{\rho} \geq |f(\theta_*) - f(\theta_*+n\alpha)| \geq \nu\|n\alpha\|_\mathbb{T}  \geq \frac{C\nu}{(L^{(2)})^\tau},
\end{align*}
a contradiction.  Thus, for any $0 \neq |n| \leq L^{(2)}$ such that $\theta_* + n\alpha \in I_-$,  $|f(\theta_* + n\alpha) - f(\theta_*)| > 3\sprev{\rho}$.

If instead $\theta_* + n\alpha \in I_+$, $|n| \leq L^{(2)}$ and $n \neq n_0$, and suppose $|f(\theta_* + n\alpha) - f(\theta_*)| \leq 3\sprev{\rho}$.  Then $\theta_* + n_0\alpha \in I_+$ and
\begin{align*}
3\sprev{\rho} \geq |f(\theta_*) - f(\theta_*+n\alpha)| &= |f(\theta_* + n_0\alpha) - f(\theta_* + n\alpha)| \geq \nu\|n\alpha\|_\mathbb{T} \geq \frac{C\nu}{(L^{(2)})^\tau},
\end{align*}
a contradiction as above.  Again, the claim for $\theta \in B_{\sprev{\rho}/8D_0}(\theta_*) \cap I \cap (I-n\alpha)$ follows from the Mean Value Theorem and the uniform bound $|\partial_\theta f| \leq D_0$.  The lower bound for the derivatives follows from Lemmas \ref{l:tfbound} and \ref{l:DRnubd}, and that $B_{\sprev{\rho}/8D_0}(\theta_*) \cap I \subset I^{\textnormal{DR}}_{n_0,-}$.
\end{proof}

\subsection{Domain adjustment}

We now show that a function $g$ well-approximating a function $f$ satisfying Assumption \ref{as:C2fn} can have its domain slightly modified to satisfy Assumption \ref{as:C2fn}.

Let $I_\pm \subset \mathbb{T}$ be two closed intervals with disjoint interiors, let $I := I^f_- \cup I^f_+$, and consider a piecewise $C^1$ function
\begin{align*}
f : I \to J, \quad f_\pm := f|_{I^f_\pm}
\end{align*}
such that each function $f_\pm$ maps onto $J$.

Let $\delta>0$ satisfy $16\delta < |J|$ and $\frac{16\delta}{\nu} < |I_\pm|$.  Our first lemma handles the interval adjustment in the absence of critical points for $f$:

\begin{lem}\label{l:Asm3StabDR}
Suppose $\pm\partial_\theta f_\pm \geq \nu > 0$ uniformly on $I_\pm$, and suppose $g \in C^2(I, \mathbb{R})$ satisfies the following stability conditions:
\begin{enumerate}
\item The function $g$ is Morse on $I$
\begin{align*}
d \leq |\partial_\theta g| + |\partial_\theta^2 g| \leq D
\end{align*}
with $I_\pm^g := \{\theta \in I : \pm\partial_\theta g \geq 0\}$ two closed intervals with disjoint interiors.
\item There is a constant $0 < \tilde{\nu} < d/2$ such that $|\partial_\theta g| \geq \tilde{\nu}$ on the boundary points of $I$.
\item %
If $\theta \in I$ is such that $g(\theta) \notin g(I^g_-) \cap g(I^g_+)$, or if $\theta$ is a boundary point of $I$, then 
\begin{align*}
|f(\theta) - g(\theta)| \leq 2\delta.
\end{align*}
\end{enumerate}
Then there exists a subset $\tilde I^g = \tilde I_-^g \cup \tilde I_+^g \subset I$, $\tilde I_\pm^g$ being closed intervals with disjoint interiors, such that
\begin{align*}
|I \setminus \tilde I^g| \leq \frac{32\delta}{\nu}
\end{align*} 
on which $g$ satisfies Assumption \ref{as:C2fn}.
\end{lem}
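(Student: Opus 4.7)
The plan is to construct $\tilde I^g_\pm$ as preimages under $g_\pm$ of the common image of $g$'s monotone pieces, and to bound the size of the discarded region using condition (3).

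First I would argue that the hypotheses force $I^g_\pm = I_\pm$.  Continuity of $\partial_\theta f$ together with $\pm\partial_\theta f_\pm \geq \nu > 0$ precludes $I_-$ and $I_+$ from sharing a boundary point, so $I = I_- \cup I_+$ is a genuinely disjoint union of two arcs in $\mathbb T$; any closed subinterval of $I$ lies in a single component, and the single-closed-interval hypothesis on $I^g_\pm$ then forces $\{I^g_-, I^g_+\} = \{I_-, I_+\}$.  The labeling is pinned down by condition (3) at $\partial I$ together with the sign of $\partial_\theta f_\pm$, so without loss of generality $I^g_\pm = I_\pm$; in particular $g$ is strictly monotone on each component of $I$ with no interior critical points.

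The main technical step is then to promote the boundary bound of condition (2) to a uniform bound $|\partial_\theta g| \geq \tilde\nu$ throughout $I$.  For this I would examine a connected component $C$ of $\{|\partial_\theta g| < d/2\} \cap I_\pm$.  On $C$ the Morse condition gives $|\partial_\theta^2 g| \geq d/2$, so $\partial_\theta^2 g$ has constant sign and $\partial_\theta g$ is strictly monotone on $\overline C$.  Each endpoint of $C$ is either an interior point of $I_\pm$ with $|\partial_\theta g| = d/2$ or a boundary point of $I$ with $|\partial_\theta g| \geq \tilde\nu$; strict monotonicity rules out both endpoints simultaneously carrying the common value $d/2$, so $C$ must reach $\partial I$, and monotonicity of $\partial_\theta g$ through values in $[\tilde\nu, d/2]$ gives $|\partial_\theta g| \geq \tilde\nu$ throughout $C$; outside $\{|\partial_\theta g| < d/2\}$ the bound is immediate.

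With the uniform derivative bound in hand, I would set $\tilde J := g(I_-) \cap g(I_+)$ and $\tilde I^g_\pm := g_\pm^{-1}(\tilde J)$, so that $g_\pm(\tilde I^g_\pm) = \tilde J$ by construction, with Assumption \ref{as:C2fn} then following immediately: the Morse condition is inherited, the common image condition holds by definition, and $|\partial_\theta g| \geq \tilde\nu$ at $\partial \tilde I^g$ comes from the previous step.  To control the trim, I would use condition (3) at $\partial I$ to place each endpoint of $g(I_\pm)$ within $2\delta$ of the corresponding endpoint of $J$, so $|g(I_\pm) \setminus \tilde J| \leq 8\delta$; on $T := I \setminus \tilde I^g$ one has $g(\theta) \notin \tilde J$, so condition (3) applies, placing $f(T)$ in a $2\delta$-fattening of $\bigcup_\pm (g(I_\pm) \setminus \tilde J) \cap J$, and the bound $|\partial_\theta f| \geq \nu$ then yields $|T| \leq 32\delta/\nu$.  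The main obstacle I anticipate is the structural argument in the second paragraph; once it is established that $g$ shares $f$'s monotonicity intervals and has no interior critical points, both the uniform derivative bound and the measure estimate are essentially computations using the Morse condition and condition (3).
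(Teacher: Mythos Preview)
Your structural claim in the second paragraph is where the argument breaks. You appeal to continuity of $\partial_\theta f$ to conclude that $I_-$ and $I_+$ cannot share a boundary point, but in the setup preceding the lemma $f$ is only \emph{piecewise} $C^1$, and $I_\pm$ are closed intervals with disjoint \emph{interiors}; they may well share an endpoint at which the one-sided derivatives of $f$ carry opposite signs of magnitude $\geq \nu$. In the principal application (the double-resonant construction at the end of Section~5.3), $f = \tilde{\mathbf E}^{(2)}_{s,n_s,\bullet}$ is a pointwise max or min of two transverse $C^2$ curves on a single connected interval $I$, and $g = \mathbf E^{(2)}_{s+1,n_s,\bullet}$ genuinely has an interior critical point by Theorem~\ref{t:DRRelFns}. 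So your conclusions $I^g_\pm = I_\pm$ and ``$g$ has no interior critical points'' both fail in precisely the case the lemma is designed for. Your third-paragraph derivative argument collapses for the same reason: a component of $\{|\partial_\theta g| < d/2\}$ can contain the interior critical point rather than reach $\partial I$.

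The paper's proof does not try to identify $I^g_\pm$ with $I_\pm$. It sets $J^g := g(I^g_-)\cap g(I^g_+)$ directly, uses condition~(3) at $\partial I$ to show $J^g \neq \varnothing$ and to locate its endpoints within $6\delta$ of $\partial J$, and then bounds each of the at most four edge intervals $[\inf I_-, \inf \tilde I^g_-)$, etc., by $8\delta/\nu$ via $|\partial_\theta f_\pm^{-1}| \leq \nu^{-1}$. When $g$ has a critical extremum, some of these edge intervals are absorbed and the count drops, which is how the total stays at $32\delta/\nu$. For the boundary-derivative condition the paper does not aim for a uniform bound on $I$; it argues only at $\partial\tilde I^g$ and, when a critical point is present, falls back to $\tfrac{d}{2}|\tilde I^g_\pm|$, giving the weaker constant $\min\{\tilde\nu,\tfrac{d}{2}(|I_\pm|-16\delta/\nu)\}$, which still verifies Assumption~\ref{as:C2fn}. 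Your approach is essentially sound in the genuinely disconnected case, but that case is the easy one; the content of the lemma lies in accommodating an interior critical point of $g$.
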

\begin{proof}
Let $J^g_r := \min\{\sup g(I_g^+),\sup g(I_g^-)\}$ 
and $J^g_l := \max\{\inf g(I_g^+),\inf g(I_g^-)\}$. 

We first show that $J^g_r > J^g_l$. 
Note that $\max\{g(\sup I^g_+),g(\inf I^g_-)\}=\sup g$; likewise, $\min\{g(\inf I^g_+),g(\sup I^g_-)\}=\inf g$. 
Define $g_\pm := g|_{I^g_\pm}$; note that we have either $g_+^{-1}(J^g_r)=\sup I^g_+$ or $g_-^{-1}(J_g^r)=\inf I^g_-$.
Either $\inf I^g_- = \sup I^g_+$ is a critical point of $g$, in which case $J^g_r = \sup g$, or both $\inf I^g_-$ and $\sup I^g_+$ are (distinct) boundary points of $I$, in which case $f(\inf I^g_-)=f(\sup I^g_+) \in \{\inf J, \sup J\}$, and thus $|f(\inf I^g_-) - J^g_r|, |f(\inf I^g_-) - \sup g|\leq 2\delta$. Thus in either case we have $|J^g_r - \sup g|\leq 4\delta$, and it follows that 
\begin{equation}\label{eq:Jgrsup}
|J^g_r - \sup J|\leq 6\delta.
\end{equation}
By analogous reasoning, 
\begin{equation}\label{eq:Jglinf}
|J^g_l - \inf J|\leq 6\delta. 
\end{equation}
Since, by assumption, $|J|>12\delta$, we must have $J^g_r>J^g_l$.

We define $J^g := [J^g_l,J^g_r]$ and $\tilde I^g_\pm := g_\pm^{-1}(J^g)$ (so $\tilde I^g = g^{-1}(J^g)$). Since we have shown $J^g_r>J^g_l$, $\tilde I^g_\pm$ are nonempty intervals, and $g_\pm$ maps $\tilde I^g_\pm$ onto $J^g$. 
We now measure the set
$I \setminus \tilde I^g$
by considering the sets $I_\pm \setminus \tilde I^g_\pm$. 
We first note that by applying the Mean Value Theorem to $f$ on $\tilde I^g_\pm$, we conclude that $\tilde I^g_\pm \cap I_\pm \neq \emptyset$. Thus each of the sets $I_\pm \setminus \tilde I^g_\pm$ consists of at most two intervals $[\inf I_\pm,\inf \tilde I^g_\pm)$ and $(\sup \tilde I^g_\pm, \sup I_\pm]$. 
Consider first $[\inf I_-,\inf \tilde I^g_-)$.%
Such an interval belongs to $I_- \setminus \tilde I^g_-$ if and only if $\inf \tilde I^g_- \in I_-$. 
If $J^g_r$ is not a critical point of $g$, we have $|f(\inf \tilde I^g_-) - J^g_r| = |(f-g)\circ g_-^{-1}(J^g_r)| \leq 2\delta$; combining this with \eqref{eq:Jgrsup} gives $|f(\inf \tilde I^g_-) - \sup J|\leq 8\delta$. Thus 
\begin{align*}
    |[\inf I_-,\inf \tilde I^g_-)| &= |\inf \tilde I^g_- - \inf I_-| \\ 
    &= |f_-^{-1}(f(\inf \tilde I^g_-))-f_-^{-1}(\sup J)| \\ 
    &\leq \frac{1}{\nu}|f(\inf \tilde I^g_-) - \sup J| \leq \frac{8\delta}{\nu}.
\end{align*}
Analogously, $|(\sup \tilde I^g_+, \sup I_+]| \leq \frac{8\delta}{\nu}$ if $J^g_r$ is not a critical point of $g$, and $|(\sup \tilde I^g_-, \sup I_-]| \leq \frac{8\delta}{\nu}$, $|[\inf I_+,\inf \tilde I^g_+)|\leq \frac{8\delta}{\nu}$ if $J^g_l$ is not a critical point of $g$.

If $g$ contains no critical points, we conclude that $|I_\pm \setminus \tilde I^g_\pm|\leq \frac{16\delta}{\nu}$, and $|I \setminus \tilde I^g| \leq |I_- \setminus \tilde I^g_-|+|I_+\setminus \tilde I^g_+| \leq \frac{32\delta}{\nu}$.

If $g$ attains its minimum, but not its maximum, at a critical point, then $\tilde I^g = [\inf \tilde I^g_-, \sup \tilde I^g_+]$, and likewise $I = [\inf I_-, \sup I_+]$; thus, 
\begin{align*}
|I \setminus \tilde I^g| &\leq |[\inf I_-, \inf \tilde I^g_-)| + |(\sup \tilde I^g_+, \sup I_+]| \\
&\leq \frac{16\delta}{\nu}
\end{align*}
by the above computation. The argument is analogous if $g$ attains its maximum, but not its minimum, at a critical point. If $g$ attains both its maximum and its minimum at critical points, then trivially $\tilde I^g = I$.

It remains to verify a lower bound for $|\partial_\theta g|$ on the boundary points of $\tilde I^g$. 
We first note that we can apply Lemma \ref{l:monotonederiv} to the unrestricted function $g$ on $I$, since the condition that $g_\pm$ map onto the same image is not used in Lemma \ref{l:monotonederiv}. 
Thus, if $|\partial_\theta g|<\tilde\nu$ on a boundary point of $\tilde I^g$ (which is also a boundary point of $\tilde I^g_\pm)$, it must belong to a connected component of $I^g_\pm$ containing a critical point of $g$ (which is a boundary point of $I^g_\pm$), and $|\partial_\theta^2 g|\geq d/2$ throughout that component. By construction, the critical point belongs to $\tilde I^g_\pm$, and is thus the other boundary point of that interval; thus $|\partial_\theta^2 g|\geq d/2$ on the entirety of $\tilde I^g_\pm$. Then, on the boundary point of $\tilde I^g$, 
\begin{equation*}
    |\partial_\theta g| \geq \frac{d}{2}|\tilde I^g_\pm| \geq \frac{d}{2}\left(|I_\pm| - \frac{16\delta}{\nu}\right),
\end{equation*}
so we conclude $|\partial_\theta g| \geq \min\left\{\tilde\nu,\frac{d}{2}\left(|I_\pm| - \frac{16\delta}{\nu}\right)\right\}$ on the boundary of $\tilde I^g$.
\end{proof}

If $f : I \to J$ satisfies Assumption \ref{as:C2fn} and $g : I \to \mathbb{R}$ is $C^2$ close to $f$, we likewise can find these adjusted intervals:
\begin{lem} \label{l:Asm3StabSR}
Suppose $g \in C^2(I,\mathbb{R})$ is $C^2$ close to $f$, i.e.
\begin{align*}
\|\partial_\theta^k (f-g)\| \leq C\frac{\delta}{\rho^k}, \quad 0 \leq k \leq 2.
\end{align*}
Then there exists a subset $\tilde{I}^g = \tilde{I}^g_- \cup \tilde{I}^g_+ \subset I$, $\tilde{I}_\pm^g$ being closed intervals with disjoint interiors, such that 
\begin{align*}
|I \setminus \tilde{I}^g| \leq \frac{32\delta}{\nu}
\end{align*}
and on which $g$ satisfies Assumption 3.
\end{lem}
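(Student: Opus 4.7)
The plan is to verify that $g$ on a slightly adjusted domain satisfies the hypotheses of Lemma \ref{l:Asm3StabDR}, and then invoke it; the only additional subtlety is the possibility of critical points of $f$ at shared boundaries of $I_-$ and $I_+$, which Lemma \ref{l:Asm3StabDR} does not accommodate directly.

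The first step is to propagate structural properties from $f$ to $g$. The $C^2$-closeness $\|\partial_\theta^k(f-g)\|_\infty \leq C\delta/\rho^k$ immediately implies, for $\delta$ small enough that $C\delta/\rho^2 \leq d/4$ and $C\delta/\rho \leq \nu/2$, that $g$ is Morse on $I$ with constants $d/2, 2D$ and satisfies $|\partial_\theta g| \geq \nu/2$ at all boundary points of $I$. Running the argument of Lemma \ref{l:monotonederiv} for $g$, every critical point of $g$ must lie in the interior of $I$; defining $I^g_\pm := \{\theta \in I : \pm\partial_\theta g(\theta) \geq 0\}$, these are closed intervals with disjoint interiors, and any critical point of $g$ is a shared boundary point of $I^g_+$ and $I^g_-$. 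If $f$ has no critical points at all---so $\pm\partial_\theta f_\pm \geq \nu$ uniformly on $I_\pm$ by Lemma \ref{l:monotonederiv}---we can invoke Lemma \ref{l:Asm3StabDR} directly, with the $C^0$-bound $\|f-g\|_\infty \leq C\delta$ supplying hypothesis (3) after absorbing constants into $\delta$.

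If $f$ does have a critical point, I would instead directly repeat the construction of Lemma \ref{l:Asm3StabDR}: define $J^g_r := \min\{\sup g(I^g_-), \sup g(I^g_+)\}$, $J^g_l := \max\{\inf g(I^g_-), \inf g(I^g_+)\}$, $J^g := [J^g_l, J^g_r]$, and $\tilde{I}^g_\pm := g_\pm^{-1}(J^g)$. The bound $J^g_r > J^g_l$ follows from the $C^0$-bound and $|J| > 16\delta$; the measure bound $|I \setminus \tilde{I}^g| \leq 32\delta/\nu$ uses that any boundary point of $\tilde{I}^g_\pm$ requiring nontrivial adjustment is not a critical point of $g$, and hence (by Lemma \ref{l:monotonederiv} applied to $g$) lies near an outer boundary point of $I$, where $|\partial_\theta g| \geq \nu/2$ permits linear inversion exactly as in Lemma \ref{l:Asm3StabDR}. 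The main obstacle is really the bookkeeping around critical points: verifying that $I^g_\pm$ do not differ qualitatively from $I_\pm$ (no spurious critical points appear, and any genuine critical point of $g$ lies near the shared boundary of $I^g_+$ and $I^g_-$), so that the boundaries of $\tilde{I}^g_\pm$ requiring adjustment coincide with the locations where $|\partial_\theta g|$ enjoys a quantitative linear lower bound. This bookkeeping is handled by the $C^1$-closeness of $g$ to $f$ together with the Morse condition on $f$.
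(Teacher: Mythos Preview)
Your proposal is correct and follows essentially the same approach as the paper: verify that $g$ inherits the Morse and boundary-derivative structure from $f$ via $C^2$-closeness, dispatch the no-critical-point case to Lemma \ref{l:Asm3StabDR}, and in the critical-point case rerun the $J^g$ construction directly. The only cosmetic difference is that you branch on whether $f$ has a critical point while the paper branches on whether $g$ does; these are equivalent by the $C^1$-closeness (as both you and the paper observe), so the arguments are effectively identical.
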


\begin{proof}
By the well-approximation of $g$ to $f$, we immediately get that $g$ is Morse on $I$, e.g.
\begin{align*}
\tilde{d} = d - C\left(\frac{\delta}{\rho} + \frac{\delta}{\rho^2}\right) < |\partial_\theta g| + |\partial_\theta^2 g| \leq D + C\left(\frac{\delta}{\rho} + \frac{\delta}{\rho^2}\right) = \tilde{D}.
\end{align*}
Furthermore, on the boundary points of $I$, one has
\begin{align*}
|\partial_\theta g| \geq \nu - C\frac{\delta}{\rho} > \nu - 2C\left(\frac{\delta}{\rho} + \frac{\delta}{\rho^2}\right) =: \tilde{\nu}
\end{align*}
and $\tilde{\nu} < \tilde{d}/2$.  By the Morse condition on $g$ and the assumption on $f$, we can decompose $I = I^g_- \cup I^g_+$ into intervals such that
\begin{align*}
\pm \partial_\theta g|_{I^g_\pm} \geq 0.
\end{align*}
We denote by $g_\pm := g|_{I^g_\pm}$.  By Lemma \ref{l:monotonederiv}, critical points of $g_\pm$ must lie on the boundary of $I^g_\pm$ and each connected component of $I^g_{\pm,<\tilde{\nu}}$ must contain a critical point, and, conversely, any critical point of $g$ must lie in a connected component of $I^g_{\pm,<\tilde{\nu}}$.

If $g$ has no critical point in $I$, then neither can $f$, and we are in the situation outlined in Lemma \ref{l:Asm3StabDR} (with $\nu$ replaced by $\nu/4$, e.g.).  If $g$ has a critical point $\theta_c$, then by the Morse condition it is an extremum of $g$.  Suppose the critical point is a minimum; then, defining $J^g$ as in the proof of Lemma \ref{l:Asm3StabDR}, we have
\begin{align*}
g_+^{-1}(J^g) &=: \tilde{I}_+^g = [\theta_c, \tilde{\theta}_+], \\
g_-^{-1}(J^g) &=: \tilde{I}_-^g = [\tilde{\theta}_-, \theta_c].
\end{align*}
By monotonicity of $g_\pm$, it must be that $\tilde{I}_\pm^g = I_\pm^g$ for at least one of $+$ or $-$; suppose $\tilde{I}_+^g = I_+^g$.  Since $\sup f_+ = \sup f_-$ and $\|f -g\| \leq C\delta$, we get that $|\sup g_+ - \sup g_-| = |g(\tilde{\theta}_+) - J_r^g| \leq C\delta$.

To measure $|I_- \setminus \tilde{I}_-^g|$, it suffices to compute $|\tilde{\theta}_- - \theta_-|$, where $\theta_\pm = \inf I_\pm^g$.  We have
\begin{align*}
|g(\theta_-) - g(\tilde{\theta}_-)| = |g(\theta_-) - f(\theta_+)| \leq C\delta + |f(\theta_-) - f(\theta_+)| = C\delta;
\end{align*}
furthermore, $\theta_-$ and $\tilde{\theta}_-$ live on the same monotonicity interval of $g$, and so
\begin{align*}
|\theta_- - \tilde{\theta}_-| \leq \sqrt{\frac{12C\delta}{\tilde{d}}}.
\end{align*}
Since our only critical point is a minimum, it must be that $\theta_- = \inf I_-^f$, and thus for any $\theta \in [\theta_-,\tilde{\theta}_-]$, we have
\begin{align*}
|g'(\theta)| &\geq |f'(\theta)| - \frac{C\delta}{\rho}  \\
&\geq \nu - \left(\frac{C\delta}{\rho} + D\sqrt{\frac{12C\delta}{\tilde{d}}}\right) \geq \tilde{\nu}.  \qedhere
\end{align*}
\end{proof}

We conclude by estimating the difference of inverses of close functions satisfying Assumption \ref{as:C2fn}.
\begin{lem} \label{l:difinv}
Let $f$ be a function satisfying Assumption \ref{as:C2fn} with boundary derivative constant $\nu$ and containing no critical point. 
Let $g$ be another function satisfing Assumption \ref{as:C2fn} with $\dom g_\pm \subset \dom f_\pm$ and $\|f-g\|_\infty \leq 2\delta$. 
Then for $E_* \in \Ima f \cap \Ima g$, $|T_f(E_*) - T_g(E_*)| \leq \frac{4\delta}{\nu}$.
\end{lem}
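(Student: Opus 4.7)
The plan is to express the difference $T_f(E_*) - T_g(E_*)$ as a sum of two differences of inverse values, each of which can be bounded by comparing $f_\pm$ and $g_\pm$ at the same point and using a uniform lower bound on $|\partial_\theta f_\pm|$. The key observation, which unlocks the argument, is that the hypothesis that $f$ contains no critical point combined with Lemma \ref{l:monotonederiv} upgrades the boundary-only derivative bound $|\partial_\theta f| \geq \nu$ from Assumption \ref{as:C2fn}(3) to a uniform bound $|\partial_\theta f_\pm| \geq \nu$ on all of $\dom f_\pm$. Indeed, Lemma \ref{l:monotonederiv} says every connected component of $I_{\pm,<\nu}$ contains a critical point of $f$, so the absence of critical points forces $I_{\pm,<\nu} = \emptyset$.

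Fix $E_* \in \Ima f \cap \Ima g$ and set $\theta^f_\pm := f_\pm^{-1}(E_*)$ and $\theta^g_\pm := g_\pm^{-1}(E_*)$, each well-defined by Assumption \ref{as:C2fn}(2) for $f$ and $g$. Since $\dom g_\pm \subset \dom f_\pm$, the point $\theta^g_\pm$ lies in $\dom f_\pm$, so $f_\pm(\theta^g_\pm)$ makes sense, and the sup-norm hypothesis gives
\[
|f_\pm(\theta^g_\pm) - f_\pm(\theta^f_\pm)| = |f_\pm(\theta^g_\pm) - g_\pm(\theta^g_\pm)| \leq 2\delta.
\]
Applying the Mean Value Theorem to $f_\pm$ on the interval between $\theta^f_\pm$ and $\theta^g_\pm$ (a subset of $\dom f_\pm$ by the containment of domains and monotonicity of $f_\pm$ on its branch), together with the uniform lower bound $|\partial_\theta f_\pm| \geq \nu$ established above, yields
\[
|\theta^g_\pm - \theta^f_\pm| \leq \frac{|f_\pm(\theta^g_\pm) - f_\pm(\theta^f_\pm)|}{\nu} \leq \frac{2\delta}{\nu}.
\]

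Combining the two estimates by the triangle inequality gives
\[
|T_f(E_*) - T_g(E_*)| \leq |\theta^f_+ - \theta^g_+| + |\theta^f_- - \theta^g_-| \leq \frac{4\delta}{\nu},
\]
which is the claim. There is no real obstacle here beyond identifying the correct reading of Assumption \ref{as:C2fn}(3) in the no-critical-point case; the hypothesis $\dom g_\pm \subset \dom f_\pm$ is used solely to ensure $\theta^g_\pm \in \dom f_\pm$, and the quantitative estimate follows from a single application of the Mean Value Theorem per branch.
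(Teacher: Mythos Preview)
Your proof is correct and follows essentially the same approach as the paper: both arguments use Lemma \ref{l:monotonederiv} to upgrade the boundary derivative bound to a uniform bound $|\partial_\theta f_\pm| \geq \nu$, then estimate $|f_\pm^{-1}(E_*) - g_\pm^{-1}(E_*)|$ via the Lipschitz property of $f_\pm^{-1}$ (the paper writes this as a composition $f_\pm^{-1}\circ g\circ g_\pm^{-1}(E_*) - f_\pm^{-1}\circ f\circ g_\pm^{-1}(E_*)$, while you use the Mean Value Theorem on $f_\pm$ directly, which is equivalent). The final triangle-inequality step is identical.
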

\begin{proof}
Since $f$ contains no critical point, by Lemma \ref{l:monotonederiv}, $|\partial_\theta f_\pm(\theta)| \geq \nu$ for all $\theta \in I$, and thus $|\partial_\theta f_\pm^{-1}(E)| \geq \nu^{-1}$ for all $E \in J$. Then
\begin{align*}
    |f_\pm^{-1}(E_*) - g_\pm^{-1}(E_*)| &= 
    |f_\pm^{-1} \circ g \circ g_\pm^{-1}(E_*) - f_\pm^{-1} \circ f \circ g_\pm^{-1}(E_*)| \\&\leq \nu^{-1}|g \circ g_\pm^{-1}(E_*) - f \circ g_\pm^{-1}(E_*)| \\&\leq 2\delta\nu^{-1}.
\end{align*}
Thus, 
\begin{align*}
|T_f(E_*) - T_g(E_*)| \leq |f_+^{-1}(E_*) - g_+^{-1}(E_*)| + |f_-^{-1}(E_*) - g_-^{-1}(E_*)| &\leq 4\delta\nu^{-1}.  \qedhere
\end{align*}
\end{proof}

\newpage
\section{Multiscale spectral analysis}\label{sec:induction}

We wish to understand how the Rellich functions of $H^\Lambda$ inherit the cosine-like properties from the sampling function $v$.  Given $(\theta_*, E_*) \in \mathbb{T} \times \mathbb{R}$, we claim that, by properly choosing our notions of resonance, we can inductively construct increasing intervals $\Lambda_s = \Lambda_s(\theta_*, E_*)$ in the integer lattice, centered at 0, such that any $E_*$-resonant Rellich functions of $H^\Lambda(\theta_*)$ are locally cosine-like with eigenfunctions localized near 0.  Furthermore, these intervals will be stable in the parameters $\theta_*$ and $E_*$.

In broad strokes, the induction will proceed as follows: we suppose that we inductively have been given a cosine-like Rellich function $\mathbf{E}_s : \mathbf{I}_s \to \mathbf{J}_s$ and that certain ``nonresonant'' intervals for $\mathbf{E}_s$ exhibit off-diagonal Green's function decay for energies $E_*$ near the codomain of $\mathbf{E}_s$.  Under these assumptions, we apply Section 4 to $\mathbf{E}_s$ to classify energy regions $J_{s}^{(j_{s+1})} \subset \mathbf{J}_s$ as being simple- or double-resonant (indicated by $j_{s+1} \in \{1,2\}$) and prevent recurrence to those energy regions for long times.  The resonant sites for $\mathbf{E}_s$ will be so well-separated that we can find an even integer $L_{s+1}$ for each energy region well-separated from resonances of all the ancestors of $\mathbf{E}_s$. %
Thus, the intervals $\Lambda_{s+1}$ of length $L_{s+1}$ will have long shoulder intervals with Green's function decay by our inductive assumption.  

Having found the integers $L_{s+1}$, we can then apply Sections 2 and 3 to construct Rellich children $\mathbf{E}_{s+1}$ which are likewise cosine-like.  Finally, for energies which are not resonant with any child $\mathbf{E}_{s+1}$ for long intervals $\Lambda$, we can use these constructed intervals to build coverings of $\Lambda$ like those described in Appendix A, proving the inductive nonresonance hypothesis.  
Here we crucially use the uniform local separation between double-resonant Rellich children established in Proposition \ref{pr:unifmLocalSep}; since the separation is much larger than the scale-$s+1$ resonance parameter, energies near the codomain of one of a pair of double-resonant Rellich functions do not allow resonance with the other.

The perturbative upper bound $\varepsilon_0(v,\alpha)$ on the interaction $\varepsilon$ will be polynomially small, depending on $v$ and $\alpha$, in the initial length scale $L$; the size of $L$ effectively will be dictated by avoiding substantial cumulative loss in the Green's function decay parameters $\gamma_k$, cf. Lemma \ref{l:lenrelns} below.

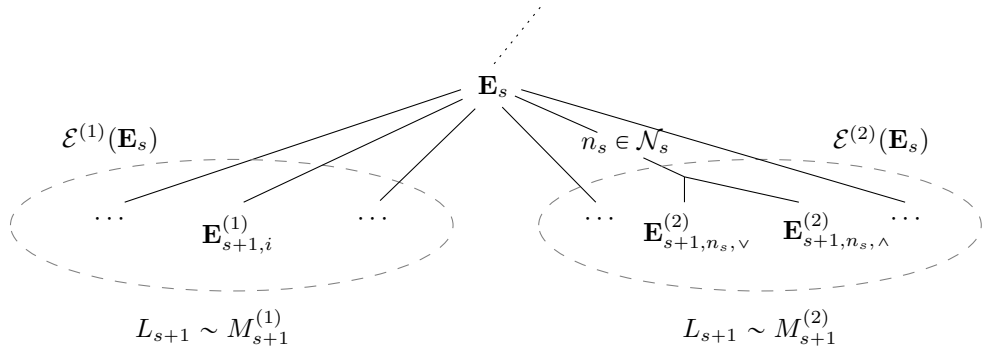
\begin{figure}[h]
    \centering
\begin{tikzpicture}[x=0.6pt,y=0.6pt,yscale=-1,xscale=1]

\draw  [dash pattern={on 0.84pt off 2.51pt}]  (372,11) -- (341,49) ;
\draw    (322,63) -- (110,134) ;
\draw    (462.89,118) -- (437,106) ;
\draw  [color={rgb, 255:red, 128; green, 128; blue, 128 }  ,draw opacity=1 ][dash pattern={on 4.5pt off 4.5pt}] (371,148.5) .. controls (371,125.58) and (433.46,107) .. (510.5,107) .. controls (587.54,107) and (650,125.58) .. (650,148.5) .. controls (650,171.42) and (587.54,190) .. (510.5,190) .. controls (433.46,190) and (371,171.42) .. (371,148.5) -- cycle ;
\draw    (535,134) -- (462.89,118) ;
\draw    (462.89,118) -- (462.89,134) ;
\draw    (348,74) -- (407,134) ;
\draw    (360,63) -- (602,134) ;
\draw  [color={rgb, 255:red, 128; green, 128; blue, 128 }  ,draw opacity=1 ][dash pattern={on 4.5pt off 4.5pt}] (38,148.5) .. controls (38,125.58) and (100.46,107) .. (177.5,107) .. controls (254.54,107) and (317,125.58) .. (317,148.5) .. controls (317,171.42) and (254.54,190) .. (177.5,190) .. controls (100.46,190) and (38,171.42) .. (38,148.5) -- cycle ;
\draw    (185,134) -- (323,69) ;
\draw    (271,133) -- (331,75) ;
\draw    (356,67) -- (408,90) ;

\draw (331,52.4) node [anchor=north west][inner sep=0.75pt]    {\footnotesize $\mathbf{E}_{s}$};
\draw (69,82.4) node [anchor=north west][inner sep=0.75pt]    {\footnotesize $\mathcal{E}^{( 1)}(\mathbf{E}_{s})$};
\draw (555,83.4) node [anchor=north west][inner sep=0.75pt]    {\footnotesize $\mathcal{E}^{( 2)}(\mathbf{E}_{s})$};
\draw (157,140.4) node [anchor=north west][inner sep=0.75pt]    {\footnotesize $\mathbf{E}_{s+1,i}^{( 1)}$};
\draw (435.22,139.4) node [anchor=north west][inner sep=0.75pt]    {\footnotesize $\mathbf{E}_{s+1,n_{s} ,\vee }^{( 2)}$};
\draw (523.22,138.4) node [anchor=north west][inner sep=0.75pt]    {\footnotesize $\mathbf{E}_{s+1,n_{s} ,\wedge }^{( 2)}$};
\draw (396.22,87.4) node [anchor=north west][inner sep=0.75pt]    {\footnotesize $n_{s} \in \mathcal{N}_{s}$};
\draw (398,139.4) node [anchor=north west][inner sep=0.75pt]    {\footnotesize $\cdots $};
\draw (591,139.4) node [anchor=north west][inner sep=0.75pt]    {\footnotesize $\cdots $};
\draw (255,139.4) node [anchor=north west][inner sep=0.75pt]    {\footnotesize $\cdots $};
\draw (89,139.4) node [anchor=north west][inner sep=0.75pt]    {\footnotesize $\cdots $};
\draw (115,200) node [anchor=north west][inner sep=0.75pt]    {\footnotesize $L_{s+1} \sim M_{s+1}^{( 1)}$};
\draw (460,200) node [anchor=north west][inner sep=0.75pt]    {\footnotesize $L_{s+1} \sim M_{s+1}^{( 2)}$};

\end{tikzpicture}

    \caption{The inductive expansion of our Rellich tree at a particular node $\mathbf{E}_s$.}
    \label{f:treeexpn}
\end{figure}

\subsection{Preparing the induction}

\subsubsection{The initial scale}

Suppose $\mathbf{E}_0 := v$, $v :\mathbb{T}\to [-1,1]$, is cosine-like in the sense of the main theorem, with Morse constants $$d_0 \leq |\partial_\theta v(\theta)| + |\partial_\theta^2 v(\theta)| \leq D_0, \quad \theta \in \mathbb{T},$$ and that $\alpha \in DC_{C,\tau}$ is Diophantine.  Letting $L$ be a large even integer, we consider the initial length scales $$L_1^{(1)} := L, \quad L_1^{(2)} := L^2,$$ and initialize the following parameters:
$$\nu_0 = \frac{C_\alpha d_{0}}{14 (8L)^\tau}, \quad \rho_{-1} = \frac{d_0}{24D_0^2}\frac{1}{(8L)^{2\tau}}, \quad \bar{\rho}_0 = \frac{d_0}{15000 D_0^2}\rho_{-1}^2,$$ $$\rho_0 = \bar{\rho}_0^3, \quad \delta_0 := \varepsilon < \varepsilon_0 := \rho_0^4, \quad \gamma_{-1}= \gamma_0 = \frac14|\log\varepsilon|, \quad \ell_{-1}= \ell_0 = 1.$$
With these parameters, we recall a classical Neumann series argument ensuring off-diagonal Green's function decay on nonresonant intervals:
\begin{lem}\label{l:initGrnDec}
	Let $\Lambda \subset \mathbb{Z}$ and suppose $|v(\theta_* + m\alpha) - E_*| \geq \rho_0$ for $m \in \Lambda$.  Then for $m, n \in \Lambda$, $|\theta- \theta_*| < \rho_0/8D_0$, and $|E - E_*| < \rho_0/2$,
	\begin{align*}
		|R^\Lambda_{\theta,E}(m,n)| \leq \frac{8}{\rho_0}\left(\frac{8\varepsilon}{\rho_0}\right)^{|m-n|}.
	\end{align*}
	In particular, for $\gamma_0 = \frac{1}{4}|
	\log\varepsilon|$ and $|m-n|\geq 1$,
	\begin{align}\label{eq:InitDec}
		\log|R^\Lambda_{\theta,E}(m,n)| \leq -\gamma_0|m-n|,
	\end{align}
	i.e. the Green's function decay property for $(\ell_0, \gamma_0)$ holds.
\end{lem}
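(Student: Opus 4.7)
The plan is to apply the classical Neumann series expansion for the resolvent of $H^\Lambda - E$, exploiting that the diagonal operator $V^\Lambda - E$ is uniformly invertible under the hypotheses. First, I would use the Morse bound $\|\partial_\theta v\|_\infty \leq D_0$ together with the restrictions $|\theta - \theta_*| < \rho_0/8D_0$ and $|E - E_*| < \rho_0/2$ to propagate the nonresonance hypothesis, obtaining
\[
|v(\theta + m\alpha) - E| \geq \rho_0 - \tfrac{1}{8}\rho_0 - \tfrac{1}{2}\rho_0 \geq \tfrac{3}{8}\rho_0, \quad m \in \Lambda.
\]
Hence $V^\Lambda - E$ is invertible as a diagonal operator with $\|(V^\Lambda - E)^{-1}\| \leq 8/(3\rho_0)$, and since $\|\Delta^\Lambda\| \leq 2$, the smallness $\varepsilon < \rho_0^4$ ensures $\|\varepsilon \Delta^\Lambda (V^\Lambda - E)^{-1}\| \ll 1$.

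Next I would expand
\[
R^\Lambda_{\theta,E} = \sum_{k=0}^\infty (-\varepsilon)^k (V^\Lambda - E)^{-1}\bigl(\Delta^\Lambda (V^\Lambda - E)^{-1}\bigr)^k
\]
and read off the matrix element $R^\Lambda_{\theta,E}(m,n)$ as a sum over nearest-neighbor walks on $\Lambda$ of length $k \geq |m-n|$ connecting $n$ to $m$, each weighted by $\varepsilon^k$ and a product of $k+1$ factors $(v(\theta + m_j\alpha) - E)^{-1}$. Bounding the number of length-$k$ walks by $2^k$ collapses the walk sum into a geometric series, yielding
\[
|R^\Lambda_{\theta,E}(m,n)| \leq \frac{8}{3\rho_0} \sum_{k \geq |m-n|}\left(\frac{16\varepsilon}{3\rho_0}\right)^k \leq \frac{8}{\rho_0}\left(\frac{8\varepsilon}{\rho_0}\right)^{|m-n|},
\]
where the last step absorbs the tail of the geometric series using the initial smallness of $\varepsilon/\rho_0$ (with some slack, hence the loosened constants $8$ and $8$).

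Finally, to derive the exponential rate form \eqref{eq:InitDec}, I would take logarithms: since $\gamma_0 = |\log\varepsilon|/4$ and $\varepsilon < \rho_0^4$, one has $\log(8\varepsilon/\rho_0) \leq \tfrac{3}{4}\log\varepsilon + \log 8$, which is more negative than $-\gamma_0 = \tfrac{1}{4}\log\varepsilon$ provided $\varepsilon$ is small (which it is by the initial parameter choice). The prefactor $\log(8/\rho_0)$ is likewise dominated by the exponential decay whenever $|m-n| \geq 1$, again using $\rho_0 \gg \varepsilon^{1/4}$. This step is purely a constants-bookkeeping check.

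There is no real obstacle here; the only subtlety is tracking the constants so that enough decay rate ($\tfrac{1}{4}|\log\varepsilon|$ rather than the naive $|\log\varepsilon|$) is retained to allow cumulative losses at later scales of the induction without collapsing $\gamma_s$ to zero, as foreshadowed by the discussion preceding the lemma.
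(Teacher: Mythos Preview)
Your proposal is correct and follows essentially the same approach as the paper: propagate the nonresonance bound to nearby $(\theta,E)$ via the Lipschitz estimate, then run the Neumann series for $(V^\Lambda-E)^{-1}\varepsilon\Delta^\Lambda$ and sum over nearest-neighbor walks. The paper uses the slightly cruder lower bound $|v(\theta+m\alpha)-E|>\rho_0/4$ and writes the last step as $\frac{8}{\rho_0}(8\varepsilon/\rho_0)^{|m-n|}\leq \rho_0^{2|m-n|-1}\leq \rho_0^{|m-n|}$; your logarithmic bookkeeping using $\varepsilon<\rho_0^4$ is in fact a bit more explicit than the paper's final line.
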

\begin{proof}
	For $m \in \Lambda$, $|\theta - \theta_*| < \rho_0/8D_0$, and $|E - E_*| < \rho_0/2$, one has
	\begin{align*}
		|v(\theta + m\alpha) - E| &\geq |v(\theta_* + m\alpha) - E_*| - |E-E_*| - D_0|\theta - \theta_*| > \rho_0/4.
	\end{align*}
	Thus $|v(\theta + m\alpha) - E| \geq \rho_0/4 > 4\varepsilon$ for all $m \in \Lambda$, and $\|(V(\theta) - E)^{-1}\varepsilon\Delta\|_{\mathbb{C}^\Lambda} < 1/2$; hence, by the Neumann series, one has for $m, n \in \Lambda$
	\begin{align*}
		|R^\Lambda_{\theta,E}(m,n)| &= |\langle \delta_m, (H^\Lambda(\theta) - E)^{-1} \delta_n \rangle| \\
		&\leq \frac{4}{\rho_0} \sum_{k \geq |m-n|} \left(\frac{8\varepsilon}{\rho_0}\right)^k \\
		&\leq \frac{8}{\rho_0}\left(\frac{8\varepsilon}{\rho_0}\right)^{|m-n|}.
	\end{align*}
	Provided $|m-n| \geq 1$, we have
	\begin{align*}|R^\Lambda_{\theta,E}(m,n)| \leq \frac{8}{\rho_0}\left(\frac{8\varepsilon}{\rho_0}\right)^{|m-n|} &\leq \rho_0^{2|m-n|-1} \leq \rho_0^{|m-n|}.\qedhere\end{align*}
\end{proof}
\noindent This lemma, combined with our assumptions on $v$, will be the foundation of our induction.

\subsubsection{Inductive definitions}

Fix a scale $s \geq 0$.  If $s \geq 1$, we suppose that, for $0 \leq k \leq s-1$, we have collections 
\begin{align*}
	\mathcal{E}_k^{(j)} &= \bigcup_{\mathbf{E} \in \mathcal{E}_{k-1}} \mathcal{E}^{(j)}(\mathbf{E}), \quad j \in \{1,2\}, \\
	\mathcal{E}_k &= \mathcal{E}_k^{(1)} \cup \mathcal{E}_k^{(2)}
\end{align*} 
of Rellich functions $\mathbf{E}_k : \mathbf{I}_k \to \mathbf{J}_k$ of certain Dirichlet restrictions $H^{\Lambda_k}$, $\Lambda_k = [-L_k/2, L_k/2]$, where $L_k = L_k(\mathbf{E}_k)$ are even integers on scale $k$
\begin{align*}
	\frac{1}{2}M_k^{(j)} \leq L_k \leq M_k^{(j)}, \quad \mathbf{E}_k \in \mathcal{E}_k^{(j)},
\end{align*}
where we define the length scales
$$M_k^{(j)} := (L^{4^{k-1}})^j, \quad  k \geq 1,\;  j \in \{1,2\}.$$
We take as convention that $M_0^{(1)} = M_0^{(2)} = 1$.  
For each $\mathbf{E}_k \in \mathcal{E}_k$,  $k \geq 1$, we define the corresponding parameters:
\vspace{0.1in}
\begin{enumerate}
	\item[] $d_k = \frac{\nu_{k-1}}{12}$, $D_k = 2D_0(1+\frac{D_0}{\sigma_k})$, \hfill (Morse)
	\item[]  $\nu_k = \bar{\nu}_{k-1}/2$, $\bar\nu_k := \frac{3d_k}{8D_0}\bar{\rho}_{k}$ \hfill (Derivative control)
	\item[] $\rho_k = \varepsilon^{L_k^{2/3}}$, $\bar{\rho}_k = \frac{d_k}{15000 D_0^2}\rho_{k-1}^2$, $\sigma_k =  \varepsilon^{72M_k^{(1)}}$ \hfill (Resonance) %
	\item[] $\delta_{k} = \varepsilon^{L_k/8}$ \hfill (Eigenfunction interaction)
	\item[] $\ell_k = L_k^{5/6}$, $\gamma_k = \gamma_0(1- 64\sum_{i=1}^k\frac{|\log(\rho_i/8)|}{\ell_i})$ \hfill  (Green's function decay)
\end{enumerate}
\vspace{0.1in}
We likewise define $\salt{\mathbf{J}}(\mathbf{E}_k)$ and $\salt{\salt{\mathbf{J}}}(\mathbf{E}_k)$ as in equation \eqref{eq:Jcheck} with $\rho = \rho_k$ and $\sprev{\rho} = \rho_{k-1}$, cf. Figure \ref{f:Jcheck}. As matters of convention, we define $\salt{\mathbf{J}}(\mathbf{E}_{-1}) = \salt{\salt{\mathbf{J}}}(\mathbf{E}_{-1}) = \mathbb{R}$ and $\mathcal{E}_0 = \mathcal{E}(\mathbf{E}_{-1}) := \{v\}$.

We have the following relations among these parameters: 
\begin{lem}\label{l:lenrelns}
	Let $\mathbf{E}_k \in \mathcal{E}_k$ with $L_k$ and constants defined as above.  For $L$ sufficiently large (depending only on $C_\alpha$, $\tau$, $d_0$, and $D_0$), we have the following for all $k \geq 0$:
	\begin{enumerate}
		\item $\gamma_k \geq \frac{1}{2}\gamma_0 \geq \log 7$.
		\item $10M_k^{(2)} < M_{k+1}^{(1)}$, $24L_{k+1}^{2/3} < 8L_{k+1}^{5/6} < L_{k+1}$, $\ell_{k+1} \gg 16|\log\varepsilon||\log \rho_{k+1}|$.
		\item $\frac{C_\alpha}{2(8M_{k+1}^{(1)})} \geq \frac{7\nu_k}{d_k}$, $\frac{3d_k}{8D_0}\bar{\rho}_k < \nu_k$, and $\frac{9\rho_k}{8\nu_k}  \ll \bar{\rho}_k \ll \frac{\nu_k^2\rho_{k-1}}{100D_0^3}$.
		\item $\delta_k < \rho_k^3/2 \ll \rho_{k-1}^2\nu_k^{5/2}$.
		\item If $j_k = 2$, then $2\rho_k < \frac{\nu_{k-1}\sigma_k}{2D_0+\nu_{k-1}}$.
		\end{enumerate}
\end{lem}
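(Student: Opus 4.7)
The lemma is a sequence of bookkeeping inequalities about the explicit recursive definitions of the parameters, so the plan is to unpack each definition and verify the bound by induction on $k$, throughout choosing $L$ large relative to $C_\alpha$, $\tau$, $d_0$, $D_0$. The main structural facts I will use are: (i) $M_k^{(1)} = L^{4^{k-1}}$ so $M_{k+1}^{(1)} = (M_k^{(2)})^2$ and $L_k \in [M_k^{(1)}/2, M_k^{(2)}]$, giving doubly-exponential growth of $L_k$ in $k$; (ii) $\rho_k = \varepsilon^{L_k^{2/3}}$ and $\delta_k = \varepsilon^{L_k/8}$ are exponentially small in $L_k$; (iii) $\nu_k, \bar\nu_k, d_k$ are polynomial in $\rho_{k-1}$ via the recursion $d_k = \nu_{k-1}/12$, $\bar\nu_k = \frac{3 d_k}{8 D_0}\bar\rho_k$, $\nu_k = \bar\nu_{k-1}/2$, $\bar\rho_k = \frac{d_k}{15000 D_0^2}\rho_{k-1}^2$; and (iv) $\varepsilon < \varepsilon_0 = \rho_0^4$ is polynomially small in the initial Diophantine-derived quantities, so in particular $|\log \varepsilon|$ is bounded by a fixed polynomial in $\log L$.

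First I would handle item (2), the length-scale comparisons, as these are purely arithmetic from $M_{k+1}^{(1)} = (M_k^{(2)})^2$ and $L_{k+1}\geq M_{k+1}^{(1)}/2\geq L/2$: the bounds $10 M_k^{(2)} < M_{k+1}^{(1)}$ and $24L_{k+1}^{2/3} < 8L_{k+1}^{5/6} < L_{k+1}$ follow for $L\geq L_*$ some absolute constant, and $\ell_{k+1} = L_{k+1}^{5/6} \gg 16 L_{k+1}^{2/3}|\log\varepsilon| = 16|\log \varepsilon||\log \rho_{k+1}|$ reduces to $L_{k+1}^{1/6} \gg 16|\log\varepsilon|$, which holds since $|\log\varepsilon|\lesssim (\log L)^{O(1)}$ while $L_{k+1}^{1/6}\geq L^{1/6}$.
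Then item (1) follows by observing that $|\log(\rho_i/8)|/\ell_i = (L_i^{2/3}|\log\varepsilon|+\log 8)/L_i^{5/6}\lesssim |\log \varepsilon|/L_i^{1/6}$, and since $L_i \geq L^{4^{i-1}/2}$ grows doubly-exponentially, the sum $\sum_{i\geq 1}|\log\varepsilon|/L_i^{1/6}$ is dominated by its first term $\lesssim |\log \varepsilon|/L^{1/6}$, which is $\leq 1/128$ for $L$ large; hence $\gamma_k \geq \gamma_0/2 = |\log\varepsilon|/8 \geq \log 7$ (using $\varepsilon_0$ sufficiently small).

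Item (3) follows from the inductive computation $\nu_k/d_k = 6\bar\nu_{k-1}/\nu_{k-1} = \frac{9 d_{k-1}\bar\rho_{k-1}}{4 D_0 \nu_{k-1}}$, which shows $\nu_k/d_k$ is comparable to a product of $\bar\rho_j$'s, hence shrinks at least like $\rho_{k-1}^2$, while $(M_{k+1}^{(1)})^{-\tau}$ shrinks only like $L^{-\tau 4^k}$; the doubly-exponential growth of $M_{k+1}^{(1)}$ beats the polynomial-in-$\rho$ dependence of $\nu_k/d_k$. The inequalities $\frac{3d_k}{8D_0}\bar\rho_k<\nu_k$ and $\frac{9\rho_k}{8\nu_k}\ll \bar\rho_k\ll \frac{\nu_k^2\rho_{k-1}}{100 D_0^3}$ follow by direct substitution of the recursive definitions: $\bar\rho_k$ is a fixed multiple of $\rho_{k-1}^2$, and $\nu_k \sim \bar\nu_{k-1}\sim \bar\rho_{k-1}$, so each side is a monomial in $\rho_{k-1}$, $\rho_k$, and $\bar\rho_k$ that I just compare directly. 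Item (4) is immediate: $\delta_k<\rho_k^3/2$ reduces to $L_k/8 > 3 L_k^{2/3}$, valid for $L_k$ large; and $\rho_k^3 = \varepsilon^{3L_k^{2/3}}$ is much smaller than any fixed positive power of $\rho_{k-1}$ since $L_k^{2/3}\gg L_{k-1}^{2/3}$.

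The delicate case is item (5), since it requires the explicit relationship $L_k^{2/3}$ vs.\ $M_k^{(1)}$ in the double-resonant branch. Here I will use that $j_k = 2$ forces $L_k \geq M_k^{(2)}/2 = (M_k^{(1)})^2/2$, so $L_k^{2/3}\geq (M_k^{(1)})^{4/3}/2^{2/3}$, which exceeds $72 M_k^{(1)}$ once $M_k^{(1)} \geq (72\cdot 2^{2/3})^3$, hence for $L$ chosen larger than this absolute constant. Then $2\rho_k = 2\varepsilon^{L_k^{2/3}} < \varepsilon^{72 M_k^{(1)}}\cdot \nu_{k-1}/(2D_0 + \nu_{k-1}) = \sigma_k \nu_{k-1}/(2D_0 + \nu_{k-1})$, using $|\log\varepsilon|$ large to absorb the factor $\nu_{k-1}/(2D_0+\nu_{k-1})$ into the exponent gap. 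The main obstacle is precisely keeping track of which constants depend on $L$ (via $\varepsilon_0(L)$) and which are absolute, and confirming that all such circular dependencies close up by taking $L$ above a single threshold; this is a finite check since only the initial data $C_\alpha, \tau, d_0, D_0$ and item (5)'s numeric threshold enter.
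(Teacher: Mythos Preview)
Your plan is correct and follows essentially the same approach as the paper's proof: both arguments are direct bookkeeping from the recursive definitions, using the doubly-exponential growth of $L_k$ to control the sum in item (1), the arithmetic $M_{k+1}^{(1)}=(M_k^{(2)})^2$ for item (2), the chain of monomial comparisons $\nu_k,\bar\nu_k,\bar\rho_k\sim\rho_{k-1}^{O(1)}$ for items (3)--(4), and the key observation $j_k=2\Rightarrow L_k^{2/3}\gtrsim (M_k^{(1)})^{4/3}\gg M_k^{(1)}$ for item (5). The paper organizes item (3) slightly differently by first recording the intermediate bounds $\rho_{k-1}^{9/4}<\bar\rho_k,\bar\nu_k,\nu_{k+1}<\rho_{k-1}^2$ and $\rho_k\ll\rho_{k-1}^6$, but this is the same computation you sketch.
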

\begin{proof}
    We proceed with each item in turn:
	\begin{enumerate}
		\item For $1 \leq i \leq k$, we have by definition that
		\begin{align*}
			\frac{|\log(\rho_i/8)|}{\ell_i} &\leq L_i^{-1/6}|\log\varepsilon| \lesssim \tau L^{-\frac{1}{6}4^{i-1}}\log L \lesssim \tau L^{-\frac{1}{7}(4^{i-1})}
		\end{align*}
		Since this sequence is subgeometric, we may find $L$ sufficiently large such that 
		\begin{align*}
			64\sum_{i=1}^k\frac{|\log(\rho_i/8)|}{\ell_i} \leq \frac{1}{2}
		\end{align*}
		independently of $k$.  By possibly making $L$ larger, one can insist that $$\frac12\gamma_0 = \frac18\log|\varepsilon| \sim \tau\log|L| > \log 7.$$
		\item This is immediate from the fact that, for $k \geq 1$, $$M_{k+1}^{(1)} \geq L M_k^{(2)},$$ alongside the observation that $|\log\varepsilon|$ is comparable to $\log L$.
		\item 
		One can check by definitions that, for $L$ sufficiently large (depending on $\tau$, $d_0$, and $D_0$), one has for $k \geq 0$ that:
		\begin{align*}
			\nu_{k}^{5/4} < d_{k+1} < \nu_{k} < \frac{d_{k}}{2} \\
			\rho_{k-1}^{9/4} < \bar{\rho}_k < \rho_{k-1}^{2} \\
			\rho_{k-1}^{9/4} < \nu_{k+1}, \bar{\nu}_k < \rho_{k-1}^2 \\
			\rho_k \ll \rho_{k-1}^6.
		\end{align*}
		The inequalities follow for sufficiently large $L$. 
		
		\item 
		Given the above inequalities, one has $$\rho_{k-1}^2\nu_k^{5/2} \gg \rho_{k-1}^2\rho_{k-2}^{45/8} \gg \rho_{k-1}^2\rho_{k-2}^6 \gg \rho_{k-1}^3 \gg \rho_k^3/2,$$
		and $L_k > 24L_k^{2/3}$.
		
		\item 
		Since $j_k = 2$, we have $L_k^{2/3} \geq (M_k^{(2)}/2)^{2/3} \geq \frac{1}{2}(M_k^{(1)})^{4/3} \gg 144M_k^{(1)}$ provided $L$ is sufficiently large,
		and thus $\log\rho_{k} \ll 2\log\sigma_{k}$. The inequality follows because $\log\nu_{k-1} \gg \log\rho_{k-1} \gg \log\sigma_k$.
		\qedhere
	\end{enumerate}
\end{proof}

It will be convenient to fix language describing ``nonresonant'' regions at each scale.  Fix $(\theta_*,E_*) \in \mathbb T \times \mathbb R$ and a Rellich function $\mathbf E_k \in \mathcal E_k$. We say that a set $S \subset \mathbb Z$ is \emph{$k$-nonresonant} (relative to $(\theta_*,E_*)$ and $\mathbf E_k$) if $$\theta_*+m\alpha \notin \mathbf I_k \,\text{ or }\, |\mathbf E_k(\theta_*+m\alpha)-E_*| \geq \rho_k \quad \forall m \in S.$$ Note that, with this language, Lemma \ref{l:initGrnDec} shows that $0$-nonresonant intervals have $(\ell_0, \gamma_0)$ Green's function decay.

To show Green's function decay at future scales $k \geq 1$, we must avoid resonant sites for all ancestors of $\mathbf{E}_k$; we codify this condition as $k$-regularity.  Namely, we say that a point $m \in \mathbb Z$ is \emph{$k$-left-regular} if, for each ancestor $\mathbf E_i$ ($0 \leq i \leq k-1$) of $\mathbf E_k$, 
$[m,m+\frac34L_{i+1}]$ is $i$-nonresonant. Analogously, we say that $m$ is \emph{$k$-right-regular} if, for each ancestor $\mathbf E_i$ ($0 \leq i \leq k-1$) of $\mathbf E_k$, 
$[m-\frac34L_{i+1},m]$ is $i$-nonresonant, and we say that $m$ is \emph{$k$-regular} if it is both $k$-left-regular and $k$-right-regular.

Finally, we define the slight enlargement of the set of $k$-resonant points by $$\mathcal{S}_k(\theta_*, E_*) := \left\{m \in \mathbb{Z} : \theta_*+m\alpha \in \mathbf{I}_k, \; |\mathbf{E}_k(\theta_*+ m\alpha) - E_*| \leq \frac{25}{24}\rho_k\right\}.$$

\subsection{The inductive proposition}

Subject to these definitions, we suppose that the following inductive proposition holds for $0 \leq k \leq s-1$:

\begin{prop}[Induction, scale $k$]\label{pr:induction}
	Suppose the hypotheses below hold:
	\begin{hypsub}[Cosine-like Rellich functions, scale $k$]\label{prsub:c2rell}
		Each $\mathbf{E}_k \in \mathcal{E}_k$ satisfies Assumption \ref{as:C2fn} with parameters $\nu = \nu_k, d = d_k,$ and $D = D_k$.  
	\end{hypsub}
	\begin{hypsub}[Resonant orbits, scale $k$]\label{pr:resorbs}  
		If $k \geq 1$, let $\mathbf{E}_k \in \mathcal{E}^{(j_k)}(\mathbf{E}_{k-1})$, $\mathbf{E}_k : \mathbf{I}_k \to \mathbf{J}_k$, denote $$\mathcal{N}_k := \mathcal{N}(8M_{k+1}^{(1)},\alpha, \mathbf{E}_k),$$ and let $\theta_* \in \mathbb{T}$.  We can characterize aspects of the resonant orbits depending on the resonance type $j_k$:
		\begin{enumerate}
			\item $j_k = 1$:
			
			We have $$\min\{|n| : n \in \mathcal{N}_{k}\} > 8M_{k}^{(1)} > L_{k}.$$  Furthermore, suppose either $E_* \in \salt{\mathbf{J}}(\mathbf{E}_k)$ or the closest point in $\mathbf{J}_k$ to $E_*$ is a critical value of $\mathbf{E}_k$.  Then, for any $m \in \mathcal{S}_k(\theta_*, E_*)$, $$B_{\rho_{k-1}/24D_0}(\theta_*+ m\alpha) \subset \mathbf{I}_k.$$
			
			\item $j_k = 2$ \textit{(and thus $\mathbf{E}_k = \mathbf{E}_{k,n_{k-1},\bullet}^{(2)}$)}:
			
			We have $$\mathcal{N}_k = \emptyset.$$  Furthermore, suppose $E_* \in [\inf \salt{\mathbf{J}}(\mathbf{E}_{k,n_{k-1},\wedge}^{(2)}), \sup \salt{\mathbf{J}}(\mathbf{E}_{k,n_{k-1},\vee}^{(2)})].$  Then, for any $m \in \mathcal{S}_k(\theta_*, E_*)$, for some $p \in \{m, m-n_{k-1}\}$, $$B_{\rho_{k-1}/24D_0}(\theta_* + p\alpha) \subset \mathbf{I}_{k,n_{k-1},\vee} \cup \mathbf{I}_{k,n_{k-1},\wedge}.$$  If $\theta_* + p\alpha \in \mathbf{I}_{k,n_{k-1},\wedge}^{(2)} \setminus \mathbf{I}_{k,n_{k-1},\vee}^{(2)}$, then $|\mathbf{E}_{k,n_{k-1},\wedge}^{(2)}(\theta_*+p\alpha) - E_*| < \frac{9}{8}\rho_{k-1}$; similarly, if $\theta_* + p\alpha \in \mathbf{I}_{k,n_{k-1},\vee}^{(2)} \setminus \mathbf{I}_{k,n_{k-1},\wedge}^{(2)}$, then $|\mathbf{E}_{k,n_{k-1},\vee}^{(2)}(\theta_*+p\alpha) - E_*| < \frac{9}{8}\rho_{k-1}$.
		\end{enumerate}
	\end{hypsub}
	\begin{hypsub}[Nonresonance, scale $k$]\label{prsub:NR}
	    Let $\mathbf{E}_{k-1} \in \mathcal{E}_{k-1}$, let $\theta_* \in \mathbb{T}$, and suppose $E_* \in \salt{\mathbf{J}}(\mathbf{E}_{k-1})$ is such that
	    \begin{itemize}
		\item If $\mathbf{E}_{k-1}$ does not attain its maximum at a critical point, then $E_* \leq \sup \bigcup_{\mathbf{E} \in \mathcal{E}(\mathbf{E}_{k-1})}\salt{\mathbf{J}}(\mathbf{E})$.
		\item If $\mathbf{E}_{k-1}$ does not attain its minimum at a critical point, then $E_* \geq \inf \bigcup_{\mathbf{E} \in \mathcal{E}(\mathbf{E}_{k-1})}\salt{\mathbf{J}}(\mathbf{E})$.
	\end{itemize}
		Let $\mathbf{E}_k \in \mathcal{E}(\mathbf{E}_{k-1})$ be a Rellich curve minimizing $\dist(E_*, \salt{\mathbf{J}}(\mathbf{E}))$ among $\mathbf{E} \in \mathcal{E}(\mathbf{E}_{k-1})$.
		Suppose $\Lambda = [a,b] \subset \mathbb{Z}$ is $k$-nonresonant and that $a$ and $b$ are $k$-left- and $k$-right-regular, respectively.  Then for $|E - E_*| < \rho_k/2$ and $\theta \in B_{\rho_k/8D_0}(\theta_*)$,
		\begin{align*}
			\log|R^{\Lambda}_{\theta,E}(m,n)| \leq -\gamma_{k}|m-n|, \quad |m-n| \geq \ell_{k},
		\end{align*}
		and
		\begin{align*}
			\|R^{\Lambda}_{\theta,E}\| \leq 4\rho^{-1}_k.
		\end{align*}
	
		Furthermore, if $\salt{\Lambda}$ is $k$-nonresonant with 
		$|\salt{\Lambda}| \geq 2L_k+3M_k^{(1)}$, then
		there exists a $k$-nonresonant subinterval $\Lambda = [a,b] \subset \salt{\Lambda}$ with $|\Lambda|\geq|\salt{\Lambda}|-(2L_k+3M_k^{(1)})$ such that $a$ and $b$ are $k$-left- and $k$-right-regular, respectively. 
		
	\end{hypsub}

Subject to the above hypotheses, we have the following:	Let $\mathbf{E}_k \in \mathcal{E}_{k}$ with $\mathbf{E}_k : \mathbf{I}_k \to \mathbf{J}_k$.  There exists a collection $\mathcal{I}_{k+1}(\mathbf{E}_k) = \mathcal{I}_{k+1}^{(1)} \cup \mathcal{I}_{k+1}^{(2)}$ of intervals $I_{k+1}$  and corresponding even integers $L_{k+1}$ so that, denoting $\Lambda_{k+1} = [-L_{k+1}/2, L_{k+1}/2]$, the following hold:
	\begin{enumerate}
		\item[](Simple resonance): If $I_{k+1} \in \mathcal{I}_{k+1}^{(1)}$, $\theta_*\in I_{k+1}$, and $E_* = \mathbf{E}_k(\theta_*)$, then $H^{\Lambda_{k+1}}(\theta_*)$ satisfies Assumption \ref{as:SR} with \begin{gather*}
			\snext\Lambda = \Lambda_{k+1}, \; {\Lambda} = \Lambda_k, \\
			\rho = \rho_k,\; \delta = \delta_k,\; \gamma = \gamma_k,\; \ell = \ell_k.
		\end{gather*}
		\item[](Double resonance): If $I_{k+1} \in \mathcal{I}_{k+1}^{(2)}$, there exists a unique $n$ with $L_k < |n| \leq 8M_{k+1}^{(1)}$ and a unique point $\theta_{**} = \theta_{k,n,-} \in I_{k+1}$ such that $E_{**} = \mathbf{E}_{k}(\theta_{**}) = \mathbf{E}_k(\theta_{**}+n\alpha)$.  Furthermore, $H^{\Lambda_{k+1}}(\theta_{**})$ satisfies Assumption \ref{as:DR} with the following assignments:
		\begin{gather*}
			\snext\Lambda = \Lambda_{k+1},\; {\Lambda}_- = \Lambda_k,\; {\Lambda}_+ = \Lambda_k + n, \\
			\sprev{\rho} = \frac23\rho_{k-1},\; \delta = \delta_k,\; \gamma = \gamma_k,\; \ell = \ell_k,\; \nu = \nu_k.
		\end{gather*}
		Moreover, we can choose the additional parameters introduced in Section \ref{sec:DR} as follows:
		\begin{gather*}
		    \snext\sigma = \sigma_{k+1},\; 
		    \eta = \frac{5\rho_k}{4\nu_k},\; 
		    \rho = \rho_k.
		\end{gather*}
	\end{enumerate}
	In particular, there exists a family $\mathcal{E}(\mathbf{E}_k) = \mathcal{E}^{(1)}(\mathbf{E}_k) \cup \mathcal{E}^{(2)}(\mathbf{E}_k)$ of Rellich children $\mathbf{E}_{k+1} : \mathbf{I}_{k+1} \to \mathbf{J}_{k+1}$  such that each $\mathbf{E}_{k+1}$ is a Rellich function of $H^{\Lambda_{k+1}}$ satisfying Assumption \ref{as:C2fn}, such that Hypotheses \ref{prsub:c2rell}, \ref{pr:resorbs}, and \ref{prsub:NR} hold at scale $k+1$.
	
	Additionally, we have the following:
	\begin{enumerate}
		\item The even integers $L_{k+1}$ can be chosen such that $\pm L_{k+1}/2$ and $\pm(L_{k+1}/2+1)$ are $k$-regular for any $\theta_* \in \mathbf{I}_{k+1}$ for any $E_* \in B_{\rho_k}(\mathbf{E}_{k}(\theta_*))$, and such that $L_{k+1}/2+1$ (respectively, $-(L_{k+1}/2+1)$) is $k+1$-left-regular (respectively, $k+1$-right-regular) for any $E_* \in B_{\rho_{k}}(\mathbf{E}_{k+1}(\theta_*))$ 
		\item The intervals $\mathbf{I}_{k+1}$ and their relevant translates cover $\mathbf{I}_k$, in the sense that if $\theta \in \mathbf{I}_k$ and $\theta \notin \mathbf{I}_{k+1}$ for all $\mathbf{E}_{k+1} \in \mathcal{E}(\mathbf{E}_k)$, then there exists a unique $n$ with $8M_{k}^{(1)} < |n| \leq 8M_{k+1}^{(1)}$ and a Rellich curve $\mathbf{E}_{k+1} = \mathbf{E}_{k+1,n,\bullet} \in \mathcal{E}^{(2)}(\mathbf{E}_k)$ such that $\theta \in \mathbf{I}_{k+1} + n\alpha$.
		
		\item If $E_* = E_{n_{k+1}}(\mathbf E_{k+1})$ for $n_{k+1} \in \mathcal N_{k+1}$,  then $\overline{B}_{\rho_{k}}(E_*) \subset \salt{\mathbf{J}}(\mathbf{E}_k)$.  Furthermore, we have 
		\begin{align*}
			\bar{\rho}_{k}/2 \leq |\mathbf{J}_{k+1}| \leq 2\bar{\rho}_{k}.
		\end{align*}
	\end{enumerate}
\end{prop}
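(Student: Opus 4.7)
The plan is to apply the machinery of Section \ref{sec:invfn} to each $\mathbf{E}_k \in \mathcal{E}_k$ to produce the energy partitions, then apply Sections \ref{sec:SR} and \ref{sec:DR} to the local restrictions to build the Rellich children, and finally verify each of the three inductive hypotheses at scale $k+1$.

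First, I would invoke Proposition \ref{pr:C2cover} with $f = \mathbf{E}_k$, $\alpha$, $L^{(1)} = 8M_{k+1}^{(1)}$, $L^{(2)} = 8M_{k+1}^{(2)}$, $\bar\rho = \bar\rho_k$, $\sprev\rho = \rho_{k-1}$, $\rho = \rho_k$, to obtain a cover of $\salt{\salt{\mathbf{J}}}(\mathbf{E}_k)$ by intervals $J_i^{(j)}$ (cf. Figure \ref{f:treeexpn}). The scale relations in Lemma \ref{l:lenrelns} ensure all hypotheses of that proposition are met. The associated preimages $I_i^{(j)} = \mathbf{E}_k^{-1}(J_i^{(j)})$ are the candidate domains. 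We set $\mathcal{I}_{k+1}^{(j)}(\mathbf E_k) := \{I_i^{(j)}\}$; by Lemma \ref{l:invdiffuniq}, the $n$ referenced in the statement for the double-resonant case is the unique integer such that $T_{\mathbf E_k}(E_n) = n\alpha \bmod 1$, and its size lies in $(L_k, 8M_{k+1}^{(1)}]$ by Hypothesis \ref{pr:resorbs}.

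Second, I would fix the scales $L_{k+1}$ in $[\tfrac12 M_{k+1}^{(j)}, M_{k+1}^{(j)}]$. By the counting in part 5 of Proposition \ref{pr:C2cover} and the lower bound on $L$ coming from Lemma \ref{l:lenrelns}, there are at most polynomially many ``bad'' even choices of $L_{k+1}$ for which one of $\pm L_{k+1}/2, \pm(L_{k+1}/2+1)$ fails to be $k$-regular for some $(\theta_*, E_*)$ of interest; choosing $L$ large ensures a good even $L_{k+1}$ exists. This choice guarantees by Hypothesis \ref{prsub:NR} that the shoulders of $\snext\Lambda = \Lambda_{k+1}$ (respectively, the central and outer intervals in the DR case) are $k$-nonresonant with $k$-regular endpoints, hence enjoy $(\ell_k,\gamma_k)$ Green's function decay. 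Combined with the Rellich function structure on the resonant ``core'' $\Lambda_k$ (resp. $\Lambda_k \cup \Lambda_k+n$), and using the eigenvalue separation from Hypothesis \ref{pr:resorbs} along with eigenvector decay inherited from previous scales, this verifies Assumption \ref{as:SR} (resp. Assumption \ref{as:DR}). The eigenvector stability bound on $\partial_\theta\psi$ follows from the Feynman formula \eqref{eq:feynmanEvec} applied to the parent scale.

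Third, I would apply Proposition \ref{pr:AL1} to produce one child $\mathbf{E}_{k+1,i}^{(1)}$ per simple-resonant $I_i^{(1)}$, and Theorem \ref{t:DRRelFns} together with Proposition \ref{pr:AL2} to produce the pair $\mathbf{E}_{k+1,n,\vee}^{(2)}, \mathbf{E}_{k+1,n,\wedge}^{(2)}$ per double-resonant $I_n^{(2)}$. In the SR case, Proposition \ref{pr:interscaleapprox1} gives $C^2$-closeness of the child to its parent, and Lemma \ref{l:Asm3StabSR} provides a slightly reduced domain on which the child satisfies Assumption \ref{as:C2fn} with parameters $\nu_{k+1}, d_{k+1}, D_{k+1}$; the domain adjustment is so small relative to $\bar\rho_k$ that it does not disturb the covering. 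In the DR case, Theorem \ref{t:DRRelFns} directly provides the Morse structure with constants $d_{k+1}, D_{k+1}$ and the uniform local separation $\sigma_{k+1}$, which (by part 5 of Lemma \ref{l:lenrelns}) will remain open at all future scales. This establishes Hypothesis \ref{prsub:c2rell} at scale $k+1$.

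Finally, Hypothesis \ref{pr:resorbs} at scale $k+1$ is a consequence of the Diophantine separation of $E_n$ values (Lemmas \ref{l:DRensep} and \ref{l:DRensepfromcrit}) together with the relation $T_{\mathbf{E}_{k+1}} \approx T_{\mathbf{E}_k}$ off critical points via Lemma \ref{l:difinv}; applied to the children, this forces $\mathcal{N}_{k+1} \cap \{|n|\leq L_k\} = \emptyset$ in the SR case and $\mathcal{N}_{k+1} = \emptyset$ in the DR case. The statement about $p \in \{m, m - n_{k-1}\}$ for DR children follows because $\mathbf{I}_{k+1,n,\vee}^{(2)}$ and $\mathbf{I}_{k+1,n,\wedge}^{(2)}$ overlap on two short intervals symmetric about $\theta_{**}$ and $\theta_{**} + n\alpha$. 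The main technical obstacle is Hypothesis \ref{prsub:NR} at scale $k+1$: given a $(k+1)$-nonresonant interval $\Lambda$ with $(k+1)$-regular endpoints, one covers $\Lambda$ by overlapping translates of the just-constructed $\Lambda_{k+1}$ intervals together with $k$-nonresonant gaps between them, extracts Green's function decay on each piece from Proposition \ref{pr:AL1}, Proposition \ref{pr:AL2}, and the scale-$k$ hypothesis, and then glues via the resolvent identity exactly as in Appendix A (Lemma \ref{l:NRdecay}); the loss per gluing is $|\log(\rho_{k+1}/8)|/\ell_{k+1}$, which by the first part of Lemma \ref{l:lenrelns} keeps $\gamma_{k+1} \geq \tfrac12 \gamma_0$. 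The construction of the $k$-nonresonant regular subinterval of a $(k+1)$-nonresonant $\salt\Lambda$ proceeds by trimming a bounded number of scale-$k$ resonant windows from each end, which costs at most $2L_{k+1} + 3M_{k+1}^{(1)}$ in length.
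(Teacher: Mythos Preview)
Your overall architecture matches the paper's proof exactly: apply Section~\ref{sec:invfn} to $\mathbf E_k$ to produce the covering (the paper's Lemma~\ref{l:constructionsummary}), construct $L_{k+1}$, verify Assumptions~\ref{as:SR}/\ref{as:DR} (the paper's Lemma~\ref{l:res}), build and trim the children via Lemmas~\ref{l:Asm3StabSR}/\ref{l:Asm3StabDR}, and then check the three hypotheses at scale $k+1$ (Lemmas~\ref{l:centerdist}, \ref{l:Scond}, \ref{l:NRreg}, \ref{l:NRintadj}).  Two steps in your sketch, however, need more than you indicate.

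First, your construction of $L_{k+1}$ by a single ``polynomially many bad choices'' count does not obviously close.  Regularity is a condition at \emph{every} ancestor scale $0\le i\le k-1$, and at each scale the bad density in the window $[M_{k+1}^{(j)}/2, M_{k+1}^{(j)}]$ is bounded only by a fixed fraction (not summable in $i$), so a naive union bound over scales can exceed~$1$.  The paper instead uses a descending-sequence argument (Lemma~\ref{l:Lconst}): start with $a_s = M_{s+1}^{(j)}/2$ and, at each scale $k$ going down, shift by at most $8L_k$ to clear $\mathcal S_{k-1,\pm}$ with margin $\tfrac45 L_k$.  Because $\sum_{i<k} 8L_i \ll \tfrac1{20}L_k$ by the super-geometric growth of $L_k$, later adjustments do not destroy regularity already obtained.

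Second, your claim that the eigenvector stability bound $\|\Gamma_\mathcal{P}^{\Lambda_{k+1}}(\partial_\theta\psi_k)\| \lesssim \delta_k/\rho$ ``follows from the Feynman formula applied to the parent scale'' is a real gap.  The Feynman formula gives $\partial_\theta\psi_k = -R_\perp V'\psi_k$, and the naive bound $\|\Gamma_\mathcal{P}\|\cdot\|R_\perp\|\cdot\|V'\|$ yields only $\sim \varepsilon/\rho$, not $\delta_k/\rho$.  The paper's Lemma~\ref{l:asm25} gets the extra smallness by splitting $\psi_k$ into its restriction to a core $\{|m|\le L_s/4\}$ and its tail, then expanding $\Gamma_s R_\perp$ on the core via the resolvent identity against the scale-$(s-1)$ nonresonant partition; the needed decay of $\Gamma_s\sprev R$ comes from the Green's function estimate on $\Lambda_s\cap\Lambda_{l/c/r}$ established in Lemma~\ref{c:GFdecInd}.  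This is where the $\delta_s = \varepsilon^{L_s/8}$ factor actually appears.

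One smaller point: in your verification of Hypothesis~\ref{pr:resorbs} at scale $k+1$ you write $\mathcal N_{k+1}\cap\{|n|\le L_k\}=\emptyset$, but what is needed (and what the paper proves in Lemma~\ref{l:centerdist}) is $\min\{|n|:n\in\mathcal N_{k+1}\} > 8M_{k+1}^{(1)}$, which is a much stronger statement and is exactly what drives the contradiction with $j_{k+1}=1$.
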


Subject to these hypotheses at scale $s-1$, we verify that the statement holds at scale $s$ with appropriately defined constants:
\begin{thm} \label{t:induction}
	There exists $\varepsilon_0 = \varepsilon_0(\alpha, v)$ such that, for $\varepsilon \leq \varepsilon_0$, Proposition \ref{pr:induction} holds for all $k \geq 0$.
\end{thm}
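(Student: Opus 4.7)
The plan is to prove the theorem by induction on the scale $k$, with Proposition \ref{pr:induction} itself serving as the engine of the inductive step: once one shows that Hypotheses \ref{prsub:c2rell}, \ref{pr:resorbs}, and \ref{prsub:NR} hold at scale $k$, the conclusion of the proposition includes the assertion that they propagate to scale $k+1$, and iterating gives the theorem.  Choose $L$ large enough (depending on $C$, $\tau$, $d_0$, $D_0$) that Lemma \ref{l:lenrelns} holds at every scale; this fixes $\rho_0$, and we set $\varepsilon_0 := \rho_0^4$, which is a function only of $v$ and $\alpha$ through the initial parameters.

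\textbf{Base case ($k=0$).} Hypothesis \ref{prsub:c2rell} at scale $0$ is immediate: $\mathbf{E}_0 = v$ is Morse cosine-like with constants $d_0,D_0$ by the assumption of the Main Theorem, so $\mathbf{E}_0$ satisfies Assumption \ref{as:C2fn} with $\nu_0$ chosen (as above) strictly below $d_0/2$.  Hypothesis \ref{pr:resorbs} is vacuous since it is only asserted for $k \geq 1$.  For Hypothesis \ref{prsub:NR} at scale $0$, one takes $\mathbf{E}_{-1}$ with $\salt{\mathbf{J}}(\mathbf{E}_{-1}) = \mathbb{R}$ and $\mathcal{E}(\mathbf{E}_{-1}) = \{v\}$; the $0$-nonresonance hypothesis on $\Lambda$ gives $|v(\theta_*+m\alpha)-E_*|\geq \rho_0$ for all $m \in \Lambda$, so Lemma \ref{l:initGrnDec} applies directly to yield both the off-diagonal decay bound with $(\ell_0,\gamma_0)$ and the norm bound $\|R^{\Lambda}_{\theta,E}\| \leq 4\rho_0^{-1}$.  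The statement about existence of a subinterval with regular endpoints is trivial at scale $0$, since $0$-regularity imposes no condition (there are no ancestors of $\mathbf{E}_0$); one simply takes $\Lambda = \salt{\Lambda}$.

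\textbf{Inductive step ($k \geq 0$).} Assume Proposition \ref{pr:induction} has been established at scales $0, 1, \dots, k-1$; applied to every $\mathbf{E}_{k-1} \in \mathcal{E}_{k-1}$, this yields exactly that Hypotheses \ref{prsub:c2rell}, \ref{pr:resorbs}, and \ref{prsub:NR} hold at scale $k$ for every $\mathbf{E}_k \in \mathcal{E}_k$.  Applying Proposition \ref{pr:induction} once more at scale $k$ then produces, for every $\mathbf{E}_k$, the family $\mathcal{I}_{k+1}(\mathbf{E}_k)$ of intervals and corresponding lengths $L_{k+1}$, the Rellich children $\mathcal{E}(\mathbf{E}_k)$, and the verification of Hypotheses \ref{prsub:c2rell}, \ref{pr:resorbs}, \ref{prsub:NR} at scale $k+1$.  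Thus by induction Proposition \ref{pr:induction} holds for every $k \geq 0$.

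\textbf{The genuine work and main obstacle.}  All content is in checking that $L$ can be chosen large enough (independent of $k$) to drive the entire induction.  The scale relations of Lemma \ref{l:lenrelns} are what allow the perturbative hypotheses \eqref{eq:eps1}--\eqref{eq:gam1} and \eqref{eq:eps2}--\eqref{eq:rho2} of Sections \ref{sec:SR}--\ref{sec:DR} to be verified with the stated parameter assignments; in particular, the inequalities $\delta_k \ll \rho_k^3$, $\sigma_{k+1} \gg \rho_{k+1}$, and $\bar{\rho}_k \ll \nu_k^2 \rho_{k-1}/D_0^3$ are what make the simple- and double-resonant constructions of Propositions \ref{pr:AL1}, \ref{pr:AL2}, and Theorem \ref{t:DRRelFns} applicable with the inductive parameters.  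The critical point --- and the main obstacle if one were unpacking Proposition \ref{pr:induction} --- is the propagation of Hypothesis \ref{prsub:NR} to scale $k+1$: one must choose $L_{k+1}$ to lie in the window $[\tfrac{1}{2}M_{k+1}^{(j_{k+1})}, M_{k+1}^{(j_{k+1})}]$ so that the endpoints of $\Lambda_{k+1}$ are $k$-regular (achievable because the density of non-$k$-regular integers in that window is controlled by $\sum_{i\leq k}L_i/M_{k+1}^{(1)} \ll 1$, using item 2 of Lemma \ref{l:lenrelns}), and then bootstrap Green's function decay on $(k+1)$-nonresonant intervals by partitioning them into nonresonant subintervals of the previous scale together with simple- or double-resonant child intervals from $\mathcal{I}_{k+1}$.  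The inductive $(k+1)$-versions of Hypotheses \ref{prsub:c2rell} and \ref{pr:resorbs} then follow from the Morse structure of the children (Proposition \ref{pr:interscaleapprox1}, Theorem \ref{t:DRRelFns}, and the domain-adjustment Lemmas \ref{l:Asm3StabDR} and \ref{l:Asm3StabSR}) combined with Propositions \ref{pr:C2cover} and \ref{pr:SRDRsep} applied at the next scale.
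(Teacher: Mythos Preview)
Your proposal is correct and follows essentially the same inductive scheme as the paper: choose $L$ large via Lemma \ref{l:lenrelns}, set $\varepsilon_0 = \rho_0^4$, verify the base case $k=0$ directly from the assumptions on $v$ and Lemma \ref{l:initGrnDec}, and then use Proposition \ref{pr:induction} (whose proof at arbitrary scale $s$ occupies Sections 5.3--5.5) to propagate the hypotheses from scale $k$ to scale $k+1$. The paper's own proof of the theorem is a single sentence pointing back to this machinery, so your more explicit unpacking of the inductive structure and identification of the key obstacles (regularity of the endpoints of $\Lambda_{k+1}$, bootstrapping Green's function decay, domain adjustment for the children) is consistent with and a faithful elaboration of the paper's argument.
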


For the remainder of this section, we suppose that the proposition holds for $0 \leq k \leq s-1$, that Hypotheses \ref{prsub:c2rell}, \ref{pr:resorbs}, and \ref{prsub:NR} hold for $0 \leq k \leq s$, and prove the proposition for $k = s$.  We note that, in light of Lemma \ref{l:initGrnDec}, the proof below also verifies the initial case $s=0$.

\subsection{The family of descendants of $\mathbf{E}_s$}

If $s = 0$, let $\mathbf{E}_s = v$; otherwise, let $\mathbf{E}_s \in \mathcal{E}^{(j_s)}(\mathbf{E}_{s-1})$, $\mathbf{E}_s : \mathbf{I}_s \to \mathbf{J}_s$, $j_s \in \{1,2\}$, be a Rellich function of $H^{\Lambda_s}$, $\Lambda_s = [-L_s/2,L_s/2]$ constructed as in Proposition \ref{pr:induction} at scale $s-1$.  In the case $j_s = 2$ (and thus $\mathbf{E}_s = \mathbf{E}_{s,n_{s-1},\bullet}^{(2)}$), we denote $$\mathbf{I}_{s,n_{s-1},\cup} := \mathbf{I}_{s,n_{s-1},\wedge}^{(2)}\cup \mathbf{I}_{s,n_{s-1},\vee}^{(2)}.$$

\subsubsection{Construction of the next length scale}
By our inductive hypotheses, $\mathbf{E}_s$ satisfies Assumption \ref{as:C2fn}, with Morse constants $d_s, D_s$ and boundary derivative lower bound $\nu_s$.  By Lemma \ref{l:lenrelns}, Section \ref{sec:invfn} applies to $\mathbf{E}_s$ with these values, and, recalling that $$\bar{\rho}_s = \frac{d_s}{15000D_0^2}\rho_{s-1}^2,$$ we have the following:
\begin{lem}\label{l:constructionsummary}
	$\salt{\salt{\mathbf{J}}}(\mathbf{E}_s) \cap \mathbf{J}_s$ can be covered by closed intervals $J_{s,i}^{(j)}$, $j \in \{1,2\}$, such that
	\begin{enumerate}
		\item $J_{s,n_{s}}^{(2)} = \overline{B}_{\bar{\rho}_s}(E_{n_s})$, where $\theta_* = \theta_{s,n_{s},-}$ is the unique point in $\mathbf{I}_s$ such that $$E_{n_{s}} := \mathbf{E}_s(\theta_*) = \mathbf{E}_s(\theta_*+n_{s}\alpha)$$
		for any $n_{s} \in \mathcal{N}_{s} := \mathcal{N}(8M_{s+1}^{(1)},\alpha,\mathbf{E}_s)$ (cf. Section 4).
		\item $\bar{\rho}_s \leq |J_{s,i}^{(j)}| \leq 2\bar{\rho}_s$
		\item $J_{s,i} \cap J_{s,i'} \neq \emptyset \implies |J_{s,i}\cap J_{s,i'}| = 3\rho_s$
		\item For any $E \in J_{s,i}^{(j)}$, $\overline{B}_{\rho_s}(E) \subset \salt{\mathbf{J}}(\mathbf{E}_s)$.  Furthermore, if $j=2$ and $E_* = E_{n_s}$, then $\overline{B}_{\rho_{s-1}}(E_*) \subset \salt{\mathbf J}(\mathbf E_{s-1})$.
		\item The total number of such intervals $J_{s,i}^{(j)}$ does not exceed $5\lceil |\mathbf{J}_s|\bar{\rho}_s^{-1}\rceil$.
	\end{enumerate}
	These intervals are defined such that, on the corresponding preimages
	\begin{align*}
		I_{s,i,\pm}^{(j)} := 	(\mathbf{E}_s)_\pm^{-1}(J_{s,i}^{(j)}),
		\quad I_{s,i}^{(j)} := I_{s,i,-}^{(j)} \cup I_{s,i,+}^{(j)},
	\end{align*}
	one has either
\begin{enumerate}
		\item[] (Simple resonance): If $\theta_* \in I_{s,i}^{(1)}$, then for any $n$ with $ 0 \neq |n| \leq 8M_{s+1}^{(1)}$ such that $\theta_* + n\alpha \in \mathbf{I}_s$,
		\begin{align*}
			|\mathbf{E}_s(\theta_*) - \mathbf{E}_s(\theta_*+n\alpha)| > 3\rho_s.
		\end{align*}
		\item[] (Double resonance): If $\theta_* = \theta_{s,n_s,-} \in I_{s,n_s,-}^{(2)}$, then for $\theta \in B_{\rho_{s-1}/8D_0}(\theta_*)$ and any $n$ with $n \notin \{0,n_s\}, |n| \leq 8M_{s+1}^{(2)}$ such that $\theta + n\alpha \in \mathbf{I}_s$,
		\begin{align*}
			|\mathbf{E}_s(\theta) - \mathbf{E}_s(\theta+n\alpha)| > 2\rho_{s-1}
		\end{align*}
		and
		\begin{align*}
			\min\{-\partial_\theta \mathbf{E}_s(\theta), \partial_\theta \mathbf{E}_s(\theta+n_s\alpha)\} \geq \nu_s.
		\end{align*}
	\end{enumerate}
	Finally, recalling that 
	\begin{equation*}
		\bar\nu_s = \frac{3d_s}{8D_0}\bar{\rho}_{s} < \nu_s,
	\end{equation*}
	each function $\mathbf E_s|_{I_{s,i}^{(j)}}: I_{s,i}^{(j)} \to J_{s,i}^{(j)}$ satisfies Assumption \ref{as:C2fn} with Morse constants $d_s,D_s$ and boundary derivative constant $\bar\nu_s$. 
\end{lem}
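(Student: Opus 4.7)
The plan is to observe that this lemma is essentially a direct application of the machinery of Section \ref{sec:invfn} (specifically, Propositions \ref{pr:C2cover} and \ref{pr:SRDRsep}) to the inductively constructed function $\mathbf{E}_s : \mathbf{I}_s \to \mathbf{J}_s$ under the parameter dictionary
\[ f = \mathbf{E}_s, \quad (d, D, \nu) = (d_s, D_s, \nu_s), \quad (\rho, \bar\rho, \sprev\rho) = (\rho_s, \bar\rho_s, \rho_{s-1}), \quad (L^{(1)}, L^{(2)}) = (8M_{s+1}^{(1)}, 8M_{s+1}^{(2)}). \]
The bulk of the proof reduces to verifying that the standing hypotheses of Section \ref{sec:invfn} are met at scale $s$.

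First I would verify that Assumption \ref{as:C2fn} holds for $\mathbf{E}_s$ with the constants $(d_s, D_s, \nu_s)$; this is precisely Hypothesis \ref{prsub:c2rell} at scale $s$, which is available by the outer induction. Next I would check the Diophantine preparation $C_\alpha/(2(8M_{s+1}^{(1)})^\tau) \geq 7\nu_s/d_s$ and the layered bound
\[ \rho_s \ll \bar\rho_s \ll \rho_{s-1} < \min\left\{ \frac{d_s}{D_s^2}\left(\frac{C_\alpha}{24(8M_{s+1}^{(1)})^\tau}\right)^2,\; \frac{C_\alpha \nu_s}{12(8M_{s+1}^{(1)})^\tau},\; \frac{C_\alpha \nu_s}{3(8M_{s+1}^{(2)})^\tau}\right\} \]
required at the start of Section \ref{sec:invfn}'s ``Double resonance'' subsection. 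After unpacking the parameter definitions, both reduce to the content of items (3)--(4) of Lemma \ref{l:lenrelns}; the crucial point is that $\rho_{s-1} = \varepsilon^{L_{s-1}^{2/3}}$ is super-polynomially smaller than each of the polynomial-in-$M_{s+1}$ quantities appearing on the right-hand side, so these inequalities hold for $L$ sufficiently large.

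With the setup in place, Proposition \ref{pr:C2cover} immediately yields the covering of $\salt{\salt{\mathbf{J}}}(\mathbf{E}_s) \cap \mathbf{J}_s$ by the intervals $J_{s,i}^{(j)}$ satisfying (1)--(3), (5), and the first sentence of (4), together with the assertion that each restriction $\mathbf{E}_s|_{I_{s,i}^{(j)}}$ inherits Assumption \ref{as:C2fn} with boundary derivative constant $\bar\nu_s = (3d_s/8D_0)\bar\rho_s$. Proposition \ref{pr:SRDRsep} then delivers the simple- and double-resonance separation statements, including the transversality lower bound $\min\{-\partial_\theta \mathbf{E}_s(\theta),\,\partial_\theta \mathbf{E}_s(\theta + n_s\alpha)\} \geq \nu_s$ at double-resonant crossings.

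The only item not supplied directly by Section \ref{sec:invfn} is the second sentence of (4), namely that $\overline{B}_{\rho_{s-1}}(E_{n_s}) \subset \salt{\mathbf{J}}(\mathbf{E}_{s-1})$ for any $n_s \in \mathcal{N}_s$. This is not a property of the scale-$s$ construction itself but is inherited from the previous scale: it is exactly conclusion (3) of Proposition \ref{pr:induction} applied at $k = s-1$, which is available by the outer induction (the base case $s=0$ being trivial under the convention $\salt{\mathbf{J}}(\mathbf{E}_{-1}) = \mathbb{R}$). The main ``obstacle'' in writing this out is therefore purely notational bookkeeping: aligning the generic $(\rho, \bar\rho, \sprev\rho, L^{(1)}, L^{(2)})$ of Section \ref{sec:invfn} with their scale-indexed counterparts, and carefully tracking the nesting of $\salt{\mathbf{J}}$ and $\salt{\salt{\mathbf{J}}}$ across successive scales.
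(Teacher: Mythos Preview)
Your proposal is correct and follows essentially the same approach as the paper: the paper's proof is a two-sentence remark that the lemma ``follows immediately from Propositions \ref{pr:C2cover} and \ref{pr:SRDRsep}, except the second half of item 4, which follows from the inductive proposition,'' and you have identified exactly these ingredients, together with the parameter bookkeeping (via Hypothesis \ref{prsub:c2rell} and Lemma \ref{l:lenrelns}) needed to justify that the machinery of Section \ref{sec:invfn} applies. Your identification of conclusion (3) of Proposition \ref{pr:induction} at $k=s-1$ as the source of the second half of item 4 is also precisely right.
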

\begin{proof}
    This follows immediately from Propositions \ref{pr:C2cover} and \ref{pr:SRDRsep}, except the second half of item 4,
    which follows from the inductive proposition.
\end{proof}

\begin{figure}
    \centering
    
\begin{tikzpicture}[x=0.75pt,y=0.75pt,yscale=-1,xscale=1]

\draw [line width=2.25]    (152,31) .. controls (206.5,299.47) and (332.5,276.72) .. (595,31) ;
\draw [color={rgb, 255:red, 128; green, 128; blue, 128 }  ,draw opacity=1 ][line width=1.5]    (151,304.02) -- (594,304.02) ;
\draw [shift={(594,304.02)}, rotate = 180] [color={rgb, 255:red, 128; green, 128; blue, 128 }  ,draw opacity=1 ][line width=1.5]    (0,6.71) -- (0,-6.71)   ;
\draw [shift={(151,304.02)}, rotate = 180] [color={rgb, 255:red, 128; green, 128; blue, 128 }  ,draw opacity=1 ][line width=1.5]    (0,6.71) -- (0,-6.71)   ;
\draw [color={rgb, 255:red, 128; green, 128; blue, 128 }  ,draw opacity=1 ][line width=1.5]    (607,74.13) -- (607,226.12) ;
\draw [shift={(607,226.12)}, rotate = 270] [color={rgb, 255:red, 128; green, 128; blue, 128 }  ,draw opacity=1 ][line width=1.5]    (0,6.71) -- (0,-6.71)   ;
\draw [shift={(607,74.13)}, rotate = 270] [color={rgb, 255:red, 128; green, 128; blue, 128 }  ,draw opacity=1 ][line width=1.5]    (0,6.71) -- (0,-6.71)   ;
\draw [color={rgb, 255:red, 128; green, 128; blue, 128 }  ,draw opacity=1 ] [dash pattern={on 0.84pt off 2.51pt}]  (102,123.83) -- (614.5,123.83) ;
\draw [line width=2.25]    (102,85.6) -- (102,162.05) ;
\draw [shift={(102,162.05)}, rotate = 270] [color={rgb, 255:red, 0; green, 0; blue, 0 }  ][line width=2.25]    (0,7.83) -- (0,-7.83)   ;
\draw [shift={(102,85.6)}, rotate = 270] [color={rgb, 255:red, 0; green, 0; blue, 0 }  ][line width=2.25]    (0,7.83) -- (0,-7.83)   ;
\draw   (126,124.14) .. controls (130.67,124.14) and (133,121.81) .. (133,117.14) -- (133,117.14) .. controls (133,110.47) and (135.33,107.14) .. (140,107.14) .. controls (135.33,107.14) and (133,103.81) .. (133,97.14)(133,100.14) -- (133,93.83) .. controls (133,89.16) and (130.67,86.83) .. (126,86.83) ;
\draw [color={rgb, 255:red, 128; green, 128; blue, 128 }  ,draw opacity=1 ] [dash pattern={on 0.84pt off 2.51pt}]  (178,124.74) -- (178,314.04) ;
\draw   (485.5,121.1) .. controls (485.5,116.43) and (483.17,114.1) .. (478.5,114.1) -- (285.62,114.1) .. controls (278.95,114.1) and (275.62,111.77) .. (275.62,107.1) .. controls (275.62,111.77) and (272.29,114.1) .. (265.62,114.1)(268.62,114.1) -- (190.5,114.1) .. controls (185.83,114.1) and (183.5,116.43) .. (183.5,121.1) ;
\draw [color={rgb, 255:red, 128; green, 128; blue, 128 }  ,draw opacity=1 ] [dash pattern={on 0.84pt off 2.51pt}]  (486,122.01) -- (486,312.22) ;
\draw [color={rgb, 255:red, 155; green, 155; blue, 155 }  ,draw opacity=1 ][line width=0.75]  [dash pattern={on 4.5pt off 4.5pt}]  (101,85.6) -- (534,85.6) ;
\draw [color={rgb, 255:red, 128; green, 128; blue, 128 }  ,draw opacity=1 ][line width=0.75]  [dash pattern={on 4.5pt off 4.5pt}]  (101,162.05) -- (438,162.05) ;
\draw [color={rgb, 255:red, 128; green, 128; blue, 128 }  ,draw opacity=1 ] [dash pattern={on 4.5pt off 4.5pt}]  (438,162.05) -- (438,304.93) ;
\draw [color={rgb, 255:red, 128; green, 128; blue, 128 }  ,draw opacity=1 ] [dash pattern={on 4.5pt off 4.5pt}]  (534,85.6) -- (534,304.93) ;
\draw [line width=2.25]    (438,304.93) -- (534,304.93) ;
\draw [shift={(534,304.93)}, rotate = 180] [color={rgb, 255:red, 0; green, 0; blue, 0 }  ][line width=2.25]    (0,7.83) -- (0,-7.83)   ;
\draw [shift={(438,304.93)}, rotate = 180] [color={rgb, 255:red, 0; green, 0; blue, 0 }  ][line width=2.25]    (0,7.83) -- (0,-7.83)   ;
\draw  [dash pattern={on 0.84pt off 2.51pt}]  (486,337.15) -- (486,352.62) ;
\draw [color={rgb, 255:red, 128; green, 128; blue, 128 }  ,draw opacity=1 ] [dash pattern={on 4.5pt off 4.5pt}]  (165,85.6) -- (165,304.93) ;
\draw [color={rgb, 255:red, 128; green, 128; blue, 128 }  ,draw opacity=1 ] [dash pattern={on 4.5pt off 4.5pt}]  (196,162.05) -- (196,304.93) ;
\draw [line width=2.25]    (165,304.93) -- (196,304.93) ;
\draw [shift={(196,304.93)}, rotate = 180] [color={rgb, 255:red, 0; green, 0; blue, 0 }  ][line width=2.25]    (0,7.83) -- (0,-7.83)   ;
\draw [shift={(165,304.93)}, rotate = 180] [color={rgb, 255:red, 0; green, 0; blue, 0 }  ][line width=2.25]    (0,7.83) -- (0,-7.83)   ;
\draw  [dash pattern={on 0.84pt off 2.51pt}]  (178,338.06) -- (178,351.71) ;
\draw [color={rgb, 255:red, 155; green, 155; blue, 155 }  ,draw opacity=1 ][line width=1.5]    (116,151.59) -- (116,198.46) ;
\draw [shift={(116,198.46)}, rotate = 270] [color={rgb, 255:red, 155; green, 155; blue, 155 }  ,draw opacity=1 ][line width=1.5]    (0,6.71) -- (0,-6.71)   ;
\draw [shift={(116,151.59)}, rotate = 270] [color={rgb, 255:red, 155; green, 155; blue, 155 }  ,draw opacity=1 ][line width=1.5]    (0,6.71) -- (0,-6.71)   ;
\draw [color={rgb, 255:red, 155; green, 155; blue, 155 }  ,draw opacity=1 ][line width=1.5]    (102,189.81) -- (102.5,226) ;
\draw [shift={(102.5,226)}, rotate = 269.21] [color={rgb, 255:red, 155; green, 155; blue, 155 }  ,draw opacity=1 ][line width=1.5]    (0,6.71) -- (0,-6.71)   ;
\draw [shift={(102,189.81)}, rotate = 269.21] [color={rgb, 255:red, 155; green, 155; blue, 155 }  ,draw opacity=1 ][line width=1.5]    (0,6.71) -- (0,-6.71)   ;
\draw [color={rgb, 255:red, 155; green, 155; blue, 155 }  ,draw opacity=1 ][line width=1.5]    (115,50.57) -- (115,97.44) ;
\draw [shift={(115,97.44)}, rotate = 270] [color={rgb, 255:red, 155; green, 155; blue, 155 }  ,draw opacity=1 ][line width=1.5]    (0,6.71) -- (0,-6.71)   ;
\draw [shift={(115,50.57)}, rotate = 270] [color={rgb, 255:red, 155; green, 155; blue, 155 }  ,draw opacity=1 ][line width=1.5]    (0,6.71) -- (0,-6.71)   ;
\draw   (89,151.51) .. controls (87.63,151.51) and (86.94,152.2) .. (86.94,153.58) -- (86.94,153.58) .. controls (86.94,155.54) and (86.25,156.52) .. (84.88,156.52) .. controls (86.25,156.52) and (86.94,157.5) .. (86.94,159.46)(86.94,158.58) -- (86.94,159.46) .. controls (86.94,160.84) and (87.63,161.53) .. (89,161.53) ;

\draw (304,313.63) node [anchor=north west][inner sep=0.75pt]    {$\mathbf{J}_{s}$};
\draw (620,118) node [anchor=north west][inner sep=0.75pt]    {$E_{n_{s}}(\mathbf{E}_{s})$};
\draw (322,229.9) node [anchor=north west][inner sep=0.75pt]    {$\mathbf{E}_{s}$};
\draw (66,110.5) node [anchor=north west][inner sep=0.75pt]    {$J_{s,n_{s}}^{( 2)}$};
\draw (571,240) node [anchor=north west][inner sep=0.75pt]  [color={rgb, 255:red, 0; green, 0; blue, 0 }  ,opacity=1 ]  {$\salt{\mathbf{J}}(\mathbf{E}_{s}) \cap \mathbf{J}_{s}$};
\draw (143,98) node [anchor=north west][inner sep=0.75pt]    {$\overline{\rho }_{s}$};
\draw (260,92) node [anchor=north west][inner sep=0.75pt]    {$n_{s} \alpha $};
\draw (162,351.52) node [anchor=north west][inner sep=0.75pt]    {$\theta _{s,n_{s} ,-}$};
\draw (466,352.43) node [anchor=north west][inner sep=0.75pt]    {$\theta _{s,n_{s} ,+}$};
\draw (465,312.85) node [anchor=north west][inner sep=0.75pt]    {$I_{s,n_{s} ,+}$};
\draw (164,313.85) node [anchor=north west][inner sep=0.75pt]    {$I_{s,n_{s} ,-}$};
\draw (65,193.15) node [anchor=north west][inner sep=0.75pt]  [color={rgb, 255:red, 128; green, 128; blue, 128 }  ,opacity=1 ]  {$J_{s,i}^{( 1)}$};
\draw (55,146.56) node [anchor=north west][inner sep=0.75pt]    {$3\rho _{s}$};

\end{tikzpicture}

    \caption{A cartoon illustration of the covering constructed in Lemma \ref{l:constructionsummary}.}
    \label{f:covering}
\end{figure}
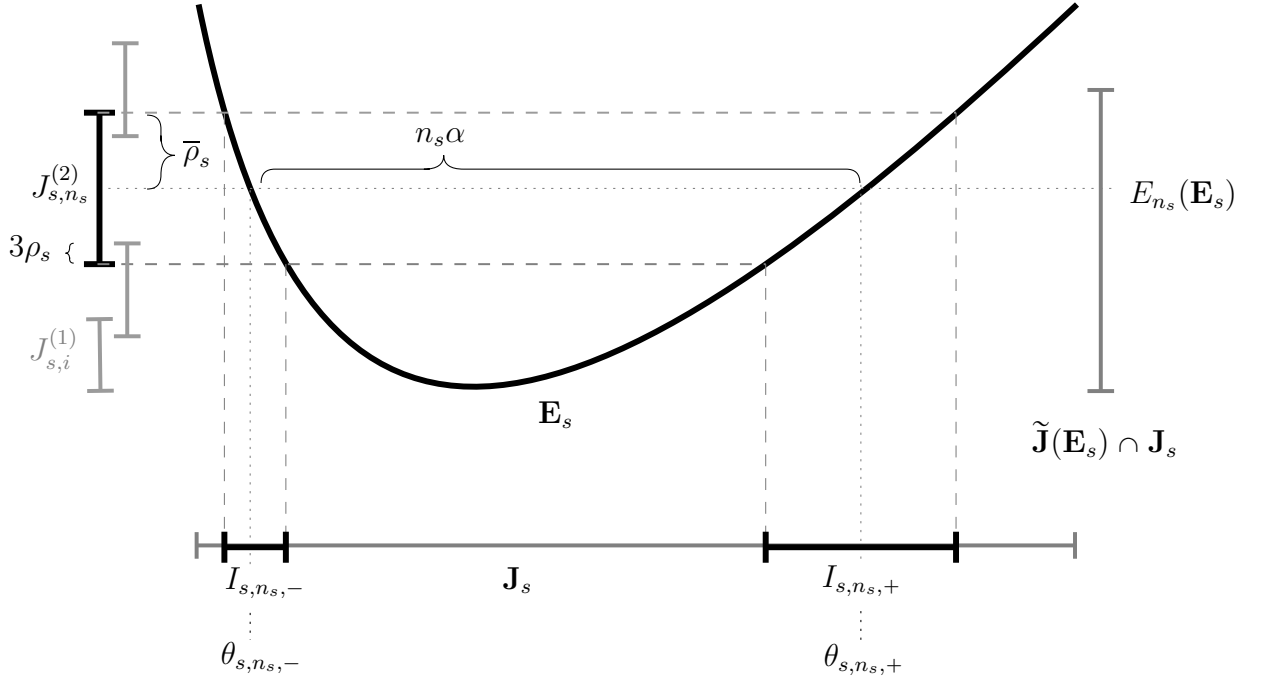

Fix an interval $J_{s,i}^{(j)}$ ($j \in \{1,2\}$), 
and note that  %
$|I_{s,i,\pm}^{(j)}| \leq \rho_{s-1}/24D_0$ by Lemma \ref{l:morsefnbd}, the definition of $\bar{\rho}_s$, and item 2 of Lemma \ref{l:constructionsummary}.  
Consider all `ancestors' $\mathbf E_{k} : \mathbf I_k \to \mathbf J_k$ ($0\leq k \leq s-1$) of $\mathbf E_s$. 
For each such $k$, we define two sets $\mathcal S_{k,\pm}$ as follows:
$$\mathcal S_{k,\pm} := \bigcup_{\theta \in I_{s,i,\pm}^{(j)}} \mathcal{S}_{k}(\theta, \mathbf{E}_s(\theta)).$$
The integers in $\mathcal S_{k,\pm}$ are well-separated uniformly in $\theta$ at each scale $0\leq k\leq s-1$ in the following sense:
\begin{lem}\label{l:Ssep} Let $0 \leq k \leq s-1$, and suppose $m \in \mathcal S_{k,\pm}$. 
	\begin{enumerate}
		\item If $j_{k+1} = 1$, then for any $n \neq m$ with $|n-m| \leq 8M_{k+1}^{(1)}$, $n \notin \mathcal S_{k,\pm}$.
		\item If $j_{k+1} = 2$ (and thus $\mathbf{E}_{k+1} = \mathbf{E}_{k+1,n_{k},\bullet}$ for some $M_k^{(2)} < |n_{k}| \leq 8 M_{k+1}^{(1)}$), then for some $p \in \{m,m-n_{k}\}$ and for any $n \notin \{p,p+ n_{k}\}$ with $|n-p| \leq 8M_{k+1}^{(2)}$, $n \notin \mathcal S_{k,\pm}$.
	\end{enumerate}
\end{lem}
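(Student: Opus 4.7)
The plan is to reduce the lemma to Proposition \ref{pr:SRDRsep} applied to $\mathbf E_k$ via the scale-$k$ covering of Lemma \ref{l:constructionsummary}. By iterating Proposition \ref{pr:induction} from scale $k$ up to scale $s-1$, the Rellich function $\mathbf E_s$ is a descendant of $\mathbf E_{k+1}$, and after allowing a shift by $0$ or $n_k\alpha$ in the double-resonant case, the domain $\mathbf I_s$ lies inside one of the scale-$k$ phase intervals $I_{k,i_k}^{(j_{k+1})}$; hence so does $I_{s,i,\pm}^{(j)}$.

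Fix any reference $\theta_* \in I_{s,i,\pm}^{(j)}$ and let $E_* := \mathbf E_s(\theta_*)$. Combining the diameter bounds $\mathrm{diam}(I_{s,i,\pm}^{(j)}) \leq \rho_{s-1}/(24D_0)$ and $\mathrm{diam}(\mathbf E_s(I_{s,i,\pm}^{(j)})) \leq 2\bar\rho_s$ from Lemma \ref{l:constructionsummary}, the uniform bound $|\partial_\theta \mathbf E_k| \leq D_0$ (immediate from \eqref{eq:feynman1} and $\|v'\|_\infty \leq D_0$), and the scale inequalities $\rho_{s-1} \leq \rho_k$ and $\bar\rho_s \ll \rho_k$ from Lemma \ref{l:lenrelns}, a triangle inequality yields uniformly for every $m \in \mathcal S_{k,\pm}$
\begin{equation*}
|\mathbf E_k(\theta_* + m\alpha) - E_*| \leq \tfrac{25}{24}\rho_k + \tfrac{\rho_{s-1}}{24} + 2\bar\rho_s \leq \tfrac{27}{24}\rho_k.
\end{equation*}

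In the simple-resonant case $j_{k+1}=1$, $\theta_* \in I_{k,i_k}^{(1)}$ and $E_*$ lies in the simple-resonant energy interval $J_{k,i_k}^{(1)}$, which by construction of the scale-$k$ covering is separated from every crossing value $E_{n_k'}(\mathbf E_k)$ by at least $\bar\rho_k - 3\rho_k \gg \rho_k$. Combined with the $O(\rho_k)$ bound above, the shifted energy $\mathbf E_k(\theta_* + m\alpha)$ still lies in a simple-resonant interval of the scale-$k$ covering, so $\theta_* + m\alpha$ belongs to some simple-resonant phase interval $I_{k,i'}^{(1)}$ by the two-monotonicity structure of $\mathbf E_k$. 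Applying Proposition \ref{pr:SRDRsep} (simple-resonant conclusion) at the shifted phase, for every $0 < |n-m| \leq 8M_{k+1}^{(1)}$ with $\theta_*+n\alpha \in \mathbf I_k$ we obtain $|\mathbf E_k(\theta_*+n\alpha) - \mathbf E_k(\theta_*+m\alpha)| > 3\rho_k$, whence $|\mathbf E_k(\theta_*+n\alpha) - E_*| > \tfrac{45}{24}\rho_k$. Reverting to any witness phase $\theta_n \in I_{s,i,\pm}^{(j)}$ via the same triangle inequality gives $|\mathbf E_k(\theta_n + n\alpha) - \mathbf E_s(\theta_n)| > \tfrac{25}{24}\rho_k$, so $n \notin \mathcal S_{k,\pm}$.

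The double-resonant case $j_{k+1}=2$ proceeds analogously. Here $\theta_*$ lies in $\mathbf I_{k+1,n_k,\cup}$, i.e.\ within $\bar\rho_k/D_0$ of either $\theta_{k,n_k,-}$ or $\theta_{k,n_k,+}$. For any $m \in \mathcal S_{k,\pm}$, the Morse lower bound of Lemma \ref{l:morsefnbd} together with the $\tfrac{27}{24}\rho_k$ estimate force $\theta_* + m\alpha$ to be close to one of the two preimages $(\mathbf E_k)_\pm^{-1}(E_*)$, and hence to either $\theta_{k,n_k,-}$ or $\theta_{k,n_k,+}$. Define $p \in \{m, m-n_k\}$ to be the shift aligning $\theta_*+p\alpha$ with $\theta_{k,n_k,-}$; then the double-resonant conclusion of Proposition \ref{pr:SRDRsep} applied at $\theta_*+p\alpha$ gives $|\mathbf E_k(\theta_*+n\alpha) - \mathbf E_k(\theta_*+p\alpha)| > 2\rho_{k-1} \gg \rho_k$ for every $n \notin \{p, p+n_k\}$ with $|n-p| \leq 8M_{k+1}^{(2)}$, and the same reversion argument excludes $n$ from $\mathcal S_{k,\pm}$. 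The main obstacle is the bookkeeping of the shift $p \in \{m, m-n_k\}$ in this second case, specifically verifying via Lemma \ref{l:DRensep} and the scale gap $\rho_k \ll \bar\rho_k$ that $\theta_* + m\alpha$ cannot simultaneously be near two distinct preimages from different crossings.
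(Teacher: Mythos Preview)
Your overall strategy---reduce to Proposition~\ref{pr:SRDRsep} applied to $\mathbf E_k$ on the scale-$k$ covering---is the same as the paper's. The gap is in how you place the shifted phase $\theta_*+m\alpha$ in the right scale-$k$ interval so that Proposition~\ref{pr:SRDRsep} actually applies.

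Your opening claim, that $\mathbf I_s$ (and hence $I_{s,i,\pm}^{(j)}$) lies inside $I_{k,i_k}^{(j_{k+1})}$ after a shift by $0$ or $n_k\alpha$, is false in general: every double resonance at an intermediate scale $k'\in\{k+2,\dots,s\}$ introduces its own shift by $n_{k'-1}\alpha$ when building $\mathbf I_{k'}\subset\tilde I_{k'-1,n_{k'-1},\bullet}$, and these accumulate. More importantly, even if that containment held, it locates $\theta_*$, not $\theta_*+m\alpha$, and the latter is what Proposition~\ref{pr:SRDRsep} needs. Your triangle inequality then silently assumes $\theta_*+m\alpha\in\mathbf I_k$: the definition of $m\in\mathcal S_{k,\pm}$ only gives a witness $\theta_m\in I_{s,i,\pm}^{(j)}$ with $\theta_m+m\alpha\in\mathbf I_k$, and nothing prevents $\theta_m+m\alpha$ from sitting at the very edge of $\mathbf I_k$, so that the reference point $\theta_*+m\alpha$ falls outside and $\mathbf E_k(\theta_*+m\alpha)$ is undefined. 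The same issue recurs in your ``reversion'' step, where you need $\theta_n+n\alpha\in\mathbf I_k$ uniformly over $\theta_n\in I_{s,i,\pm}^{(j)}$.

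The paper closes exactly this gap via the inductive Hypothesis~\ref{pr:resorbs} (Resonant orbits): given $m\in\mathcal S_k(\theta_*,E_*)$ with $E_*\in\salt{\mathbf J}(\mathbf E_k)$, it yields $B_{\rho_k/24D_0}(\theta_*+p\alpha)\subset\mathbf I_{k+1}$ (or $\mathbf I_{k+1,n_k,\cup}$ in the double-resonant case) for the appropriate $p\in\{m,m-n_k\}$. Since $|I_{s,i,\pm}^{(j)}|\le\rho_{s-1}/24D_0\le\rho_k/24D_0$, this buffer swallows the entire interval $I_{s,i,\pm}^{(j)}+p\alpha$, placing every $\theta+p\alpha$ inside the correct scale-$k$ phase interval at once. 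Only then does Lemma~\ref{l:constructionsummary} (i.e., Proposition~\ref{pr:SRDRsep} at scale $k$) give the separation $|\mathbf E_k(\theta+n\alpha)-\mathbf E_k(\theta+p\alpha)|>3\rho_k$ uniformly in $\theta$, and the final triangle inequality against $|\mathbf E_k(\theta+p\alpha)-\mathbf E_s(\theta)|\le\tfrac{25}{24}\rho_k+\tfrac{1}{12}\rho_{s-1}$ goes through. Your argument can be repaired by invoking Hypothesis~\ref{pr:resorbs} at this point; without it the boundary cases are not handled.
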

\begin{proof}
	Since $m \in \mathcal S_{k,\pm}$, there is some $\theta_* \in I_{s,i,\pm}^{(j)}$ such that $m \in \mathcal{S}_{k}(\theta_*,\mathbf{E}_s(\theta_*))$; note that $J_{s,i}^{(j)} \subset \salt{\mathbf J}(\mathbf{E}_s) \subset \salt{\mathbf J}(\mathbf{E}_k)$, so $E_* = \mathbf{E}_s(\theta_*)$ satisfies the assumptions of the inductive hypothesis \ref{pr:resorbs}.
	As noted above, 
	we also have that $|I_{s,i,\pm}^{(j)}| \leq \rho_{s-1}/24D_0 \leq \rho_{k}/24D_0$.
	
	We have two cases: 
	\begin{enumerate}
		\item \textit{$j_{k+1} = 1$:} 
		
		Take $p = m$.  By the length bound on $I_{s,i,\pm}^{(j)}$, we have $$I_{s,i,\pm}^{(j)}+p\alpha \subset B_{\rho_{k}/24D_0}(\theta_*+ p\alpha) \subset \mathbf{I}_s.$$  
		
		\item \text{$j_{k+1} = 2$:} By Hypothesis \ref{pr:resorbs} and the length bound on $I_{s,i,\pm}^{(j)}$, for some $p \in \{m,m-n_{k+1}\}$, we have $$I_{s,i,\pm}^{(j)} + p\alpha \subset B_{\rho_{k}/24D_0}(\theta_*+ p\alpha) \subset \mathbf{I}_{k+1,\,\cup} \subset B_{\rho_k/8D_0}(\theta_{k+1,n_{k+1},-}).$$
	\end{enumerate}
	In both cases, by Lemma \ref{l:constructionsummary}, for any $\theta \in I_{s,i,\pm}^{(j)}$ and $n \neq p$ with $|n-p|\leq 8M_{k+1}^{(j_{k+1})}$ such that $\theta+n\alpha \in \mathbf{I}_k$, we have
	\begin{align*}
		|\mathbf E_k(\theta+n\alpha) - \mathbf E_s(\theta)| \geq &|\mathbf E_k(\theta+n\alpha)-\mathbf E_k(\theta + p\alpha)| - |\mathbf E_k(\theta+p\alpha)-\mathbf E_k(\theta_*+p\alpha)| \\&- |\mathbf E_k(\theta_*+p\alpha)-\mathbf E_s(\theta_*)| - |\mathbf E_s(\theta_*)-\mathbf E_s(\theta)| \\
		&\geq 3\rho_k - \frac{1}{24} \rho_{s-1} - \frac{25}{24}\rho_k - \frac{1}{24}\rho_{s-1} \geq \frac{25}{24}\rho_k,
	\end{align*}
	and so $n \notin \mathcal{S}_{k,\pm}$, as claimed.	
\end{proof}

This separation will, in turn, allow us to construct even integers $L_{s+1}$ so that $\pm L_{s+1}/2$ are $s$-regular:

\begin{lem}\label{l:Lconst}
	There is an even integer $L_{s+1}$ with $M_{s+1}^{(j)}/2 \leq L_{s+1} \leq M_{s+1}^{(j)}$ so that, for all $\theta_* \in I_{s,i,\pm}^{(j)}$ for all $E_* \in B_{\rho_s}(\mathbf{E}_s(\theta_*))$, $\pm L_{s+1}/2$ and $\pm(L_{s+1}/2+1)$ are $s$-regular relative to $(\theta_*, E_*)$.  %
\end{lem}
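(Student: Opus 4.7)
The plan is a counting argument within the target window $W := [M_{s+1}^{(j)}/4,\, M_{s+1}^{(j)}/2]$ in which $L_{s+1}/2$ must lie. I will bound the number of integers $m \in W$ that fail to be $s$-regular for some admissible pair $(\theta_*, E_*)$, show this count is strictly less than $|W|$, and conclude the existence of an even $L_{s+1}$ making $\pm L_{s+1}/2$ and $\pm(L_{s+1}/2+1)$ all $s$-regular. By symmetry, the mirror window at $-L_{s+1}/2$ and the left/right dichotomy in the definition of regularity may be handled in parallel.

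First I will reduce to uniform forbidden sets per scale. Because $\mathbf{E}_s$ is Morse on each $J_{s,i}^{(j)}$, Lemma \ref{l:morsefnbd} gives $|I_{s,i,\pm}^{(j)}| \leq \rho_{s-1}/24D_0$; combined with $\rho_s \ll \rho_k$ for $k<s$ (Lemma \ref{l:lenrelns}) and the uniform bound $|\partial_\theta \mathbf{E}_k| \leq D_0$, a short stability check shows that $\mathcal{S}_k(\theta_*, E_*) \subset \mathcal{S}_{k,\pm}$ throughout the admissible parameter range. Hence an integer $m \in W$ fails $s$-left-regularity only if, for some $k \in \{0,\dots,s-1\}$, the interval $[m,\, m+\tfrac34 L_{k+1}]$ meets the uniform set $\mathcal{S}_{k,\pm}$; the corresponding bad set at scale $k$ is therefore $B_k^L := \bigcup_{n \in \mathcal{S}_{k,\pm}}[n - \tfrac34 L_{k+1},\, n]$.

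Next I will apply Lemma \ref{l:Ssep} to count: within any window of size $M_{s+1}^{(j)}$ the set $\mathcal{S}_{k,\pm}$ contains at most $M_{s+1}^{(j)}/(8 M_{k+1}^{(1)}) + 1$ points when $j_{k+1}=1$, and at most $2 M_{s+1}^{(j)}/(8 M_{k+1}^{(2)}) + 2$ points when $j_{k+1}=2$ (since in the double case the points occur in pairs with pair-to-pair separation $\geq 8 M_{k+1}^{(2)}$). Each such point excludes at most $\tfrac34 L_{k+1} + 1 \leq M_{k+1}^{(j_{k+1})}$ integers from $s$-left-regularity. Multiplying and summing over $k$, using $L_{k+1} \leq M_{k+1}^{(j_{k+1})}$ and the geometric growth $M_{k+1}^{(j)} = L^{4^k j}$ from Lemma \ref{l:lenrelns} to absorb the additive boundary corrections into $O(M_s^{(2)}) \ll |W|$, yields a total bad count strictly less than $|W|/2$. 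Adding the analogous estimate for $s$-right-regularity, the mirror window at $-W$, and the parity constraint on $L_{s+1}$ removes only an $O(1)$ factor of candidates, still leaving a positive-density family; any of them furnishes the required $L_{s+1}$.

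The main obstacle is the careful bookkeeping of the per-scale bad density: the elementary bound $\tfrac{3L_{k+1}}{32 M_{k+1}^{(j_{k+1})}}$ at scale $k$ does not individually decay in $k$, so a crude union bound would accumulate linearly in the number of scales. The saving comes from exploiting both the tightness of Lemma \ref{l:Ssep} (the sites do not merely exist, but are quantitatively well-separated into singletons or pairs) and the rapid growth of the length scales from Lemma \ref{l:lenrelns}, with $L = L(\alpha, v)$ chosen large enough initially so that the cumulative boundary corrections are controlled. The stability reduction $\mathcal{S}_k(\theta_*, E_*) \subset \mathcal{S}_{k,\pm}$ in the first step is the linchpin of the argument, as it permits a single count to serve the entire admissible parameter range.
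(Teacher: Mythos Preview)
Your direct counting argument has a genuine gap that you identify but do not actually repair. After the reduction to the uniform sets $\mathcal{S}_{k,\pm}$ (which is correct), the bad set at scale $k$ for left-regularity is $\bigcup_{n\in\mathcal{S}_{k,\pm}}[n-\tfrac34 L_{k+1},n]$. Using Lemma~\ref{l:Ssep}, the points of $\mathcal{S}_{k,\pm}$ in $W$ are separated by at least $8M_{k+1}^{(1)}$ (or come in pairs separated by at least $8M_{k+1}^{(2)}$), and each point excludes roughly $\tfrac34 L_{k+1}$ integers. Since $\tfrac12 M_{k+1}^{(j_{k+1})}\le L_{k+1}\le M_{k+1}^{(j_{k+1})}$, the resulting bad \emph{density} in $W$ at scale $k$ is bounded below by a positive constant (on the order of $3/64$), independently of $k$ and of $L$. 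Summing this constant over the $s$ ancestor scales gives a total bad fraction $\gtrsim s$, which exceeds $1$ for $s$ large. Your claimed ``saving'' via the rapid growth of length scales only controls the additive boundary corrections $O(M_{k+1}^{(j_{k+1})})$; it does nothing for the density terms, which are scale-invariant. The tightness of Lemma~\ref{l:Ssep} is already fully used in obtaining the $3/64$ figure.

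The paper avoids this obstruction by an iterative descent rather than a global count. Starting from $a_s = M_{s+1}^{(j)}/2$, one searches at each step $k=s,s-1,\dots,1$ only within the small window $[a_k-8L_k,a_k]$ (and its mirror) for an $a_{k-1}$ with $\dist(\pm a_{k-1},\mathcal{S}_{k-1,\pm})\ge\tfrac45 L_k$. Because $8L_k\le 8M_k^{(j_k)}$ matches the separation scale of $\mathcal{S}_{k-1,\pm}$, this window can meet at most one point (or one pair) of $\mathcal{S}_{k-1,\pm}$, so the bad region has \emph{absolute} size $<\tfrac95 L_k$, and $4\cdot\tfrac95<8$ guarantees a good choice. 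The crucial point is that the subsequent adjustments total only $8\sum_{i<k}L_i$, which is geometrically dominated by $L_k$ and therefore does not destroy the separation $\tfrac45 L_k$ achieved at step $k$: one still has $\dist(\pm a_0,\mathcal{S}_{k-1,\pm})>\tfrac34 L_k+1$. This nested-window mechanism is what converts the constant per-scale density into a summable absolute count; your argument as written lacks an analogue of it.
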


\begin{proof}
	Let $a_s := M_{s+1}^{(j)}/2$.  We define a nonincreasing sequence of integers $a_s, a_{s-1}, \dots, a_0$ as follows.  For each $1 \leq k \leq s$, we have by the previous lemma that $$\# \left\{ a \in [a_k - 8L_k, a_k] : \dist(a, \mathcal{S}_{k-1,\pm}) < \frac{4}{5}L_k \right\} < \frac{9}{5}L_k,$$
	where we have crucially used here that $5M_{k-1}^{(2)} < M_k^{(1)}/2$.  The same counting argument holds replacing $[a_k - 8L_k, a_k]$ by $[-a_k, -a_k + 8L_k]$; since $4(9/5) < 8$, we can find some $a_{k-1} \in [a_k-8L_k,a_k]$ such that $$\dist(\pm a_{k-1}, \mathcal{S}_{k-1,\pm}) \geq \frac{4}{5}L_k$$
	for all four choices of signs.
	
	We define $L_{s+1} = 2a_0$.  By construction, we have $$M^{(j)}_{s+1} \geq L_{s+1} \geq M^{(j)}_{s+1} - 2\sum_{k=1}^s 8L_k \geq M^{(j)}_{s+1}/2,$$ and $$\dist(\pm L_{s+1}/2, \mathcal S_{k-1,\pm}) \geq \frac45L_k - 8\sum_{i=1}^{k-1} L_i > \frac34L_k+1$$ for all $1\leq k\leq s$ and all four choices of signs.
\end{proof}

\subsubsection{Decay of resonant eigenpairs}

For the fixed interval $J_{s,i}^{(j)}$ we denote by $$\Lambda_{s+1} := [-L_{s+1}/2, L_{s+1}/2]$$ and recall that $$\ell_s = L_s^{5/6} \geq 16L_s^{2/3}|\log\varepsilon|^2 = 16|\log\varepsilon||\log\rho_s|.$$  From our inductive assumptions, we immediately find Green's function decay away from $\{0\}$ (and the double-resonant site $\{n_s\}$, if it exists):

\begin{lem}\label{c:GFdecInd}
	Fix $\theta_*\in \mathbf{I}_s$ and find $\Lambda_{s+1}$ as above.  Then we have the following:
	\begin{enumerate}
		\item If $\theta_*\in I_{s,i,\pm}^{(1)}$, then for $|E - \mathbf{E}_s(\theta_*)| < \frac{3}{2}\rho_s$ and $\theta \in B_{\rho_s/8D_0}(\theta_*)$, $$\log|R_{\theta,E}^{\Lambda_{s+1} \setminus \Lambda_s}(m,n)| \leq -\gamma_s|m-n|, \quad |m-n| \geq \ell_s$$ and $$\|R_{\theta,E}^{\Lambda_{s+1}\setminus \Lambda_s}\| \leq 4 \rho_s^{-1}.$$
		Furthermore, there are intervals $\Lambda_l \supset [-\frac78L_s, -\frac18L_s]$ and $\Lambda_r \supset [\frac18L_s, \frac78L_s]$ such that for $|E - \mathbf{E}_s(\theta_*)| < \frac{3}{2}\rho_s$ and $\theta \in B_{\rho_{s-1}/8D_0}(\theta_*)$,
		\begin{align*}
			\log|R^{\Lambda_{l/r}}_{\theta,E}(m,n)| \leq -\gamma_{s-1}|m-n|, \quad |m-n| \geq \ell_{s-1}.
		\end{align*}
		\item If $\theta_* = \theta_{s,n_s,-}$, then for $|E - \mathbf{E}_s(\theta_*)| < \frac{3}{2}\rho_{s-1}$ and $\theta \in B_{\rho_{s-1}/8D_0}(\theta_*)$, $$\log|R_{\theta,E}^{\Lambda_{s+1} \setminus (\Lambda_s \cup \Lambda_s + n_s)}(m,n)| \leq -\gamma_{s-1}|m-n|, \quad |m-n| \geq \ell_{s-1}$$ and $$\|R_{\theta,E}^{\Lambda_{s+1}\setminus (\Lambda_s \cup \Lambda_s + n_s)}\| \leq 4 \rho_{s-1}^{-1}.$$
		Furthermore, letting $c_l = \min\{0,n_s\}$ and $c_r = \max\{0,n_s\}$, there are intervals $\Lambda_l \supset [c_l - \frac78L_s,c_l-\frac18L_s]$, $\Lambda_c \supset [c_l+\frac18L_s,c_r-\frac18L_s]$, and $\Lambda_r \supset [c_r+\frac18L_s,c_r+\frac78L_s]$ such that, for $|E - \mathbf{E}_s(\theta_*)| < \frac{3}{2}\rho_{s-1}$ and $\theta \in B_{\rho_{s-1}/8D_0}(\theta_*)$,
		\begin{align*}
			\log|R^{\Lambda_{l/c/r}}_{\theta,E}(m,n)| \leq -\gamma_{s-1}|m-n|, \quad |m-n| \geq \ell_{s-1}.
		\end{align*}
	\end{enumerate}
	In either case, the %
	intervals $\Lambda_{s} \cap \Lambda_{l/c/r}$ %
	also satisfy the Green's function decay property for $(\ell_{s-1},\gamma_{s-1})$, 
	as do %
	the intervals $(\Lambda_{s} + n_s) \cap \Lambda_{l/c/r}$.
\end{lem}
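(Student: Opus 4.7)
The lemma is proved by applying the inductive nonresonance hypothesis (Hypothesis \ref{prsub:NR}) at the appropriate scale to the connected components of $\Lambda_{s+1}$ with the resonant cores excised. The key choice between the two cases is the scale at which nonresonance can be invoked: in Case 1 (simple resonance) the hypothesis is applied at scale $s$, while in Case 2 (double resonance) the cores $\Lambda_s$ and $\Lambda_s+n_s$ resonate with each other at scale $s$ by construction of $n_s\in\mathcal{N}_s$, so one must retreat to scale $s-1$. In each case, the argument reduces to verifying (i) nonresonance of the relevant subintervals and (ii) regularity of their endpoints, then invoking Hypothesis \ref{prsub:NR}.

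For Case 1, given $\theta_*\in I_{s,i,\pm}^{(1)}$, the simple-resonance conclusion of Lemma \ref{l:constructionsummary} yields $|\mathbf{E}_s(\theta_*)-\mathbf{E}_s(\theta_*+n\alpha)|>3\rho_s$ for all nonzero $|n|\leq 8M_{s+1}^{(1)}$ with $\theta_*+n\alpha\in\mathbf{I}_s$. A triangle-inequality estimate using $\|\partial_\theta\mathbf{E}_s\|_\infty\leq D_0$ upgrades this to $|\mathbf{E}_s(\theta+n\alpha)-E|>\rho_s$ for $(\theta,E)$ in the specified window, showing each connected component of $\Lambda_{s+1}\setminus\Lambda_s$ is $s$-nonresonant. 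The outer endpoints $\pm L_{s+1}/2$ are $s$-regular by Lemma \ref{l:Lconst}, and the inner endpoints $\pm(L_s/2+1)$ are $s$-left/right-regular by the final regularity clause of Proposition \ref{pr:induction} applied at the previous scale, which applies since $|E-\mathbf{E}_s(\theta_*)|<\rho_s<\rho_{s-1}$. Hypothesis \ref{prsub:NR} at scale $s$ then produces the claimed $(\ell_s,\gamma_s)$-decay and the resolvent bound $4\rho_s^{-1}$. The subintervals $\Lambda_l,\Lambda_r$ providing $(s-1)$-scale decay across the windows $[-\tfrac{7}{8}L_s,-\tfrac{1}{8}L_s]$ and $[\tfrac{1}{8}L_s,\tfrac{7}{8}L_s]$ are extracted via the ``furthermore'' clause of Hypothesis \ref{prsub:NR} at scale $s-1$, applied to a long $(s-1)$-nonresonant superset obtained by excluding only a small neighborhood of the scale-$(s-1)$ resonant core.

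Case 2 proceeds analogously. The double-resonance property in Lemma \ref{l:constructionsummary}, combined with Hypothesis \ref{pr:resorbs} (which translates nonresonance of $\mathbf{E}_s$ into nonresonance of the parent $\mathbf{E}_{s-1}$ via the description of the orbit passing either through $\mathbf{I}_{s,n_s,\vee}$ or $\mathbf{I}_{s,n_s,\wedge}$), shows that each of the three shoulders of $\Lambda_{s+1}\setminus(\Lambda_s\cup(\Lambda_s+n_s))$ is $(s-1)$-nonresonant. The regularity of the six endpoints involved (two outer via Lemma \ref{l:Lconst}, the inner pair at $\pm(L_s/2+1)$, and the additional inner pair $n_s\pm(L_s/2+1)$) follows from the inductive regularity clause applied once at $\theta_*$ and once at the shifted phase $\theta_*+n_s\alpha$, exploiting translation covariance. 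The final claim that $\Lambda_s\cap\Lambda_{l/c/r}$ and $(\Lambda_s+n_s)\cap\Lambda_{l/c/r}$ retain $(\ell_{s-1},\gamma_{s-1})$-decay follows immediately once the $\Lambda_{l/c/r}$ are constructed, since these intersections inherit nonresonance and their endpoints include the already-regular $\pm(L_s/2+1)$ and $n_s\pm(L_s/2+1)$. The main obstacle will be the careful bookkeeping of regularity and nonresonance across the three scales $s-1$, $s$, $s+1$, and in particular the selection in Case 2 of the correct Rellich curve in $\mathcal{E}(\mathbf{E}_{s-2})$ (namely, the one whose $\salt{\mathbf{J}}$-neighborhood contains $E$) when invoking Hypothesis \ref{prsub:NR} at scale $s-1$. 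This selection must be made using Hypothesis \ref{pr:resorbs}(2) to correctly handle the possibility that $E$ lies near the gap between $\salt{\mathbf{J}}(\mathbf{E}_{s,n_s,\vee}^{(2)})$ and $\salt{\mathbf{J}}(\mathbf{E}_{s,n_s,\wedge}^{(2)})$; the uniform local separation $\sim\sigma_{s+1}$ between these two curves established in Proposition \ref{pr:unifmLocalSep} together with Lemma \ref{l:lenrelns}(5) ensures this choice is well-defined.
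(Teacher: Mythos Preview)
Your core approach matches the paper's: both arguments reduce the lemma to the inductive nonresonance hypothesis \ref{prsub:NR} applied at scale $s$ (Case~1) or scale $s-1$ (Case~2), with nonresonance coming from Lemma~\ref{l:constructionsummary} and endpoint regularity from Lemma~\ref{l:Lconst} together with the inductive regularity clause (and its translation-covariance consequence for $n_s\pm(L_s/2+1)$). The paper's proof is terser but follows exactly this outline.

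Your final paragraph, however, is confused about scales. In Case~2 one has $n_s\in\mathcal N_s\neq\emptyset$, which by Hypothesis~\ref{pr:resorbs} forces $j_s=1$; hence Hypothesis~\ref{pr:resorbs}(2) at scale $s$ is not applicable, and the objects $\mathbf I_{s,n_s,\vee}$, $\mathbf E_{s,n_s,\vee}^{(2)}$, $\mathbf E_{s,n_s,\wedge}^{(2)}$ that you invoke do not exist --- these would be the scale-$(s+1)$ double-resonant children currently under construction, so they cannot be used here. Likewise the separation $\sim\sigma_{s+1}$ from Proposition~\ref{pr:unifmLocalSep} and Lemma~\ref{l:lenrelns}(5) pertains to those not-yet-built curves and plays no role in this lemma.

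The ``selection'' concern you raise is real but resolved more simply. By item~4 of Lemma~\ref{l:constructionsummary} (equivalently, item~3 of Proposition~\ref{pr:induction} at scale $s-1$), one has $\overline B_{\rho_{s-1}}(E_{n_s})\subset\salt{\mathbf J}(\mathbf E_{s-1})$, so the actual parent $\mathbf E_{s-1}$ already achieves $\dist(E_*,\salt{\mathbf J}(\mathbf E_{s-1}))=0$ and is a valid choice in Hypothesis~\ref{prsub:NR} at scale $s-1$. The $(s-1)$-nonresonance of the shoulders then follows from the double-resonance conclusion of Lemma~\ref{l:constructionsummary} combined with $|\mathbf E_s-\mathbf E_{s-1}|\le C\delta_{s-1}$ (since $j_s=1$), with no appeal to any scale-$(s+1)$ structure.
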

\begin{proof}
	Noting the $s$-regularity relative to $(\theta_*, E_*)$ for any $E_* \in B_{\rho_s}(\mathbf{E}_s(\theta_*))$, these statements are precisely the nonresonance hypothesis  \ref{prsub:NR} applied to various intervals.  In particular, the maximal connected components of $\Lambda_{s+1} \setminus \Lambda_s$ are inductively $s$-nonresonant with $s$-directionally regular endpoints. 	Similarly, the maximal connected components of  $\Lambda_{s+1} \setminus (\Lambda_{s} \cup \Lambda_{s} + n_s)$ are $s-1$-nonresonant, as are the maximal components of $[-L_s,L_s] \setminus \Lambda_{s-1}$ (in case $\mathbf E_s \in \mathcal E^{(1)}(\mathbf E_{s-1})$) or $[-L_s,L_s] \setminus (\Lambda_{s-1} \cup \Lambda_{s-1} + n_{s-1})$ (in case $\mathbf E_s \in \mathcal E^{(2)}(\mathbf E_{s-1})$).

	The existence of the subintervals $\Lambda_{l/c/r}$ follows from the nonresonance hypothesis after noting that $$\frac18L_s - M_{s-1}^{(2)} - 1 \geq \frac{1}{16} (M_{s-1}^{(2)})^2 - M_{s-1}^{(2)} - 1 \gg 5L_{s-1}$$ for the left and right intervals, and $$c_r-c_l-2(1+M_{s-1}^{(2)}) \geq L_s - 2(1+M_{s-1}^{(2)}) \gg 5L_{s-1}$$ for the center interval in the double-resonant case.

	Finally, the decay estimates on the intervals $\Lambda_s \cap \Lambda_{l/c/r}$ likewise follow from the inductive nonresonance hypothesis and the $s-1$-directional regularity of $L_{s}/2$.  The decay estimates on $(\Lambda_s + n_s) \cap \Lambda_{l/c/r}$ follow from the corresponding regularities of $n_s \pm L_s/2$, which follow from the observation that $H^{\Lambda_s}(\theta_* + n_s\alpha) = H^{\Lambda_s+ n_s}(\theta_*)$.

\end{proof}

Noting that $$\frac{3}{8}L_s > L_s^{5/6} = \ell_s \gg \ell_{s-1},$$ we apply the above decay to conclude stable eigenvector decay for resonant eigenvectors of $H^{\Lambda_{s+1}}$:

\begin{lem} \label{l:asm25}
	Suppose $\theta_* \in \mathbf{I}_s$, and let $E_* = \mathbf{E}_s(\theta_*)$.
	\begin{enumerate}
		\item If $\theta_* \in I_{s,i,\pm}^{(1)}$, let $\rho = \rho_s$ and let $\mathcal{P}$ be the partition of $\Lambda_{s+1}$ into $\Lambda_s$ and its complement.
		\item If $\theta_* = \theta_{s,n_s,-}$, let $\rho = \rho_{s-1}$ and let $\mathcal{P}$ be the partition of $\Lambda_{s+1}$ into $\Lambda_s \cup \Lambda_s+n_s$ and its complement.
	\end{enumerate}
	Then, for $\theta \in B_{\rho/8D_0}(\theta_*)$, for any eigenpair $(E,\psi)$ of $H^{\Lambda_{s+1}}(\theta)$ with $|E - E_*| < \frac32\rho$, the unit eigenvector $\psi$ has $\mathcal{P}$-boundary values no larger than $\delta_s/\varepsilon$:
	$$\|\Gamma_\mathcal{P}^{\Lambda_{s+1}}\psi\| \leq 4 \delta_s.$$
	Furthermore, the resonant unit eigenvector $\psi_s$ of $H^{\Lambda_s}$ corresponding to $\mathbf{E}_s$  (as well as the associated unit eigenvector of $H^{\Lambda_s+n_s}$ in case 2) has stably-small $\mathcal{P}$-boundary values:
	\begin{align*}
		\|\Gamma_\mathcal{P}^{\Lambda_{s+1}}\psi_s\| &\leq 2\delta_s \\
		\|\Gamma_\mathcal{P}^{\Lambda_{s+1}}(\partial_\theta\psi_s)\| &\leq 24D_0\frac{\delta_s}{\rho}
	\end{align*}
\end{lem}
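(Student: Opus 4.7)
Observe that $\Gamma_{\mathcal{P}}^{\Lambda_{s+1}}$ is supported only on the interface sites adjacent to the partition boundaries; for any vector $\phi$, one has $(\Gamma_{\mathcal{P}}^{\Lambda_{s+1}}\phi)(j)=\varepsilon\phi(k)$ whenever $\{j,k\}$ is a nearest-neighbor edge severed by $\mathcal{P}$, and zero at all other sites. Thus $\|\Gamma_{\mathcal{P}}^{\Lambda_{s+1}}\phi\|^2$ equals $\varepsilon^2$ times the sum of squared values of $\phi$ over the $O(1)$ interface sites flanking $\Lambda_s$ (and $\Lambda_s+n_s$ in the DR case), and each claim reduces to pointwise bounds at the sites $\pm L_s/2,\pm(L_s/2+1)$ and their $n_s$-translates. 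The strategy is to propagate Green's function decay into these sites via the Poisson formula (Lemma \ref{l:poisson}) together with the decay estimates furnished by Lemma \ref{c:GFdecInd}.

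For the first claim, let $\psi$ be a resonant unit eigenvector of $H^{\Lambda_{s+1}}$. I apply the Poisson formula on the straddling interval $\Lambda_l$, which by Lemma \ref{c:GFdecInd} contains the site $-L_s/2$, extends distance $\geq 3L_s/8$ to either side, and enjoys $(\ell_{s-1},\gamma_{s-1})$-decay. The Poisson formula expresses $\psi(-L_s/2)$ as an $\varepsilon R^{\Lambda_l}$-weighted sum of $\psi$ at the endpoints of $\Lambda_l$; the exponential decay of $R^{\Lambda_l}$ over that distance, combined with the trivial bound $\|\psi\|_{\ell^2}=1$ on the interior endpoint and Proposition \ref{pr:AL1}'s Anderson-localization bound at the outer endpoint, gives $|\psi(\pm L_s/2)|\leq 2\delta_s/\varepsilon$. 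The neighboring sites $|\psi(\pm(L_s/2+1))|$ are then bounded by Poisson on the outer shoulder $[-L_{s+1}/2,-L_s/2-1]$, using Dirichlet vanishing at $-L_{s+1}/2-1$ to kill one endpoint term and the resolvent norm $\|R^{\Lambda_{s+1}\setminus\Lambda_s}\|\leq 4\rho^{-1}$ from Lemma \ref{c:GFdecInd} to control the other. In the double-resonant case the argument proceeds identically with three straddling intervals $\Lambda_l,\Lambda_c,\Lambda_r$.

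For the second claim, $\psi_s$ extended by zero from $\Lambda_s$ to $\Lambda_{s+1}$ vanishes at $\pm(L_s/2+1)$, so $\Gamma_{\mathcal{P}}^{\Lambda_{s+1}}\psi_s$ has nonzero components only at the outer interface sites, equal to $\varepsilon\psi_s(\pm L_s/2)$. An analogous Poisson argument on $\Lambda_l\cap\Lambda_s$---which by the last sentence of Lemma \ref{c:GFdecInd} inherits $(\ell_{s-1},\gamma_{s-1})$-decay---bounds $|\psi_s(\pm L_s/2)|$, with the Dirichlet condition $\psi_s(-L_s/2-1)=0$ cleanly eliminating one endpoint, yielding the claimed $2\delta_s$ bound. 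For the derivative estimate, differentiating $H^{\Lambda_s}\psi_s=\mathbf{E}_s\psi_s$ and applying the Feynman formula \eqref{eq:feynmanEvec} gives $\partial_\theta\psi_s=-R_\perp^{\Lambda_s}(\mathbf{E}_s)V'\psi_s$. The boundary values $|\partial_\theta\psi_s(\pm L_s/2)|$ are then controlled by combining matrix-element decay of $R_\perp^{\Lambda_s}(\mathbf{E}_s)$ (inherited from $R^{\Lambda_l\cap\Lambda_s}$ via the resolvent identity together with the inductive spectral gap $\|R_\perp^{\Lambda_s}(\mathbf{E}_s)\|\leq\rho^{-1}$) with $\|V'\|\leq D_0$ and the preceding $\psi_s$-bound, producing the required $24D_0\delta_s/\rho$ estimate.

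The principal technical obstacle is the careful accounting of decay rates versus propagation distances: one must verify that the product $\gamma_{s-1}\cdot d$ (for $d$ the Poisson-propagation distance inside the straddling intervals) exceeds $|\log(\delta_s/\varepsilon)|$ with enough margin, so the Poisson-produced decay actually reaches the $\delta_s=\varepsilon^{L_s/8}$ threshold. This relies sharply on the scale-separation $L_s\gg L_{s-1}$, the threshold estimate $\ell_{s-1}\ll L_s$, and the uniform lower bound $\gamma_{s-1}\geq\gamma_0/2$, all of which are furnished by Lemma \ref{l:lenrelns}.
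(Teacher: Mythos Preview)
Your overall strategy is the paper's, but there is one concrete break in the argument. For the outer interface sites $\pm(L_s/2+1)$ you apply Poisson on the full shoulder $[-L_{s+1}/2,-L_s/2-1]$ and invoke only the norm bound $\|R^{\Lambda_{s+1}\setminus\Lambda_s}\|\leq 4\rho_s^{-1}$. The relevant Green's function entry is diagonal (the site $-(L_s/2+1)$ is an endpoint of that shoulder), so no off-diagonal decay is available, and you obtain $|\psi(-(L_s/2+1))|\leq 4\varepsilon\rho_s^{-1}|\psi(-L_s/2)|$. Since $\rho_s=\varepsilon^{L_s^{2/3}}$, the factor $\varepsilon/\rho_s=\varepsilon^{1-L_s^{2/3}}$ diverges, and the bound is useless. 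The fix is simply to observe that the straddling interval $\Lambda_l\supset[-\tfrac78 L_s,-\tfrac18 L_s]$ already contains $-(L_s/2+1)$ at distance $\geq \tfrac38 L_s-1$ from both endpoints, so the same Poisson argument you used for $-L_s/2$ applies verbatim. Relatedly, your appeal to Proposition~\ref{pr:AL1} for endpoint control is circular: that proposition sits under Assumption~\ref{as:SR}, which Lemma~\ref{l:asm25} is being invoked (via Lemma~\ref{l:res}) to verify. All you need at the endpoints of $\Lambda_l$ is the trivial bound $|\psi|\leq 1$ from normalization.

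For the derivative estimate your sketch is on the right track but elides the one nontrivial term. Writing $R_\perp$ via the resolvent identity against the partitioned resolvent $\sprev R=R^{\Lambda_s}_{\sprev{\mathcal P}}$ does not directly give matrix-element decay of $R_\perp$ from boundary to interior: one has $\Gamma_s R_\perp=\Gamma_s\sprev R(I-P_s-\Gamma_{\sprev{\mathcal P}}R_\perp)$, and the rank-one piece $\Gamma_s\sprev R P_s$ requires its own treatment. The paper handles this by introducing the interior cutoff $\salt\chi_s$ onto $\{|m|\leq L_s/4\}$, so that $\Gamma_s\sprev R\salt\chi_s P_s$ inherits decay from $\sprev R$ while $\Gamma_s\sprev R(\chi_s-\salt\chi_s)P_s$ is controlled by $\|(\chi_s-\salt\chi_s)\psi_s\|\lesssim\delta_s$; the same splitting is what makes your ``combining matrix-element decay $\ldots$ with the preceding $\psi_s$-bound'' actually go through. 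Without isolating and estimating the $P_s$ contribution, the argument as written has a gap.
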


\begin{proof}
	The estimate on $\|\Gamma_\mathcal{P}^{\Lambda_{s+1}}\psi\|$ follows in all cases from Lemma \ref{c:GFdecInd} and the Poisson formula \eqref{eq:poisson}; for example, in the case $\theta_* \in I_s^{(1)}$, denoting $\Lambda_l = [a_l,b_l]$, one has %
	\begin{align*}
		|\psi(-L_s/2)| &\leq \varepsilon \left(|R_{\theta,E}^{\Lambda_l}(-L_s/2,a_l)| + |R_{\theta,E}^{\Lambda_l}(-L_s/2,b_l)|\right) \\
		&\leq 2\varepsilon e^{-3\gamma_{s-1}L_s/8} \leq \delta_s.
	\end{align*}
	The other cases follow analogously.
	
	The estimate on $\|\Gamma_\mathcal{P}^{\Lambda_{s+1}}\psi_s\|$ is made similarly, instead applying the decay on the intervals $\Lambda_s \cap \Lambda_{l/c/r}$.
	
	Let $\chi_s$ denote the linear projection onto the coordinates in the interval $\Lambda_s$, and $\salt{\chi}_s$ denote the linear projection onto the interval $$\{m : |m| \leq L_s/4\} \subset \Lambda_s.$$
	Let $\Gamma_s := \Gamma_\mathcal{P}^{\Lambda_{s+1}}\chi_s$, and note that $$\Gamma_s(\partial_\theta\psi_s) = \Gamma_\mathcal{P}^{\Lambda_{s+1}}(\partial_\theta\psi_s).$$  	Recall that $$\partial_\theta\psi_s = -R_\perp(H^{\Lambda_s}, \mathbf{E}_s) V'\psi_s;$$ we break our estimate of $\|\Gamma_s(\partial_\theta\psi_s)\|$ into pieces corresponding to $\salt{\chi}_s$ and its relative complement $(\chi_s - \salt{\chi}_s)$.

	By again applying the Poisson formula on the intervals $\Lambda_s \cap \Lambda_{l/c/r}$ from Lemma \ref{c:GFdecInd}, we have
	\begin{align*}
		\|(\chi_s - \salt{\chi}_s)V'\psi_s\|^2 &\leq \sum_{L_s/4 \leq |m| \leq L_s/2} D_0^2|\psi_s(m)|^2 \\
		&\leq 8D_0^2\sum_{j > L_s/8} \varepsilon^2e^{-2\gamma_{s-1}j} \\
		&\leq (3D_0\delta_s)^2.
	\end{align*}
	Now let $\sprev{\mathcal{P}}$ be the partition of $\Lambda_s$ into $\Lambda_s \cap (\Lambda_l \cup \Lambda_c \cup \Lambda_r)$ and its complement, and recall that $E_* = \mathbf{E}_s(\theta_*)$.  Letting $R_\perp = R_\perp( H^{\Lambda_s}, \mathbf{E}_s), \sprev{R} = R_{\sprev{\mathcal{P}}}^{\Lambda_s}(E_*),$ and $P_s = \psi_s\psi_s^{\top}$, we expand $$\Gamma_sR_\perp\salt{\chi}_s = \Gamma_s\sprev{R}(I - P_s - \Gamma_{\sprev{\mathcal{P}}}^{\Lambda_s}R_\perp)\salt{\chi}_s$$ by the resolvent identity.  Since the intervals in  $\Lambda_s \cap (\Lambda_l \cup \Lambda_c \cup \Lambda_r)$ are $s-1$-nonresonant, we get $$\max\{\|\Gamma_s\sprev{R}\salt{\chi}_s\|, \|\Gamma_s\sprev{R}\Gamma_{\sprev{\mathcal{P}}}^{\Lambda_s}\|\} \leq 2\varepsilon e^{-\gamma_{s-1}L_s/4}.$$  By expanding $P_s = \salt{\chi}_sP_s + (\chi_s - \salt{\chi}_s)P_s$, we likewise estimate
	\begin{align*}
		\|\Gamma_s\sprev{R}P_s\salt{\chi}_{s}\| &\leq \|\Gamma_s \sprev{R}\salt{\chi}_s\|\|P_s\salt{\chi}_s\| + \|\Gamma_s\sprev{R}\|\|(\chi_s - \salt{\chi}_s)P_s\|\|\salt{\chi}_s\| \\
		&\leq 2\varepsilon e^{-\gamma_{s-1}L_s/4}+\frac{8\varepsilon}{\rho_{s-1}}\cdot 3\varepsilon e^{-\gamma_{s-1}L_s/8} \\
		&\leq 6\frac{\delta_s}{\rho_{s-1}}.
	\end{align*}
	It remains to estimate $\|R_\perp\|$, but this follows inductively.  Specifically, by Hypothesis \ref{pr:resorbs}, if $\theta_* = \theta_{s,n_s,-}$, it must be that $j_s = 1$ and so $\|R_\perp\| \leq 4\rho_{s-1}^{-1}$ by \eqref{eq:rperpbd}.  
	If $\theta_* \in I_s^{(1)}$, we have the estimate $\|R_\perp\| \leq 4 \rho_s^{-1}$, either by \eqref{eq:rperpbd} if $j_s=1$, or by \eqref{eq:rperpperpbd} and Proposition \ref{pr:unifmLocalSep} if $j_s=2$, since the separation guaranteed by Proposition \ref{pr:unifmLocalSep} is bigger than $2\rho_s$.  In either case, $\|R_\perp\| \leq 4\rho^{-1}$ by our definition of $\rho$.  Thus, we compute
	\begin{align*}
		\|\Gamma_s(\partial_\theta\psi_s)\| &= \|\Gamma_sR_\perp V'\psi_s\| \\
		&\leq \|\Gamma_sR_\perp\| \|(\chi_s-\salt{\chi}_s)V'\psi_s\| + \|\Gamma_sR_\perp\salt{\chi}_s\|\|V'\psi_s\| \\
		&\leq \|\Gamma_sR_\perp\| \|(\chi_s-\salt{\chi}_s)V'\psi_s\| + D_0\left(\|\Gamma_s\sprev{R}\salt{\chi}_s\| + \|\Gamma_s\sprev{R}P_s\salt{\chi}_s\| + \|\Gamma_s\sprev{R}\Gamma_{\sprev{\mathcal{P}}}^{\Lambda_s}\|\|R_\perp\|\|\salt{\chi}_s\|\right) \\
		&\leq 8\varepsilon\rho^{-1}\cdot 3D_0\delta_s + D_0\left(2\delta_s + 6\delta_s\rho_{s-1}^{-1} + 8\delta_s\rho^{-1}\right) \\
		&\leq 24D_0\frac{\delta_s}{\rho},
	\end{align*}
	as claimed.  %
	The estimate for the unit eigenvector of $H^{\Lambda_s + n_s}$ follows analogously using the intervals $(\Lambda_s + n_s) \cap \Lambda_{l/c/r}$ instead of the intervals $\Lambda_s \cap \Lambda_{l/c/r}$.
\end{proof}

\subsubsection{Constructing the descendants $\mathbf{E}_{s+1}$}

With these estimates, we are ready to invoke the content of Sections 2 and 3:
\begin{lem}\label{l:res}
	We have the following:
	\begin{enumerate}
		\item (Simple resonance): If $\theta_* \in I_{s}^{(1)}$ and $E_* = \mathbf{E}_s(\theta_*)$, then $H^{\Lambda_{s+1}}(\theta_*)$ satisfies Assumption \ref{as:SR} with the following assignments:
		\begin{align*} 
			\snext\Lambda = \Lambda_{s+1}&, \; \Lambda = \Lambda_s, \\
			\rho = \rho_s, \; 
			\delta = \delta_s&, \;
			\gamma = \gamma_s, \; \ell = \ell_s.
		\end{align*}
		\item (Double resonance): If $\theta_{**} = \theta_{s,n_s,-}$ and $E_{**} = \mathbf{E}_s(\theta_*)$, then $H^{\Lambda_{s+1}}(\theta_{**})$ satisfies Assumption \ref{as:DR} with the following assignments:
		\begin{align*} 
			\snext\Lambda = \Lambda_{s+1}, \; {\Lambda}_- &= \Lambda_s, \; {\Lambda}_+ = \Lambda_s + n_s, \\ \sprev{\rho} = \frac23\rho_{s-1},\; \delta = \delta_s&, \; \gamma = \gamma_s, \; \ell=\ell_s, \; \nu = {\nu}_s.
		\end{align*}
	\end{enumerate}
\end{lem}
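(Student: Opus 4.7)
The plan is to verify each item of Assumption \ref{as:SR} in the simple resonant case and each item of Assumption \ref{as:DR} in the double resonant case, drawing on the lemmas already established in this section together with the inductive hypotheses at scale $s$. The parameter constraints \eqref{eq:eps1}--\eqref{eq:gam1} and \eqref{eq:eps2}--\eqref{eq:rho2} follow from Lemma \ref{l:lenrelns}; in particular, $\rho_s = \varepsilon^{L_s^{2/3}} \ll \rho_{s-1}$, $\delta_s = \varepsilon^{L_s/8} \ll \rho_s^3$, and $\gamma_s \geq \tfrac12 \gamma_0 > \log 7$ hold uniformly.

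For the simple resonant case, item 1 of Assumption \ref{as:SR} is immediate: by definition $\mathbf{E}_s$ is a Rellich function of $H^{\Lambda_s}$ with $\mathbf{E}_s(\theta_*) = E_*$. Items 2 and 5 (eigenvector decay and stability of $\psi_s$) are exactly the content of Lemma \ref{l:asm25}. Item 3 (Green's function decay on the shoulders $\Lambda_l, \Lambda_r$ of $\Lambda_{s+1}\setminus \Lambda_s$) follows from Lemma \ref{c:GFdecInd}, since these shoulders are $s$-nonresonant with $s$-regular endpoints by our choice of $L_{s+1}$ (Lemma \ref{l:Lconst}). For item 4, one must show $\mathbf{E}_s(\theta)$ is the unique eigenvalue of $H^{\Lambda_{s+1}}_\mathcal{P} = H^{\Lambda_l} \oplus H^{\Lambda_s} \oplus H^{\Lambda_r}$ in $B_{7\rho_s/4}(E_*)$. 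The block-diagonal structure lets us treat each summand separately: the shoulders contribute no eigenvalue in $B_{3\rho_s/2}(E_*)$ via the resolvent bound $\|R^{\Lambda_{l/r}}\| \leq 4\rho_s^{-1}$ from Lemma \ref{c:GFdecInd}, while $H^{\Lambda_s}$ contributes only $\mathbf{E}_s(\theta)$ because the inductive application of Propositions \ref{pr:AL1}/\ref{pr:AL2} at the previous scale gives isolation by at least $3\rho_{s-1}/2 \gg 7\rho_s/4$ (in the case $\mathbf E_s$ is a double child, the sibling is separated by at least $\nu_{s-1}\sigma_s/(2D_0+\nu_{s-1}) > 2\rho_s$ via Proposition \ref{pr:unifmLocalSep}).

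For the double resonant case, items 1, 2, and 5 follow analogously: the two Rellich pairs $(\mathbf{E}_s|_{\Lambda_s}, \psi_s^-)$ and $(\mathbf{E}_s(\,\cdot + n_s\alpha)|_{\Lambda_s+n_s}, \psi_s^+)$ both satisfy the defining relation at $\theta_{**} = \theta_{s,n_s,-}$ by construction, and Lemma \ref{l:asm25} supplies the decay and stability bounds with $\rho = \rho_{s-1}$. Item 3 is the decay assertion of Lemma \ref{c:GFdecInd} applied to $\Lambda_l, \Lambda_c, \Lambda_r$. Item 4 is verified by the same block-diagonal strategy as in the simple case, using the shoulder/center bound $\|R^{\Lambda_{l/c/r}}\| \leq 4\rho_{s-1}^{-1} \leq 6\rho_{s-1}^{-1}$ and the fact that by Hypothesis \ref{pr:resorbs}(2) only simple-child $\mathbf E_s$ admits double resonances (so the inductive isolation of $\mathbf{E}_s$ from other eigenvalues of $H^{\Lambda_s}$ is governed by $3\rho_{s-1}/2$, which comfortably exceeds $7\sprev\rho/4 = 7\rho_{s-1}/6$); the identical bound holds for $H^{\Lambda_s+n_s}(\theta) = H^{\Lambda_s}(\theta+n_s\alpha)$.

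The genuinely new input is item 6, transversality: this is precisely the final assertion of Lemma \ref{l:constructionsummary}. Since $\theta_{**} \in I_{s,n_s,-}^{(2)}$ lies on the decreasing branch of $\mathbf{E}_s$ and $\theta_{**} + n_s\alpha \in I_{s,n_s,+}^{(2)}$ lies on the increasing branch, the identifications $\mathbf{E}_-(\theta) = \mathbf{E}_s(\theta)$ and $\mathbf{E}_+(\theta) = \mathbf{E}_s(\theta + n_s\alpha)$ yield opposite-signed derivatives on all of $B_{\rho_{s-1}/8D_0}(\theta_{**})$, with $-\partial_\theta \mathbf{E}_-(\theta), \partial_\theta \mathbf{E}_+(\theta) \geq \nu_s$ in accordance with \eqref{eq:DRtransv}. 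The main conceptual obstacle throughout is ensuring the eigenvalue separation (item 4) at the appropriate resonance scale; as noted above, the slight shrinkage $\sprev{\rho} = \tfrac23\rho_{s-1}$ in the double resonance case is precisely what buys the needed margin between the inductive $3\rho_{s-1}/2$ isolation and the required $7\sprev\rho/4$ exclusion radius.
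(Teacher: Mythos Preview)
Your proof is correct and takes essentially the same approach as the paper: both verify the items of Assumptions \ref{as:SR} and \ref{as:DR} by invoking Lemma \ref{l:asm25} for items 2 and 5, Lemma \ref{c:GFdecInd} for item 3, the inductive partial-resolvent bounds from Propositions \ref{pr:AL1}/\ref{pr:AL2} together with Lemma \ref{c:GFdecInd} for item 4, and Lemma \ref{l:constructionsummary} for item 6. Your explicit remark that $\mathcal{N}_s \neq \emptyset$ forces $j_s = 1$ (via Hypothesis \ref{pr:resorbs}) and your observation that the choice $\sprev\rho = \tfrac23\rho_{s-1}$ is precisely what reconciles the inductive separation with the required $7\sprev\rho/4$ exclusion radius are both in the paper as well, though you articulate the latter more clearly.
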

\begin{proof}
	We verify the relevant assumptions in each case:
	\begin{enumerate}
		\item The values $\delta$, $\rho$, and $\gamma$ satisfy \eqref{eq:eps1} and \eqref{eq:gam1} by definition.  Taking ${\psi} = \psi_s$, item 1 of Assumption \ref{as:SR} is immediate by definition of $E_*$. Items 2 and 5 follow immediately from Lemma \ref{l:asm25}, and item 3 follows immediately from Lemma \ref{c:GFdecInd}.  Finally, item 4 follows from the inductive bound $\|R^{\Lambda_s}_{\perp,\theta,E}\| \leq 4\rho_s^{-1}$ (following from either Proposition \ref{pr:AL1} or \ref{pr:AL2}, depending on the value of $j_s$) and the bound $\|R^{\Lambda_{s+1}\setminus\Lambda_s}_{\theta,E}\| \leq 4\rho_s^{-1}$ from Lemma \ref{c:GFdecInd}.
		\item Again, $\delta$, $\sprev{\rho}$, and $\gamma$ satisfy \eqref{eq:eps2}, \eqref{eq:gam2}, and \eqref{eq:rho2} by definition.  As in case 1, item 1 of Assumption \ref{as:DR} is immediate by definition of $E_{**}$, items 2 and 5 follow immediately from Lemma \ref{l:asm25}, and item 3 follows immediately from Lemma \ref{c:GFdecInd}.  Since $n_s \in \mathcal{N}_s \neq \emptyset$, we must have $j_s = 1$, and so inductively $\|R_{\perp,\theta,E}^{\Lambda_s}\| \leq 4\rho_{s-1}^{-1} < 4\sprev{\rho}^{-1}$ for $|E - E_{**}| \leq \rho_{s-1} = \frac32\sprev{\rho}$ (and similarly for $\|R_{\perp,\theta,E}^{\Lambda_s+n_s}\|$) by Proposition \ref{pr:AL1}.  Item 4 follows from this observation and the bound 	$\|R^{\Lambda_{s+1}\setminus(\Lambda_s \cup \Lambda_s+n)}_{\theta,E}\| \leq 4\rho_{s-1}^{-1} < 4\sprev{\rho}^{-1}$ from Lemma \ref{c:GFdecInd}.  Finally, item 6 follows from Lemma \ref{l:constructionsummary}. \qedhere
	\end{enumerate}
\end{proof}

As a consequence of the above, for each phase interval $I_s \in \mathcal{I}$, we have found an interval $\Lambda_{s+1} = \Lambda_{s+1}(I_s)$ such that any resonant Rellich function is Morse with at most one critical point and whose eigenvector is localized.  Specifically, if $I_s = I_{s,i}^{(1)}$ we have found a unique Rellich function $$\mathbf{E}_{s+1,i}^{(1)}: I_{s,i}^{(1)} \to J_{s+1,i}^{(1)}$$ of $H^{\Lambda_{s+1}}$, 
where we define $J_{s+1,i}^{(1)}$ to be the image of $I_{s,i}^{(1)}$ under $\mathbf E_{s+1,i}^{(1)}$.  
Similarly, for each $I_{s,n,-}^{(2)}$, $n \in \mathcal{N}_s$, we have found two Rellich functions $$\mathbf{E}_{s+1,n,\bullet}^{(2)} : I_{s,n,-}^{(2)} \to J_{s+1,n}^{(2)}, \quad \bullet \in \{\vee,\wedge\}$$ of $H^{\Lambda_{s+1}}$,
where we define $J_{s+1,n}^{(2)}$ to be the smallest interval containing the images of $I_{s,n,-}^{(2)}$ under both Rellich functions.  
We note that, since $\varepsilon < \frac13$ and $\ell_s,L_s<M_{s+1}^{(2)}$, we may choose $\snext\sigma=\sigma_{s+1}$ in Theorem \ref{t:DRRelFns}, as this choice satisfies \eqref{eq:sepconst}.
To finish constructing the children of $\mathbf{E}_s$, we must modify the domains of these new Rellich functions so as to again satisfy Assumption \ref{as:C2fn}.

In the simple-resonant case, by Lemma \ref{l:constructionsummary} and Proposition \ref{pr:interscaleapprox1}, the function $\mathbf{E}_{s+1,i}^{(1)}$ satisfies the assumptions of Lemma \ref{l:Asm3StabSR} relative to the function $\mathbf{E}_{s,i}^{(1)}$ with $\delta = \delta_s$ and $\rho = \rho_s$; thus, we may find an interval $\mathbf{I}_{s+1,i}^{(1)} \subset I^{(1)}_{s,i}$ and its image $\mathbf{J}_{s+1,i}^{(1)} := \mathbf{E}_{s+1,i}^{(1)}(\mathbf{I}_{s+1,i}^{(1)})$ with $$|I_{s,i}^{(1)} \setminus \mathbf{I}_{s+1,i}^{(1)}| \leq \frac{32\delta_s}{\bar\nu_s} \ll \frac{\rho_s}{24D_0}.$$
We abuse notation and henceforth denote by $\mathbf{E}_{s+1,i}^{(1)}$ the restriction of the  Rellich function of $H^{\Lambda_{s+1}}$ to the interval $\mathbf{I}_{s+1,i}^{(1)}$.

The double-resonant case requires a bit more care.  If $I_s = I_{s,n_s,-}^{(2)}$, $n_s \in \mathcal{N}_s$, we recall the regions $$J^{(2)}_{s,n_s} := \overline{B}_{\bar{\rho}_{s}}(E_{n_s}), \quad I_{s,n_s,\pm}^{(2)} := \mathbf{E}_{s,\pm}^{-1}(J^{(2)}_{s,n_s}).$$  Note that one has $$B_{\frac{\bar{\rho}_{s}}{D_0}}(\theta_{s,n_s,\pm}) \subset I_{s,n_s,\pm}^{(2)} \subset B_{\frac{\bar{\rho}_s}{\nu_s}}(\theta_{s,n_s,\pm})$$ since $\nu_s \leq |\partial_\theta \mathbf{E}_s| \leq D_0$ on $I_{s,n_s,\pm}^{(2)}$.

Denote by
\begin{align*}
	\tilde{\mathbf{E}}_{s,n_s,\vee}^{(2)}(\theta) &:= \max\{\mathbf{E}_{s,-}(\theta), \mathbf{E}_{s,+}(\theta+n_s\alpha)\}, \quad \theta \in \tilde{I}_{s,n_s,\vee}^{(2)}:=[\inf I_{s,n_s,-}^{(2)}, \sup I_{s,n_s,+}^{(2)}-n_s\alpha], \\
	\tilde{\mathbf{E}}_{s,n_s,\wedge}^{(2)}(\theta) &:= \min\{\mathbf{E}_{s,-}(\theta), \mathbf{E}_{s,+}(\theta+n_s\alpha)\}, \quad \theta \in \tilde{I}_{s,n_s,\wedge}^{(2)}:=[\inf I_{s,n_s,+}^{(2)} - n_s\alpha, \sup I_{s,n_s,-}^{(2)}].
\end{align*}
We denote also $$I_{s,n_s,\cap}^{(2)}:= I_{s,n_s,-}^{(2)} \cap (I_{s,n_s,+}^{(2)}-n_s\alpha) \subset \tilde{I}_{s,n_s,\bullet}^{(2)}, \quad \bullet \in \{\vee,\wedge\}.$$
Since $\theta_{s,n_s,+} = \theta_{s,n_s,-}+n_s\alpha$, we have that $$\overline{B}_{\frac{\bar{\rho}_s}{D_0}}(\theta_{s,n_s,-}) \subset I_{s,n_s,\cap}^{(2)}.$$  By construction, $$| \tilde{\mathbf{E}}_{s,n_s,\bullet}^{(2)}(\tilde{I}_{s,n_s,\bullet}^{(2)})| = \bar{\rho}_s, \quad \bullet \in \{\vee,\wedge\},$$ and thus, by Lemma \ref{l:morsefnbd},
\begin{align*}
	|\{\theta \in \tilde{I}_{s,n_s,\bullet}^{(2)} : \tilde{\mathbf{E}}_{s,n_s,\bullet}^{(2)}(\theta) = \mathbf{E}_s(\theta+p\alpha)\}| \geq \frac{\bar{\rho}_s}{D_0}, \quad \bullet \in \{\vee,\wedge\},\; p \in \{0,n_s\}.
\end{align*}
The functions $\tilde{\mathbf{E}}_{s,n_s,\bullet}^{(2)}$ satisfy Assumption 3; by Lemma \ref{l:constructionsummary} and Theorem \ref{t:DRRelFns}, the Rellich functions $\mathbf{E}_{s+1,n_s,\bullet}^{(2)}$ satisfy the assumptions of Lemma \ref{l:Asm3StabDR} relative to the functions $\tilde{\mathbf{E}}_{s,n_s,\bullet}^{(2)}$ with $\delta = C\delta_s$; thus, we may find an interval $\mathbf{I}_{s+1,n_s,\bullet}^{(2)} \subset \tilde{I}_{s,n_s,\bullet}^{(2)}$ and its image $\mathbf{J}_{s+1,n_s,\bullet}^{(2)} := \mathbf{E}_{s+1,n_s,\bullet}^{(2)}(\mathbf{I}_{s+1,n_s,\bullet}^{(2)})$ with $$|\tilde{I}_{s,n_s,\bullet}^{(2)} \setminus \mathbf{I}_{s+1,n_s,\bullet}^{(2)}| \leq \frac{32C\delta_s}{\bar\nu_s} \ll \frac{\rho_s}{24D_0}.$$  
In particular, we note that $|\mathbf I^{(2)}_{s+1,n_s,\cap}| \geq \frac{\overline\rho_s}{D_0}-\frac{\rho_s}{24D_0} \gg \frac{5\rho_s}{4\nu_s}$, where $\mathbf I^{(2)}_{s+1,n_s,\cap}:=\mathbf I^{(2)}_{s+1,n_s,\vee} \cap \mathbf I^{(2)}_{s+1,n_s,\wedge}$, so we may choose $\eta = \frac{5\rho_s}{4\nu_s}$ and $\rho=\rho_s$ in Theorem \ref{t:DRRelFns}, with $B_\eta(\theta_{s,n_s,-}) \subset \mathbf I^{(2)}_{s+1,n_s,\cap}$.
We abuse notation and henceforth denote by $\mathbf{E}_{s+1,n_s,\bullet}^{(2)}$ the restriction of the corresponding Rellich function of $H^{\Lambda_{s+1}}$ to the interval $\mathbf{I}_{s+1,n_s,\bullet}^{(2)}$, $\bullet \in \{\vee,\wedge\}$.

Denote the collection of all descendants of $\mathbf{E}_s$ constructed above by $\mathcal{E}(\mathbf{E}_s)$.
\begin{lem}\label{l:jrelations}
	Let $\mathbf{E}_{s+1} \in \mathcal{E}(\mathbf{E}_s)$ be constructed as above.
	\begin{enumerate}
		\item $\mathbf{E}_{s+1} : \mathbf{I}_{s+1} \to \mathbf{J}_{s+1}$ satisfies Assumption \ref{as:C2fn} with Morse constants 
			\begin{align*}
				d_{s+1} &= \nu_s/12, \\
				D_{s+1} &= 2D_0(1+D_0\sigma_{s+1}^{-1})
			\end{align*}
			and boundary derivative constant
			\begin{align*}
				\nu_{s+1} &= \min\left\{\frac{d_{s+1}}{14(8M_{s+2}^{(1)})^\tau}, \frac{\bar{\nu}_s}{2}\right\} = \frac{\bar{\nu}_s}{2}
			\end{align*}
		\item $\mathbf{J}_{s+1} \cup \salt{\mathbf{J}}(\mathbf{E}_{s+1}) \subset \salt{\mathbf{J}}(\mathbf{E}_{s})$
		\item If $\mathbf{E}_{s}$ does not attain its maximum at a critical point, then $$\sup \salt{\salt{\mathbf{J}}}(\mathbf{E}_{s}) \leq \sup \bigcup_{\mathbf{E} \in \mathcal{E}(\mathbf{E}_s)}\salt{\salt{\mathbf{J}}}(\mathbf{E}).$$
		Similarly, if $\mathbf{E}_s$ does not attain its minimum at a critical point, then $$\inf \salt{\salt{\mathbf{J}}}(\mathbf{E}_s) \geq \inf \bigcup_{\mathbf{E} \in \mathcal{E}(\mathbf{E}_s)} \salt{\salt{\mathbf J}}(\mathbf{E}).$$
		\item If $\mathbf{E}_{s+1}$ does not attain its supremum $\sup \mathbf{J}_{s+1}$ at a critical point and $\sup \mathbf{J}_{s+1} \neq \sup_{\mathbf{E}' \in \mathcal{E}(\mathbf{E}_s)}\sup \mathbf{J}'$, then there is some $\mathbf{E}_{s+1}' \in \mathcal{E}(\mathbf{E}_{s+1})$ such that $$B_{\frac{11\rho_s}{4}}(\sup \mathbf{J}_{s+1}) \subset \mathbf{J}_{s+1}'.$$  The same statement holds with $\inf$ in place of $\sup$.
	\end{enumerate}
\end{lem}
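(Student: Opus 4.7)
The plan is to verify the four items in turn, leaning on the spectral machinery assembled in Sections 2--4 together with the scale relations collected in Lemma \ref{l:lenrelns}.

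\emph{Items 1--2.} I will split according to resonance type. For the simple-resonant child $\mathbf{E}_{s+1,i}^{(1)}$, Lemma \ref{l:constructionsummary} guarantees $\mathbf{E}_s|_{I_{s,i}^{(1)}}$ already satisfies Assumption \ref{as:C2fn} with constants $d_s, D_s, \bar\nu_s$, and Proposition \ref{pr:interscaleapprox1} yields $\|\partial_\theta^k(\mathbf{E}_{s+1,i}^{(1)} - \mathbf{E}_s)\|_\infty \leq C\delta_s/\rho_s^k$ for $0 \leq k \leq 2$. Lemma \ref{l:Asm3StabSR} then produces the trimmed domain $\mathbf{I}_{s+1,i}^{(1)}$ on which Assumption \ref{as:C2fn} holds for $\mathbf{E}_{s+1,i}^{(1)}$ with constants close to $(d_s, D_s, \bar\nu_s)$; the stated looser values $(d_{s+1}, D_{s+1}, \nu_{s+1})$ then follow from Lemma \ref{l:lenrelns}. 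For the double-resonant children $\mathbf{E}_{s+1,n_s,\bullet}^{(2)}$, Theorem \ref{t:DRRelFns} applies with the assigned parameters (in particular $\snext\sigma = \sigma_{s+1}$, $\eta = 5\rho_s/(4\nu_s)$, $\rho = \rho_s$) and directly furnishes the Morse constants $d_{s+1}$ and $D_{s+1}$. Combining Lemma \ref{l:DRedgeSR} with Proposition \ref{pr:interscaleapprox1} for $\theta$ outside $B_\eta(\theta_{**})$ gives $C^2$-closeness of $\mathbf{E}_{s+1,n_s,\bullet}^{(2)}$ to the cross functions $\tilde{\mathbf{E}}_{s,n_s,\bullet}^{(2)}$, which have $|\partial_\theta \tilde{\mathbf{E}}_{s,n_s,\bullet}^{(2)}| \geq \nu_s$ on the relevant boundary; Lemma \ref{l:Asm3StabDR} then yields $\mathbf{I}_{s+1,n_s,\bullet}^{(2)}$ satisfying Assumption \ref{as:C2fn} with $\tilde\nu \geq \bar\nu_s/2 = \nu_{s+1}$. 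For item 2, the containment $\mathbf{I}_{s+1} \subset I_{s,i}^{(j)}$ together with $C^0$-closeness shows $\mathbf{J}_{s+1} \subset B_{2\delta_s}(J_{s,i}^{(j)})$, and Proposition \ref{pr:C2cover}(4) implies $B_{\rho_s - 2\delta_s}(\mathbf{J}_{s+1}) \subset \salt{\mathbf{J}}(\mathbf{E}_s)$; since $\salt{\mathbf{J}}(\mathbf{E}_{s+1}) \subset B_{\rho_{s+1}}(\mathbf{J}_{s+1})$ and $\rho_{s+1} + 2\delta_s \ll \rho_s$, the inclusion $\salt{\mathbf{J}}(\mathbf{E}_{s+1}) \subset \salt{\mathbf{J}}(\mathbf{E}_s)$ follows.

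\emph{Items 3--4.} For item 3, assume $\mathbf{E}_s$ does not attain its maximum at a critical point, so $\sup \salt{\salt{\mathbf{J}}}(\mathbf{E}_s) = \sup \mathbf{J}_s - \frac{5}{4}\rho_{s-1}$; by Lemma \ref{l:constructionsummary}, the rightmost cover interval $J_{s,i_*}^{(j_*)}$ contains $\sup \salt{\salt{\mathbf{J}}}(\mathbf{E}_s)$, so the corresponding child $\mathbf{E}'_{s+1}$ has $\sup \mathbf{J}'_{s+1} \geq \sup \salt{\salt{\mathbf{J}}}(\mathbf{E}_s) - O(\delta_s/\bar\nu_s)$ (accounting for both $C^0$-closeness and domain trimming). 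In either case for $\mathbf{E}'_{s+1}$'s max, $\sup \salt{\salt{\mathbf{J}}}(\mathbf{E}'_{s+1}) \geq \sup \mathbf{J}'_{s+1} - \frac{5}{4}\rho_s$, and since $\rho_s \ll \rho_{s-1}$ by Lemma \ref{l:lenrelns}, one obtains $\sup \salt{\salt{\mathbf{J}}}(\mathbf{E}'_{s+1}) \geq \sup \salt{\salt{\mathbf{J}}}(\mathbf{E}_s)$. The infimum case is symmetric. For item 4, suppose $\mathbf{E}_{s+1}$ comes from cover interval $J_{s,i}^{(j)}$ which is not rightmost; Proposition \ref{pr:C2cover}(3) gives a neighbor $J_{s,i'}^{(j')}$ overlapping on exactly $3\rho_s$ to the right, and the corresponding sibling $\mathbf{E}'_{s+1}$ has $\mathbf{J}'_{s+1}$ within $O(\delta_s + \delta_s/\bar\nu_s)$ of $J_{s,i'}^{(j')}$. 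Since $\sup \mathbf{J}_{s+1}$ is within $O(\delta_s)$ of $\sup J_{s,i}^{(j)}$ and $\bar\rho_s \geq 23\rho_s/4$ by Lemma \ref{l:lenrelns}, the ball $B_{11\rho_s/4}(\sup \mathbf{J}_{s+1})$ fits inside $J_{s,i'}^{(j')}$ --- the left endpoint is covered by the $3\rho_s$-overlap with room $\rho_s/4$ to absorb the $O(\delta_s)$ error, and the right endpoint by the interval length $\geq \bar\rho_s \gg 11\rho_s/4 + 3\rho_s$ --- hence inside $\mathbf{J}'_{s+1}$ after absorbing trimming errors.

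\emph{Main obstacle.} The chief technical work throughout is bookkeeping two distinct sources of error, the $O(\delta_s)$ from $C^0$-closeness of Rellich functions to their parent cover-interval structure and the $O(\delta_s/\bar\nu_s)$ from domain trimming in Lemmas \ref{l:Asm3StabSR} and \ref{l:Asm3StabDR}, and verifying at each step that these are absorbed by the $\rho_s$- or $\rho_{s-1}$-scale buffers designed into $\salt{\mathbf{J}}$ and $\salt{\salt{\mathbf{J}}}$. Each such verification reduces to a quantitative scale inequality already encoded in Lemma \ref{l:lenrelns}, so no genuinely new analytic ideas are required beyond careful accounting.
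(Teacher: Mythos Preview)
Your approach to items 1, 2, and 4 is essentially the paper's approach with more explicit unpacking of the ingredients; the paper compresses item 1 into ``follows from Lemma \ref{l:res}'' and then records the key boundary estimates $|\sup \mathbf{J}_{s+1} - \sup J_{s,i}^{(j)}| \leq C\delta_s$ at non-critical extrema, from which items 2--4 are read off.

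There is, however, a genuine gap in your argument for item 3. You observe only that the rightmost cover interval $J_{s,i_*}^{(j_*)}$ \emph{contains} $\sup\salt{\salt{\mathbf{J}}}(\mathbf{E}_s)$, which yields $\sup J_{s,i_*}^{(j_*)} \geq \sup\salt{\salt{\mathbf{J}}}(\mathbf{E}_s)$ and hence $\sup \mathbf{J}'_{s+1} \geq \sup\salt{\salt{\mathbf{J}}}(\mathbf{E}_s) - O(\delta_s)$. Passing to $\salt{\salt{\mathbf{J}}}(\mathbf{E}'_{s+1})$ then costs you $\tfrac54\rho_s$, leaving $\sup\salt{\salt{\mathbf{J}}}(\mathbf{E}'_{s+1}) \geq \sup\salt{\salt{\mathbf{J}}}(\mathbf{E}_s) - \tfrac54\rho_s - O(\delta_s)$, which is \emph{strictly below} the target. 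The invocation ``since $\rho_s \ll \rho_{s-1}$'' does not close this deficit: both sides of your inequality are anchored at $\sup\salt{\salt{\mathbf{J}}}(\mathbf{E}_s)$, and you are short by a positive amount of order $\rho_s$.

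The missing observation is that the cover actually reaches much higher than $\sup\salt{\salt{\mathbf{J}}}(\mathbf{E}_s)$. By construction (Proposition \ref{pr:C2cover}) the intervals $J_{s,i}^{(j)}$ cover $\salt{J}$ except possibly for a tail of length at most $2\bar\rho_s$ at each non-critical boundary, so $\sup_{i,j}\sup J_{s,i}^{(j)} \geq \sup\salt{\mathbf{J}}(\mathbf{E}_s) - (2\bar\rho_s + \rho_s)$. In the non-critical-maximum case one has, by the very definition of $\salt{\mathbf{J}}$ versus $\salt{\salt{\mathbf{J}}}$, the exact gap $\sup\salt{\mathbf{J}}(\mathbf{E}_s) - \sup\salt{\salt{\mathbf{J}}}(\mathbf{E}_s) = \tfrac18\rho_{s-1}$. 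This $\tfrac18\rho_{s-1}$ is the buffer that absorbs the $\tfrac54\rho_s + 2\bar\rho_s + \rho_s + C\delta_s \ll \rho_{s-1}/24$ loss. The paper's proof uses precisely this: it is the third displayed ingredient ``$\sup\salt{\mathbf{J}}(\mathbf{E}_s) - \sup\salt{\salt{\mathbf{J}}}(\mathbf{E}_s) = \rho_{s-1}/8$'' that makes the arithmetic close, and your sketch omits it.
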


\begin{proof}
	Item 1 follows immediately from Lemma \ref{l:res}; from the same result, we likewise have 
	\begin{equation*}
		|\sup J_{s+1,i}^{(j)} - \sup J_{s,i}^{(j)}|, |\inf J_{s+1,i}^{(j)} - \inf J_{s,i}^{(j)}| \leq C\delta_s \ll \rho_s/2
	\end{equation*}
	for $j \in \{1,2\}$ and all $i$ for which these intervals are defined. The process of ``trimming'' the function by Lemma \ref{l:Asm3StabDR} or \ref{l:Asm3StabSR} does not change the supremum or infimum; thus, we still have 
	\begin{align*}
		|\sup \mathbf J_{s+1,i}^{(1)} - \sup J_{s,i}^{(1)}|, |\inf \mathbf J_{s+1,i}^{(1)} - \inf J_{s,i}^{(1)}| &\leq C\delta_s \ll \rho_s/2, \\
		|\sup \mathbf J_{s+1,n_s,\vee}^{(2)} - \sup J_{s,n_s}^{(2)}|, |\inf \mathbf J_{s+1,n_s,\wedge}^{(2)} - \inf J_{s,n_s}^{(2)}| &\leq C\delta_s \ll \rho_s/2.
	\end{align*}
	In particular, since each $\mathbf E_{s+1,n,\vee}^{(2)}$ attains its infimum at a critical point, and each $\mathbf E_{s+1,n,\wedge}^{(2)}$ attains it supremum at a critical point, this estimate applies at every non-critical value extremum of any $\mathbf E_{s+1} \in \mathcal E(\mathbf E_s)$.
	
	The next three items follow. Specifically, by construction, $$\sup \salt{\mathbf J}(\mathbf E_s) - \sup J_{s+1,i}^{(j)} \geq \rho_s;$$ and by definition, $$\sup \salt{\mathbf J}(\mathbf E_{s+1}) - \sup \mathbf J_{s+1} \leq \rho_{s+1}.$$ These estimates, combined with the analogous estimates with $\inf$ and with the estimates established above, give item 2. For item 3, we use the fact that by construction, $$\sup \salt{\mathbf J}(\mathbf E_s) - \sup_{i,j}\sup J_{s,i}^{(j)} \leq 2\bar{\rho}_s+\rho_s \ll \rho_{s-1}/24,$$ and the fact that, by definition, for each $\mathbf E \in \mathcal E(\mathbf E_s)$, $\mathbf{E} : \mathbf{I} \to \mathbf{J}$, we have $$\sup \mathbf{J} - \sup \salt{\salt{\mathbf{J}}}(\mathbf E) \leq \frac54\rho_s \ll \rho_{s-1}/24.$$ Since, by definition, $\sup\salt{\mathbf J}(\mathbf E_s) - \sup\salt{\salt{\mathbf J}}(\mathbf E_s) = \rho_{s-1}/8$, and the analogous estimates with $\inf$ hold, item 3 follows from the above estimates. Item 4 follows from the above estimates and the fact that, by construction, each pair of adjacent intervals $J_{s,i}^{(j)}$ overlap by $3\rho_s$. 
\end{proof}

Finally, we verify the directional regularity of our new endpoints:
\begin{lem}\label{l:dirreg}
	For all $\theta_* \in \mathbf{I}_{s+1}$ for all $E_* \in B_{\rho_s}(\mathbf{E}_{s+1}(\theta_*))$, $L_{s+1}/2+1$ is $s+1$-left-regular and $-(L_{s+1}/2+1)$ is $s+1$-right-regular relative to $(\theta_*,E_*)$.
\end{lem}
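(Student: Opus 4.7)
The plan is to verify the $(s+1)$-left-regularity of $L_{s+1}/2+1$ by treating the ancestors $\mathbf{E}_i$ with $0 \leq i \leq s-1$ (inherited from $\mathbf{E}_s$) separately from the new ancestor $\mathbf{E}_s$; the right-regularity statement for $-(L_{s+1}/2+1)$ is entirely symmetric. For the inherited ancestors I will invoke Lemma \ref{l:Lconst}, and for the new ancestor I will apply Lemma \ref{l:constructionsummary} directly.

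For $0 \leq i \leq s-1$, Lemma \ref{l:Lconst} already gives the desired $i$-nonresonance of the interval $[L_{s+1}/2+1, L_{s+1}/2+1 + \frac{3}{4}L_{i+1}]$ for pairs $(\theta_*, E_*')$ with $\theta_* \in I_{s,i',\pm}^{(j)}$ and $E_*' \in B_{\rho_s}(\mathbf{E}_s(\theta_*))$. The bulk of the work is to transfer this to $E_* \in B_{\rho_s}(\mathbf{E}_{s+1}(\theta_*))$ using the inter-scale approximations. In the simple-resonant case, Proposition \ref{pr:interscaleapprox1} yields $|\mathbf{E}_{s+1}(\theta_*) - \mathbf{E}_s(\theta_*)| \leq C\delta_s$, so the discrepancy between the admissible energy windows is of order $\delta_s$, which is much smaller than the slack $\frac{1}{24}\rho_{k-1}$ built into the defining set $\mathcal{S}_{k-1,\pm}$ for each $k \leq s$; the $i$-nonresonance is thus preserved. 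In the double-resonant case, Proposition \ref{pr:AL2} gives $|\mathbf{E}_{s+1}(\theta_*) - E_{**}| \leq \rho_{s-1}/4$, and the containment $\mathbf{I}_{s+1,n_s,\bullet}^{(2)} \subset B_{\rho_{s-1}/8D_0}(\theta_{s,n_s,-})$ established in the construction, combined with $|\partial_\theta \mathbf{E}_s| \leq D_0$, controls $|\mathbf{E}_s(\theta_*) - E_{**}|$ by $D_0 \cdot \bar\rho_s/\nu_s$. All of these discrepancies are much smaller than $\rho_{k-1}$ by the scale relations in Lemma \ref{l:lenrelns}.

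For the new ancestor $\mathbf{E}_s$, I must show that $[L_{s+1}/2+1, L_{s+1}/2+1+\frac{3}{4}L_{s+1}]$ is $s$-nonresonant relative to $(\theta_*, E_*)$. Any $m$ in this interval satisfies $L_{s+1}/2+1 \leq m \leq 2L_{s+1} \leq 2M_{s+1}^{(j_{s+1})}$; moreover, by Lemma \ref{l:lenrelns}, $m \geq M_{s+1}^{(j_{s+1})}/4 \gg 8M_{s+1}^{(1)} \geq |n_s|$, so in the double-resonant case $m \notin \{0, n_s\}$. The resonance-avoidance clauses of Lemma \ref{l:constructionsummary} then yield either $\theta_* + m\alpha \notin \mathbf{I}_s$ or the orbit separation $|\mathbf{E}_s(\theta_* + m\alpha) - \mathbf{E}_s(\theta_*)| > 3\rho_s$ in the simple-resonant case (respectively $> 2\rho_{s-1}$ in the double-resonant case). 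Subtracting the bound $|E_* - \mathbf{E}_s(\theta_*)| \ll \rho_{s-1}$ established in the previous paragraph gives $|\mathbf{E}_s(\theta_* + m\alpha) - E_*| \geq \rho_s$, the desired $s$-nonresonance.

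The principal technical obstacle is the double-resonant case, in which $\mathbf{E}_{s+1}(\theta_*)$ may differ from $\mathbf{E}_s(\theta_*)$ by as much as $\rho_{s-1}/4 + 4\delta_s$, and in which the trimmed domain $\mathbf{I}_{s+1,n_s,\bullet}^{(2)}$ can extend slightly beyond $I_{s,n_s,-}^{(2)}$ so that $\theta_*$ may lie in a neighboring resonance region of $\mathbf{E}_s$. The critical saving grace is that both $\mathbf{E}_{s+1}(\theta_*)$ and $\mathbf{E}_s(\theta_*)$ remain within $O(\bar\rho_s + \rho_{s-1}/4)$ of the common crossing value $E_{**}$, and the separation hierarchy in Lemma \ref{l:lenrelns} ensures that these combined errors are strictly dominated by the slack factor $\frac{25}{24}$ built into the definition of $\mathcal{S}_k$. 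The arithmetic is delicate and must be tracked carefully, relying throughout on the fact that $\rho_s, \bar\rho_s \ll \rho_{s-1}$ by a polynomial margin.
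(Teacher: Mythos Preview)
Your structure matches the paper's: reduce $(s+1)$-left-regularity to $s$-regularity (inherited from Lemma~\ref{l:Lconst}) plus $s$-nonresonance of the new interval $[L_{s+1}/2+1,\,L_{s+1}/2+1+\tfrac34 L_{s+1}]$. In the simple-resonant case your argument is correct and coincides with the paper's.

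In the double-resonant case, however, your transfer of $s$-regularity from Lemma~\ref{l:Lconst} breaks quantitatively. The construction in Lemma~\ref{l:Lconst} avoids the enlarged sets $\mathcal S_{k-1,\pm}$, which are defined with threshold $\tfrac{25}{24}\rho_{k-1}$; the usable slack for moving the energy is therefore only $\tfrac{1}{24}\rho_{k-1}$. You bound $|\mathbf E_{s+1}(\theta_*)-E_{**}|$ by $\rho_{s-1}/4$ via Proposition~\ref{pr:AL2}, but $\rho_{s-1}/4$ exceeds the slack $\tfrac{1}{24}\rho_{s-1}$ at the top ancestor level $i=s-1$, so your claim that ``all of these discrepancies are much smaller than $\rho_{k-1}$'' is false there. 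The right estimate is the much tighter $|\mathbf E_{s+1,\bullet}(\theta_*)-\mathbf E_\bullet(\theta_*)|\leq 4\delta_s$ from Proposition~\ref{pr:drclose} (or simply the containment $\mathbf J_{s+1}\subset J_{s,n_s}^{(2)}+O(\delta_s)$), which places $E_*$ within $O(\bar\rho_s)$ of energies already covered by the $\mathcal S_{k-1,\pm}$ construction.

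Your $s$-nonresonance argument for the new interval also diverges from the paper. You invoke the energy-separation clause of Lemma~\ref{l:constructionsummary}, $|\mathbf E_s(\theta_*)-\mathbf E_s(\theta_*+m\alpha)|>2\rho_{s-1}$; but this requires $\theta_*\in\mathbf I_s$, and since $\mathbf I_{s+1,n_s,\bullet}^{(2)}$ is built from the union $\tilde I_{s,n_s,\bullet}^{(2)}=I_{s,n_s,-}^{(2)}\cup(I_{s,n_s,+}^{(2)}-n_s\alpha)$, the point $\theta_*$ may lie outside $\mathbf I_{s,-}$ and hence outside $\mathbf I_s$, leaving $\mathbf E_s(\theta_*)$ undefined. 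The paper bypasses this by a pure phase argument: $\mathbf I_{s+1,n_s,\cup}^{(2)}$ has length $\leq\rho_{s-1}/12D_0$, so $\theta_*$ is that close to $\mathbf I_{s,-}$ and $\theta_*+n_s\alpha$ that close to $\mathbf I_{s,+}$; the Diophantine condition then forces $\theta_*+m\alpha\notin\mathbf I_s$ for all $m$ in the relevant range, giving $s$-nonresonance without ever evaluating $\mathbf E_s$ at $\theta_*$.
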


\begin{proof}
	We will show the point $L_{s+1}/2 + 1$ is $s+1$-left-regular; the other case is analogous.  Since $L_{s+1}/2+1$ is $s$-regular by Lemma \ref{l:Lconst}, it suffices to check that $[L_{s+1}/2+1, 5L_{s+1}/4+1]$ is $s$-nonresonant.  	
	
	In the case $j_{s+1} = 2$, we have $\theta_* \in \mathbf I^{(2)}_{s+1,n_s,\cup}$, an interval of length at most $\rho_{s-1}/12D_0$; thus $\dist(\theta_*,\mathbf I_{s,-}) \leq \rho_{s-1}/12D_0$, and $\dist(\theta_*+n_s\alpha,\mathbf I_{s,+}) \leq \rho_{s-1}/12D_0$. Then, by the Diophantine condition, $\dist(\theta_*+m\alpha, \mathbf I_s) > \rho_{s-1}/12D_0$ for all $m \in [L_{s+1}/2+1, 5L_{s+1}/4+1]$, which implies that interval is  $s$-nonresonant.
	
	Consider instead the case $j_{s+1} = 1$. 
	We have $$|\mathbf{E}_{s+1}(\theta_*) - \mathbf{E}_{s}(\theta_*)| \leq C\delta_{s} \ll \rho_{s}/2,$$ and for any $m \in [L_{s+1}/2+1,5L_{s+1}/4+1]$, $$0 < |m| \leq (5L_{s+1}/4+1)< 8M_{s+1}^{(1)};$$ by Lemma \ref{l:constructionsummary}, it follows that, for any such $m$, if $\theta_*+m\alpha \in \mathbf I_s$,
	\begin{align*}
		|\mathbf{E}_{s}(\theta_*+ m\alpha) - E_*| &\geq |\mathbf{E}_{s}(\theta_*) - \mathbf{E}_{s}(\theta_*+ m\alpha)| - |E_* - \mathbf{E}_{s}(\theta_*)| \\
		 &\geq |\mathbf{E}_{s}(\theta_*) - \mathbf{E}_{s}(\theta_*+ m\alpha)| - 2\rho_{s} > \rho_{s}. \qedhere
	\end{align*}
	
\end{proof}

\subsection{Resonant orbits}

We note that double resonance of any descendant $\mathbf{E}_{s+1}$ can  happen only at lengths greater than $L_{s+1}$: 

\begin{lem}\label{l:centerdist}
	If $j_{s+1} = 1$, then
	\begin{align*}
		\min\{|n| : n \in \mathcal{N}_{s+1}\} > 8M_{s+1}^{(1)} \geq L_{s+1}.
	\end{align*}
	If $j_{s+1} = 2$, then $\mathcal{N}_{s+1} = \emptyset$.
\end{lem}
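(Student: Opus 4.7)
The plan is to handle the two cases separately, using the characterization from Lemmas~\ref{l:Naf} and \ref{l:invdiffuniq}: $n \in \mathcal{N}_{s+1}$ iff there exists $\theta \in \mathbf{I}_{s+1,-}$ with $\theta + n\alpha \in \mathbf{I}_{s+1,+}$ satisfying $\mathbf{E}_{s+1}(\theta) = \mathbf{E}_{s+1}(\theta + n\alpha)$.

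For the simple-resonant case $j_{s+1} = 1$, I would argue by contradiction: if $n \in \mathcal{N}_{s+1}$ with $0 < |n| \leq 8M_{s+1}^{(1)}$, then the associated $\theta$ and $\theta + n\alpha$ both lie in $\mathbf{I}_{s+1} \subset I_{s,i}^{(1)}$. Lemma~\ref{l:res}(1) places $H^{\Lambda_{s+1}}$ in the simple-resonance setup of Section~\ref{sec:SR} with base point any $\eta \in \mathbf{I}_{s+1}$, so Proposition~\ref{pr:interscaleapprox1} gives the uniform bound $|\mathbf{E}_{s+1}(\eta) - \mathbf{E}_s(\eta)| \leq C\delta_s$ on $\mathbf{I}_{s+1}$. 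The equality $\mathbf{E}_{s+1}(\theta) = \mathbf{E}_{s+1}(\theta + n\alpha)$ then forces $|\mathbf{E}_s(\theta) - \mathbf{E}_s(\theta + n\alpha)| \leq 2C\delta_s$, contradicting the simple-resonance separation $> 3\rho_s$ from Lemma~\ref{l:constructionsummary} (using $\delta_s \ll \rho_s$ by Lemma~\ref{l:lenrelns}). Hence $\min\{|n|:n\in\mathcal{N}_{s+1}\} > 8M_{s+1}^{(1)} \geq L_{s+1}$, the latter because $L_{s+1} \leq M_{s+1}^{(1)}$ by construction in the simple-resonant case.

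For the double-resonant case $j_{s+1} = 2$, the plan is to show that $\mathbf{I}_{s+1}$ is small enough to force emptiness of $\mathcal{N}_{s+1}$ via the Diophantine condition. Since $\mathbf{I}_{s+1} \subset \tilde{I}_{s,n_s,\bullet}^{(2)}$ and the DR conclusion of Lemma~\ref{l:constructionsummary} gives $|\partial_\theta \mathbf{E}_s| \geq \nu_s$ throughout $I_{s,n_s,\pm}^{(2)}$, we obtain $|I_{s,n_s,\pm}^{(2)}| \leq |J_{s,n_s}^{(2)}|/\nu_s \leq 2\bar\rho_s/\nu_s$ and therefore $|\mathbf{I}_{s+1}| \leq 2\bar\rho_s/\nu_s$. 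Any $n \in \mathcal{N}_{s+1}$ forces $\|n\alpha\|_{\mathbb{T}} \leq 2\bar\rho_s/\nu_s$, and the Diophantine condition gives $|n| \geq (C_\alpha \nu_s/(2\bar\rho_s))^{1/\tau}$. The final step is to verify that this lower bound exceeds $8M_{s+2}^{(1)}$, which would preclude membership in $\mathcal{N}_{s+1}$.

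The main obstacle will be this last parameter comparison. For $s \geq 1$, the inductive relations yield $\nu_s$ comparable to $\bar\rho_{s-1}$ and $\bar\rho_s$ comparable to $\rho_{s-1}^2$, so $\nu_s/\bar\rho_s$ is super-polynomially large in $L_{s-1}$ (via a negative power of $\varepsilon$) and dominates $M_{s+2}^{(1)}$ with comfortable margin. The base scale $s = 0$ is tighter, since the explicit initial values give only $\nu_0/\bar\rho_0 \sim L^{3\tau}$ while $M_2^{(1)} = L^4$; here I plan to supplement the Diophantine estimate with the pointwise approximation $\mathbf{E}_1 \approx \mathbf{E}_\bullet$ from Proposition~\ref{pr:drclose}, which reduces the hypothesis $\mathbf{E}_1(\theta) = \mathbf{E}_1(\theta + n\alpha)$ to $|v(\theta) - v(\theta + (n+n_0)\alpha)| = O(\delta_0)$. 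The DR separation of Lemma~\ref{l:constructionsummary} applied to the shift $n + n_0$ handles $|n+n_0| \leq 8M_1^{(2)}$, while for larger $|n+n_0|$ the Morse growth of $v$ along nearby orbits combined with the crossing-value separation of Lemma~\ref{l:DRensep} rules out the residual window of $n$.
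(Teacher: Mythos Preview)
For $j_{s+1}=1$, your argument is correct and is more direct than the paper's. Where you immediately contradict the simple-resonance separation $|\mathbf{E}_s(\theta)-\mathbf{E}_s(\theta+n\alpha)|>3\rho_s$ of Lemma~\ref{l:constructionsummary} against the $C^0$ bound $|\mathbf{E}_{s+1}-\mathbf{E}_s|\leq C\delta_s$, the paper instead introduces $g(\theta)=\mathbf{E}_s(\theta)-\mathbf{E}_s(\theta+n\alpha)$, uses a derivative lower bound and the intermediate value theorem to locate a nearby zero $\theta_{s,n,-}$ of $g$ (hence $n\in\mathcal{N}_s$), and then derives a contradiction from the fact that $\theta_{s,n,-}$ sits in a scale-$s$ double-resonant interval while the nearby point $\theta_{s+1,n,-}$ sits in a simple-resonant one. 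Your route bypasses the IVT step entirely and is cleaner.

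For $j_{s+1}=2$, the paper's proof is precisely the Diophantine argument you outline first (single interval $\mathbf{I}_{s+1}$ of length $\lesssim\rho_{s-1}$, so $\|n\alpha\|_{\mathbb T}$ cannot be that small for $0<|n|\le 8M_{s+2}^{(1)}$); it does not single out $s=0$. Your attempt to patch $s=0$ does not close, however. The reduction via Proposition~\ref{pr:drclose} to $|v(\theta)-v(\theta+(n+n_0)\alpha)|=O(\delta_0)$ is fine, and the scale-$0$ double-resonance separation from Lemma~\ref{l:constructionsummary} then handles $|n+n_0|\le 8M_1^{(2)}=8L^2$. But in the residual window your Diophantine bound leaves open (roughly $L^3\lesssim|n|\le 8L^4$), the appeal to Lemma~\ref{l:DRensep} does not produce a contradiction: applied with $L^{(1)}\sim L^4$ it yields only $|E_{n+n_0}-E_{n_0}|\gtrsim \nu_0/(L^4)^\tau\sim L^{-5\tau}$, while the upper bound available from the geometry is $|E_{n+n_0}-E_{n_0}|\lesssim\bar\rho_0\sim L^{-4\tau}$, and these are compatible rather than contradictory. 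The vague ``Morse growth'' remark does not rescue this, since $\theta$ and $\theta+(n+n_0)\alpha$ lie on \emph{opposite} monotonicity branches of $v$, so Lemma~\ref{l:morsefnbd} does not apply across the pair. The final sentence of your plan is therefore a genuine gap that needs a different idea.
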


\begin{proof}
	In the case $j_{s+1} = 2$, the interval $\mathbf{I}_{s+1}$ is a single interval of length at most $\rho_{s-1}/24D_0$, and $\mathcal{N}_{s+1} = \emptyset$ by the Diophantine condition.
	
	Consider instead the case $j_{s+1} = 1$, and suppose for the sake of contradiction that there exists $n \in \mathcal{N}_{s+1}$ with $|n| \leq 8M_{s+1}^{(1)}$.  Then by definition there exists $\theta_{s+1,n,-}  \in \mathbf{I}_{s+1,-}$ such that $\theta_{s+1,n,-}+ n\alpha \in \mathbf{I}_{s+1,+}$ and $\mathbf{E}_{s+1}(\theta_{s+1,n,-}) = \mathbf{E}_{s+1}(\theta_{s+1,n,-}+n\alpha)$.
	
	Define the function $g(\theta) = \mathbf{E}_{s}(\theta) - \mathbf{E}_{s}(\theta+n\alpha)$.  Since $j_{s+1} = 1$, we have that $|g(\theta_{s+1,n,-})| \leq 2C\delta_{s}$; furthermore, we have $$|\partial_\theta g(\theta)| \geq 2\left(\nu_{s} - C\frac{\delta_{s}}{\rho_{s}} \right) \geq \nu_s$$ for $\theta \in I_{s+1,n,-}^{(2)}$ by Lemma \ref{l:constructionsummary}.  Thus, there exists some point $\theta_{s,n,-}$ with $$|\theta_{s,n,-} - \theta_{s+1,n,-}| < 2C\frac{\delta_{s}}{\nu_s} \ll \frac{3}{4D_0}\bar{\rho}_{s}$$ such that $|g(\theta_{s,n,-})| = 0$; since $|n| \leq 8M_{s+1}^{(1)}$, $n \in \mathcal{N}_s$.  But $B_{3\bar{\rho}_{s-1}/4D_0}(\theta_{s-1,n,-}) \cap I_{s-1,i,-}^{(1)} = \emptyset$ for any $i$ and any $n \in \mathcal{N}_s$; thus, $j_{s+1} = 2$, a contradiction.
\end{proof}

This allows us to verify a remaining item of the inductive proposition:
\begin{lem} \label{l:item3}
If $E_* = E_{n_{s+1}}(\mathbf E_{s+1})$ for $n_{s+1} \in \mathcal N_{s+1}$,  then $\overline{B}_{\rho_{s}}(E_*) \subset \salt{\mathbf{J}}(\mathbf{E}_s)$.
\end{lem}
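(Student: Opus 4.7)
The plan is to upgrade the inclusion $E_* \in \mathbf{J}_{s+1} \subset \salt{\mathbf{J}}(\mathbf{E}_s)$ already given by Lemma \ref{l:jrelations}(2) to the stronger statement $\overline{B}_{\rho_s}(E_*) \subset \salt{\mathbf{J}}(\mathbf{E}_s)$ by showing that $E_*$, being a crossing value of $\mathbf{E}_{s+1}$, is quantitatively separated from the boundary of $\salt{\mathbf{J}}(\mathbf{E}_s)$. I will prove the upper inclusion $E_* + \rho_s \leq \sup\salt{\mathbf{J}}(\mathbf{E}_s)$; the lower inclusion is entirely symmetric.

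The argument will split on whether $\sup\mathbf{J}_s$ is a critical value of $\mathbf{E}_s$. In the ``shrunk'' case (when it is not), the definitions yield $\sup\salt{\mathbf{J}}(\mathbf{E}_s) - \sup\salt{\salt{\mathbf{J}}}(\mathbf{E}_s) = \frac{1}{8}\rho_{s-1}$, which vastly exceeds $\rho_s + C\delta_s$ by Lemma \ref{l:lenrelns}. By Lemma \ref{l:constructionsummary}, the parent interval $J_{s,i}^{(j)}$ satisfies $\sup J_{s,i}^{(j)} \leq \sup \salt{\salt{\mathbf{J}}}(\mathbf{E}_s)$, and the $C^2$-closeness of $\mathbf{E}_{s+1}$ to $\mathbf{E}_s$ (Proposition \ref{pr:interscaleapprox1}) or to $\tilde{\mathbf{E}}_{s,n_s,\bullet}^{(2)}$ (Theorem \ref{t:DRRelFns}(3)) gives $\sup\mathbf{J}_{s+1} \leq \sup J_{s,i}^{(j)} + C\delta_s$; chaining these two estimates absorbs the error into the $\frac{1}{8}\rho_{s-1}$ buffer.

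In the ``extended'' case where $\sup\mathbf{J}_s$ is a critical value (so $\sup\salt{\mathbf{J}}(\mathbf{E}_s) = \sup\mathbf{J}_s + \rho_s$), I will subdivide by the resonance type of $\mathbf{E}_{s+1}$. For a simple-resonant child $\mathbf{E}_{s+1,i}^{(1)}$, if the parent $J_{s,i}^{(1)}$ does not contain $\sup\mathbf{J}_s$, then the overlap structure of Lemma \ref{l:constructionsummary} combined with $|J_{s,i}^{(1)}| \geq \bar{\rho}_s$ forces $\sup J_{s,i}^{(1)} \leq \sup\mathbf{J}_s - \bar{\rho}_s + 3\rho_s$, and the cushion $\bar{\rho}_s \gg \rho_s$ suffices. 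If $J_{s,i}^{(1)}$ does contain $\sup\mathbf{J}_s$, then $\mathbf{E}_s|_{I_{s,i}^{(1)}}$ attains its maximum at a critical point, and the trimming analysis of Lemma \ref{l:Asm3StabSR} together with $C^2$-proximity shows that $\mathbf{E}_{s+1,i}^{(1)}$ also attains its maximum at a critical point, so $\sup\mathbf{J}_{s+1,i}^{(1)}$ is a critical value of $\mathbf{E}_{s+1,i}^{(1)}$; Lemma \ref{l:DRensepfromcrit} applied at scale $s{+}1$ with $L^{(1)} = 8M_{s+2}^{(1)}$ then yields $|E_* - \sup\mathbf{J}_{s+1,i}^{(1)}| \geq \frac{C^2 d_{s+1}}{48(8M_{s+2}^{(1)})^{2\tau}} \gg \rho_s$. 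For a double-resonant child $\mathbf{E}_{s+1,n_s,\bullet}^{(2)}$, the parent $J_{s,n_s}^{(2)} = \overline{B}_{\bar{\rho}_s}(E_{n_s})$ is automatically separated from $\sup\mathbf{J}_s$ by $\frac{C^2 d_s}{48(8M_{s+1}^{(1)})^{2\tau}} \gg \bar{\rho}_s$ via Lemma \ref{l:DRensepfromcrit} applied at scale $s$, which again produces the required buffer after adding $\bar{\rho}_s + \rho_s + C\delta_s$.

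The hard part will be the bookkeeping in the extended regime: one must correctly identify, for each configuration of parent-to-child resonance type, whether the relevant extremum is a genuine critical value of $\mathbf{E}_{s+1}$ (calling for Lemma \ref{l:DRensepfromcrit} at scale $s{+}1$) or rather a non-critical trimming boundary inherited from the parent (calling for Lemma \ref{l:DRensepfromcrit} at scale $s$ via $E_{n_s}$, or for the overlap structure of the cover). The numerical estimate needed in every subcase reduces to the comparison $\rho_s \ll d_{s+1}/(M_{s+2}^{(1)})^{2\tau}$, i.e.\ an exponentially small quantity in $L$ versus a polynomially small one; this holds with ample room for all $\varepsilon < \varepsilon_0$ by Lemma \ref{l:lenrelns}, so the inequalities themselves are routine once the case structure is correctly set up.
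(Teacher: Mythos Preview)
Your approach is essentially correct but considerably more elaborate than the paper's, which uses two simplifications you miss.

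First, the paper invokes Lemma~\ref{l:centerdist} at the outset: since $\mathcal N_{s+1}$ is nonempty, $j_{s+1}=1$ automatically, so your entire double-resonant-child subcase (using Lemma~\ref{l:DRensepfromcrit} at scale $s$ on the parent crossing $E_{n_s}$) is vacuous. Second, the paper splits on whether $\mathbf E_{s+1}$, not $\mathbf E_s$, has a critical extremum. When $\mathbf E_{s+1}$ does not attain its minimum at a critical point, the paper simply asserts $E_* \in \salt{\mathbf J}(\mathbf E_{s+1})$, and by the definition of the modified codomain with $\sprev\rho=\rho_s$ this already gives $E_*-\rho_s \in \mathbf J_{s+1}\subset \salt{\mathbf J}(\mathbf E_s)$ via Lemma~\ref{l:jrelations}(2); this single line replaces your entire ``shrunk'' case and the non-critical-containing subcase of your ``extended'' case. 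When $\mathbf E_{s+1}$ does attain a critical minimum, the paper argues exactly as you do in the corresponding subcase: $C^2$-closeness forces $\mathbf E_s$ to have a critical minimum too, and Lemma~\ref{l:DRensepfromcrit} at scale $s{+}1$ gives the separation.

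One minor correction: Lemma~\ref{l:constructionsummary} does not assert $\sup J_{s,i}^{(j)}\le \sup\salt{\salt{\mathbf J}}(\mathbf E_s)$; what item~4 gives is $\overline B_{\rho_s}(E)\subset\salt{\mathbf J}(\mathbf E_s)$ for every $E\in J_{s,i}^{(j)}$, i.e.\ $\sup J_{s,i}^{(j)}\le \sup\salt{\mathbf J}(\mathbf E_s)-\rho_s$. You should invoke this directly rather than routing through $\salt{\salt{\mathbf J}}$; as stated your chain of inequalities leaves a residual $C\delta_s$ that has to be absorbed, which you can do (in the non-critical-min-of-$\mathbf E_{s+1}$ situation) by noting $E_*$ lies at least $\frac98\rho_s$ above $\inf\mathbf J_{s+1}$ rather than merely in $\mathbf J_{s+1}$ --- which is precisely the shortcut the paper takes.
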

\begin{proof}
Let $n_{s+1} \in \mathcal N_{s+1}$ and $E_* = E_{n_{s+1}}$.
We have two cases:
\begin{itemize}
    \item If $\mathbf E_{s+1}$ does not attain its minimum at a critical point, since $E_* \in \salt{\mathbf J}(\mathbf E_{s+1})$, $E_* - \rho_{s} \in \mathbf J_{s+1} \subset \salt{\mathbf J}(\mathbf E_{s})$, by Lemma \ref{l:jrelations}.
    \item If $\mathbf E_{s+1}$ attains its minimum at a critical point, note that as $\mathcal N_{s+1}$ is nonempty, $j_{s+1}=1$ by Lemma \ref{l:centerdist}.  Thus, by Lemma \ref{l:res}, $\mathbf E_{s+1}$ is $C^2$-close to $\mathbf E_s$, so $\mathbf E_{s}$ also attains its minimum at a critical point, with  $|\inf\mathbf J_{s} - \inf\mathbf J_{s+1}| \leq 2\delta_{s}$.  By Lemma \ref{l:DRensepfromcrit}, $E_* - \inf \mathbf J_{s+1} \gg 2\delta_{s}$; thus 
    \begin{align*}
        E_* - \rho_{s} &\geq \inf \mathbf J_{s+1} + 2\delta_{s} - \rho_{s} \\ &\geq \inf \mathbf J_{s} - \rho_{s} \\&= \inf\salt{\mathbf J}(\mathbf E_{s}).
    \end{align*}
\end{itemize}
In either case, $E_* - \rho_{s} \geq \inf\salt{\mathbf J}(\mathbf E_{s})$; analogously, $E_* + \rho_{s} \leq \sup\salt{\mathbf J}(\mathbf E_{s})$.
\end{proof}

We likewise demonstrate that $s$-resonant points $m$ correspond to points $\theta_*+ m\alpha$ in the domain of the relevant Rellich function (part of Hypothesis \ref{pr:resorbs}):

\begin{lem}\label{l:Scond}
	Let $\theta_*\in \mathbb{T}$ and $\mathbf{E}_{s+1} \in \mathcal{E}(\mathbf{E}_s)$, and suppose that $E_*$ satisfies the following:
	\begin{enumerate}
		\item If $j_{s+1} = 1$, either $E_* \in \salt{\mathbf{J}}(\mathbf{E}_{s+1})$, or the closest point in $\mathbf{J}_{s+1}$ to $E_*$ is a critical value of $\mathbf{E}_{s+1}$.
		\item If $j_{s+1} = 2$ (and so $\mathbf{E}_{s+1} = \mathbf{E}_{s+1,n_s,\bullet}^{(2)}$), then $E_* \in [\inf \salt{\mathbf{J}}(\mathbf{E}_{s+1,n_s,\wedge}^{(2)}), \sup \salt{\mathbf{J}}(\mathbf{E}_{s+1,n_s,\vee}^{(2)})]$
	\end{enumerate}
	Let $m \in \mathcal{S}_{s}(\theta_*,E_*)$.  Then:
	\begin{enumerate}
		\item If $j_{s+1} = 1$, then $B_{\rho_s/24D_0}(\theta_*+m\alpha)\subset \mathbf{I}_{s+1}$.
		\item If $j_{s+1} = 2$ (and so $\mathbf{E}_{s+1} = \mathbf{E}_{s+1,n_s,\bullet}^{(2)}$), then for some $p \in \{m, m-n_s\}$, $B_{\rho_s/24D_0}(\theta_*+p\alpha) \subset \mathbf{I}_{s+1,n_s,\cup}^{(2)}$.  Furthermore, if $\theta_* + p\alpha \in \mathbf{I}_{s+1,n_s,\bullet} \setminus \mathbf{I}_{s+1,n_s,\cap}$, then $$|\mathbf{E}_{s+1,n_s,\bullet}^{(2)}(\theta_*+ p\alpha) - E_*| < \frac{9}{8}\rho_s.$$
	\end{enumerate}
\end{lem}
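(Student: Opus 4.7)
The plan is to combine the $C^2$-closeness of $\mathbf{E}_{s+1}$ to (a branch of) $\mathbf{E}_s$ --- from Proposition \ref{pr:interscaleapprox1} in the simple-resonant case or Theorem \ref{t:DRRelFns} in the double-resonant case --- with the cosine-like Morse structure of $\mathbf{E}_s$ to show that $\theta_*+m\alpha$ (or $\theta_*+(m-n_s)\alpha$) lies at least $\rho_s/24D_0$ inside $\mathbf{I}_{s+1}$. Note that the content of the lemma is not merely containment of a point (which is already partially encoded in $m \in \mathcal{S}_s$) but rather of a whole ball, i.e., that the shift is quantitatively bounded away from the boundary of $\mathbf{I}_{s+1}$.

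For Case 1, write $\mathbf{E}_{s+1} = \mathbf{E}_{s+1,i}^{(1)}$ with $\mathbf{I}_{s+1} \subset I_{s,i}^{(1)}$. If $E_* \in \salt{\mathbf{J}}(\mathbf{E}_{s+1})$, the definition of $\salt{\mathbf{J}}$ places $E_*$ at least $\tfrac{9}{8}\rho_s$ away from any non-critical endpoint of $\mathbf{J}_{s+1}$, and by Lemma \ref{l:jrelations} each such endpoint lies within $C\delta_s \ll \rho_s$ of the corresponding endpoint of $J_{s,i}^{(1)}$. Combining with $|\mathbf{E}_s(\theta_*+m\alpha) - E_*| \leq \tfrac{25}{24}\rho_s$ and the derivative upper bound $|\partial_\theta \mathbf{E}_s| \leq D_0$, this forces $\theta_*+m\alpha$ onto one of the monotone branches $I_{s,i,\pm}^{(1)}$ at distance at least $\rho_s/(12D_0)$ from each non-critical endpoint. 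Since $|I_{s,i}^{(1)} \setminus \mathbf{I}_{s+1}| \leq 32\delta_s/\bar\nu_s \ll \rho_s/(24D_0)$ (from Lemma \ref{l:Asm3StabSR}, using the scale relations in Lemma \ref{l:lenrelns}), this transfers directly to the stated ball containment in $\mathbf{I}_{s+1}$. The alternative hypothesis --- that the closest point in $\mathbf{J}_{s+1}$ to $E_*$ is a critical value of $\mathbf{E}_{s+1}$ --- forces $\theta_*+m\alpha$ to within $O(\sqrt{\rho_s/d_s})$ of a critical point of $\mathbf{E}_s$ via Lemma \ref{l:morsefnbd}; by construction of $\mathcal{E}(\mathbf{E}_s)$ such a critical point lies interior to $\mathbf{I}_{s+1}$, and the ball containment again follows.

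For Case 2, $\mathbf{E}_{s+1} = \mathbf{E}_{s+1,n_s,\bullet}^{(2)}$ arises from a double resonance of $\mathbf{E}_s$ at $\theta_{s,n_s,-}$ and $\theta_{s,n_s,+} = \theta_{s,n_s,-} + n_s\alpha$. Any shift $\theta_*+m\alpha$ whose $\mathbf{E}_s$ value lies within $\tfrac{25}{24}\rho_s$ of $E_*$ (which belongs to the extended range $[\inf\salt{\mathbf{J}}(\mathbf{E}_{s+1,n_s,\wedge}^{(2)}), \sup\salt{\mathbf{J}}(\mathbf{E}_{s+1,n_s,\vee}^{(2)})]$) must lie on one of the two parent branches near $\theta_{s,n_s,-}$ or $\theta_{s,n_s,+}$; setting $p=m$ in the former case and $p=m-n_s$ in the latter places $\theta_*+p\alpha$ near $\theta_{s,n_s,-}$, after which the ball containment in $\mathbf{I}_{s+1,n_s,\cup}^{(2)}$ follows by the Case 1 analysis applied to the enlarged intervals $\tilde{I}_{s,n_s,\bullet}^{(2)}$ containing $\mathbf{I}_{s+1,n_s,\bullet}$. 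The sub-claim $|\mathbf{E}_{s+1,n_s,\bullet}^{(2)}(\theta_*+p\alpha) - E_*| < \tfrac{9}{8}\rho_s$ in the outer region $\mathbf{I}_{s+1,n_s,\bullet} \setminus \mathbf{I}_{s+1,n_s,\cap}$ follows from Lemma \ref{l:DRedgeSR}: in that region the double-resonant child $\mathbf{E}_{s+1,n_s,\bullet}^{(2)}$ is $C^2$-close to the appropriate single branch of $\mathbf{E}_s$, so $|\mathbf{E}_{s+1,n_s,\bullet}^{(2)}(\theta_*+p\alpha) - \mathbf{E}_s(\theta_*+p\alpha)| \leq C\delta_s$, and combining with $|\mathbf{E}_s(\theta_*+p\alpha) - E_*| \leq \tfrac{25}{24}\rho_s$ gives the bound comfortably.

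The main technical obstacle will be the critical-boundary cases, where $\salt{\mathbf{J}}(\mathbf{E}_{s+1})$ extends beyond $\mathbf{J}_{s+1}$ (by $+\rho_{s+1}$) rather than retreating (by $-\tfrac{9}{8}\rho_s$). There the first-derivative lower bound $\bar\nu_s$ fails near the critical point, so one must replace it with the Morse second-derivative bound via Lemma \ref{l:morsefnbd}. Fortunately critical points always lie interior to $\mathbf{I}_{s+1}$ --- precisely because the extremum is attained there --- so the weaker quadratic distance-to-critical-point estimate is still enough to ensure the required $\rho_s/(24D_0)$ buffer from the boundary.
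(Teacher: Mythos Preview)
Your plan is essentially the paper's own argument: use the $C^0$-closeness of $\mathbf{E}_{s+1}$ to $\mathbf{E}_s$ (or to $\mathbf{E}_s(\cdot+n_s\alpha)$) together with the derivative bound $|\partial_\theta\mathbf{E}_s|\le D_0$ to convert the $\tfrac{9}{8}\rho_s$ energy buffer from $\salt{\mathbf J}(\mathbf E_{s+1})$ into a $\rho_s/(24D_0)$ phase buffer. The paper argues slightly more directly, showing $\mathbf{E}_{s,\pm}(\mathbf I_{s+1,\pm})\supset[\inf\mathbf J_{s+1}+\tfrac1{24}\rho_s,\sup\mathbf J_{s+1}-\tfrac1{24}\rho_s]$ and then pulling back, rather than routing through $I_{s,i}^{(1)}$ and $|I_{s,i}^{(1)}\setminus\mathbf I_{s+1}|\le 32\delta_s/\bar\nu_s$; it also unifies the critical and non-critical subcases of Case~1 by observing that either hypothesis forces $E_*\le\sup\mathbf J_{s+1}-\tfrac98\rho_s$ (when the minimum is critical), instead of invoking Lemma~\ref{l:morsefnbd} separately. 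These are cosmetic differences.

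There is one genuine slip in your Case~2 sub-claim. In the outer region $\mathbf I_{s+1,n_s,\bullet}\setminus\mathbf I_{s+1,n_s,\cap}$ when $p=m-n_s$ (i.e., when $\theta_*+m\alpha\in\mathbf I_{s,+}$), the child $\mathbf E_{s+1,n_s,\bullet}^{(2)}$ is \emph{not} close to $\mathbf E_s(\theta_*+p\alpha)=\mathbf E_{s,-}(\theta_*+p\alpha)$; it is close to the shifted branch $\mathbf E_{s,+}(\theta_*+p\alpha+n_s\alpha)=\mathbf E_s(\theta_*+m\alpha)$. Correspondingly, the inequality you invoke should be $|\mathbf E_s(\theta_*+m\alpha)-E_*|\le\tfrac{25}{24}\rho_s$, which holds because $m\in\mathcal S_s$; the quantity $|\mathbf E_s(\theta_*+p\alpha)-E_*|$ need not be small. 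With this correction the argument goes through exactly as you intend.
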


\begin{proof}
	The proof proceeds by case analysis:
	\begin{enumerate}
		\item $j_{s+1} = 1$:
		\begin{enumerate}
			\item \textit{$\mathbf{I}_{s+1}$ is a union of two disjoint intervals:} In this case, $\mathbf{E}_{s+1}$ has no critical points, so $E_* \in \salt{\mathbf{J}}(\mathbf{E}_{s+1}) = [\inf \mathbf{J}_{s+1} + \frac98\rho_s, \sup \mathbf{J}_{s+1} - \frac98\rho_s]$.  Thus, $$\mathbf{E}_{s}(\theta_*+m\alpha) \in [\inf \mathbf{J}_{s+1}+\frac{1}{12}\rho_s, \sup \mathbf{J}_{s+1}-\frac{1}{12}\rho_s],$$ and so $$\mathbf{E}_s\left(B_{\frac{\rho_s}{24D_0}}(\theta_*+m\alpha)\right) \subset [\inf \mathbf{J}_{s+1}+\frac{1}{24}\rho_s, \sup \mathbf{J}_{s+1}-\frac{1}{24}\rho_s].$$  On the other hand, one has  $$\mathbf{E}_{s,\pm}(\mathbf{I}_{s+1,\pm}) \supset [\inf \mathbf{J}_{s+1}+\frac{1}{24}\rho_s, \sup \mathbf{J}_{s+1}-\frac{1}{24}\rho_s]$$ since $j_{s+1} = 1$, and in particular
			$$\mathbf{I}_{s+1} \supset \mathbf{E}_{s}^{-1}\left([\inf \mathbf{J}_{s+1}+\frac{1}{24}\rho_s, \sup \mathbf{J}_{s+1}-\frac{1}{24}\rho_s]\right) \supset B_{\frac{\rho_s}{24D_0}}(\theta_*+m\alpha).$$
			
			\item \textit{$\mathbf{I}_{s+1}$ is a single interval:} In this case, $\mathbf{E}_{s+1}$ has a unique critical point in $\mathbf{I}_{s+1}$.  Suppose that this critical point is a minimum (the other case is analogous).  Since $j_{s+1} = 1$, $\mathbf{E}_s$ likewise achieves its minimum at a critical point, and by definition of $\salt{\mathbf{J}}(\mathbf{E}_{s+1})$ and our assumption on $E_*$ we have $$E_* \leq \sup \mathbf{J}_{s+1} - \frac98\rho_s.$$  As above, we find $$\mathbf{E}_s\left(B_{\frac{\rho_s}{24D_0}}(\theta_*+m\alpha)\right) \subset [\inf \mathbf{J}_s, \sup \mathbf{J}_{s+1} - \frac{1}{24}\rho_s].$$  On the other hand, one has that $$\mathbf{E}_{s,\pm}(\mathbf{I}_{s+1,\pm}) \supset [\inf \mathbf{J}_{s}, \sup \mathbf{J}_{s+1}-\frac{1}{24}\rho_s]$$ since $j_{s+1} = 1$, and in particular
			$$\mathbf{I}_{s+1} \supset \mathbf{E}_{s}^{-1}\left([\inf \mathbf{J}_{s}, \sup \mathbf{J}_{s+1}-\frac{1}{24}\rho_s]\right) \supset B_{\frac{\rho_s}{24D_0}}(\theta_*+m\alpha).$$
		\end{enumerate}
	\item \textit{$j_{s+1} = 2$ and $\mathbf{E}_{s+1} = \mathbf{E}_{s+1,n_s,\bullet}^{(2)}$}:  By assumption, we have $$E_* \in [\inf \salt{\mathbf{J}}(\mathbf{E}_{s+1,n_s,\wedge}^{(2)}), \sup \salt{\mathbf{J}}(\mathbf{E}_{s+1,n_s,\vee}^{(2)})],$$ from which it follows that $$\mathbf{E}_{s}\left(B_{\frac{\rho_s}{24D_0}}(\theta_*+m\alpha)\right) \supset [\inf \mathbf{J}_{s+1,\wedge}+\frac{1}{24}\rho_s, \sup \mathbf{J}_{s+1,\vee}-\frac{1}{24}\rho_s]$$ in analog to the above.  We have two subcases:
	\begin{enumerate}
		\item $\theta_*+m\alpha \in \mathbf{I}_{s,-}$: Let $I = [\inf \mathbf{I}_{s+1,\vee}, \sup \mathbf{I}_{s+1,\wedge}] \subset \mathbf{I}_{s+1,n_s,\cup}$.  Since $j_{s+1} = 2$, we have $$\mathbf{E}_{s,-}(I) \supset [\inf \mathbf{J}_{s+1,\wedge} + \frac{1}{24}\rho_s, \sup \mathbf{J}_{s+1,\vee} - \frac{1}{24}\rho_s],$$ from which it follows that $$I \supset \mathbf{E}_{s,-}^{-1}\left([\inf \mathbf{J}_{s+1,\wedge} + \frac{1}{24}\rho_s, \sup \mathbf{J}_{s+1,\vee} - \frac{1}{24}\rho_s]\right) \supset B_{\frac{\rho_s}{24D_0}}(\theta_*+m\alpha).$$
		Furthermore, for $\theta \in (\mathbf{I}_{s+1,n_s,\vee} \setminus \mathbf{I}_{s+1,n_s,\wedge}) \cap I$, we have by Lemma \ref{l:DRedgeSR} that $$|\mathbf{E}_{s+1,n_s,\vee}^{(2)}(\theta) - \mathbf{E}_s(\theta)| < C\delta_s \ll \rho_s/12,$$
		and similarly for $\vee$ and $\wedge$ swapped.  The lemma thus holds in this case taking $p = m$.
		
		\item $\theta_*+m\alpha \in \mathbf{I}_{s,+}$: Let $I = [\inf \mathbf{I}_{s+1,\wedge}, \sup \mathbf{I}_{s+1,\vee}] \subset \mathbf{I}_{s+1,n_s,\cup}$.  Since $j_{s+1} = 2$, we have $$\mathbf{E}_{s,+}(I+n_s\alpha) \supset [\inf \mathbf{J}_{s+1,\wedge} + \frac{1}{24}\rho_s, \sup \mathbf{J}_{s+1,\vee} - \frac{1}{24}\rho_s],$$ from which it follows that $$I +n_s\alpha \supset \mathbf{E}_{s,+}^{-1}\left([\inf \mathbf{J}_{s+1,\wedge} + \frac{1}{24}\rho_s, \sup \mathbf{J}_{s+1,\vee} - \frac{1}{24}\rho_s]\right) \supset B_{\frac{\rho_s}{24D_0}}(\theta_*+m\alpha),$$ i.e. for $p = m-n_s$, $$\mathbf{I}_{s+1,n_s,\cup} \supset I \supset B_{\frac{\rho_s}{24D_0}}(\theta_*+p\alpha).$$  Furthermore, for $\theta \in (\mathbf{I}_{s+1,n_s,\vee} \setminus \mathbf{I}_{s+1,n_s,\wedge}) \cap I$, we have by Lemma \ref{l:DRedgeSR} that $$|\mathbf{E}_{s+1,n_s,\vee}^{(2)}(\theta) - \mathbf{E}_s(\theta+n_s\alpha)| < C\delta_s \ll \rho_s/12,$$
		and similarly for $\vee$ and $\wedge$ swapped.  The lemma thus holds in this case taking $p = m-n_s$. \qedhere
	\end{enumerate} 
	\end{enumerate}
\end{proof}

\subsection{Nonresonance}

We now prove that the nonresonance hypothesis \ref{prsub:NR} holds at scale $s+1$.  
We begin by establishing Green's function decay for $s+1$-nonresonant intervals provided their left and right endpoints are $(s+1)$-left- and $(s+1)$-right-regular, respectively.

\begin{lem}\label{l:NRreg}
	Let $\theta_* \in \mathbb{T}$ and $E_* \in \salt{\mathbf{J}}(\mathbf{E}_{s})$ be such that 
	\begin{itemize}
		\item If $\mathbf{E}_s$ does not attain its maximum at a critical point, then $E_* \leq \sup \bigcup_{\mathbf{E} \in \mathcal{E}(\mathbf{E}_s)}\salt{\mathbf{J}}(\mathbf{E})$.
		\item If $\mathbf{E}_s$ does not attain its minimum at a critical point, then $E_* \geq \inf \bigcup_{\mathbf{E} \in \mathcal{E}(\mathbf{E}_s)}\salt{\mathbf{J}}(\mathbf{E})$.
	\end{itemize}
	Let $\mathbf{E}_{s+1} \in \mathcal{E}(\mathbf{E}_s)$ be a Rellich curve minimizing $\dist(E_*, \salt{\mathbf{J}}(\mathbf{E}))$ among all $\mathbf{E} \in \mathcal{E}(\mathbf{E}_s)$, and let $\snext\Lambda = [a,b] \subset \mathbb{Z}$ be an $(s+1)$-nonresonant interval with $a$ being $(s+1)$-left-regular and $b$ being $(s+1)$-right-regular.  Then for $E \in B_{\rho_{s+1}/2}(E_*)$ and $\theta \in B_{\rho_{s+1}/8D_0}(\theta_*)$,
	$$\log|R^{\snext\Lambda}_{\theta,E}(m,n)| \leq - \gamma_{s+1}|m-n|, \quad |m-n| \geq \ell_{s+1},$$ and $$\|R^{\snext\Lambda}_{\theta,E}\| \leq 4\rho_{s+1}^{-1}.$$
\end{lem}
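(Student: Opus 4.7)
The plan is to execute a multiscale resolvent expansion in the spirit of Appendix A: partition $\snext\Lambda$ into (i) \emph{resonant windows} of length $L_{s+1}$ centered on each cluster of $s$-resonant sites, on which the scale-$(s+1)$ Rellich children constructed earlier in Section \ref{sec:induction} control the Green's function via Propositions \ref{pr:AL1} and \ref{pr:AL2}; and (ii) the complementary \emph{shoulders}, which are $s$-nonresonant with $s$-directionally regular endpoints, so that the inductive Hypothesis \ref{prsub:NR} at scale $s$ supplies $(\ell_s,\gamma_s)$-Green's function decay and an $O(\rho_s^{-1})$ norm bound. The local resolvents are then stitched together by iterating the resolvent identity against the block-diagonal partitioned operator $H^{\snext\Lambda}_{\mathcal{P}}$.

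In detail, I would first enumerate the $s$-resonant sites $m \in \snext\Lambda$, i.e., those with $\theta_*+m\alpha \in \mathbf{I}_s$ and $|\mathbf{E}_s(\theta_*+m\alpha)-E_*| < \rho_s$. By Hypothesis \ref{pr:resorbs} applied at scale $s$ these sites occur either singly, at separation $> 8M_{s+1}^{(1)}$ from other resonant sites, or in pairs $\{m, m+n_s\}$ with $L_s < |n_s| \leq 8M_{s+1}^{(1)}$. Lemma \ref{l:Scond} then guarantees $\theta_* + p\alpha \in \mathbf{I}_{s+1}$ for an appropriate representative $p$ of each cluster relative to the Rellich child $\mathbf{E}_{s+1}$ singled out by the distance-minimization in the statement. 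Around each cluster I place a translate $\Lambda_{s+1}^{(m)} = m + \Lambda_{s+1}$ (with a centering shift in the double-resonant case so both members of the pair lie inside the window); these windows fit inside $\snext\Lambda$ because $a$ is $(s+1)$-left-regular and $b$ is $(s+1)$-right-regular, which by definition forbids $s$-resonances inside distance $\tfrac34 L_{s+1}$ of the endpoints. The complementary intervals are then automatically $s$-nonresonant, and by the second half of Hypothesis \ref{prsub:NR} I can trim each by at most $2L_s+3M_s^{(1)}$ to realize $s$-directional regularity at its endpoints.

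Inside each window, $(s+1)$-nonresonance of $\snext\Lambda$ translates directly into the Assumption \ref{as:SR} (respectively \ref{as:DR}) hypothesis that $|E_* - \mathbf{E}_{s+1}(\theta_*+m\alpha)| \geq \rho_{s+1}$ for the relevant child, yielding $\|R^{\Lambda_{s+1}^{(m)}}_{\theta,E}\| \leq 4\rho_{s+1}^{-1}$ via \eqref{eq:rperpbd} or \eqref{eq:rperpperpbd}. In the double-resonant case the uniform local gap from Proposition \ref{pr:unifmLocalSep} — which dominates $\rho_{s+1}$ by Lemma \ref{l:lenrelns}(5) — is decisive: at most one of $\mathbf{E}_{s+1,n,\vee}^{(2)}, \mathbf{E}_{s+1,n,\wedge}^{(2)}$ can approach $E_*$, so $(s+1)$-nonresonance against the single distance-minimizing child already excludes resonance with its sibling, and the inductive machinery applies uniformly. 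The resulting $(\ell_s,\gamma_s)$-decay on the shoulders combined with the $O(\rho_{s+1}^{-1})$ norm bound and $(\ell_s,\gamma_s)$-shoulder decay inside the windows (supplied by Propositions \ref{pr:AL1}--\ref{pr:AL2}) allows a standard Schur/resolvent iteration to propagate across $\snext\Lambda$ with a degraded rate $\gamma_{s+1} = \gamma_s(1 - 64|\log(\rho_{s+1}/8)|/\ell_{s+1})$; this is precisely the cumulative-loss formula baked into Lemma \ref{l:lenrelns}, and the scale choice $\ell_{s+1} = L_{s+1}^{5/6}$ is designed to dominate the total window length $O(M_{s+1}^{(2)})$ that must be traversed. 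The principal obstacle is the bookkeeping for directional regularity when inserting windows and trimming shoulders — ensuring simultaneous compatibility with both the scale-$s$ decay hypothesis on shoulders and the scale-$(s+1)$ resolvent bounds in windows — which is exactly what the selection mechanism of Lemma \ref{l:Lconst} and the trimming clause of Hypothesis \ref{prsub:NR} are engineered to handle.
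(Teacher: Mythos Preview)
Your proposal is correct and follows essentially the same route as the paper's proof: enumerate $s$-resonant clusters, surround each by a translate of $\Lambda_{s+1}$ on which $(s+1)$-nonresonance together with the partial resolvent bounds \eqref{eq:rperpbd}/\eqref{eq:rperpperpbd} (and, in the double-resonant case, the uniform gap from Proposition \ref{pr:unifmLocalSep}) yields $\|R^{\Lambda_{s+1}+p}\| \lesssim \rho_{s+1}^{-1}$; trim the complementary $s$-nonresonant shoulders via the second clause of Hypothesis \ref{prsub:NR}; then feed everything into the Appendix A machinery (Lemmas \ref{l:NRspecsep} and \ref{l:NRmultdecay}) to obtain $(\ell_{s+1},\gamma_{s+1})$-decay and the norm bound. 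The paper's case analysis for $j_{s+1}=2$ is slightly more granular than you indicate --- it splits according to whether $\theta_*+p\alpha$ lies in $\mathbf{I}_{s+1,n_s,\cap}^{(2)}$ or in one domain but not the other, invoking Lemma \ref{l:DRedgeSR} in the latter subcase --- but your account already flags the decisive role of the uniform gap.
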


\begin{proof}
	Recall the set $\mathcal{S}_s(\theta_*, E_*)$ (defined relative to the parent curve $\mathbf{E}_s$), and define $\mathcal{S}_s^{\snext\Lambda} := \mathcal{S}_s \cap \snext\Lambda$.  If $\mathcal{S}_s^{\snext\Lambda}$ is empty, then $\snext\Lambda$ is $s$-nonresonant and the lemma follows from the inductive hypothesis.  We thus assume $\mathcal{S}_s^{\snext\Lambda}$ is nonempty.
	
	In order to apply Lemmas \ref{l:NRspecsep} and \ref{l:NRmultdecay}, we wish to find a finite family $\{\Lambda_{s+1} + p\}_{p \in \overline{\mathcal{S}}_s^{\snext\Lambda}}$ of translates of $\Lambda_{s+1}$ such that each $m \in \mathcal{S}_s^{\snext\Lambda}$ is near the center of $\Lambda_{s+1}+p$ for some $p \in \overline{\mathcal{S}}_s^{\snext\Lambda}$ and $\|R^{\Lambda_{s+1}+p}_{\theta_*,E_*}\| \leq \rho_{s+1}^{-1}$ for all $p \in \overline{\mathcal{S}}_s^{\snext\Lambda}$.  We do so by case analysis:
	\begin{enumerate}
		\item $j_{s+1} = 1$:
		
		For any $m \in \mathcal{S}_s^{\snext\Lambda}$, $\theta_*+ m\alpha \in \mathbf{I}_{s+1}$ by Lemma \ref{l:Scond}, and since $|\mathbf{E}_s(\theta_*+m\alpha) - E_*| < 25\rho_s/24$ and $\snext\Lambda$ is $(s+1)$-nonresonant, we have $$\frac98 \rho_s > |\mathbf{E}_{s+1}(\theta_*+m\alpha) - E_*| \geq \rho_{s+1}.$$  By the eigenvalue separation estimates in Section \ref{sec:SR} coming from Lemma \ref{l:res}, it follows that $|\lambda - E_*| \geq \rho_{s+1}$ for any eigenvalue $\lambda$ of $H^{\Lambda_{s+1}+m}(\theta_*)$, and so $\|R_{\theta_*,E_*}^{\Lambda_{s+1}+m}\| \leq \rho_{s+1}^{-1}$.  In this case, we define $\overline{\mathcal{S}}_s^{\snext\Lambda} = \mathcal{S}_s^{\snext\Lambda}$.
		
		\item $j_{s+1} = 2$:
		
		Let $\mathbf{E}_{s+1} = \mathbf{E}_{s+1,n_s,\vee}^{(2)}$ (the $\wedge$ case is analogous), and fix $m \in \mathcal{S}_s^{\snext\Lambda}$.  By Lemma \ref{l:Scond}, $\theta_* + p\alpha \in \mathbf{I}_{s+1,n_s,\cup}^{(2)}$ for some $p = p_m \in \{m, m-n_s\}$.  By the regularity assumption on the endpoints of $\snext\Lambda$, we must have that $p \in \snext\Lambda$.  We have subcases:  
		\begin{enumerate}
			\item $\theta_* + p\alpha \in \mathbf{I}_{s+1,n_s,\cap}^{(2)}$: We have $|\mathbf{E}_{s+1}(\theta_*+ p\alpha) - E_*| \geq \rho_{s+1}$.
			To separate $E_*$ from $\mathbf E^{(2)}_{s+1,n_s,\wedge}(\theta_*+p\alpha)$, we crucially use the uniform separation between the images of $\mathbf E^{(2)}_{s+1,n_s,\vee}$ and $\mathbf E^{(2)}_{s+1,n_s,\wedge}$ guaranteed by Proposition \ref{pr:unifmLocalSep}. This separation is greater than $2\rho_{s+1}$, which implies that $\salt{\mathbf J}(\mathbf E^{(2)}_{s+1,n_s,\vee})$ and $\salt{\mathbf J}(\mathbf E^{(2)}_{s+1,n_s,\wedge})$ are disjoint; since, by assumption, %
			$$\dist(E_*, \salt{\mathbf{J}}(\mathbf{E}_{s+1})) \leq \dist(E_*, \salt{\mathbf{J}}(\mathbf{E}_{s+1,n_s,\wedge}^{(2)})),$$ we have $$E_* \geq %
			\sup \salt{\mathbf{J}}(\mathbf{E}_{s+1,n_s,\wedge}^{(2)})=
			\sup \mathbf{J}_\wedge + \rho_{s+1},$$ and so $$|\mathbf{E}_{s+1,n_s,\wedge}^{(2)}(\theta_* + p\alpha) - E_*| \geq \rho_{s+1}.$$  
			It remains to separate $E_*$ from other eigenvalues of $H^{\Lambda_{s+1}+p}$.  Because $$\dist\left(E_*, \mathbf{E}_s(\mathbf{I}_{s+1,n_s,\cap}^{(2)} \cup (\mathbf{I}_{s+1,n_s,\cap}^{(2)}+n_s\alpha))\right) < \frac{25}{24}\rho_s \ll \frac{1}{12}\rho_s$$ and by construction $$|\mathbf{E}_s(\mathbf{I}_{s+1,n_s,\cap}^{(2)} \cup (\mathbf{I}_{s+1,n_s,\cap}^{(2)}+n_s\alpha))| \ll \frac{1}{4}\rho_{s-1},$$ it follows that $$|E_* - \mathbf{E}_{s}(\theta_{s,n_s,-})| < \frac{1}{3}\rho_{s-1} = \frac{1}{2}\left(\frac{2}{3}\rho_{s-1}\right).$$ Thus we may apply the estimate on $\|R_\perp(E_*; H^{\Lambda_{s+1}+p}, \{\mathbf{E}_{s+1,\vee}\}\cup\{\mathbf{E}_{s+1,\wedge}\})\|$ coming from Section \ref{sec:DR} and Lemma \ref{l:res}.  It follows that $\|R_{\theta_*,E_*}^{\Lambda_{s+1}+p}\| \leq \rho_{s+1}^{-1}$.
			
			\item $\theta_* + p\alpha \in \mathbf{I}_{s+1,n_s,\vee}^{(2)} \setminus \mathbf{I}_{s+1,n_s,\wedge}^{(2)}$ \textit{(The case with $\vee$ and $\wedge$ swapped is analogous.):} Lemma \ref{l:Scond} and the eigenvalue separation estimate from Lemma \ref{l:DRedgeSR} ensure $|\lambda - E_*| \geq \rho_{s+1}$ for any eigenvalue $\lambda \neq \mathbf{E}_{s+1,n_s,\wedge}^{(2)}$ of $H^{\Lambda_{s+1}+p}(\theta_*)$.  As in the $\mathbf{I}_{s+1,n_s,\cap}$ case, $$|\mathbf{E}_{s+1,n_s,\wedge}^{(2)}(\theta_*+p\alpha) -E_*| \geq \rho_{s+1}.$$  It follows that $\|R_{\theta_*,E_*}^{\Lambda_{s+1}+p}\| \leq \rho_{s+1}^{-1}$.  
		\end{enumerate}
		We denote by $\overline{\mathcal{S}}_s^{\snext\Lambda}$ the set of all integers $p$ coming from $m \in \mathcal{S}_s^{\snext\Lambda}$ as above.  
	\end{enumerate}
	For each interval $\Lambda_{s+1}+p$, $p \in \overline{\mathcal{S}}_s^{\snext\Lambda}$, it follows immediately from $\|R_{\theta_*,E_*}^{\Lambda_{s+1}+p}\| \leq \rho_{s+1}^{-1}$ that, for $E \in B_{3\rho_{s+1}/4}(E_*)$ and $\theta \in B_{\rho_{s+1}/8D_0}(\theta_*)$, that $$\|R_{\theta,E}^{\Lambda_{s+1}+p}\| \leq 8\rho_{s+1}^{-1}.$$  By the $s+1$-regularity of the endpoints of $\snext\Lambda = [a,b]$, for any $m \in \mathcal{S}_s^{\snext\Lambda}$, we have $$\min\{|m-a|,|m-b|\} \geq 3L_{s+1}/4;$$ since $|p-m| \leq 8M_{s+1}^{(1)} \ll L_{s+1}/4$, we have that $\Lambda_{s+1}+p \subset \Lambda$ for all $p \in \overline{\mathcal{S}}_s^{\snext\Lambda}$.  Moreover, by Lemma \ref{l:constructionsummary}, the intervals $\{\Lambda_{s+1}+p\}_{p \in \overline{\mathcal{S}}_s^{\snext\Lambda}}$ are non-overlapping.  Enumerating $\overline{\mathcal{S}}_s^{\snext\Lambda} = \{p_i\}_{i=1}^{|\overline{\mathcal{S}}_s^{\snext\Lambda}|}$ in increasing order, we denote by $\salt\Lambda_i := [p_i, p_i + n_s]$ (with the convention that $n_s = 0$ if $j_{s+1}=1$) and define the intervals $\salt\Lambda_{i,l/r}$ to be the maximal intervals to the left/right of $\salt\Lambda_i$, i.e.
	\begin{align*}
		\salt\Lambda_{i,l} &= [\max \salt\Lambda_{i-1}+1,\min \salt\Lambda_i - 1] =: [\salt a_{i,l},\salt b_{i,l}], \\
		\salt\Lambda_{i,r} &= [\max \salt\Lambda_{i}+1,\min \salt\Lambda_{i+1} - 1] =: [\salt a_{i,r},\salt b_{i,r}].
	\end{align*}
	We note that $\salt\Lambda_{i,r} = \salt\Lambda_{i+1,l}$, but we fix this notation to emphasize the relationship of these intervals to $\salt\Lambda_i$.
	
	Again by Lemma \ref{l:constructionsummary}, each interval $\salt\Lambda_{i,l/r}$ is $s$-nonresonant and has length at least $L_{s+1} \gg 2L_s+3M_s^{(1)}$, and thus each such interval satisfies the inductive hypothesis \ref{prsub:NR}.  In particular, there exist intervals ${\Lambda}_{i,l/r} = [{a}_{i,l/r}, {b}_{i,l/r}] \subset \salt\Lambda_{i,l/r}$ satisfying the Green's function decay property for $(\ell_s, \gamma_s)$ with $$\max\{|\salt a_{i,l/r} - {a}_{i,l/r}|,|\salt b_{i,l/r} - {b}_{i,l/r}|\} \leq 2L_s + 3M_s^{(1)} \ll \frac{1}{2}M_{s+1}^{(1)}.$$  Moreover, by the regularity of the endpoints of $\snext\Lambda$, we can choose $${a}_{1,l} = \salt{a}_{1,l} = a, \quad {b}_{|\overline{\mathcal{S}}^{\snext\Lambda}_s|,r} = \salt{b}_{|\overline{\mathcal{S}}^{\snext\Lambda}_s|,r} = b.$$
	
	Define ${\Lambda}_i = [{b}_{i,l}+1, {a}_{i,r}-1]$.  We have ${\Lambda}_i \subset \Lambda_{s+1} + p_i \subset \snext\Lambda$ and $\|R^{\Lambda_{s+1}+p_i}\| \leq 8\rho_{s+1}^{-1}$, so we may apply Lemma \ref{l:NRspecsep} to conclude that $\snext\Lambda$, partitioned by the $\Lambda_i$ and $\Lambda_{i,l/r}$ subintervals, satisfies Assumption \ref{as:NRmult} 
	with $({\ell},{\gamma})=(\ell_{s},\gamma_{s})$ and $\snext\ell = \ell_{s+1}$.  By Lemma \ref{l:NRmultdecay}, $\snext\Lambda$ satisfies the Green's function decay property for $(\ell_{s+1}, \gamma_{s+1})$ where $\gamma_{s+1} = \gamma_{s} - 16|\log\varepsilon\log(\rho_{s+1}/8)|/\ell_{s+1}$.

	Since we have shown the Green's function decay property for $|E-E_*|<\frac34\rho_{s+1}$, it follows that the resolvent is well-defined for $|E-E_*|<\frac34\rho_{s+1}$, i.e., $E \notin \spec H^{\snext\Lambda}(\theta)$. Thus, for $|E-E_*|\leq \rho_{s+1}/2$, $\|R^{\snext\Lambda}_{\theta,E}\|\leq4\rho_{s+1}^{-1}$.
\end{proof}

Finally, the following lemma shows that if the endpoints of $\snext\Lambda$ are not regular, we can adjust them (provided $\snext\Lambda$ is sufficiently long) to find a subinterval which does satisfy the conditions of Lemma \ref{l:NRreg}.
\begin{lem} \label{l:NRintadj}
	Let $\theta_*,E_*,\mathbf E_{s+1} \in \mathcal{E}^{(j_{s+1})}(\mathbf{E}_s)$ satisfy the relations stated in Lemma \ref{l:NRreg}, and let 
	$\snext\Lambda = [a,b]$ with 
	$|\snext\Lambda| \geq 2L_{s+1}+3M_{s+1}^{(1)}$. 
	Then there exists a subinterval $\tilde{\Lambda} = [\tilde{a},\tilde{b}] \subset \Lambda$ with $|\tilde\Lambda|\geq|\snext\Lambda|-(2L_{s+1}+3M_{s+1}^{(1)})$ such that $\tilde{a}$ is $s+1$-left-regular and $\tilde{b}$ is $s+1$-right-regular. 
\end{lem}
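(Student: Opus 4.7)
The plan is to adapt the iterative counting argument of Lemma \ref{l:Lconst} to trim the endpoints of $\snext\Lambda$ scale by scale. By symmetry, I construct $\tilde a \geq a$; the construction of $\tilde b \leq b$ is entirely analogous, with leftward adjustments replacing rightward ones. Specifically, I build a nondecreasing chain $a = a^{(s+1)} \leq a^{(s)} \leq \dots \leq a^{(0)} =: \tilde a$ by selecting, at each scale $k$ descending from $s$ to $0$, some $a^{(k)} \in [a^{(k+1)}, a^{(k+1)} + 8L_{k+1}]$ such that the buffered window
\begin{equation*}
    \bigl[a^{(k)},\ a^{(k)} + \tfrac{3}{4}L_{k+1} + B_k\bigr], \qquad B_k := \sum_{i < k} 8 L_{i+1},
\end{equation*}
contains no $k$-resonant site relative to $(\theta_*, E_*)$ and the scale-$k$ ancestor of $\mathbf{E}_{s+1}$. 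The safety buffer $B_k$ anticipates the cumulative rightward drift produced by all future steps at scales $i < k$; hence once the chain is complete, the unbuffered window $[\tilde a, \tilde a + \tfrac{3}{4} L_{k+1}]$ lies inside the buffered window at every scale $k$, making $\tilde a$ be $(s+1)$-left-regular by definition.

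The counting at step $k$ mirrors that in Lemma \ref{l:Lconst} and rests on a fixed-$(\theta_*, E_*)$ analog of Lemma \ref{l:Ssep}. Applying Lemma \ref{l:constructionsummary} and Proposition \ref{pr:SRDRsep} at scale $k$ to the energy interval $J_{k,i}^{(j)}$ of the scale-$k$ covering nearest $E_*$ shows that the elements of $\mathcal{S}_k(\theta_*, E_*)$ are either isolated and separated by more than $8M_{k+1}^{(1)}$ (simple case $j_{k+1}=1$) or arise in $n_k$-pairs with $|n_k| \leq 8M_{k+1}^{(1)}$ between which the spacing exceeds $8M_{k+1}^{(2)}$ (double case $j_{k+1}=2$). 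Consequently the search window $[a^{(k+1)}, a^{(k+1)} + 8L_{k+1}]$ contains at most two $k$-resonant sites, and each excludes at most $\tfrac{3}{4} L_{k+1} + B_k + 1$ candidate values of $a^{(k)}$. Since $B_k \leq 10 L_k \ll L_{k+1}$ by the subgeometric scale growth in Lemma \ref{l:lenrelns}, the excluded mass is much smaller than the search window size, guaranteeing that a valid $a^{(k)}$ exists.

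Summing the adjustments yields $|\tilde a - a| \leq \sum_{k=0}^{s} 8L_{k+1}$, which decomposes into a dominant scale-$s$ contribution (at most $L_{s+1}$ plus the pair span $|n_s| \leq 8M_{s+1}^{(1)}$ in the double case) and subdominant lower-scale contributions $\sum_{k < s} 8L_{k+1} \leq 10 L_s \leq M_{s+1}^{(1)}$ (using $L_s \leq M_s^{(2)} < M_{s+1}^{(1)}/10$ from Lemma \ref{l:lenrelns}). Careful bookkeeping of these contributions delivers $|\tilde a - a| \leq L_{s+1} + \tfrac{3}{2} M_{s+1}^{(1)}$, and combining with the symmetric bound on $|\tilde b - b|$ while setting $\tilde\Lambda := [\tilde a, \tilde b]$ produces the claimed estimate $|\tilde\Lambda| \geq |\snext\Lambda| - (2L_{s+1} + 3M_{s+1}^{(1)})$. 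The main technical obstacle — that lower-scale adjustments could in principle undo the higher-scale nonresonance conditions already secured — is handled uniformly by the safety buffer $B_k$: it absorbs all cumulative future rightward drift at scales below $k$, and since it remains negligibly small compared with $L_{k+1}$, the scale-$k$ counting still leaves ample room for a valid $a^{(k)}$ at every step.
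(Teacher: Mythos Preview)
Your overall strategy matches the paper's: trim the endpoints of $\snext\Lambda$ scale by scale, descending from $k = s$ to $k = 0$, ensuring at each step that the required $k$-nonresonance window to the right of the current left endpoint is clear. The paper does this by a direct ``jump past the nearest $k$-resonance (pair)'' rather than your search-window counting argument borrowed from Lemma~\ref{l:Lconst}, but both ideas are valid and your safety-buffer device $B_k$ is a clean way to handle the cross-scale interaction.

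There is, however, a genuine quantitative inconsistency in your bookkeeping. You commit to a search window of size $8L_{k+1}$ at every scale and state that summing gives $|\tilde a - a| \le \sum_{k=0}^{s} 8 L_{k+1}$; you then assert that this sum ``decomposes'' so that the top-scale contribution is at most $L_{s+1}$ plus the pair span $|n_s| \le 8 M_{s+1}^{(1)}$. These two claims are incompatible: the top-scale term in your sum is $8 L_{s+1}$, not $L_{s+1} + |n_s|$. Concretely, when $j_{s+1}=1$ and $L_{s+1}$ is near $M_{s+1}^{(1)}$, your window allows a per-endpoint adjustment up to $8M_{s+1}^{(1)}$, exceeding the target $L_{s+1} + \tfrac{3}{2}M_{s+1}^{(1)} \le \tfrac{5}{2}M_{s+1}^{(1)}$; when $j_{s+1}=2$, the $8L_{s+1}$ term can be of order $M_{s+1}^{(2)}$, far larger than the required bound.

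The fix is straightforward and is essentially what the paper does: instead of asserting existence somewhere in an $8L_{k+1}$-window, take $a^{(k)}$ to be the \emph{minimal} integer $\ge a^{(k+1)}$ whose buffered window is clear. Since any $k$-resonance blocking $a^{(k+1)}$ lies in that buffered window and the next $k$-resonance (or pair) is more than $8M_{k+1}^{(j_{k+1})}$ away, the minimal jump is at most $\tfrac{3}{4}L_{k+1} + B_k + |n_k|\cdot \mathbf{1}_{\{j_{k+1}=2\}} + 1 \le L_{k+1}$, using $B_k \le 10 L_k \ll L_{k+1}$ and, in the double case, $|n_k| \le 8M_{k+1}^{(1)} \ll L_{k+1}$. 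Summing $\sum_{k \le s} L_{k+1} = L_{s+1} + \sum_{k < s} L_{k+1} \le L_{s+1} + 2L_s < L_{s+1} + M_{s+1}^{(1)}$ then delivers the needed per-endpoint bound with room to spare.
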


\begin{proof}
	In the following, if $j_{s+1} = 1$, we adopt the convention that $n_s=0$.  
	
	Let $m_a$ minimize $|m-a|$ among $m \in \mathcal{S}_{s}^{\snext\Lambda}$.  %
	If $\theta_*+m_a\alpha \in \mathbf I_s$ 
	and $|m_a-a|<L_s$, we replace $a$ with $a_{s-1} = \max\{m_a+1, m_a+n_0+1\}$; similarly, if $\theta_*+m_a\alpha \in \mathbf I_{s+1}+n_s\alpha$ and $|m_a-a|<L_{s+1}$, replace $a$ with $a_{s} = \max\{m_a+1, m_a-n_s+1\}$.  
	Otherwise $a_{s}=a$.  We perform analogous adjustments to $b$. 
	We repeat this process for each $s-1 \geq i \geq 1$, with $i$ replacing $s$, and set $\tilde a = a_0$ and $\tilde b = b_0$. 
	By the inductive proposition, these endpoints  satisfy the necessary regularity conditions.
\end{proof}

\subsection{Proof of Proposition \ref{pr:induction} and Theorem \ref{t:induction}}

\begin{proof}
    The lemmas above complete the proof of inductive Proposition \ref{pr:induction} at scale $s$.  The integers $L_{s+1}$ satisfy the Proposition by Lemmas \ref{l:Lconst} and \ref{l:dirreg}.  Any function in $\mathbf{E}_{s+1} \in \mathcal{E}(\mathbf{E}_s)$ is a Rellich function of some $H^{\Lambda_{s+1}}$ satisfying Assumption \ref{as:C2fn} by Lemma \ref{l:jrelations}; such a function satisfies inductive hypothesis \ref{prsub:c2rell} by Lemma \ref{l:res}, hypothesis \ref{pr:resorbs} by Lemmas \ref{l:centerdist} and \ref{l:Scond}, and hypothesis \ref{prsub:NR} by Lemmas \ref{l:NRreg} and \ref{l:NRintadj}.   %
    For any $\mathbf{E}_{s+1} \in \mathcal{E}(\mathbf{E}_{s})$, $\mathbf J_{s+1} \cup \salt{\mathbf J}(\mathbf{E}_{s+1}) \subset \salt{\mathbf J}(\mathbf E_s)$ by Lemma \ref{l:jrelations}; and if $E_* = E_{n_{s+1}}(\mathbf E_{s+1})$ for $n_{s+1} \in \mathcal N_{s+1}$, then $\overline{B}_{\rho_{s}}(E_*) \subset \salt{\mathbf{J}}(\mathbf{E}_s)$ by Lemma \ref{l:item3}.  Item 4 follows from Lemma \ref{l:constructionsummary}.
    
    Since our choice of $\varepsilon < \varepsilon_0 = \rho_0^4$ was arbitrary provided $L$ was sufficiently long as in Lemma \ref{l:lenrelns}, Theorem \ref{t:induction} holds.
\end{proof}

\newpage
\section{Localization and Cantor spectrum}

With the induction argument of Section \ref{sec:induction} complete, we have now constructed a tree of Rellich functions $\mathbf{E}_s$, cf. Figure \ref{f:Rellichtree}, with each child classified as being either simple- or double-resonant relative to its parent.  In this section we relate this tree to spectral information about the limiting operator $H$.  In particular, we will show the following:
\begin{enumerate}
    \item The spectral points of $H$ are precisely the limit points of the modified codomains $\salt{\salt{\mathbf{J}}}(\mathbf{E}_k)$ along any infinite path $\{\mathbf{E}_k\}_{k=0}^\infty$ in our Rellich tree.
    \item Every Rellich function $\mathbf{E}$ has some double-resonant descendant; 
    by the uniform local separation estimate from Proposition \ref{pr:unifmLocalSep},
    the gap that this double resonance opens is large enough to %
    remain open for all future scales, guaranteeing Cantor spectrum. 
    \item The set $\mathrm{B}$ of bad phases $\theta_*$ which encounter a scale-$k$ double resonance on an orbit of approximate size $M_{k+1}^{(2)}$ for infinitely many scales $k$ has zero measure.  The full-measure complement $\Theta$ of this bad set will have the property that, for $\theta \in \Theta$, any generalized eigenvalue $E(\theta)$ of $H(\theta)$ corresponds to a path $\{\mathbf{E}^{(j_{k})}_k\}_{k=0}^\infty$ which eventually consists only of simple resonances, i.e. there exists some $K$ such that $j_k = 1$ for all $k \geq K$.  In fact, we will show that this path can be chosen so that the corresponding center of localization $m_k$ eventually remains fixed; this will yield exponential localization of the corresponding generalized eigenfunction, hence Anderson localization.
\end{enumerate}

To this end, it will be useful to introduce some notation to refer to different parts of our Rellich tree.  Recall that $\mathcal{E}(\mathbf{E})$ denotes the collection of all immediate children of the Rellich curve $\mathbf{E}$, and that $\mathcal{E}_s$ denotes the collection of all scale-$s$ Rellich functions, i.e. the $s^{th}$ ``generation'' of the tree.  For a Rellich function $\mathbf{E} \in \mathcal{E}_k$, $k \leq s$, we denote by $\mathcal E_s(\mathbf E)$ the collection of all the scale-$s$ descendants of $\mathbf E$; i.e., for a scale-$k$ Rellich function $\mathbf E_k$, $\mathcal E_k(\mathbf E_k) = \{\mathbf E_k\}$, and for all $s\geq k$, if $\mathbf E' \in \mathcal E_s(\mathbf E_k)$ and $\mathbf E'' \in \mathcal E(\mathbf E')$, then $\mathbf E'' \in \mathcal E_{s+1}(\mathbf E_k)$.
With this notation we have $\mathcal E_k = \mathcal E_k(\mathbf E_0)$; we denote by $\mathcal E := \bigcup_{k\geq0} \mathcal E_k$ the entire Rellich tree.

\subsection{Characterization of the spectrum}

In this first subsection, we use paths through the tree of Rellich curves to characterize the spectrum of $H$.

First, note that there can be at most one energy common to the modified codomains $\salt{\mathbf{J}}(\mathbf{E}_k)$ of all Rellich functions in an infinite path:

\begin{lem} \label{l:Jto0}
Let $\{\mathbf E_k\}_{k=0}^\infty$ be a sequence of Rellich functions such that $\mathbf E_{k+1} \in \mathcal E(\mathbf E_k)$ for all $k\geq0$.  Then $\liminf_{k\to\infty}|{\salt{\mathbf J}}(\mathbf E_k)|=\liminf_{k\to\infty}|\salt{\salt{\mathbf J}}(\mathbf E_k)|=0$. In particular, $\bigcap_{k=0}^\infty \salt{\salt{\mathbf J}}(\mathbf E_k)$ contains at most a single point.
\end{lem}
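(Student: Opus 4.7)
The plan is to bound the lengths of the modified codomains directly using the parameters defined in the induction, show they tend to zero, and deduce uniqueness of the intersection from this.

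First I would invoke item~3 of Proposition~\ref{pr:induction}, which gives $|\mathbf{J}_{k+1}| \leq 2\bar{\rho}_k$ for every $k \geq 0$. Since $\salt{\mathbf{J}}(\mathbf{E}_k)$ and $\salt{\salt{\mathbf{J}}}(\mathbf{E}_k)$ are obtained from $\mathbf{J}_k$ by extending each endpoint by at most $\rho_k$ (the critical case) or contracting by $\tfrac{9}{8}\rho_{k-1}$ or $\tfrac{5}{4}\rho_{k-1}$ respectively (the non-critical case), one obtains the crude upper bound
\begin{equation*}
\max\bigl\{|\salt{\mathbf{J}}(\mathbf{E}_{k+1})|,\, |\salt{\salt{\mathbf{J}}}(\mathbf{E}_{k+1})|\bigr\} \leq |\mathbf{J}_{k+1}| + 2\rho_{k+1} \leq 2\bar{\rho}_k + 2\rho_{k+1}.
\end{equation*}

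Next I would check that $\bar{\rho}_k \to 0$ (and $\rho_k \to 0$) as $k \to \infty$. By the inductive definitions $\rho_k = \varepsilon^{L_k^{2/3}}$ and $\bar{\rho}_k = \frac{d_k}{15000 D_0^2}\rho_{k-1}^2$, together with the length-scale relations in Lemma~\ref{l:lenrelns} (in particular $L_k \to \infty$ super-geometrically), the sequence $\rho_k$ decays doubly exponentially in $k$, and hence so does $\bar{\rho}_k$. Combining with the displayed estimate yields $\lim_{k\to\infty} |\salt{\mathbf{J}}(\mathbf{E}_k)| = \lim_{k\to\infty} |\salt{\salt{\mathbf{J}}}(\mathbf{E}_k)| = 0$, which certainly gives the liminf statement.

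Finally, to conclude that $\bigcap_{k=0}^\infty \salt{\salt{\mathbf{J}}}(\mathbf{E}_k)$ contains at most one point, I would argue by contradiction: if two distinct points $x,y$ belonged to this intersection, then for every $k$ one would have $|x-y| \leq \diam \salt{\salt{\mathbf{J}}}(\mathbf{E}_k) = |\salt{\salt{\mathbf{J}}}(\mathbf{E}_k)|$, since each $\salt{\salt{\mathbf{J}}}(\mathbf{E}_k)$ is an interval. Taking $k$ along the subsequence realizing the liminf forces $|x-y|=0$, a contradiction. There is no genuine obstacle here; the only mild bookkeeping point is to confirm that the non-critical trimming does not cause $\salt{\mathbf{J}}$ or $\salt{\salt{\mathbf{J}}}$ to become something other than an interval, which is immediate from definition~\eqref{eq:Jcheck}.
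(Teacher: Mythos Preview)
Your proof is correct and follows essentially the same approach as the paper: the paper's proof is the one-line observation that $|\salt{\mathbf J}(\mathbf E_k)|,|\salt{\salt{\mathbf J}}(\mathbf E_k)| \leq 4\bar\rho_{k-1} \to 0$, which is exactly your bound $2\bar\rho_{k-1}+2\rho_k$ after absorbing $\rho_k\ll\bar\rho_{k-1}$. Your additional paragraphs justifying $\bar\rho_k\to 0$ and the uniqueness of the intersection are fine elaborations of steps the paper leaves implicit.
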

\begin{proof}
By construction, $|\salt{\mathbf J}(\mathbf E_k)|,|\salt{\salt{\mathbf J}}(\mathbf E_k)| \leq 4\bar\rho_{k-1} \to 0$.
\end{proof}

Thus, each infinite path characterizes a single point; we will show these are spectral points. Define the sets 
\begin{equation*}
    \Sigma := \left\{E \in \mathbb R : E = \bigcap_{k=0}^\infty \salt{\salt{\mathbf J}}(\mathbf E_k),\; \mathbf E_{k+1} \in \mathcal E(\mathbf E_k)\, \forall k\geq0\right\}
\end{equation*}
and
\begin{equation*}
G := (\mathbb R \setminus \salt{\salt{\mathbf J}}(\mathbf{E}_0)) \cup \bigcup_{\mathbf E \in \mathcal{E}}\left(\salt{\salt{\mathbf J}}(\mathbf E) \setminus \bigcup_{\mathbf E' \in \mathcal{E}(\mathbf E)}\salt{\salt{\mathbf J}}(\mathbf E')\right).
\end{equation*}
We will show that $\Sigma$ characterizes the spectrum of $H(\theta)$, while $G$ characterizes the spectral gaps.  We first show that points in $\Sigma$ are indeed spectral:

\begin{lem}\label{l:specS}\begin{equation*}
\Sigma \subset \spec H(\theta).
\end{equation*}\end{lem}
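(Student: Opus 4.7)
The plan is to apply Weyl's criterion: for $E \in \Sigma$, I will construct unit vectors $\phi_k \in \ell^2(\mathbb{Z})$ with $\|(H(\theta)-E)\phi_k\| \to 0$, forcing $E \in \spec H(\theta)$. The two ingredients are (i) Rellich eigenvectors concentrated deep inside $\Lambda_k$, supplied by Propositions \ref{pr:AL1} and \ref{pr:AL2} along the path, and (ii) minimality of the rotation by $\alpha$.

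First I fix the defining path $\{\mathbf{E}_k\}_{k \geq 0}$ of $E$. Since $E \in \salt{\salt{\mathbf{J}}}(\mathbf{E}_k)$ for every $k$ and $\salt{\salt{\mathbf{J}}}(\mathbf{E}_k)$ lies within $\rho_k$ of $\mathbf{J}_k$, I can select $\theta_k \in \mathbf{I}_k$ with $|\mathbf{E}_k(\theta_k) - E| \leq \rho_k$, and let $\psi_k$ denote the corresponding unit eigenvector of $H^{\Lambda_k}(\theta_k)$. Iterating Propositions \ref{pr:AL1} and \ref{pr:AL2} down the path, $\psi_k$ is exponentially concentrated on a small subinterval of $\Lambda_k$; in particular $|\psi_k(\pm L_k/2)|$ decays exponentially in $L_k$.

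Next, using minimality of the rotation, I pick $n_k \in \mathbb{Z}$ with $\|\theta + n_k\alpha - \theta_k\|_{\mathbb{T}}$ as small as desired, say smaller than $\rho_k/\sqrt{L_k}$. Translating $\psi_k$ by $n_k$ and extending by zero off $\Lambda_k + n_k$ yields a unit vector $\phi_k \in \ell^2(\mathbb{Z})$; by the shift relation $H^{\Lambda_k}(\theta_k) = H^{\Lambda_k + n_k}(\theta_k - n_k\alpha)$, this $\phi_k$ is an exact eigenvector of $H^{\Lambda_k + n_k}(\theta_k - n_k\alpha)$ at eigenvalue $\mathbf{E}_k(\theta_k)$. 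I then decompose
\begin{equation*}
(H(\theta) - E)\phi_k = \bigl(V(\theta) - V(\theta_k - n_k\alpha)\bigr)\phi_k + \bigl(\mathbf{E}_k(\theta_k) - E\bigr)\phi_k + \bigl(H(\theta) - H^{\Lambda_k + n_k}(\theta)\bigr)\phi_k,
\end{equation*}
and estimate: the first term is a multiplication operator of norm at most $D_0\|\theta + n_k\alpha - \theta_k\|_{\mathbb{T}}\sqrt{L_k}$ by the Lipschitz bound on $v$ and Cauchy--Schwarz; the second is at most $\rho_k$; the third is supported on the two sites bordering $\Lambda_k + n_k$ and is bounded by $2\varepsilon\max\{|\psi_k(\pm L_k/2)|\}$. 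Each tends to zero as $k \to \infty$, so Weyl's criterion gives $E \in \spec H(\theta)$.

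The only technical obstacle is mild bookkeeping -- verifying that the polynomial prefactor $\sqrt{L_k}$ on the interior error is absorbed by our (unlimited) freedom in choosing $n_k$. The substantive content of the argument lies entirely in Propositions \ref{pr:AL1} and \ref{pr:AL2}, which supply the Anderson localization of $\psi_k$ required to make the boundary term negligible.
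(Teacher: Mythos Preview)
Your argument is correct and follows essentially the same Weyl-sequence approach as the paper: pick $\theta_k\in\mathbf{I}_k$ with $|\mathbf{E}_k(\theta_k)-E|\le\rho_k$, shift the associated eigenvector by some $n_k$ with $\theta+n_k\alpha$ close to $\theta_k$, and control the three error pieces (potential perturbation, eigenvalue offset, boundary leakage).

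One minor simplification: your $\sqrt{L_k}$ factor on the potential term is unnecessary. Since $V(\theta)-V(\theta_k-n_k\alpha)$ is a diagonal operator with entries uniformly bounded by $D_0\|\theta+n_k\alpha-\theta_k\|_{\mathbb T}$, one has $\|(V(\theta)-V(\theta_k-n_k\alpha))\phi_k\|\le D_0\|\theta+n_k\alpha-\theta_k\|_{\mathbb T}$ directly; no Cauchy--Schwarz or length factor enters. The paper uses exactly this bound with $\|\theta+n_k\alpha-\theta_k\|_{\mathbb T}<\rho_k/D_0$. Your version still works because you absorb the spurious factor into the choice of $n_k$, but the sharper estimate removes the need for that bookkeeping. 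The boundary term is handled identically in both proofs via the exponential localization of $\psi_k$ furnished by Propositions~\ref{pr:AL1} and~\ref{pr:AL2}; the paper just writes this as $\|\Gamma_{\mathcal P}\psi_k\|<\rho_k$ without citing them explicitly.
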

\begin{proof}
Let $E \in \Sigma$. Then for each $k\geq0$, we can find a $(\theta_k,E_k) \in \mathbb T \times \mathbb R$, with $|E-E_k|\leq\rho_k$, and $\mathbf {E}_k \in \mathcal{E}$ such that $H^{\Lambda_k}\psi_k = \mathbf{E}_k(\theta_k)\psi_k = E_k\psi_k$. By irrationality of $\alpha$, we can find some $m_k \in \mathbb Z$ such that $\|(\theta+m_k\alpha)-\theta_k\|_\mathbb{T} <\rho_k/D_0$. Let $\mathcal P$ be the partition of $\mathbb Z$ into $\Lambda_k$ and its complement, and let $\psi_k^{m_k}$ denote $\psi_k$ shifted by $m_k$. Then 
\begin{align*}
\|(H(\theta) - E)\psi_k^{m_k}\| &= \|(H(\theta+m_k\alpha)-E)\psi_k\| \\
&\leq \|H(\theta+m_k\alpha)-H(\theta_k)\| + \|(H(\theta_k)-E_k)\psi_k\| + |E_k-E| \\
&\leq  \|V(\theta+m_k\alpha)-V(\theta_k)\| + \|\Gamma_{\mathcal P}\psi_k\| + |E_k-E| \\
&< 3\rho_k
\end{align*}
Thus $\{\psi_k^{m_k}\}_{k=0}^\infty$ forms a Weyl sequence for $E$, and $E \in \spec H(\theta)$.
\end{proof}

We now establish an important technical lemma demonstrating that energies
outside of the modified codomains of all children of a Rellich curve are uniformly nonresonant on sufficiently long intervals $\Lambda$:
\begin{lem} \label{l:longNR}
Let $E_* \in \salt{\salt{\mathbf J}}(\mathbf E_k) \setminus \bigcup_{\mathbf E \in \mathcal E(\mathbf E_k)} \salt{\salt{\mathbf J}}(\mathbf E)$. Then for any $\theta_* \in \mathbb{T}$, there are arbitrarily long intervals $\tilde{\Lambda}$ with the $(\ell_{k+1}, \gamma_{k+1})$ Green's function decay property.
\end{lem}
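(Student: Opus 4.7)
The plan is to invoke the inductive Hypothesis~\ref{prsub:NR} at scale $k+1$ (established by Proposition~\ref{pr:induction}) after showing that $E_*$ is uniformly separated from every scale-$(k+1)$ Rellich codomain. I choose $\mathbf E_{k+1} \in \mathcal E(\mathbf E_k)$ minimizing $\dist(E_*, \salt{\mathbf J}(\mathbf E_{k+1}))$; since $E_* \in \salt{\salt{\mathbf J}}(\mathbf E_k) \subset \salt{\mathbf J}(\mathbf E_k)$, Lemma~\ref{l:jrelations}(3) (combined with the inclusion $\salt{\salt{\mathbf J}} \subset \salt{\mathbf J}$) delivers the boundary-condition hypotheses needed to apply Hypothesis~\ref{prsub:NR} at scale $k+1$ with this choice.

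The heart of the argument is the claim
\begin{equation*}
\dist(E_*, \mathbf J') \geq \rho_{k+1} \qquad \text{for every } \mathbf E' \in \mathcal E(\mathbf E_k).
\end{equation*}
I verify it by classifying how $E_*$ can lie in $\salt{\salt{\mathbf J}}(\mathbf E_k) \setminus \bigcup_{\mathbf E \in \mathcal E(\mathbf E_k)} \salt{\salt{\mathbf J}}(\mathbf E)$. Adjacent codomains of non-double-resonant siblings inherit $3\rho_k$ overlap (up to negligible $C\delta_k$ error) from the covering intervals of Proposition~\ref{pr:C2cover} via Lemma~\ref{l:jrelations}(1); the non-critical trimming in $\salt{\salt{\mathbf J}}$ removes only $\tfrac{5}{4}\rho_k$ on each side, so the $\salt{\salt{\mathbf J}}$'s still overlap by at least $\tfrac{1}{2}\rho_k$, and interior simple-resonant gaps do not arise. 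The remaining geometries for $E_*$ are therefore (i) a double-resonance gap between $\salt{\salt{\mathbf J}}(\mathbf E^{(2)}_{k+1,n,\wedge})$ and $\salt{\salt{\mathbf J}}(\mathbf E^{(2)}_{k+1,n,\vee})$, where the critical extrema of the DR pair give $\sup\salt{\salt{\mathbf J}}(\mathbf E^{(2)}_{k+1,n,\wedge}) = \sup\mathbf J^{(2)}_{k+1,n,\wedge} + \rho_{k+1}$ and the analogous identity for $\vee$, and where separation from any non-DR sibling follows from $E_* \in J^{(2)}_{k,n}$ together with the Diophantine separation of $E_n$ from the other covering intervals (Proposition~\ref{pr:C2cover} and Lemma~\ref{l:DRensep}); and (ii) an edge gap at a critical extremum of $\mathbf E_k$, handled by analogous bookkeeping using the critical-versus-non-critical trimming constants in the definition of $\salt{\salt{\mathbf J}}$.

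Given this claim, $|\mathbf E_{k+1}(\theta) - E_*| \geq \rho_{k+1}$ for every $\theta \in \mathbf I_{k+1}$, so for \emph{any} $\theta_* \in \mathbb T$ every subinterval of $\mathbb Z$ is automatically $(k+1)$-nonresonant relative to $(\theta_*, E_*, \mathbf E_{k+1})$. For arbitrary $N$, take $\snext\Lambda$ with $|\snext\Lambda| \geq N + 2L_{k+1} + 3M^{(1)}_{k+1}$; the second half of Hypothesis~\ref{prsub:NR} produces a subinterval $\tilde\Lambda \subset \snext\Lambda$ with $|\tilde\Lambda| \geq N$ and the appropriate regular endpoints, and the first half then gives $\tilde\Lambda$ the desired $(\ell_{k+1}, \gamma_{k+1})$ Green's function decay property. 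The main obstacle is the double-resonance case of the separation claim: the bound $\dist(E_*, \mathbf J^{(2)}_{k+1,n,\bullet}) \geq \rho_{k+1}$ ultimately rests on the uniform local separation of Proposition~\ref{pr:unifmLocalSep} together with the scale relation in Lemma~\ref{l:lenrelns}(5), which ensures the DR gap opened at scale $k+1$ already exceeds $2\rho_{k+1}$ — precisely the persistence property that Section~6 will exploit to establish Cantor spectrum.
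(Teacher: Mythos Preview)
Your approach is correct and matches the paper's high-level strategy: both verify the hypotheses of Hypothesis~\ref{prsub:NR} at scale $k+1$ by showing $\dist(E_*, \mathbf J_{k+1}) > \rho_{k+1}$ for the minimizing child $\mathbf E_{k+1}$, then invoke the interval-adjustment half of that hypothesis. The paper's route to the separation is a bit cleaner than your case analysis---it uses Lemma~\ref{l:jrelations}(4) twice: first to show $E_* \notin \salt{\mathbf J}(\mathbf E)$ for \emph{every} child (were $E_* \in \salt{\mathbf J}(\mathbf E) \setminus \salt{\salt{\mathbf J}}(\mathbf E)$, it would sit near a non-critical edge of $\mathbf E$ and hence lie in a sibling's $\salt{\salt{\mathbf J}}$, a contradiction), and second to force the minimizer's nearest point in $\mathbf J_{k+1}$ to be a critical extremum, from which the $\rho_{k+1}$ separation is immediate by the definition of $\salt{\mathbf J}$.

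One point worth correcting: your identification of Proposition~\ref{pr:unifmLocalSep} as ``the main obstacle'' is misplaced for this lemma. The bound $\dist(E_*, \mathbf J^{(2)}_{k+1,n,\bullet}) > \rho_{k+1}$ follows directly from $E_* \notin \salt{\salt{\mathbf J}}(\mathbf E^{(2)}_{k+1,n,\bullet})$ together with the fact that $\salt{\salt{\mathbf J}}$ extends by $\rho_{k+1}$ past a critical extremum; no quantitative lower bound on the gap size is required (were the gap at most $2\rho_{k+1}$, your double-resonance case would simply be vacuous and the lemma would still hold). Proposition~\ref{pr:unifmLocalSep} with Lemma~\ref{l:lenrelns}(5) is what guarantees the gap region is \emph{nonempty}---that is the content needed for Lemma~\ref{l:Jgap} and Cantor spectrum, not for the present lemma. (Also: your citation of Lemma~\ref{l:jrelations}(1) for the $3\rho_k$ sibling overlap should point to item~(4).)
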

\begin{proof}
It suffices to show that the hypotheses of Lemma \ref{l:NRintadj} are met by $E_*$, any $\theta_* \in \mathbb T$, and any sufficiently long interval $\Lambda \subset \mathbb Z$.

Suppose $\mathbf E_k$ does not attain its maximum at a critical point.  We have $E_* \in \salt{\salt{\mathbf J}}(\mathbf E_k) \subset {\salt{\mathbf J}}(\mathbf E_k)$, and, by item 2 of Lemma \ref{l:jrelations}, $$E_* \leq \sup \salt{\salt{\mathbf J}}(\mathbf E_k) \leq \sup \bigcup_{\mathbf E \in\mathcal E(\mathbf E_k)} \salt{\salt{\mathbf J}}(\mathbf E) \leq \sup \bigcup_{\mathbf E \in\mathcal E(\mathbf E_k)} \salt{\mathbf J}(\mathbf E)$$ 
Similarly $E_* \geq \inf \bigcup_{\mathbf E \in\mathcal E(\mathbf E_k)} \salt{\mathbf J}(\mathbf E)$ if $\mathbf E_k$ does not attain its minimum at a critical point. 

Now suppose that $E_* \in \salt{\mathbf J}(\mathbf E) \setminus \salt{\salt{\mathbf J}}(\mathbf E)$ for some $\mathbf E \in \mathcal E(\mathbf E_k)$. By definition of $\salt{\mathbf J}$ and $\salt{\salt{\mathbf J}}$, this means that $E_*$ is near a non-critical-point extremum of $\mathbf E$, and by item 3 of Lemma \ref{l:jrelations}, it follows that $E_* \in \salt{\salt {\mathbf J}}(\mathbf E')$ for some other $\mathbf E' \in \mathcal E(\mathbf E_k)$, a contradiction. Thus we have $E_* \notin \salt{\mathbf J}(\mathbf E)$ for any $\mathbf E \in \mathcal E(\mathbf E_s)$.

Now let $\mathbf E \in \mathcal E(\mathbf E_k)$ minimize $\dist(E_*, \salt{\mathbf J}(\mathbf E))$ among all $\mathbf E \in \mathcal E(\mathbf E_k)$, let $\mathbf I$ be its domain, and let $\mathbf J$ be its image. By the above comments, this distance is nonzero; thus, $E_c := \arg\min_{E \in \mathbf J}|E_*-E|$ is an extremum of $\mathbf E$. But by item 3 of Lemma \ref{l:jrelations}, if this extremum were not a critical point, there would be some $\mathbf E' \in \mathcal E(\mathbf E_k)$ with $\dist(E_*, \salt{\mathbf J}(\mathbf E')) < \dist(E_*, \salt{\mathbf J}(\mathbf E))$, which is not the case; thus, $E_c$ is a critical point of $\mathbf E$.  Furthermore, since $E_* \notin \salt{\mathbf J}(\mathbf E)$, $\dist(E_*,\mathbf J)>\rho_{k+1}$. It follows that for any $m \in \mathbb Z$, either $|\mathbf E(\theta_* + m\alpha)-E_*|>\rho_{k+1}$, or $\theta_*+m\alpha \notin \mathbf I$. 
Thus, all the hypotheses of Lemma \ref{l:NRintadj} are met.
\end{proof}

As a consequence of this result, we can separate the energies in $G$ from generalized eigenvalues of $H$:

\begin{lem}\label{l:gapsG}
If $E_* \in G$, then there is some $\rho>0$ such that for $|E-E_*|<\rho$, $E$ is not a generalized eigenvalue of $H(\theta)$ corresponding to a generalized eigenvector $\psi$ growing at most quadratically ($|\psi(j)|<C(j^2+1)$).
\end{lem}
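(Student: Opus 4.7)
The plan is to exploit arbitrarily long Green's function decay intervals surrounding any prescribed site in order to force an at-most-quadratically-growing generalized eigenfunction $\psi$ to vanish identically. Specifically, I will choose $\rho > 0$ depending on which component of $G$ the energy $E_*$ belongs to, use the previously established nonresonance machinery to produce arbitrarily long intervals with Green's function decay, and then let the interval grow unboundedly via the Poisson formula while the polynomial growth of $\psi$ is defeated by the exponential decay of the Green's function.

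There are two cases in the definition of $G$. First, suppose $E_* \in \mathbb{R} \setminus \salt{\salt{\mathbf{J}}}(\mathbf E_0)$. Since $\mathbf E_0 = v$ attains both extrema at critical points, one has $\salt{\salt{\mathbf J}}(\mathbf E_0) = [\inf v - \rho_0, \sup v + \rho_0]$, so $|v(\theta') - E_*| > \rho_0$ uniformly in $\theta' \in \mathbb{T}$. Choosing $\rho > 0$ sufficiently small, a direct Neumann series argument (as in Lemma \ref{l:initGrnDec}) yields Green's function decay at rate $\gamma_0$ on \emph{every} interval $\Lambda \subset \mathbb Z$ for all $E \in B_\rho(E_*)$ and any $\theta$. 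Otherwise, $E_* \in \salt{\salt{\mathbf J}}(\mathbf E_k) \setminus \bigcup_{\mathbf E \in \mathcal E(\mathbf E_k)} \salt{\salt{\mathbf J}}(\mathbf E)$ for some $k \geq 0$; as argued in the proof of Lemma \ref{l:longNR}, $E_*$ then satisfies the hypotheses of Lemmas \ref{l:NRintadj} and \ref{l:NRreg}. Setting $\rho := \rho_{k+1}/2$, any interval $\snext\Lambda$ of length at least $2L_{k+1} + 3M_{k+1}^{(1)}$ contains a subinterval $\tilde\Lambda \subset \snext\Lambda$, obtained by trimming at most $2L_{k+1} + 3M_{k+1}^{(1)}$ from the endpoints, enjoying the $(\ell_{k+1}, \gamma_{k+1})$ Green's function decay property for all $E \in B_\rho(E_*)$.

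Now fix $n \in \mathbb Z$ and take $L \gg L_{k+1} + M_{k+1}^{(1)}$ (resp.\ $L \geq 1$ in the first case); applying the above to $\snext\Lambda := [n - L, n + L]$, I obtain $\tilde\Lambda_L = [a_L, b_L] \ni n$ with $\min\{n - a_L, b_L - n\} \geq L - 2L_{k+1} - 3M_{k+1}^{(1)}$ and with the relevant Green's function decay. Since $\|R^{\tilde\Lambda_L}\| \leq 4\rho^{-1}$ (so $E \notin \spec H^{\tilde\Lambda_L}$), the Poisson formula (Lemma \ref{l:poisson}) applies and yields
\begin{equation*}
|\psi(n)| \leq \varepsilon |R^{\tilde\Lambda_L}(n, a_L)| |\psi(a_L - 1)| + \varepsilon |R^{\tilde\Lambda_L}(n, b_L)| |\psi(b_L + 1)| \leq 2\varepsilon e^{-\gamma_{k+1}(L - O_k(1))} \cdot C\bigl((|n| + L)^2 + 1\bigr),
\end{equation*}
where the final bound uses the hypothesized at-most-quadratic growth of $\psi$. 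Sending $L \to \infty$ forces $\psi(n) = 0$, and since $n$ was arbitrary, $\psi \equiv 0$, contradicting that $\psi$ is a generalized eigenfunction. The primary machinery (Lemma \ref{l:longNR}) is already in hand; the only remaining technical points are the direct Neumann argument for the $\mathbf E_0$ case and verifying that $n$ remains well inside the trimmed interval $\tilde\Lambda_L$, both of which are immediate from the construction.
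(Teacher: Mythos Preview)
Your proof is correct and follows essentially the same approach as the paper's: invoke Lemma \ref{l:longNR} to obtain arbitrarily long intervals with Green's function decay around any given site, then use the Poisson formula to show that exponential decay defeats the quadratic growth of $\psi$, forcing $\psi \equiv 0$. Your explicit case split for $E_* \notin \salt{\salt{\mathbf J}}(\mathbf E_0)$ (handled directly via the Neumann series of Lemma \ref{l:initGrnDec}) is slightly more careful than the paper, which tacitly subsumes this case under the general machinery via the convention $\salt{\salt{\mathbf J}}(\mathbf E_{-1}) = \mathbb R$.
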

\begin{proof}
By Lemma \ref{l:longNR}, there is some scale $k\geq0$ %
for which we have $(\ell_k, \gamma_k)$ Green's function decay on arbitrary long intervals; i.e., for $|E-E_*|<\rho_k/2$, 
\begin{equation*}
\log|R^{\tilde{\Lambda}}_{E,\theta}(m,n)| \leq -\gamma_{k}|m-n|, \quad |m-n| \geq \ell_{k},
\end{equation*}
for arbitrarily long intervals $\tilde{\Lambda}$. Denote $\tilde\Lambda=[\tilde a,\tilde b]$. 

Assume $E$ is generalized eigenvalue of $H(\theta)$ corresponding to a generalized eigenvector $\psi$ growing at most quadratically ($|\psi(j)|<C(j^2+1)$). Fix $j \in \mathbb Z$, and choose $\tilde{\Lambda}\ni j$ large enough so $\min\{|\tilde a - j|,|\tilde b - j|\}\geq|\tilde\Lambda|/3\geq\ell_k$. By the Poisson formula \eqref{eq:poisson}, 
\begin{align*}
|\psi(j)| &\leq e^{-\gamma_k|\tilde\Lambda|/3}\left(|\psi(\tilde a-1)|+|\psi(\tilde b+1)|\right) \\
&\leq 2Ce^{-\gamma_k|\tilde\Lambda|/3}((|j|+|\tilde\Lambda|)^2+1)
\end{align*}
As $|\tilde\Lambda|\to\infty$, the right-hand side approaches $0$, so we must have $|\psi(j)|=0$. Since this is true for arbitrary $j$, $\psi$ must be identically zero, i.e., $E$ must not be a generalized eigenvalue.
\end{proof}

Combining the above results with the sets' definitions, we arrive at the characterization of $\spec H(\theta)$:

\begin{lem}\label{l:specchar}
\begin{equation*}
\spec H(\theta) = \mathbb R \setminus G = \Sigma.
\end{equation*}
\end{lem}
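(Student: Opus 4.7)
The plan is to establish the chain of inclusions
$$\mathbb R \setminus G \;\subseteq\; \Sigma \;\subseteq\; \spec H(\theta) \;\subseteq\; \mathbb R \setminus G,$$
which forces all three sets to coincide.  The middle inclusion is precisely Lemma \ref{l:specS}, so it remains to verify the first and the last.

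For $\mathbb R \setminus G \subseteq \Sigma$, I will argue by a K\"onig-style inductive path construction through the Rellich tree.  Fixing $E \in \mathbb R \setminus G$ and unwinding the definition of $G$, we learn that $E \in \salt{\salt{\mathbf J}}(\mathbf E_0)$ and that for every $\mathbf E \in \mathcal E$ with $E \in \salt{\salt{\mathbf J}}(\mathbf E)$, some child $\mathbf E' \in \mathcal E(\mathbf E)$ also satisfies $E \in \salt{\salt{\mathbf J}}(\mathbf E')$.  Starting from $\mathbf E_0$ and selecting such a child at each scale, I build a sequence $\{\mathbf E_k\}_{k=0}^\infty$ with $E \in \bigcap_{k=0}^\infty \salt{\salt{\mathbf J}}(\mathbf E_k)$; by Lemma \ref{l:Jto0} this intersection is exactly $\{E\}$, placing $E$ in $\Sigma$.

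For $\spec H(\theta) \subseteq \mathbb R \setminus G$, I will invoke Schnol's theorem: the generalized eigenvalues of $H(\theta)$ (energies $E$ admitting polynomially bounded formal solutions of $H\psi=E\psi$) are spectrally dense in $\spec H(\theta)$.  Lemma \ref{l:gapsG} furnishes, for each $E_* \in G$, an open ball around $E_*$ free of generalized eigenvalues of at most quadratic growth; the union $U$ of these balls is an open set containing $G$ and disjoint from those generalized eigenvalues.  By spectral density one then has $U \cap \spec H(\theta) = \emptyset$, hence $G \cap \spec H(\theta) = \emptyset$.  Combining the three inclusions gives the desired equality $\mathbb R \setminus G = \Sigma = \spec H(\theta)$.

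The main obstacle I anticipate is a minor mismatch in growth rates: Schnol's theorem produces generalized eigenvectors obeying some polynomial bound whose degree depends on dimension, while Lemma \ref{l:gapsG} is phrased for quadratic growth.  I will handle this by observing that the proof of Lemma \ref{l:gapsG} only uses that the exponential factor $e^{-\gamma_k |\tilde\Lambda|/3}$ dominates any fixed polynomial in $|\tilde\Lambda|$ as $|\tilde\Lambda|\to\infty$, so the argument applies verbatim (up to adjusting constants) to generalized eigenvectors of any fixed polynomial growth.  With that mild upgrade in hand, the remainder of the proof is bookkeeping that ties together Lemmas \ref{l:Jto0}, \ref{l:specS}, and \ref{l:gapsG}.
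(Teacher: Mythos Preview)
Your proposal is correct and follows essentially the same three-inclusion argument as the paper: Lemma \ref{l:specS} for $\Sigma \subset \spec H(\theta)$, Schnol's lemma together with Lemma \ref{l:gapsG} for $\spec H(\theta) \subset \mathbb R \setminus G$, and the inductive path-through-the-tree construction for $\mathbb R \setminus G \subset \Sigma$. Your caveat about growth rates is harmless but unnecessary in dimension one, where Schnol's theorem already yields generalized eigenfunctions with subquadratic growth.
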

\begin{proof}
By Lemma \ref{l:specS}, $\Sigma \subset \spec H(\theta)$; and since generalized eigenvalues are dense in $\spec H(\theta)$ by Schnol's Lemma, Lemma \ref{l:gapsG} implies that $\spec H(\theta) \subset \mathbb R \setminus G$. It remains to show that $\mathbb R \setminus G \subset \Sigma$. Let $E \in \mathbb R \setminus G$. The definition of $G$ ensures that $E \in \salt{\salt{\mathbf J}}(\mathbf E_0)$, and if $E \in \salt{\salt{\mathbf J}}(\mathbf E_k)$, then $E \in \salt{\salt{\mathbf J}}(\mathbf E_{k+1})$ for some $\mathbf E_{k+1} \in \mathcal{E}(\mathbf E_k)$. But this means precisely that $E \in \Sigma$; thus $\mathbb R \setminus G \subset \Sigma$.
\end{proof}

\subsection{Cantor spectrum}

Having characterized the spectrum, we now demonstrate that $\Sigma$ is a Cantor set, i.e. it is a (closed) nowhere-dense set without isolated points.  To do so, we demonstrate that the spectral gaps $G$ always meet our modified codomains:

\begin{lem} \label{l:Jgap}
For any $\mathbf E \in \mathcal E$, ${\salt{\mathbf J}}(\mathbf E) \cap G$ is nonempty.
\end{lem}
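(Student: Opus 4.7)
The strategy is to locate a double-resonant descendant of $\mathbf{E}$, since by Proposition \ref{pr:unifmLocalSep} these are the only splittings in the inductive construction that create a persistent open gap between the modified codomains $\salt{\salt{\mathbf{J}}}$ of children. Once such a descendant is located, the resulting gap will lie in $\salt{\mathbf{J}}(\mathbf{E}) \cap G$ by the nested containments of Lemma \ref{l:jrelations}(2).

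I first choose the candidate crossing energy. Since $T_{\mathbf{E}}$ is strictly monotonic (Lemma \ref{l:invdiffuniq}) and the set $\mathbf{J}^o \cap \salt{\mathbf{J}}^o(\mathbf{E})$ is nonempty and open, its image $V := T_{\mathbf{E}}(\mathbf{J}^o \cap \salt{\mathbf{J}}^o(\mathbf{E}))$ is a nonempty open arc in $\mathbb{T}$; by irrationality of $\alpha$, pick $n \neq 0$ with $n\alpha \in V$ and set $E_n := T_{\mathbf{E}}^{-1}(n\alpha)$. Starting with $\hat{\mathbf{E}}_k := \mathbf{E}$, I inductively construct a descendant chain $\{\hat{\mathbf{E}}_s\}$ obeying the rule: if $\mathcal{N}_s(\hat{\mathbf{E}}_s) \neq \emptyset$, halt; otherwise every child of $\hat{\mathbf{E}}_s$ is simple-resonant, $E_n$ lies in the interior of some unique $J^{(1)}_{s,i}$ from the covering in Lemma \ref{l:constructionsummary}, and I take $\hat{\mathbf{E}}_{s+1} := \mathbf{E}^{(1)}_{s+1,i}$. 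I claim halting must occur at a finite scale: if not, we would have an infinite chain of simple-resonant descendants with $E_n$ interior to every $\mathbf{J}_s$ (the boundary trimming from Lemma \ref{l:Asm3StabSR} is summably dominated). A local version of Lemma \ref{l:difinv} applied in a neighborhood of $E_n$ (where all $\hat{\mathbf{E}}_s$ are uniformly far from critical values by the Morse condition and the interior choice of $E_n$) then yields $|T_{\hat{\mathbf{E}}_s}(E_n) - n\alpha| \leq C\sum_{r=k}^{s-1}\delta_r \to 0$. Hence for all large $s$, $n\alpha$ lies in the open interval $T_{\hat{\mathbf{E}}_s}(\mathbf{J}_s)$ and also $|n| \leq 8M^{(1)}_{s+1}$, forcing $n \in \mathcal{N}_s(\hat{\mathbf{E}}_s)$, a contradiction.

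Let $s$ be the halting scale and pick any $n' \in \mathcal{N}_s$. Proposition \ref{pr:induction} produces a double-resonant pair $\hat{\mathbf{E}}^{(2)}_{s+1,n',\vee}$, $\hat{\mathbf{E}}^{(2)}_{s+1,n',\wedge}$ of $\hat{\mathbf{E}}_s$, and Proposition \ref{pr:unifmLocalSep} separates their images by at least $\nu_s \sigma_{s+1}/(2D_0+\nu_s)$. Because each child attains only one of its extrema at a critical point, $\salt{\salt{\mathbf{J}}}$ of each extends beyond its image by only $\rho_{s+1}$ on the critical side; by Lemma \ref{l:lenrelns}(5) (whose proof establishes $\rho_{s+1} \ll \sigma_{s+1}$), a nonempty open interval
\[
I \subset \left[\sup\salt{\salt{\mathbf{J}}}(\hat{\mathbf{E}}^{(2)}_{s+1,n',\wedge}),\, \inf\salt{\salt{\mathbf{J}}}(\hat{\mathbf{E}}^{(2)}_{s+1,n',\vee})\right]
\]
persists. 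The simple-resonant siblings have $\salt{\salt{\mathbf{J}}}$ contained essentially in their $J^{(1)}_{s,i}$, which overlaps $J^{(2)}_{s,n'}$ only by $3\rho_s$ at the boundary (Lemma \ref{l:constructionsummary}(3)) and therefore misses the central gap $I$ surrounding $E_{n'}$. Thus $I \subset \salt{\salt{\mathbf{J}}}(\hat{\mathbf{E}}_s) \setminus \bigcup_{\mathbf{E}' \in \mathcal{E}(\hat{\mathbf{E}}_s)}\salt{\salt{\mathbf{J}}}(\mathbf{E}') \subset G$; iterating Lemma \ref{l:jrelations}(2) along the descendant chain gives $\salt{\salt{\mathbf{J}}}(\hat{\mathbf{E}}_s) \subset \salt{\mathbf{J}}(\mathbf{E})$, so $I \subset \salt{\mathbf{J}}(\mathbf{E}) \cap G$.

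The main obstacle is the halting argument: interleaving the convergence $T_{\hat{\mathbf{E}}_s}(E_n) \to n\alpha$ with the successive domain adjustments and shrinking codomains must be tracked carefully, in order to guarantee that $E_n$ remains strictly interior to every $\mathbf{J}_s$ (so that continuity of the monotone $T_{\hat{\mathbf{E}}_s}$ can be invoked) and that the local version of Lemma \ref{l:difinv} applies along the whole chain.
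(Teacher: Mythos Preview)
Your approach is essentially the paper's: pick a crossing integer $n$ via $T_{\mathbf{E}}$, follow it through simple-resonant descendants using difference-of-inverse estimates, argue that eventually $|n|\le 8M_{s+1}^{(1)}$ forces $n\in\mathcal{N}_s$, and then invoke Proposition~\ref{pr:unifmLocalSep} together with Lemma~\ref{l:lenrelns}(5) to exhibit a point of $G$ inside $\salt{\mathbf J}(\mathbf E)$.

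Two implementation differences are worth noting. First, the paper tracks the \emph{moving} crossing value $E_n(\hat{\mathbf E}_s)$ at each step, updating it via the intermediate value theorem applied to $T_{\hat{\mathbf E}_{s+1}}$ on $[E_n(\hat{\mathbf E}_s)-\rho_s/24,\,E_n(\hat{\mathbf E}_s)+\rho_s/24]$ and maintaining an explicit interior margin $\dist(E_n(\hat{\mathbf E}_s),\partial\salt{\salt{\mathbf J}}(\hat{\mathbf E}_s))\ge\rho_{s-1}/12$; this cleanly resolves precisely the ``main obstacle'' you flag, whereas keeping the original $E_n$ fixed forces you to chase it through the shrinking $\mathbf J_s$ and through the (non-unique, overlapping) covering intervals $J_{s,i}^{(1)}$. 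Second, the paper argues by contradiction and first observes that if $\hat{\mathbf E}_s$ has a critical extremum then a gap already sits immediately beyond it; this leaves only functions with no critical point in the chain, so Lemma~\ref{l:difinv} applies as stated, avoiding the need for the unproved ``local version'' you invoke.
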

\begin{proof}
We proceed by contradiction: suppose there is some $\mathbf E_k \in \mathcal E_k$ such that ${\salt{\mathbf J}}(\mathbf E_k) \cap G = \varnothing$. 

We first note that $\mathbf E_k$ cannot contain a critical point. Suppose it attained its minimum at a critical point; then we would have
\begin{align*}
\inf \salt{\salt{\mathbf J}}(\mathbf E_k) &= 
\inf \mathbf \Ima \mathbf E_k - \rho_k \\&< \inf_{\mathbf E \in \mathcal E(\mathbf E_k)} \inf \Ima \mathbf E - \rho_{k+1} \\&= \inf_{\mathbf E \in \mathcal E(\mathbf E_k)} \inf \salt{\salt{\mathbf J}}(\mathbf E).
\end{align*}
Then $\salt{\salt{\mathbf J}}(\mathbf E_k) \setminus \bigcup_{\mathbf E \in \mathcal E(\mathbf E_k)} \salt{\salt{\mathbf J}}(\mathbf E) \subset {\salt{\mathbf J}}(\mathbf E_k) \cap G$ is nonempty, contradicting our initial assumption. 
Analogously, if $\mathbf E_k$ attains its maximum at a critical point, we reach the same contradiction. Thus, $\mathbf E_k$ does not contain a critical point.

Let $n \in \mathcal N(\alpha, \mathbf E_k)$ such that $E_n \in \salt{\salt{\mathbf J}}(\mathbf E_k)$ and $\dist(E_n,\partial\salt{\salt{\mathbf J}}(\mathbf E_k))\geq \rho_{k-1}/12$; by irrationality of $\alpha$, such an $n$ exists. We will show that for some $\mathbf E_{k+1} \in \mathcal E(\mathbf E_k)$, $n \in \mathcal N(\alpha, \mathbf E_{k+1})$ and $E_n(\mathbf E_{k+1}) \in \salt{\salt{\mathbf J}}(\mathbf E_{k+1})$ with $\dist(E_n(\mathbf E_{k+1}),\partial\salt{\salt{\mathbf J}}(\mathbf E_{k+1}))\geq \rho_{k}/12$. 
First note that if $E_n \in J_{k,n_0}^{(2)}$ for some $n_0 \in \mathcal N_{k+1}$, then, by construction, there are a pair of double-resonant Rellich functions $\mathcal E(\mathbf E_k) \ni \mathbf E_{k+1,\bullet} : \mathbf I^{(2)}_{k+1,n,\bullet} \to \mathbf J^{(2)}_{k+1,n,\bullet}\,(\bullet\in\{\vee,\wedge\})$ with $\mathbf J^{(2)}_{k+1,n,\bullet} \subset \salt{\salt{\mathbf J}}(\mathbf E_k)$; since the uniform local separation between double-resonant Rellich pairs guaranteed by Proposition \ref{pr:unifmLocalSep} is larger than $2\rho_{k+1}$, %
there is some $E \in (\sup \mathbf J^{(2)}_{k+1,n,\wedge}+\rho_{k+1},\inf \mathbf J^{(2)}_{k+1,n,\vee}-\rho_{k+1})$, which must then satisfy $$E \in \salt{\salt{\mathbf J}}(\mathbf E_k) \setminus \bigcup_{\mathbf E \in \mathcal E(\mathbf E_k)} \salt{\salt{\mathbf J}}(\mathbf E) \subset \salt{\mathbf J}(\mathbf E_k) \cap G,$$ contradicting our assumption. 
Thus $E_n$ is contained in the images of only simple-resonant functions $\mathbf E_{k+1} \in \mathcal E(\mathbf E_k)$. 
By %
Lemma \ref{l:jrelations} item 4, 
we can find some such function with $B_{\rho_k/8}(E_n) \subset \salt{\salt{\mathbf J}}(\mathbf E_k)$. 
Let $E_\pm  = E_n \pm \rho_k/24$. Then, since $\partial_E T_{\mathbf E_k} \geq 2/D_0$ by the inverse function theorem, %
\begin{align*}
    T_{\mathbf E_{k+1}}(E_+) &\geq T_{\mathbf E_k}(E_+) - \frac{4\delta_s}{\nu_s} \\ &\geq T_{\mathbf E_k}(E_n) + \frac{\rho_k}{12D_0} - \frac{4\delta_s}{\nu_s} \\&\geq T_{\mathbf E_k}(E_n)
\end{align*}
where the first line follows from Lemma \ref{l:difinv}. Similarly,
\begin{align*}
    T_{\mathbf E_{k+1}}(E_-) &\leq T_{\mathbf E_k}(E_n).
\end{align*}
By continuity of $T_{\mathbf E_{k+1}}$, there is some $E_* \in [E_-,E_+]$ with $T_{\mathbf E_{k+1}}(E_*) = T_{\mathbf E_k}(E_n)$; by definition, this means $n \in \mathcal N(\alpha,\mathbf E_{k+1})$ with $E_n(\mathbf E_{k+1}) = E_*$. By construction, $E_* \in \salt{\salt{\mathbf J}}(\mathbf E_{k+1})$ with $\dist(E_*,\partial\salt{\salt{\mathbf J}}(\mathbf E_{k+1})) \geq \rho_{k+1}/12$.

But $\salt{\mathbf J}(\mathbf E_{k+1}) \subset \salt{\mathbf J}(\mathbf E_k)$ by Lemma \ref{l:jrelations}, 
so we can repeat the above procedure to find $\mathbf E_{k+2} \in \mathcal E(\mathbf E_{k+1})$ with $n \in \mathcal N(\alpha,\mathbf E_{k+2})$ satisfying the above conditions. 
As we continue to repeat this procedure, we will eventually reach some scale $s$ with $|n| \leq 8M_{s+1}^{(1)}$; since $n \in \mathcal N(\alpha, \mathbf E_s)$, we will have $n \in \mathcal N_{s}$, and we will construct a double-resonant interval $J_{s,n}^{(2)}$ with $E_n(\mathbf E_s) \in J_{s,n}^{(2)}$. As we have shown above, this leads to a contradiction with our original assumption.
\end{proof}

\begin{proof}[Proof of Cantor spectrum]
It suffices to show that for each $E \in \spec H(\theta)$, there are points arbitrarily close to $E$ which are not in $\spec H(\theta)$. Fix $E \in \spec H(\theta)$. By Lemmas \ref{l:Jto0} and \ref{l:specchar}, we can find some $\mathbf E \in \mathcal E$ with $\salt{\mathbf J}(\mathbf E) \supset \salt{\salt{\mathbf J}}(\mathbf E) \ni E$ and $|{\salt{\mathbf J}}(\mathbf E)|$ arbitrarily small. By Lemma \ref{l:Jgap}, $\salt{\salt{\mathbf J}}(\mathbf E) \ni E'$ for some $E' \in G$; by Lemma \ref{l:specchar}, $E' \notin \spec H(\theta)$.
\end{proof}

\subsection{Localization}

We first construct the full-measure set on which localization occurs. Using the definitions preceding  Lemma \ref{l:DRnubd}, we define the double-resonant regions $$J^{DR}_n(\overline\rho_{k}, 12M_{k+1}^{(2)}, \alpha, \mathbf E) := \overline{B}_{\overline\rho_{k}}(E_n(\mathbf{E})), \quad n \in \mathcal{N}(12M_{k+1}^{(2)},\alpha,\mathbf{E})$$ of each function $\mathbf E \in \mathcal E_k$, and we define $$J^{DR} = \bigcup_{n \in \mathcal N(12M_{k+1}^{(2)}, \alpha, \mathbf E)}J^{DR}_n.$$  
We now define and measure the sets
\begin{equation*}
    \mathrm{B}_k := \bigcup_{\mathbf{E} \in \mathcal{E}_k} \bigcup_{|n|\leq 12M_{k+1}^{(2)}} \left(\mathbf{E}^{-1}(J^{DR})+n\alpha\right)
\end{equation*}
on which two resonances at scale $k$ occur at nearby shifts, and define 
\begin{equation*}
\mathrm{B} := \bigcap_{n=0}^\infty \bigcup_{k=n}^\infty \mathrm{B}_k.
\end{equation*}
\begin{lem} \label{l:badsets}
$|\mathrm{B}|=0$. 
\end{lem}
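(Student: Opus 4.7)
The plan is to apply the Borel--Cantelli lemma; it suffices to show $\sum_{k=0}^\infty |\mathrm{B}_k| < \infty$.  The two essential ingredients are a preimage bound coming from the Morse property (Lemma \ref{l:morsefnbd}) and a cardinality bound on the set of double-resonant sites (Lemma \ref{l:invdiffuniq}).

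First I would fix $\mathbf{E} \in \mathcal{E}_k$ with $\mathbf{E}:\mathbf{I}\to\mathbf{J}$.  By Lemma \ref{l:invdiffuniq}, the set $\mathcal{N}(12M_{k+1}^{(2)},\alpha,\mathbf{E})$ contains at most $24M_{k+1}^{(2)}+1$ indices, each yielding a single crossing $E_n$.  Since $|J^{DR}_n| = 2\bar{\rho}_k$, Lemma \ref{l:morsefnbd} applied to each of the two monotonicity branches of $\mathbf{E}$ gives $|\mathbf{E}^{-1}(J^{DR}_n)| \leq 2\sqrt{24\bar{\rho}_k/d_k}$.  Because translations $\theta \mapsto \theta+n\alpha$ are measure-preserving on $\mathbb{T}$, taking the union over $|n|\leq 12M_{k+1}^{(2)}$ contributes another factor of $24M_{k+1}^{(2)}+1$, and summing over $\mathbf{E} \in \mathcal{E}_k$ produces
\begin{equation*}
    |\mathrm{B}_k| \leq 2|\mathcal{E}_k|(24M_{k+1}^{(2)}+1)^2\sqrt{24\bar{\rho}_k/d_k}.
\end{equation*}

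Next I would bound $|\mathcal{E}_k|$.  Lemma \ref{l:constructionsummary} guarantees that each scale-$j$ parent has at most $5\lceil|\mathbf{J}_j|/\bar{\rho}_j\rceil$ child-intervals, each producing at most two Rellich functions; combined with item 4 of Proposition \ref{pr:induction} ($|\mathbf{J}_j| \leq 2\bar{\rho}_{j-1}$), this telescopes to $\log|\mathcal{E}_k| \lesssim \sum_{j=1}^{k-1}\log(C\bar{\rho}_{j-1}/\bar{\rho}_j)$.  Since $\log(1/\bar{\rho}_j)\sim 2L_{j-1}^{2/3}|\log\varepsilon|$ grows faster than geometrically in $j$, the sum is dominated by its final term, giving $\log|\mathcal{E}_k| \lesssim L_{k-2}^{2/3}|\log\varepsilon|$.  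On the other hand, $\log(1/\sqrt{\bar{\rho}_k}) \gtrsim L_{k-1}^{2/3}|\log\varepsilon|$, and $L_{k-1}^{2/3} \gg L_{k-2}^{2/3}$ by the quartic-exponential growth $L_k \sim L^{4^{k-1}}$.  The polynomial factor $(M_{k+1}^{(2)})^2$ is dwarfed by this exponential gap, so $|\mathrm{B}_k| \leq \varepsilon^{cL_{k-1}^{2/3}}$ for some $c>0$ and all sufficiently large $k$, which is manifestly summable.  Borel--Cantelli then yields $|\mathrm{B}|=0$.

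The main obstacle is precisely this last comparison: one must verify that the combinatorial explosion of the Rellich tree does not outpace the Morse contraction $\sqrt{\bar{\rho}_k}$.  This closes only because of the specific parameter hierarchy established in Section \ref{sec:induction}, namely $\bar{\rho}_k \sim \rho_{k-1}^2$ together with the super-exponential growth of the length scales $L_k$; both choices ensure that the $k$-th-scale Morse estimate beats the cumulative child-count inherited from all previous generations.
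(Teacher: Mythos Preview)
Your proposal is correct and follows essentially the same approach as the paper: bound the preimage of each $J_n^{DR}$ via the Morse estimate (Lemma \ref{l:morsefnbd}), count at most $O(M_{k+1}^{(2)})$ crossing indices and translates, control $|\mathcal{E}_k|$ by telescoping the child bound from Lemma \ref{l:constructionsummary} together with $|\mathbf{J}_j|\leq 2\bar\rho_{j-1}$, and then invoke Borel--Cantelli. The paper packages the same estimates slightly more crudely as $|\mathcal{E}_k|\leq \rho_{k-2}^{-4}$ and $|\mathbf{E}^{-1}(J_n^{DR})|\ll \rho_{k-1}^{1/2}$, arriving at $|\mathrm{B}_k|\leq 625(M_{k+1}^{(2)})^2\rho_{k-2}^{-4}\rho_{k-1}^{1/2}$, but the mechanism and the parameter comparison you isolate are identical.
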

\begin{proof}
By construction, for any $\mathbf{E} \in \mathcal E_k$, $|\mathcal E(\mathbf E)| \leq 10\lceil \bar\rho_k^{-1} \rceil \ll \rho_{k-1}^{-3}$. 
Thus, 
for $k\geq1$, 
\begin{equation}\label{eq:counting}
    |\mathcal{E}_k| \leq |\mathcal{E}_0|\prod_{i=0}^{k-1}\rho_{i-1}^{-3} \leq \rho_{k-2}^{-4}.
\end{equation}

Fix $\mathbf E \in \mathcal E_k$. By definition, each $|J_n^{DR}| = 2\overline\rho_{k}$; by Lemma \ref{l:morsefnbd}, $|\mathbf E^{-1}(J_n^{DR})| \leq 2\sqrt{\frac{12}{d_k}\overline\rho_{k}} \ll \rho_{k-1}^{1/2}$.
Then $|\mathbf{E}^{-1}(J^{DR})| = \left|\sum_{n \in \mathcal N(12M_{k+1}^{(2)}, \alpha, \mathbf E)}\mathbf{E}^{-1}(J^{DR}_n)\right| \leq 25M_{k+1}^{(2)}\rho_{k-1}^{1/2}$, and $\left|\bigcup_{|n|\leq 12M_{k+1}^{(2)}} \left(\mathbf{E}^{-1}(J^{DR})+n\alpha\right)\right| \leq 625(M_{k+1}^{(2)})^{2}\rho_{k-1}^{1/2}$. 

Combining this with the above estimate \eqref{eq:counting}, we have $|\mathrm{B}_k| \leq 625(M_{k+1}^{(2)})^{2}\rho_{k-2}^{-4}\rho_{k-1}^{1/2}$, and $\sum_{k=1}^\infty|\mathrm{B}_k| \leq 625\sum_{k=0}^\infty(M_{k+1}^{(2)})^{2}\rho_{k-2}^{-4}\rho_{k-1}^{1/2} < \infty$. 
Thus, by the Borel-Cantelli Lemma, $|\mathrm{B}|=0$.
\end{proof}

Thus $\Theta := \mathbb T \setminus \mathrm{B}$ is a set of full measure. For the remainder of this section, we fix $\theta \in \Theta$.

To show that $H(\theta)$ is Anderson localized, it suffices by Schnol's lemma \cite{Schnol1981} to show that every generalized eigenvector $\psi$ of $H(\theta)$ that grows at most quadratically ($|\psi(j)| < C(j^2+1)$) in fact decays exponentially. 
For the remainder of this section, 
we fix a generalized eigenvalue $E(\theta)$ and corresponding generalized eigenvector $\psi$. By Lemma \ref{l:specchar}, we can also fix a sequence $\{\mathbf E_k\}_{k=0}^\infty$, with $\mathbf E_{k+1} \in \mathcal{E}(\mathbf E_{k})$ for all $k\geq0$, such that
$E \in \bigcap_{k=0}^\infty \salt{\salt{\mathbf J}}(\mathbf E_k)$. %

To locate the center of localization for $\psi$, we first employ an argument similar to Lemma \ref{l:gapsG}:
\begin{lem} \label{l:evecseesres}
There is a $K \in \mathbb N$ such that for all $k \geq K$, there is an $m_k \in \mathbb Z$ satisfying $|m_k| \leq 3M_{k}^{(2)}$ such that $\theta+m_k\alpha \in \mathbf I_k$ and $|E-\mathbf E_k(\theta+m_k\alpha)|<\rho_k$.
\end{lem}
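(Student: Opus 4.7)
The plan is to argue by contradiction: suppose there is an increasing sequence $k_n \to \infty$ along which no such $m_{k_n} \in [-3M_{k_n}^{(2)}, 3M_{k_n}^{(2)}]$ exists. Unpacking the definition of $k_n$-nonresonance (relative to $(\theta,E)$ and the chosen path curve $\mathbf{E}_{k_n}$), this assumption is exactly the statement that the entire interval $\salt{\Lambda}_{k_n} := [-3M_{k_n}^{(2)}, 3M_{k_n}^{(2)}]$ is $k_n$-nonresonant. The plan is then to invoke Hypothesis \ref{prsub:NR} at scale $k_n$ to obtain off-diagonal Green's function decay on a large subinterval of $\salt{\Lambda}_{k_n}$, and then use the Poisson formula together with the at-most-quadratic growth of $\psi$ to conclude $\psi \equiv 0$, a contradiction.

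First I would verify that $(\theta, E, \mathbf{E}_{k_n})$ satisfies the hypotheses of \ref{prsub:NR} at scale $k_n$. Since $E \in \bigcap_{k\geq 0}\salt{\salt{\mathbf{J}}}(\mathbf{E}_k) \subset \salt{\mathbf{J}}(\mathbf{E}_{k_n-1})$, and $E \in \salt{\mathbf{J}}(\mathbf{E}_{k_n}) \subset \bigcup_{\mathbf{E} \in \mathcal{E}(\mathbf{E}_{k_n-1})}\salt{\mathbf{J}}(\mathbf{E})$, both the ambient condition and the sup/inf constraints on $E$ are automatically satisfied; moreover $\mathbf{E}_{k_n}$ minimizes $\dist(E, \salt{\mathbf{J}}(\mathbf{E}))$ trivially since that distance is zero. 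Because $|\salt{\Lambda}_{k_n}| = 6M_{k_n}^{(2)}+1 \gg 2L_{k_n}+3M_{k_n}^{(1)}$ (using $M_k^{(1)} \ll M_k^{(2)}$ from Lemma \ref{l:lenrelns}), the second clause of Hypothesis \ref{prsub:NR} yields a $k_n$-nonresonant subinterval $\Lambda_{k_n} = [a_{k_n}, b_{k_n}] \subset \salt{\Lambda}_{k_n}$ with appropriately regular endpoints and $|\Lambda_{k_n}| \geq 6M_{k_n}^{(2)}-5M_{k_n}^{(1)}$; the first clause of \ref{prsub:NR} then furnishes the Green's function decay property for $(\ell_{k_n}, \gamma_{k_n})$ at energy $E$.

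For any fixed $j \in \mathbb{Z}$ and all $k_n$ sufficiently large, we have $j \in \Lambda_{k_n}$ with $\min\{|j-a_{k_n}|, |j-b_{k_n}|\} \geq 2M_{k_n}^{(2)} \gg \ell_{k_n}$. Applying the Poisson formula (Lemma \ref{l:poisson}) to the generalized eigenequation $H(\theta)\psi = E\psi$ on $\Lambda_{k_n}$ gives
\[
|\psi(j)| \leq \varepsilon |R^{\Lambda_{k_n}}_{\theta,E}(j, a_{k_n})|\,|\psi(a_{k_n}-1)| + \varepsilon |R^{\Lambda_{k_n}}_{\theta,E}(j, b_{k_n})|\,|\psi(b_{k_n}+1)|.
\]
The resolvent factors are each bounded by $\exp(-2\gamma_{k_n} M_{k_n}^{(2)})$ with $\gamma_{k_n} \geq \gamma_0/2 > 0$ uniformly (Lemma \ref{l:lenrelns}), while the at-most-quadratic growth of $\psi$ bounds the boundary values by $C(M_{k_n}^{(2)})^2$. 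Sending $k_n \to \infty$ then forces $\psi(j) = 0$; since $j$ was arbitrary, $\psi \equiv 0$, contradicting that $\psi$ is a nonzero generalized eigenvector.

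The principal point requiring care is ensuring that the truncation loss $2L_{k_n}+3M_{k_n}^{(1)}$ incurred in extracting $\Lambda_{k_n}$ from $\salt{\Lambda}_{k_n}$ remains negligible compared to the total length $6M_{k_n}^{(2)}$, so that the exponential decay genuinely dominates the polynomial growth; this is immediate from the superpolynomial growth relation $M_k^{(2)} \gg M_k^{(1)}$ in Lemma \ref{l:lenrelns}. Verifying the hypotheses of \ref{prsub:NR} at every scale $k_n$ is the only other bookkeeping step, and follows directly from the nested-codomain inclusion $\salt{\salt{\mathbf{J}}}(\mathbf{E}_k) \subset \salt{\mathbf{J}}(\mathbf{E}_k)$ together with the child-covering statements of Lemma \ref{l:jrelations}.
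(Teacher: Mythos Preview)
Your proposal is correct and follows essentially the same approach as the paper's proof: contradict by assuming a subsequence of scales along which the long interval $[-3M_{k_i}^{(2)},3M_{k_i}^{(2)}]$ is $k_i$-nonresonant, invoke Hypothesis~\ref{prsub:NR} (equivalently Lemma~\ref{l:NRintadj}) to extract a subinterval with $(\ell_{k_i},\gamma_{k_i})$ Green's function decay, and use the Poisson formula against the polynomial growth of $\psi$ to force $\psi\equiv 0$. One minor quantitative slip: you write $|\Lambda_{k_n}|\geq 6M_{k_n}^{(2)}-5M_{k_n}^{(1)}$ as if $L_{k_n}\leq M_{k_n}^{(1)}$, but along the chosen path one may have $j_{k_n}=2$ and hence $L_{k_n}$ as large as $M_{k_n}^{(2)}$; the honest bound $|\Lambda_{k_n}|\geq 4M_{k_n}^{(2)}-3M_{k_n}^{(1)}$ (and correspondingly $\min\{|j-a_{k_n}|,|j-b_{k_n}|\}\gtrsim M_{k_n}^{(2)}$ rather than $2M_{k_n}^{(2)}$) is still more than sufficient for the Poisson-formula conclusion.
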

\begin{proof}
If not, there is an increasing integer sequence $\{k_i\}_{i=0}^\infty$ such that $[-3M_{k_i}^{(2)},3M_{k_i}^{(2)}]$ satisfies the hypotheses of Lemma \ref{l:NRintadj} at scale $k_i$. Thus, for each $i$, there is an interval $\Lambda_i := [a_i,b_i] \supset [-\frac23M_{k_i}^{(2)},\frac23M_{k_i}^{(2)}]$ which satisfies the Green's function decay property for $(\ell_{k_i},\gamma_{k_i})$. For any $|n|\leq\frac13M_{k_i}^{(2)}$, $|n-a_i|,|n-b_i|\geq\frac13M_{k_i}^{(2)}\geq\ell_{k_i}$; thus the Poisson formula \eqref{eq:poisson} gives 
\begin{align*}
|\psi(n)| &\leq e^{-\gamma_{k_i}M_{k_i}^{(2)}/3}\left(|\psi(a_i-1)|+|\psi(b_i+1)|\right) \\
&\leq 2Ce^{-\gamma_{0}M_{k_i}^{(2)}/24}((3M_{k_i}^{(2)}+1)^2+1)
\end{align*}
Fixing $n \in \mathbb Z$, we have $|n|\leq\frac13M_{k_i}^{(2)}$ as $i\to\infty$, and the right-hand side of the above inequality approaches $0$. Thus we must have $|\psi(n)|=0$. But $\psi$ cannot be identically zero.
\end{proof}

The next lemma confirms that $m_k$ is unique at scale $12M_{k+1}^{(2)}$, and is in fact (eventually) independent of $k$, for $\theta \in \Theta$.

\begin{lem} \label{l:uniqres}
There is a $K \in \mathbb N$ and $m_\infty \in \mathbb Z$, with $|m_\infty| \leq 3M_{K}^{(2)}$, such that for all $k\geq K$, 
\begin{equation} \label{eq:rescond}
\theta+m_\infty\alpha \in \mathbf I_k,\text{ and } |E-\mathbf E_k(\theta+m_\infty\alpha)|<\rho_k,
\end{equation}
and $[m_\infty-12M_{k+1}^{(2)},m_\infty-1] \cup [m_\infty+1,m_\infty+12M_{k+1}^{(2)}]$ is $k$-nonresonant (relative to $(\theta,E)$ and $\mathbf E_k$).
\end{lem}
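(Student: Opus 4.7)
The plan is to define $m_\infty := m_K$ for a suitably chosen $K$ and show inductively that $m_k = m_\infty$ for all $k \geq K$. First I would choose $K \in \mathbb{N}$ so large that (i) Lemma \ref{l:evecseesres} supplies a resonance site $m_k$ with $|m_k| \leq 3M_k^{(2)}$ for every $k \geq K$, and (ii) $\theta \notin \mathrm{B}_k$ for every $k \geq K$; both are possible, the second because $\theta \in \Theta = \mathbb{T} \setminus \mathrm{B}$. Setting $m_\infty := m_K$ then gives $|m_\infty| \leq 3M_K^{(2)}$ automatically.

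The heart of the argument is a uniqueness statement at each scale $k \geq K$: if $m \in \mathbb{Z}$ satisfies $\theta + m\alpha \in \mathbf{I}_k$ and $|E - \mathbf{E}_k(\theta + m\alpha)| < \rho_k$ with $0 < |m - m_k| \leq 12M_{k+1}^{(2)}$, then $\theta \in \mathrm{B}_k$, contradicting (ii). To prove this I set $p = m - m_k$ so that $|\mathbf{E}_k(\theta + m_k\alpha) - \mathbf{E}_k(\theta + (m_k+p)\alpha)| < 2\rho_k$ and split into two cases. If the two phases lie in the same monotonicity interval of $\mathbf{E}_k$, Lemma \ref{l:morsefnbd} forces $\|p\alpha\|_{\mathbb{T}} \lesssim \sqrt{\rho_k/d_k}$, contradicting the Diophantine lower bound $\|p\alpha\|_{\mathbb{T}} \geq C_\alpha(12M_{k+1}^{(2)})^{-\tau}$ because $\rho_k = \varepsilon^{L_k^{2/3}}$ is super-polynomially small in $M_{k+1}^{(2)}$. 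Otherwise the phases sit in opposite monotonicity intervals, and a direct computation with $T_{\mathbf{E}_k}$ (write $T_{\mathbf{E}_k}(\mathbf{E}_k(\theta + m_k\alpha)) - p\alpha$ as a difference of two values of $\mathbf{E}_{k,\pm}^{-1}$) combined with Lemma \ref{l:morsefnbd} gives a bound of order $\sqrt{\rho_k/d_k}$; the monotonicity bound $\partial_E T_{\mathbf{E}_k} \geq 2/D_0$ from the proof of Lemma \ref{l:invdiffuniq} then produces $p \in \mathcal{N}(12M_{k+1}^{(2)}, \alpha, \mathbf{E}_k)$ with $|\mathbf{E}_k(\theta + m_k\alpha) - E_p(\mathbf{E}_k)| \ll \bar{\rho}_k$, using the scale hierarchy of Lemma \ref{l:lenrelns}. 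Hence $\theta + m_k\alpha \in \mathbf{E}_k^{-1}(J^{DR})$, and since $|{-m_k}| \leq 3M_k^{(2)} \leq 12M_{k+1}^{(2)}$, we get $\theta \in \mathbf{E}_k^{-1}(J^{DR}) + (-m_k)\alpha \subset \mathrm{B}_k$.

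With uniqueness in hand, the induction from $k$ to $k+1$ proceeds as follows. First, the Rellich function $\mathbf{E}_{k+1}$ in our fixed path must be a simple-resonant child of $\mathbf{E}_k$: if instead $\mathbf{E}_{k+1} \in \mathcal{E}^{(2)}(\mathbf{E}_k)$, then $E \in \mathbf{J}_{k+1}$ would lie within $2\bar{\rho}_k$ of $E_{n_k}(\mathbf{E}_k)$ for some $|n_k| \leq 8M_{k+1}^{(1)}$, and combined with $|E - \mathbf{E}_k(\theta + m_k\alpha)| < \rho_k$ this would place $\mathbf{E}_k(\theta + m_k\alpha)$ inside $J^{DR}_{n_k}$, contradicting $\theta \notin \mathrm{B}_k$ exactly as above. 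In the simple case, Proposition \ref{pr:interscaleapprox1} gives $|\mathbf{E}_{k+1} - \mathbf{E}_k| \leq C\delta_k \ll \rho_k - \rho_{k+1}$ on $\mathbf{I}_{k+1} \subset \mathbf{I}_k$ (modulo the boundary trimming from Section \ref{sec:induction}), so $m_{k+1}$ automatically satisfies (\ref{eq:rescond}) at scale $k$ as well; the bound $|m_{k+1} - m_k| \leq 3(M_k^{(2)} + M_{k+1}^{(2)}) \leq 12M_{k+1}^{(2)}$ then lets uniqueness at scale $k$ conclude that $m_{k+1} = m_k$. By induction $m_k = m_\infty$ for every $k \geq K$, and the required $k$-nonresonance of $[m_\infty - 12M_{k+1}^{(2)}, m_\infty - 1] \cup [m_\infty + 1, m_\infty + 12M_{k+1}^{(2)}]$ is simply a restatement of the uniqueness claim.

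The hard part will be the uniqueness step, specifically promoting the analytic near-coincidence $|\mathbf{E}_k(\theta + m_k\alpha) - \mathbf{E}_k(\theta + (m_k+p)\alpha)| < 2\rho_k$ into the rigid geometric statement that $\theta + m_k\alpha$ sits inside a double-resonant disk $\mathbf{E}_k^{-1}(J^{DR}_p)$. This hinges on quantitative monotonicity of $T_{\mathbf{E}_k}$ and, more delicately, on the strict scale hierarchy $\sqrt{\rho_k} \ll \bar{\rho}_k$ established inductively in Lemma \ref{l:lenrelns}; one must also handle carefully the possibility that $\theta + m_k\alpha$ lies near a critical point of $\mathbf{E}_k$, where the boundary-derivative lower bound $\nu_k$ degenerates and the quadratic Morse estimate of Lemma \ref{l:morsefnbd} must replace the inverse-function bound.
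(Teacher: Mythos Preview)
Your proposal is correct and follows the same architecture as the paper: choose $K$ so that Lemma~\ref{l:evecseesres} applies and $\theta \notin \mathrm B_k$ for $k \geq K$, establish uniqueness of the scale-$k$ resonant site within a window of size $12M_{k+1}^{(2)}$, and then show $m_{k+1}=m_k$ by checking that $\mathbf E_{k+1}$ must be a simple-resonant child and hence $m_{k+1}$ is also scale-$k$ resonant.

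The only real difference is in how the uniqueness step is carried out. The paper does not split into the two monotonicity cases by hand; instead it observes that since $\theta \notin \mathrm B_k$ one has $\mathbf E_k(\theta+m_k\alpha) \notin J^{DR}(\bar\rho_k,12M_{k+1}^{(2)},\alpha,\mathbf E_k)$, which places $\theta+m_k\alpha$ in the \emph{simple-resonance case} of Proposition~\ref{pr:SRDRsep} with $L^{(1)}=12M_{k+1}^{(2)}$, so $|\mathbf E_k(\theta+m_k\alpha)-\mathbf E_k(\theta+m\alpha)|>3\rho_k$ for all $0<|m-m_k|\leq 12M_{k+1}^{(2)}$ with $\theta+m\alpha\in\mathbf I_k$. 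Your two-case argument (same interval via Lemma~\ref{l:morsefnbd}; opposite interval via $T_{\mathbf E_k}$ and the bound $\partial_E T_{\mathbf E_k}\geq 2/D_0$) is exactly the content of the proof of that simple-resonance case, so you are rederiving rather than citing. This costs you the worry in your final paragraph about critical points and the inequality $\sqrt{\rho_k/d_k}\ll\bar\rho_k$; both are absorbed into the paper's single citation. Similarly, the paper deduces $\mathbf E_{k+1}\in\mathcal E^{(1)}(\mathbf E_k)$ directly from $\mathbf E_k(\theta+m_{k+1}\alpha)\notin J^{DR}(\bar\rho_k,8M_{k+1}^{(1)},\alpha,\mathbf E_k)$ rather than arguing with the size of $\mathbf J_{k+1}$ as you do; your version works but the constant ``$2\bar\rho_k$'' is slightly loose and should be tightened to land inside $\overline B_{\bar\rho_k}(E_{n_k})$.
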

\begin{proof}
By Lemma \ref{l:evecseesres}, for sufficiently large $k$, we can choose $|m_k| \leq 3M_{k}^{(2)}$ satisfying \eqref{eq:rescond} (with $m_k$ replacing $m_\infty$). If $\theta \in \Theta$, then $\mathbf E_k(\theta+m\alpha) \notin J^{DR}$ for all $|m|\leq 12M_{k+1}^{(2)}$, and thus $\theta+m_k\alpha$ is in the simple resonance case of Proposition \ref{pr:SRDRsep} (with $12M_{k+1}^{(2)}$ in the role of $L^{(1)}$), for sufficiently large $k$; thus, for $0 < |m-m_k|\leq 12M_{k+1}^{(2)}$, if $\theta+m\alpha\in \mathbf I_k$, then $$|E-\mathbf{E}(\theta+m\alpha)| \geq |\mathbf{E}(\theta+m_k\alpha)-\mathbf{E}(\theta+m\alpha)| - |E - \mathbf{E}(\theta+m_k\alpha)| > 2\rho_k-\rho_k=\rho_k;$$ i.e., $[m_k-12M_{k+1}^{(2)},m_k-1] \cup [m_k+1,m_k+12M_{k+1}^{(2)}]$ is $k$-nonresonant.

It remains only to show that $m_k$ must be independent of $k$. Suppose $m_{k+1} \neq m_k$.
Note that $|m_{k+1}| \leq 3M_{k+1}^{(2)}$, so $|m_{k+1}-m_k|\leq 12M_{k+1}^{(2)}$. Since $$\theta+m_{k+1}\alpha \in \mathbf I_{k+1}\subset \mathbf I_k$$ and $[m_k-12M_{k+1}^{(2)},m_k-1] \cup [m_k+1,m_k+12M_{k+1}^{(2)}]$ is $k$-nonresonant, we have $|E-\mathbf E_k(\theta+m_{k+1}\alpha)|>\rho_k$. 
Furthermore, since $$\theta +m_{k+1}\alpha \notin J^{DR}(\overline\rho_{k}, 12M_{k+1}^{(2)}, \alpha, \mathbf E_k) \supset J^{DR}(\overline\rho_{k}, 8M_{k+1}^{(1)}, \alpha, \mathbf E_k),$$ we must have $\mathbf E_{k+1} \in \mathcal E^{(1)}(\mathbf E_k)$; thus, by Proposition \ref{pr:interscaleapprox1}, $$|\mathbf E_{k+1}(\theta+m_{k+1}\alpha)-\mathbf E_k(\theta+m_{k+1}\alpha)| \leq 2\delta_k \ll \rho_k/2$$. Thus,
\begin{align*}
|E-\mathbf E_{k+1}(\theta+m_{k+1}\alpha)| &\geq |E-\mathbf E_k(\theta+m_{k+1}\alpha)| - |\mathbf E_{k+1}(\theta+m_{k+1}\alpha)-\mathbf E_k(\theta+m_{k+1}\alpha)| \\&> \rho_k - \rho_k/2 \gg \rho_{k+1}. 
\end{align*}
In particular, this means $\{m_{k+1}\}$ is $k+1$-nonresonant, %
which contradicts the construction of $m_{k+1}$; we must therefore have $m_{k+1}=m_k$, and consequently $m_k$ is in fact independent of $k$, and we may set $m_\infty = m_k$ for all sufficiently large $k$.
\end{proof}

\begin{proof}[Proof of Anderson localization]
As noted above, we need only show that $\psi$ decays exponentially. 
Let $K$ and $m_\infty$ be the integers defined in Lemma \ref{l:uniqres}. 
For $|n-m_\infty|> 8M_{K+1}^{(2)}$, let $k\geq K+1$ be such that $8M_k^{(2)} < |n-m_\infty| \leq 8M_{k+1}^{(2)}$. %
Consider the interval $$\Lambda=\left[n-\left\lceil \frac{|n-m_\infty|}2\right\rceil,n+\left\lceil \frac{|n-m_\infty|}2\right\rceil\right].$$ 
All $m \in \Lambda$ satisfy $0<|m-m_\infty| \leq 12M_{k+1}^{(2)}$. 
Thus, by Lemma \ref{l:uniqres}, $\Lambda$ satisfies the hypotheses of Lemma \ref{l:NRintadj} at scale $k$; 
therefore, there is some subinterval
\begin{align*}
    \tilde\Lambda := [\tilde a, \tilde b] &\supset \left[n-\left\lceil \frac{|n-m_\infty|}2\right\rceil+3M_k^{(2)},n+\left\lceil \frac{|n-m_\infty|}2\right\rceil-3M_k^{(2)}\right] \\ &\supset \left[n-\left\lceil \frac{|n-m_\infty|}9\right\rceil,n+\left\lceil \frac{|n-m_\infty|}9\right\rceil\right]
\end{align*} 
which satisfies the Green's function decay property for $(\ell_k,\gamma_k)$. 
Note that $\left\lceil \frac{|n-m_\infty|}9\right\rceil > \frac89M_k^{(2)} \geq \ell_k$; thus, by the Poisson formula \eqref{eq:poisson},
\begin{align*}
    |\psi(n)| &\leq e^{-|\log\varepsilon||n-m_\infty|/9}\left(|\psi(\tilde a-1)|+|\psi(\tilde b+1)|\right).
\end{align*}
We have 
\begin{align*}
\max\{|\tilde a -1|, |\tilde b + 1|\} &\leq |n| + \left\lceil\frac{|n-m_\infty|}2\right\rceil + 1 \\
&\leq |n-m_\infty| + \frac{|n-m_\infty|}2 + 2 + 3M_K^{(2)} \\
&\leq \frac32|n-m_\infty| + M_k^{(2)} \\
&< 2|n-m_\infty|.
\end{align*}
Therefore, 
\begin{align*}
    |\psi(n)| &\leq 4Ce^{-|\log\varepsilon||n-m_\infty|/9}(|n-m_\infty|^2+1),
\end{align*}
which decays exponentially away from $m_\infty$.
\end{proof}

\newpage

\appendix

\section{Bootstrapped Green's function decay}

In this appendix, we will show that, if one can partition an interval $\snext\Lambda$ into subintervals alternating between long intervals with Green's function decay and short intervals with resolvent bounds, one can iterate the resolvent identity to bootstrap the decay to all of $\snext\Lambda$.  Results of this variety are well-established in the literature (cf. e.g. \cite{FroSpe83}); we include the details relative to our specific application for the reader's convenience.

Let $v \in C^2(\mathbb{T}, [-1,1])$ with $\|\partial_\theta v\|_\infty+\|\partial_\theta^2 v\|_\infty \leq D_0$, $(\theta_*, E_*) \in \mathbb{T} \times \mathbb{R}$, $0 < \varepsilon < 1/7$, and $0<{\snext\rho}<1/2$.  Additionally, let $\ell \in \mathbb N$, $1 < {\gamma} \leq |\log\varepsilon|$, and $$M:=\max\{{\ell},|\log{\snext\rho}|\}.$$  Consider an integer $\snext\ell \in \mathbb{N}$ with $\snext\ell > 16|\log\varepsilon|M$, and define 
\begin{align*}
	\snext\gamma &= {\gamma} - 16|\log\varepsilon|M/\snext\ell .
\end{align*}

\begin{assm}\label{as:NRmult}
	Suppose that $\snext\Lambda \subset \mathbb{Z}$ admits a partition $\mathcal{P}$ satisfying:
	\begin{enumerate}
		\item Every other interval of $\mathcal{P}$ satisfies the Green's function decay property for $({\ell},{\gamma})$. We denote the intervening intervals by $\Lambda_i$ and use the notation $\Lambda_{i,l/r}$ to refer to the interval (on which the Green's function decay property for $({\ell},{\gamma})$ is satisfied) immediately to the left/right of $\Lambda_i$.
		
		Note that, with this convention, one will have, e.g., ${\Lambda}_{i,r} = {\Lambda}_{i+1,l}$.
		\item For each ${\Lambda}_i = [ a_i,  b_i]$, 
		the interval $\snext{\Lambda}_i :=\Lambda_{i,l} \cup \Lambda_i \cup \Lambda_{i,r}$ satisfies
		\begin{align}\label{eq:NRspecsepmult}
			\|{P}_{i}  R^{\snext{\Lambda}_i} \Gamma_{\mathcal P}\| \leq 2{\snext\rho}^{-1},
		\end{align}
		where ${P}_{i} $ denotes the projection onto coordinates $[ a_i-{\ell},  b_i+{\ell}] \cap \snext{\Lambda}_i$. 
		Furthermore, $R^{\snext{\Lambda}_i}$ is well-defined. 
		\item $|\Lambda_{i,l/r}|> 2\snext\ell$.
		\item $|\Lambda_i|\leq M$.
	\end{enumerate}
\end{assm}

We illustrate one segment of Assumption \ref{as:NRmult} in Figure \ref{f:NRmult}.  Under these assumptions, we get $(\snext\ell,\snext\gamma)$ decay on $\snext\Lambda$:

\begin{figure}
	\begin{tikzpicture}[x=0.75pt,y=0.75pt,yscale=-1,xscale=1]
		
		\draw [line width=1.5]    (51,93) -- (280,93) ;
		\draw [color={rgb, 255:red, 155; green, 155; blue, 155 }  ,draw opacity=1 ][line width=1.5]  [dash pattern={on 5.63pt off 4.5pt}]  (220,112) -- (428,112) ;
		\draw   (281,60) .. controls (281,55.33) and (278.67,53) .. (274,53) -- (261.5,53) .. controls (254.83,53) and (251.5,50.67) .. (251.5,46) .. controls (251.5,50.67) and (248.17,53) .. (241.5,53)(244.5,53) -- (229,53) .. controls (224.33,53) and (222,55.33) .. (222,60) ;
		\draw [line width=1.5]    (280,93) -- (372,93) ;
		\draw    (371,84) -- (371,103) ;
		\draw    (282,84) -- (282,103) ;
		\draw   (430,60) .. controls (430,55.33) and (427.67,53) .. (423,53) -- (410.5,53) .. controls (403.83,53) and (400.5,50.67) .. (400.5,46) .. controls (400.5,50.67) and (397.17,53) .. (390.5,53)(393.5,53) -- (378,53) .. controls (373.33,53) and (371,55.33) .. (371,60) ;
		\draw [line width=1.5]    (371,93) -- (600,93) ;
		\draw  [dash pattern={on 0.84pt off 2.51pt}]  (24,93) -- (51,93) ;
		\draw  [dash pattern={on 0.84pt off 2.51pt}]  (600,93) -- (627,93) ;
		
		\draw (142,59.4) node [anchor=north west][inner sep=0.75pt]    {${\Lambda }_{i,l}$};
		\draw (496,58.4) node [anchor=north west][inner sep=0.75pt]    {${\Lambda }_{i,r}$};
		\draw (246,20.4) node [anchor=north west][inner sep=0.75pt]    {${\ell }$};
		\draw (106,103.4) node [anchor=north west][inner sep=0.75pt]    {$\left({\ell } ,{\gamma }\right)$};
		\draw (151,112) node [anchor=north west][inner sep=0.75pt]   [align=left] {decay};
		\draw (460,103.4) node [anchor=north west][inner sep=0.75pt]    {$\left({\ell } ,{\gamma }\right)$};
		\draw (505,112) node [anchor=north west][inner sep=0.75pt]   [align=left] {decay};
		\draw (316,56.4) node [anchor=north west][inner sep=0.75pt]    {${\Lambda }_{i}$};
		\draw (234,121) node [anchor=north west][inner sep=0.75pt]   [align=left] {decoupled resolvent bound};
		\draw (395,20.4) node [anchor=north west][inner sep=0.75pt]    {${\ell }$};

	\end{tikzpicture}
	\caption{An illustration of one segment $\snext\Lambda_i$ of the partition $\mathcal{P}$ from Assumption \ref{as:NRmult}; note that the naming convention for the decay intervals is non-unique, i.e. $\Lambda_{i,r} = \Lambda_{i+1,l}$.}
	\label{f:NRmult}
\end{figure}
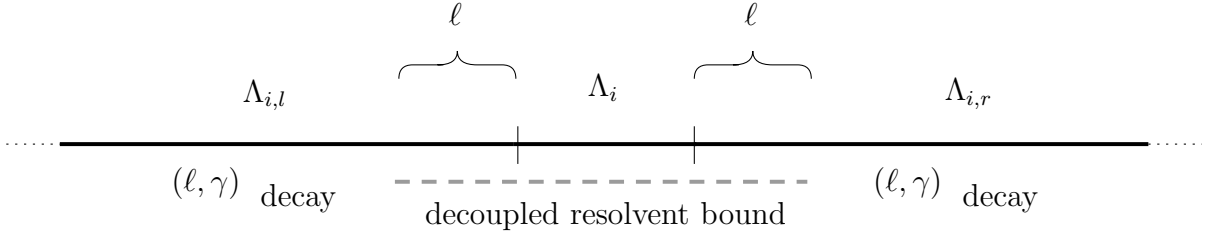
\begin{lem} \label{l:NRmultdecay}
	Suppose $\snext\Lambda$ satisfies Assumption \ref{as:NRmult}. Then $\snext\Lambda$ satisfies the Green's function decay property for $(\snext\ell,\snext\gamma)$. 
\end{lem}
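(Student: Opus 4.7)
The plan is to bootstrap the local controls from Assumption \ref{as:NRmult} into global Green's function decay on $\snext\Lambda$ by iterating the resolvent identity relative to $\mathcal{P}$ and consolidating passages through the short blocks $\Lambda_i$ via the enlarged bound \eqref{eq:NRspecsepmult} on $\snext\Lambda_i$.

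First, fix $m,n \in \snext\Lambda$ with $|m-n|\geq\snext\ell$ and iterate
\[
R^{\snext\Lambda} = R^{\snext\Lambda}_\mathcal{P} - R^{\snext\Lambda}_\mathcal{P}\,\Gamma_\mathcal{P}\,R^{\snext\Lambda}
\]
to express $R^{\snext\Lambda}(m,n)$ as a series over ``hop chains'': alternating sequences of within-$\mathcal{P}$-block propagations by $R^{\snext\Lambda}_\mathcal{P}$ and single-site boundary hops of weight $\varepsilon$ drawn from $\Gamma_\mathcal{P}$. A within-block factor spanning spatial extent at least $\ell$ inside a decay block $\Lambda_{i,l/r}$ contributes at most $e^{-\gamma\cdot(\text{extent})}$ by assumption 1, while within-block factors on a short block $\Lambda_i$ have no inherent decay but live in a region of length at most $M$ by assumption 4.

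The crucial step is to reorganize the series by aggregating every maximal run of consecutive hops whose within-block factors inhabit a common enlarged block $\snext\Lambda_i$. Each such run telescopes, via the resolvent identity now applied on $\snext\Lambda_i$, to a single operator of the form $P_i R^{\snext\Lambda_i}\Gamma_\mathcal{P}$ flanked by decay-block propagations entering and exiting $\snext\Lambda_i$, and therefore contributes amplitude at most $2\snext\rho^{-1}$ per such ``tunneling event'' by \eqref{eq:NRspecsepmult}, consuming spatial extent at most $|\Lambda_i|+2\ell\leq 3M$. Since each decay block has length $>2\snext\ell$ by assumption 3, a chain realizing displacement $|m-n|$ contains at most $\lceil|m-n|/\snext\ell\rceil$ tunneling events, while at least a fraction $1 - 3M/\snext\ell$ of the displacement is traversed inside decay blocks at rate $\gamma$.

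Combining these per-unit-distance contributions with the $\varepsilon$-factors attached to each boundary hop --- which comfortably absorb the combinatorial enumeration of chains by hop-count --- the net decay rate of $|R^{\snext\Lambda}(m,n)|$ is bounded below by $\gamma - C|\log\varepsilon|M/\snext\ell$ for an absolute constant $C$; the factor $16$ in the definition of $\snext\gamma$ is chosen to accommodate both the $|\log(2\snext\rho^{-1})|\leq M$ amplitude loss per tunneling event and the combinatorial counting. The main technical obstacle I expect is the bookkeeping: showing that the aggregation of consecutive hops into a single $P_i R^{\snext\Lambda_i}\Gamma_\mathcal{P}$ application is valid without double-counting, and verifying that adjacent decay-block propagations always supply the $\ell$-long buffer required for assumption 1 to apply.
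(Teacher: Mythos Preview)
There is a genuine gap in your approach. When you iterate the resolvent identity relative to $\mathcal{P}$, the factor $R^{\snext\Lambda}_\mathcal{P}$ involves the block resolvents $R^{\Lambda_i}$ on the short intervals $\Lambda_i$. But Assumption~\ref{as:NRmult} only asserts that $R^{\snext\Lambda_i}$ is well-defined and bounds $P_i R^{\snext\Lambda_i}\Gamma_\mathcal{P}$ via \eqref{eq:NRspecsepmult}; it says nothing about $R^{\Lambda_i}$, which may be singular (if $E \in \spec H^{\Lambda_i}$) or arbitrarily large. Thus your hop-chain expansion is not even well-defined in general, and the proposed aggregation step---resumming the hops within a common $\snext\Lambda_i$ to telescope into a single $P_i R^{\snext\Lambda_i}\Gamma_\mathcal{P}$ factor---cannot rescue it, since the individual terms being summed are themselves the problem. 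Moreover, because adjacent enlarged blocks $\snext\Lambda_i$ and $\snext\Lambda_{i+1}$ share the entire decay block $\Lambda_{i,r}=\Lambda_{i+1,l}$, the notion of a ``maximal run inhabiting a common $\snext\Lambda_i$'' is ambiguous, so the entry/exit points flanking a tunneling event are not well-specified.

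The paper circumvents both issues by never expanding relative to $\mathcal{P}$ alone. It first applies Lemma~\ref{l:NRdecay} to each $\snext\Lambda_i$ separately, obtaining $(\snext\ell,\salt\gamma)$ decay on every enlarged block. It then introduces two auxiliary partitions $\mathcal{P}_0$ and $\mathcal{P}_1$---one built from the $\snext\Lambda_i$ with $i$ even together with the bare $\Lambda_i$ with $i$ odd, and vice versa---and alternates the resolvent expansion between them. Because the boundary of a $\mathcal{P}_0$-block $\snext\Lambda_{2j}$ is adjacent to a bare $\Lambda_{2j\pm 1}$, which in $\mathcal{P}_1$ sits deep inside $\snext\Lambda_{2j\pm 1}$, every resolvent factor that actually gets evaluated is of the form $R^{\snext\Lambda_j}$, never $R^{\Lambda_j}$ alone; only controlled quantities enter, and the long overlaps $|\Lambda_{i,l/r}|>2\snext\ell$ supply the decay between successive tunneling events. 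Your tunneling heuristic is morally right, but the parity trick is the device that makes it rigorous.
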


Before proving Lemma \ref{l:NRmultdecay}, we will first handle the special case where there is precisely one interval $\Lambda_i$:
\begin{assm}\label{as:NR}
	Suppose that $\snext\Lambda$ is partitioned by  
	$\mathcal{P}$ into ${\Lambda}_l \cup {\Lambda} \cup {\Lambda}_r$, such that
	\begin{enumerate}
		\item ${\Lambda}_{l/r}$ satisfies the Green's function decay property for $({\ell},{\gamma})$.
		\item 
		$R^{\snext\Lambda}$ is well-defined, and,  
		denoting $\Lambda=[ a,  b]$,
		\begin{align}\label{eq:NRspecsep}
			\|{P}  R^{\snext\Lambda} \Gamma_{\mathcal P}\| \leq 2{\snext\rho}^{-1},
		\end{align}
		where ${P} $ denotes the projection onto coordinates $[ a-{\ell},  b+{\ell}] \cap \snext\Lambda$.
		\item $|\Lambda|\leq M$.
	\end{enumerate}
\end{assm}

Letting $$\salt\gamma:={\gamma}-6|\log\varepsilon|M/\snext\ell,$$ in this case we get slightly improved decay on $\snext{\Lambda}$:

\begin{lem}\label{l:NRdecay}
	Suppose $\snext\Lambda$ satisfies Assumption \ref{as:NR}. Then $\snext\Lambda$ satisfies the Green's function decay property for $(\snext\ell,\salt\gamma)$. 
\end{lem}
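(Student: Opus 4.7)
The plan is to iterate the resolvent identity
\[
R^{\snext\Lambda} = R^{\snext\Lambda}_\mathcal{P} - R^{\snext\Lambda}\, \Gamma_\mathcal{P}\, R^{\snext\Lambda}_\mathcal{P}
\]
together with its dual form $R^{\snext\Lambda} = R^{\snext\Lambda}_\mathcal{P} - R^{\snext\Lambda}_\mathcal{P}\, \Gamma_\mathcal{P}\, R^{\snext\Lambda}$ at most twice. The coupling $\Gamma_\mathcal{P}$ is supported only on the four hopping pairs $\{(a-1, a), (a, a-1), (b, b+1), (b+1, b)\}$, and $R^{\snext\Lambda}_\mathcal{P}$ is block diagonal across the partition $\mathcal{P}$, so each iteration exchanges an $R^{\snext\Lambda}$-factor for a factor of $R^{\Lambda_{l/r}}$ (for which hypothesis 1 supplies $({\ell},{\gamma})$-decay) at the cost of evaluating $R^{\snext\Lambda}$ at one of the four boundary indices. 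Those boundary evaluations are controlled by \eqref{eq:NRspecsep}: applied to each $\phi \in \{\delta_{a-1}, \delta_a, \delta_b, \delta_{b+1}\}$, it yields $\|P R^{\snext\Lambda} \delta_y\| \leq 2{\snext\rho}^{-1}/\varepsilon$ for the corresponding $y$, hence $|R^{\snext\Lambda}(x, y)| \leq 2{\snext\rho}^{-1}/\varepsilon$ for any $y \in \{a-1, a, b, b+1\}$ and any $x$ in the range of $P$.

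The argument then proceeds by case analysis on the locations of $m$ and $n$.  Since $|m-n| \geq \snext\ell > M \geq |\Lambda|$, at least one of $m, n$ lies in $\Lambda_l \cup \Lambda_r$.  The representative case $m \in \Lambda_l$, $n \in \Lambda_r$ yields, after two iterations, the closed formula
\[
R^{\snext\Lambda}(m,n) = \varepsilon^2\, R^{\Lambda_l}(m, a-1)\, R^{\snext\Lambda}(a, b)\, R^{\Lambda_r}(b+1, n);
\]
bounding the middle factor by $2{\snext\rho}^{-1}/\varepsilon$ and using $|m-n| = |m - (a-1)| + |\Lambda| + 1 + |n - (b+1)|$ gives $|R^{\snext\Lambda}(m,n)| \leq 2\varepsilon{\snext\rho}^{-1} e^{-{\gamma}(|m-n| - M - 1)}$.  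The mixed case ($m \in \Lambda_l$, $n \in \Lambda$) requires only one application of the dual identity to produce $R^{\snext\Lambda}(m, n) = -\varepsilon\, R^{\Lambda_l}(m, a-1)\, R^{\snext\Lambda}(a, n)$, and is bounded analogously.  The same-side case ($m, n \in \Lambda_l$, say) splits as $R^{\snext\Lambda}(m, n) = R^{\Lambda_l}(m, n) - \varepsilon R^{\snext\Lambda}(m, a) R^{\Lambda_l}(a-1, n)$; the leading term decays directly at rate ${\gamma}$, and the correction, after rewriting $R^{\snext\Lambda}(m, a) = -\varepsilon R^{\Lambda_l}(m, a-1) R^{\snext\Lambda}(a, a)$ via the dual identity, decays in $|m - (a-1)|$.

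Taking logarithms in each case, using ${\gamma} \leq |\log\varepsilon|$ and $M \geq |\log{\snext\rho}|$, the prefactor $\log(2\varepsilon{\snext\rho}^{-1})$ together with the boundary correction ${\gamma}(M+1)$ contribute at most $4|\log\varepsilon|M$ to $\log|R^{\snext\Lambda}(m,n)|$; for $|m-n| \geq \snext\ell$ this overhead is absorbed into the exponential rate, giving exactly $\salt\gamma = {\gamma} - 6|\log\varepsilon|M/\snext\ell$.  The main technical point appears in the same-side case when $n$ sits within ${\ell}$ of the boundary $a-1$: there $R^{\Lambda_l}(a-1, n)$ does not enjoy Green's function decay, but the companion factor $R^{\snext\Lambda}(m, a)$ already carries exponential decay in $|m - (a-1)|$, and under the WLOG placement $m < n \leq a-1$ this distance exceeds $|m-n|$, so the product still decays at the required rate even when the second factor is only crudely controlled.
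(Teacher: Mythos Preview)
Your overall strategy—iterate the resolvent identity once or twice and use \eqref{eq:NRspecsep} to control the middle $R^{\snext\Lambda}$ factor—is exactly the paper's, and your exact formulas in each case are correct. However, there is a genuine gap in how you handle points that lie in $\Lambda_{l/r}$ but within distance $\ell$ of $\Lambda$.

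Concretely: Assumption \ref{as:NR} gives you \emph{no} bound whatsoever on $|R^{\Lambda_l}(p,q)|$ when $|p-q|<\ell$. So in your opposite-sides case, if (say) $m=a-1$, the factor $R^{\Lambda_l}(m,a-1)=R^{\Lambda_l}(a-1,a-1)$ is completely uncontrolled, and your claimed bound $|R^{\snext\Lambda}(m,n)| \leq 2\varepsilon\snext\rho^{-1}e^{-\gamma(|m-n|-M-1)}$ fails. The same problem recurs in your same-side treatment: after your two expansions the correction term contains $R^{\Lambda_l}(a-1,n)$, and when $|n-(a-1)|<\ell$ you describe this factor as ``only crudely controlled,'' but no such crude control is available from the hypotheses—the resolvent $R^{\Lambda_l}$ could in principle have arbitrarily large norm near the diagonal. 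The excess decay $e^{-\gamma|n-(a-1)|}$ you gain from $|m-(a-1)|=|m-n|+|n-(a-1)|$ cannot absorb an unbounded factor.

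The paper sidesteps this by organizing the cases according to $\dist(\cdot,\Lambda)\leq\ell$ rather than block membership. Since $|m-n|\geq\snext\ell\geq 2\ell+|\Lambda|$, at most one of $m,n$ can be within $\ell$ of $\Lambda$; one then expands from the \emph{far} side, so that every $R^{\Lambda_{l/r}}$ factor appears with arguments at distance $\geq\ell$ (where decay applies), and the remaining $R^{\snext\Lambda}$ factor is evaluated at the \emph{near} point, which lies in the range of $P$—so \eqref{eq:NRspecsep} controls it. Your argument becomes correct as soon as you make this choice of expansion direction explicit; as written, you have chosen the wrong side in the problematic subcases.
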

\begin{proof}
	Suppose $m,n \in \snext\Lambda$ and $|m-n| \geq \snext\ell \geq 3M \geq 2{\ell}+|\Lambda|$. 
	Then at most one of $m,n$ satisfies $\dist(\cdot,{\Lambda}) \leq {\ell}$.  In the case neither $m$ nor $n$ is within ${\ell}$ of ${\Lambda}$, we use the resolvent expansion 
	\begin{equation*}
		R^{\snext\Lambda} = R^{\snext\Lambda}_{\mathcal{P}} - R^{\snext\Lambda}_{\mathcal{P}}\Gamma_{\mathcal{P}} R^{\snext\Lambda} 
		= R^{\snext\Lambda}_{\mathcal{P}} - R^{\snext\Lambda}_{\mathcal{P}}\Gamma_{\mathcal{P}}R^{\snext\Lambda}_{\mathcal{P}} + R^{\snext\Lambda}_{\mathcal{P}}\Gamma_{\mathcal{P}}R^{\snext\Lambda}\Gamma_{\mathcal{P}}R^{\snext\Lambda}_{\mathcal{P}}.
	\end{equation*}
	Since $m,n$ do not belong to adjacent intervals in $\mathcal{P}$, 
	the middle term of $R^{\snext\Lambda}(m,n)$ in this expansion vanishes. By Green's function decay on ${\Lambda}_{l/r}$ and \eqref{eq:NRspecsep}, 
	\begin{align*}
		\log|R^{\snext\Lambda}(m,n)| &\leq \log\left(e^{-{\gamma}|m-n|} + 2\varepsilon{\snext\rho}^{-1}e^{-{\gamma}(|m-n|-|\Lambda|-1)}\right) \\
		&\leq \log\left(3{\snext\rho}^{-1}e^{-{\gamma}(|m-n|-|\Lambda|-2{\ell}-1)}\right)\\
		&= -\left({\gamma} - \frac{\log3+|\log{\snext\rho}|+{\gamma}|\Lambda|+2{\gamma}{\ell}+{\gamma}}{|m-n|}\right)|m-n|\\
		&\leq -\left({\gamma} - \frac{6|\log\varepsilon|M}{\ell}\right)|m-n|.
	\end{align*}
	
	On the other hand, if $\dist(n,\Lambda)\leq{\ell}$, we have $|m-n| \leq \dist(m,\Lambda)+|\Lambda|+2{\ell}$. 
	We use the resolvent expansion \begin{equation*}    R^{\snext\Lambda} = R^{\snext\Lambda}_{\mathcal{P}} - R^{\snext\Lambda}_{\mathcal{P}}\Gamma_{\mathcal{P}} R^{\snext\Lambda}.\end{equation*}
	By Green's function decay on ${\Lambda}_{l/r}$ and \eqref{eq:NRspecsep},
	\begin{equation*}
		\log|R^{\snext\Lambda}(m,n)| \leq \log\left(e^{-{\gamma}|m-n|}+2{\snext\rho}^{-1}e^{-{\gamma}(\dist(m,\Lambda)-1)}\right) \leq \log\left(3{\snext\rho}^{-1}e^{-{\gamma}(|m-n|-|\Lambda|-2{\ell}-1)}\right),
	\end{equation*}
	and the proof concludes as in the first case.
\end{proof}

The following lemma is useful to ensure that the decoupled resolvent bounds \eqref{eq:NRspecsepmult} or \eqref{eq:NRspecsep} are satisfied:
\begin{lem}\label{l:NRspecsep}
	Suppose $\snext\ell \in \mathbb{N}$ is an integer with $$\snext\ell\geq\max\left\{{\ell}, \frac{|\log{\snext\rho}|+1}{{\gamma}}\right\}.$$
	Suppose that $\snext\Lambda$ is partitioned by  
	$\mathcal{P}$ into ${\Lambda}_l \cup {\Lambda} \cup {\Lambda}_r$, such that
	\begin{enumerate}
		\item ${\Lambda}_{l/r}$ satisfies the Green's function decay property for $({\ell},{\gamma})$.
		\item There exists an interval $\snext\Lambda \supset \salt{\Lambda} = [\salt{a}, \salt{b}] \supset {\Lambda} = [{a}, {b}]$ such that
		\begin{align*}
			\min\{|\salt{a} - {a}|, |\salt{b} - {b}|\} &\geq \ell, \\
			\|R^{\salt{\Lambda}}\| \leq {\snext\rho}^{-1}.
		\end{align*}
	\end{enumerate}
	Then 
	$R^{\snext\Lambda}$ is well-defined, 
	\eqref{eq:NRspecsep} is satisfied, and $\snext\Lambda$ satisfies Assumption \ref{as:NR}.
\end{lem}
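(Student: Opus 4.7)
The plan is to use a Schur-complement / Poisson-formula reduction to a $2\times 2$ linear system in the boundary values $\psi(a)$ and $\psi(b)$, and thereby simultaneously establish invertibility of $H^{\snext\Lambda}-E$ and the bound on $PR^{\snext\Lambda}\Gamma_{\mathcal{P}}$. The three resolvents $R^{\Lambda_l}, R^{\Lambda_r}, R^{\salt\Lambda}$ are all available (the first two by Green's function decay, which implicitly bounds $E$ away from the spectra of $H^{\Lambda_{l/r}}$; the third by hypothesis), which is what makes such a reduction possible.

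First, I will set up the equations. For any candidate $\psi$ solving $(H^{\snext\Lambda}-E)\psi=\phi$, the Poisson formula \eqref{eq:poisson} applied separately to $\Lambda_l$ and $\Lambda_r$ yields
\begin{align*}
\psi|_{\Lambda_l} &= R^{\Lambda_l}\phi|_{\Lambda_l} - \varepsilon\psi(a)\,R^{\Lambda_l}(\cdot, a-1), \\
\psi|_{\Lambda_r} &= R^{\Lambda_r}\phi|_{\Lambda_r} - \varepsilon\psi(b)\,R^{\Lambda_r}(\cdot, b+1),
\end{align*}
and the analogous formula on $\salt\Lambda$ (using $\|R^{\salt\Lambda}\|\leq\snext\rho^{-1}$) gives
\begin{equation*}
\psi|_{\salt\Lambda} = R^{\salt\Lambda}\bigl[\phi|_{\salt\Lambda} - \varepsilon\psi(\salt a-1)\delta_{\salt a} - \varepsilon\psi(\salt b+1)\delta_{\salt b}\bigr],
\end{equation*}
with the convention that $\psi(\salt a-1)$ or $\psi(\salt b+1)$ vanishes when the corresponding index lies outside $\snext\Lambda$. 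Using the first two identities to eliminate $\psi(\salt a-1)$ and $\psi(\salt b+1)$ in favor of $\psi(a), \psi(b)$, then evaluating the third at $a$ and at $b$, produces a closed system
\begin{equation*}
(I-K)\begin{pmatrix}\psi(a)\\ \psi(b)\end{pmatrix} = \begin{pmatrix}f_a(\phi)\\ f_b(\phi)\end{pmatrix},
\end{equation*}
whose coefficient matrix $K$ has entries of the form $\varepsilon^2 R^{\Lambda_{l/r}}(\salt a-1/\salt b+1,\,a-1/b+1)\,R^{\salt\Lambda}(a/b,\,\salt a/\salt b)$.

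Second, I will bound $K$ and solve. By Green's function decay on $\Lambda_{l/r}$ and $|\salt a-a|,|\salt b-b|\geq\ell$, each factor $|R^{\Lambda_{l/r}}(\cdot,\cdot)|$ is at most $e^{-\gamma\ell}$; combined with $\|R^{\salt\Lambda}\|\leq\snext\rho^{-1}$, this gives an entrywise bound $\varepsilon^2 e^{-\gamma\ell}\snext\rho^{-1}$ on $K$. Using $\gamma\leq|\log\varepsilon|$, $\snext\ell\geq\max\{\ell,(|\log\snext\rho|+1)/\gamma\}$, and $\varepsilon<1/7$, this can be pushed strictly below $1/4$, so $I-K$ is invertible by Neumann series with $\|(I-K)^{-1}\|\leq 2$. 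Solving and substituting into the Poisson representations yields a unique $\psi$ for each $\phi$; a direct check then confirms $(H^{\snext\Lambda}-E)\psi=\phi$ (the equations hold in the interior of each piece by construction, and the boundary data match across $a-1,a,b,b+1$), establishing that $R^{\snext\Lambda}$ is well-defined.

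Finally, for the norm bound, I will feed in $\phi=\Gamma_{\mathcal{P}}\tilde\phi$, so $\phi$ is supported on $\{a-1,a,b,b+1\}\subset\salt\Lambda$ with $\|\phi\|\leq\varepsilon\|\tilde\phi\|$. The solution of the system gives $|\psi(a)|,|\psi(b)|\lesssim\varepsilon\snext\rho^{-1}\|\tilde\phi\|$; then $P\psi|_{\salt\Lambda}$ is controlled by $\snext\rho^{-1}\|\phi\|$, while $P\psi|_{\Lambda_{l/r}}$ (restricted within distance $\ell$ of $\{a,b\}$) is controlled by $\varepsilon|\psi(a/b)|$ times the short-range Green's function values, which aggregate to a bounded multiple. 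Tracking the constants gives $\|PR^{\snext\Lambda}\Gamma_{\mathcal{P}}\|\leq 2\snext\rho^{-1}$. The main obstacle is verifying that $K$ is a strict contraction: this is where the interplay of the three parameter hypotheses is essential, as the ``hard'' resolvent norm $\snext\rho^{-1}$ on $\salt\Lambda$ could a priori be huge, and it is tamed only by the $\varepsilon^2$ from double boundary hopping together with the Green's function decay factor $e^{-\gamma\ell}$ on the outer leads.
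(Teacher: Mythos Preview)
Your approach is correct and genuinely different from the paper's. The paper argues via an alternating operator-level resolvent expansion: with $\salt{\mathcal P}$ the partition of $\snext\Lambda$ into $\salt\Lambda$ and its complement, one iterates $R^{\snext\Lambda}=\salt R - \salt R\salt\Gamma R^{\snext\Lambda}$ and $R^{\snext\Lambda}=R^{\snext\Lambda}_{\mathcal P}-R^{\snext\Lambda}_{\mathcal P}\Gamma_{\mathcal P}R^{\snext\Lambda}$ to obtain a Neumann-type series whose ratio is controlled by $\|\salt\Gamma R^{\snext\Lambda}_{\mathcal P}\Gamma_{\mathcal P}\|\cdot\|R^{\salt\Lambda}\|\lesssim e^{-\gamma\ell}\snext\rho^{-1}$; multiplying on the left by $P$ (whose range lies in $\salt\Lambda$, so $P\salt R=PR^{\salt\Lambda}$) and summing gives well-definedness and the bound \eqref{eq:NRspecsep} at once. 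Your Schur-complement reduction to a $2\times 2$ system in $(\psi(a),\psi(b))$ is more elementary and makes the finite-dimensional mechanism explicit; the paper's series is slicker and dovetails directly with the multi-interval bootstrap in Lemma~\ref{l:NRmultdecay}. Both rest on the same contraction factor.

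One fixable slip in your last paragraph: since the range of $P$ is $[a-\ell,b+\ell]\subset\salt\Lambda$, you should bound \emph{all} of $P\psi$ via the $\salt\Lambda$ Poisson formula, namely $\|P\psi\|\leq\snext\rho^{-1}\bigl(\|\phi\|+\varepsilon|\psi(\salt a{-}1)|+\varepsilon|\psi(\salt b{+}1)|\bigr)$, and then control the two exterior boundary values $\psi(\salt a{-}1),\psi(\salt b{+}1)$ via the $\Lambda_{l/r}$ formulas at \emph{long} range (distance $\geq\ell$ from $a{-}1,b{+}1$), where Green's function decay applies. Your proposed route through ``short-range Green's function values'' on $\Lambda_{l/r}$ is neither needed nor available: the decay hypothesis says nothing about $R^{\Lambda_{l/r}}(m,a{-}1)$ for $|m-(a{-}1)|<\ell$, and there is no a priori bound on $\|R^{\Lambda_{l/r}}\|$. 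Separately, your justification of the contraction invokes the $\snext\ell$ hypothesis to bound a quantity depending only on $\ell$; this is the same tension that appears in the paper's own series (both need $e^{-\gamma\ell}\snext\rho^{-1}<1$), and almost certainly reflects a typo in the statement (the distance condition should read $\geq\snext\ell$, after which both arguments close cleanly).
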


\begin{proof}
	Denote by $\salt{\mathcal{P}}$ the partition of $\snext\Lambda$ into $\salt{\Lambda}$ and its complement $\snext\Lambda \setminus \salt{\Lambda}$, and denote as shorthand $\salt{R} = R^{\snext\Lambda}_{\salt{\mathcal{P}}}$ and $\salt{\Gamma} = \Gamma^\Lambda_{\salt{\mathcal{P}}}$.
	
	We expand the resolvent $R^{\snext\Lambda}$ by alternating the resolvent formulas 
	\begin{align*}
		R^{\snext\Lambda} &= \salt R - \salt R\salt{\Gamma}R^{\snext\Lambda}, \\
		R^{\snext\Lambda} &= R^{\snext\Lambda}_\mathcal{P} - R^{\snext\Lambda}_\mathcal{P} \Gamma_\mathcal{P} R^{\snext\Lambda},
	\end{align*}
	to get the expansion
	\begin{equation} \label{eq:expR}
		R^{\snext\Lambda} = \salt R - \salt R\salt{\Gamma} R^{\snext\Lambda}_\mathcal{P} + \salt R\salt{\Gamma} R^{\snext\Lambda}_{\mathcal{P}} \Gamma_\mathcal{P} \salt R - \dots.
	\end{equation}
	Note that ${P}  \salt R = {P}  R^{\salt\Lambda}$, where ${P}$ is defined as in \eqref{eq:NRspecsep}; similarly, because the boundary points of $\mathcal P$ are in $\salt\Lambda$, $\Gamma_{\mathcal P}\salt R = \Gamma_{\mathcal P} R^{\salt\Lambda}$. Thus multiplying on the left 
	by ${P} $ gives
	\begin{align*}
		{P} R^{\snext\Lambda} &= {P} R^{\salt\Lambda} - {P} R^{\salt\Lambda}\salt{\Gamma} R^{\snext\Lambda}_\mathcal{P} + {P} R^{\salt\Lambda}\salt{\Gamma} R^{\snext\Lambda}_{\mathcal{P}} \Gamma_\mathcal{P} R^{\salt\Lambda} - \dots.
	\end{align*}
	By the Green's function decay on ${\Lambda}_{l/r} \supset \salt{\Lambda} \setminus {\Lambda}$, 
	\begin{align}
		\|\salt{\Gamma} R^{\snext\Lambda}_{\mathcal{P}}\Gamma_\mathcal{P}\| \leq \|\salt{\Gamma}R^{{\Lambda}_l}\Gamma_\mathcal{P}\| + \|\salt{\Gamma}R^{{\Lambda}_r}\Gamma_\mathcal{P}\| \leq \exp(-{\gamma} \ell),
	\end{align}
	and so we get that
	\begin{align*}
		\|{P} R^{\snext\Lambda} \Gamma_{\mathcal P}\|  &= \left\| {P} \sum_{k \geq 0} (R^{\salt\Lambda}\salt{\Gamma} R^{\snext\Lambda}_\mathcal{P} \Gamma_\mathcal{P})^k(R^{\salt\Lambda}-R^{\salt\Lambda}\salt{\Gamma}R^{\snext\Lambda}_\mathcal{P} )\Gamma_{\mathcal P}
		\right\| \\
		&\leq \sum_{k \geq 0}\left(\frac{e^{-{\gamma} \ell}}{{\snext\rho}}\right)^k \frac{2\varepsilon+e^{-{\gamma}\ell}}{{\snext\rho}} \\
		&\leq 2{\snext\rho}^{-1}.
	\end{align*}
	Using the same method to estimate the expansion \eqref{eq:expR} gives an upper bound on $\|R^{\snext\Lambda}\|$, which ensures $R^{\snext\Lambda}$ is well-defined.
\end{proof}

\begin{proof}[Proof of Lemma \ref{l:NRmultdecay}]
	Each interval $\snext\Lambda_i %
	= \Lambda_{i,l} \cup \Lambda_i \cup \Lambda_{i,r}$ satisfies the Green's function decay property for $(\snext\ell,\salt\gamma)$ by Lemma \ref{l:NRdecay}. Define the partitions $\mathcal{P}_1 := \bigcup_{i \text{ odd}} \snext\Lambda_i \cup \bigcup_{i \text{ even}} \Lambda_i$ and $\mathcal{P}_0 := \bigcup_{i \text{ even}} \snext\Lambda_i \cup 
	\bigcup_{i \text{ odd}} \Lambda_i$.
	For ease of notation, we define $\mathcal{P}_i = \mathcal{P}_0$
	whenever $i$ is an even integer, and $\mathcal{P}_i = \mathcal{P}_1$ whenever $i$ is an odd integer.
	
	We first show that $R^{\snext\Lambda}$ is well-defined. Partition $\snext\Lambda = \check\Lambda_0 \cup \check\Lambda_1$ such that $\bigcup_{i \text{ even}} \Lambda_i \subset \check\Lambda_0$ and $\bigcup_{i \text{ odd}} \Lambda_i \subset \check\Lambda_1$, and let $Q_0,Q_1$ be projections onto the coordinates $\check\Lambda_0,\check\Lambda_1$ respectively. Then, by Assumption \ref{as:NRmult}, we have upper bound estimates on $\|Q_0R^{\snext\Lambda}_{\mathcal P_0}\|$ and $\|Q_1R^{\snext\Lambda}_{\mathcal P_1}\|$, and we have $\|\Gamma_{\mathcal{P}_0}R^{\snext\Lambda}_{\mathcal{P}_0}\Gamma_{\mathcal{P}_1}\|,\|\Gamma_{\mathcal{P}_1}R^{\snext\Lambda}_{\mathcal{P}_1}\Gamma_{\mathcal{P}_0}\|\ll1$. Thus, we can write $R^{\snext\Lambda} = Q_0R^{\snext\Lambda} + Q_1R^{\snext\Lambda}$ and expand each term using an infinite resolvent expansion alternating between $\mathcal P_0$ and $\mathcal P_1$, the first term starting with $\mathcal P_0$ and the second term starting with $\mathcal P_1$. This will give an upper bound on $\|R^{\snext\Lambda}\|$, ensuring it is well-defined.
	
	Suppose $m,n \in \snext\Lambda$ and $|m-n| \geq \snext\ell$.
	Because the $\snext\Lambda_i$ cover $\snext\Lambda$ with overlaps 
	of size at least $2\snext\ell$, there must some $\snext\Lambda_i \ni m$ such that $m$ is separated from its boundary by at 
	least $\snext\ell$. Without loss of generality, let this be 
	$\snext\Lambda_0$.

	Suppose first that $\dist(n,\Lambda_i) > {\ell}$ for all $\Lambda_i$. We expand the resolvent first from the left using $\mathcal P_0$, then from the right using the partition $\mathcal P$ defined in Assumption \ref{as:NRmult}, and then from the left alternating between $\mathcal P_1$ and $\mathcal P_0$:
	\begin{equation}\label{eq:BREalt}
		R^{\snext\Lambda} = R^{\snext\Lambda}_{\mathcal{P}_0} - R^{\snext\Lambda}_{\mathcal{P}_0}\Gamma_{\mathcal{P}_0}R^{\snext\Lambda}_{\mathcal{P}} + R^{\snext\Lambda}_{\mathcal{P}_0}\Gamma_{\mathcal{P}_0}R^{\snext\Lambda}_{\mathcal{P}_1}\Gamma_{\mathcal{P}}R^{\snext\Lambda}_{\mathcal{P}} - R^{\snext\Lambda}_{\mathcal{P}_0}\Gamma_{\mathcal{P}_0}R^{\snext\Lambda}_{\mathcal{P}_1}\Gamma_{\mathcal{P}_1}R^{\snext\Lambda}_{\mathcal{P}_0}\Gamma_{\mathcal{P}}R^{\snext\Lambda}_{\mathcal{P}} + \dots.
	\end{equation}
	We now expand $R^{\snext\Lambda}(m,n)$ using \eqref{eq:BREalt}. Since $|m-n|\geq\snext\ell$, the first term is bounded by $e^{-\salt\gamma|m-n|}$ by Lemma \ref{l:NRdecay}. The second term gives a sum of terms of the form $\varepsilon R^{\snext\Lambda}_{\mathcal P_0}(m,p\pm1)R^{\snext\Lambda}_{\mathcal P}(p,n)$ where $p \in \Lambda_i$ for some $i$; then $R^{\snext\Lambda}_{\mathcal P}(p,n)=0$ because $n$ does not belong to any $\Lambda_i$. Subsequent terms are of the form
	\begin{equation} \label{eq:BREterm}
		(-1)^{k+1}\varepsilon^{k+1} R^{\snext\Lambda}_{\mathcal{P}_0}(p_0,q_0)R^{\snext\Lambda}_{\mathcal{P}_1}(p_1,q_1)\cdots R^{\snext\Lambda}_{\mathcal{P}_{k}}(p_k,q_k)R^{\snext\Lambda}_{\mathcal{P}}(p_{k+1},n)
	\end{equation}
	for $k\geq1$, where $p_0=m$, and for $0\leq i < k$, $q_i$ is a boundary point of the interval in $\mathcal{P}_{i}$ containing $p_i$, and $p_{i+1}=q_i\pm1$; $p_{k+1}$ is a boundary point of the interval in $\mathcal{P}$ containing $n$, and $q_k = p_{k+1}\pm1$.
	
	The structure of the intervals, our assumption on $m$, and 
	Lemma \ref{l:NRdecay} guarantee that $\log|R^{\snext\Lambda}_{\mathcal{P}_i}(p_i,q_i)|\leq-\salt\gamma|p_i-q_i|$ 
	for all $0 \leq i < k$. Similarly, Assumption \ref{as:NRmult} guarantees $\log|R^{\snext\Lambda}_{\mathcal{P}}(p_{k+1},n)|\leq-{\gamma}|p_{k+1}-n|$. To estimate $|R^{\snext\Lambda}_{\mathcal{P}_k}(p_k,q_k)|$, we note that $q_k$ must be near some $\Lambda_j$ (i.e., either $q_k \in \Lambda_j$ or $\dist(q_k,\Lambda_j)\leq{\ell}$), and we make the following considerations:
	\begin{itemize}
		\item If $k$ shares the same parity as $j$, then in order for $R^{\snext\Lambda}_{\mathcal{P}_k}(p_k,q_k)$ to be nonzero, we must have $p_k \in \Lambda_j \in \mathcal{P}_k$. 
		Then \eqref{eq:NRspecsep} applies, and $|p_k-q_k|\leq2{\ell}+|\log{\snext\rho}|\leq 3M$, giving $|R^{\snext\Lambda}_{\mathcal{P}_k}(p_k,q_k)|=|R^{\snext\Lambda_j}(p_k,q_k)| \leq 2\varepsilon^{-1}{\snext\rho}^{-1} \leq 2\varepsilon^{-1}{\snext\rho}^{-1}e^{3M\salt\gamma}e^{-\salt\gamma|p_k-q_k|}$.
		\item If $k$ and $j$ have opposite parities, then $|p_k-q_k| \geq 2\snext\ell-{\ell} \geq 5M \geq \snext\ell$. Thus either $R^{\snext\Lambda}_{\mathcal{P}_k}(p_k,q_k)=0$, or 
		Lemma \ref{l:NRdecay} applies, giving $\log|R^{\snext\Lambda}_{\mathcal{P}_k}(p_k,q_k)|\leq-\salt\gamma|p_k-q_k|$.
	\end{itemize}
	Finally, since $|m-n|\leq |p_{k+1}-n|+\sum_{i=0}^k|p_i-q_i|+k+1$, and 
	$\varepsilon^{k+1}\leq e^{-\salt\gamma (k+1)}$, each term of the form
	\eqref{eq:BREterm} is bounded by $2\varepsilon^{-1}{\snext\rho}^{-1}e^{3M\salt\gamma}e^{-\salt\gamma|m-n|}$. 
	By the same considerations, but using the fact that $|p_i-q_i|\geq\snext\ell$ for all $0\leq i<k$, each term of the form 
	\eqref{eq:BREterm} is bounded by $2\varepsilon^{-1}{\snext\rho}^{-1}e^{3M\salt\gamma}e^{-k\salt\gamma\snext\ell}$. We consider the first term $R^{\snext\Lambda}_{\mathcal{P}_0}(m,n)$ from \eqref{eq:BREalt} to be the 
	sole term of the expansion with $k=0$, and note that the same bounds apply.
	
	In the case that $n \in \Lambda_j$ or $\dist(n,\Lambda_j) \leq {\ell}$, we follow the same procedure but do not use $\mathcal P$, i.e., we use the expansion 
	\begin{equation}\label{eq:BREalt2}
		R^{\snext\Lambda} = R^{\snext\Lambda}_{\mathcal{P}_0} - R^{\snext\Lambda}_{\mathcal{P}_0}\Gamma_{\mathcal{P}_0}R^{\snext\Lambda}_{\mathcal{P}_1} + R^{\snext\Lambda}_{\mathcal{P}_0}\Gamma_{\mathcal{P}_0}R^{\snext\Lambda}_{\mathcal{P}_1}\Gamma_{\mathcal{P}_1}R^{\snext\Lambda}_{\mathcal{P}_0} - R^{\snext\Lambda}_{\mathcal{P}_0}\Gamma_{\mathcal{P}_0}R^{\snext\Lambda}_{\mathcal{P}_1}\Gamma_{\mathcal{P}_1}R^{\snext\Lambda}_{\mathcal{P}_0}\Gamma_{\mathcal{P}_0}R^{\snext\Lambda}_{\mathcal{P}_1} + \dots.
	\end{equation}
	to obtain terms of the form 
	\begin{equation} \label{eq:BREterm2}
		(-1)^{k}\varepsilon^{k} R^{\snext\Lambda}_{\mathcal{P}_0}(p_0,q_0)R^{\snext\Lambda}_{\mathcal{P}_1}(p_1,q_1)\cdots R^{\snext\Lambda}_{\mathcal{P}_{k}}(p_k,q_k)
	\end{equation}
	where $p_0=m$, $q_k=n$, and for $0\leq i < k$, $q_i$ is a boundary point of the interval in $\mathcal{P}_{i}$ containing $p_i$, and $p_{i+1}=q_i\pm1$. By considerations analogous to those in the first case, we obtain the same bounds $2\varepsilon^{-1}{\snext\rho}^{-1}e^{3M\salt\gamma}e^{-\salt\gamma|m-n|}$ and $2\varepsilon^{-1}{\snext\rho}^{-1}e^{3M\salt\gamma}e^{-k\salt\gamma\snext\ell}$ for each term of the form \eqref{eq:BREterm2}.
	
	Since each $p_i$ gives two choices for $q_i$, there 
	are at most $2^{k+1}$ nonzero terms of the form \eqref{eq:BREterm} or \eqref{eq:BREterm2} for 
	each nonzero value of $k$. Let $k_0 = \lceil\frac{|m-n|}{\snext\ell}\rceil$. For the at most $2^{k_0+1}$ terms with $k<k_0$, we use the bound $2{\snext\rho}^{-1}e^{3M\salt\gamma}e^{-\salt\gamma|m-n|}$, while we use the bound $2{\snext\rho}^{-1}e^{3M\salt\gamma}e^{-k\salt\gamma\snext\ell}$ for $k\geq k_0$, giving
	\begin{align*} 
		|R^{\snext\Lambda}(m,n)|&\leq 2\varepsilon^{-1}{\snext\rho}^{-1}e^{3M\salt\gamma}\left(2^{k_0+1}e^{-\salt\gamma|m-n|}+\sum_{k=k_0}^\infty 2^{k+1}e^{-k\salt\gamma\snext\ell}\right)\\
		&\leq 4\varepsilon^{-1}{\snext\rho}^{-1}e^{3M\salt\gamma}\left(2^{k_0}e^{-\salt\gamma|m-n|}+2^{k_0+1}e^{-\salt\gamma k_0\snext{\ell}}\right) \\
		&\leq 4\varepsilon^{-1}{\snext\rho}^{-1}e^{3M\salt\gamma}2^{k_0+2}e^{-\salt\gamma|m-n|}\\
		&\leq 2^{|m-n|/\snext\ell+5}\varepsilon^{-1}{\snext\rho}^{-1}e^{3M\salt\gamma}e^{-\salt\gamma|m-n|}.
	\end{align*}
	It follows that
	\begin{align*}
		\log|R^{\snext\Lambda}(m,n)|&\leq -\left(\salt\gamma - \frac{\log2}{\snext\ell}-\frac{5\log2+|\log\varepsilon|+|\log{\snext\rho}|+3M\salt\gamma}{|m-n|}\right)|m-n|\\
		&\leq-\left(\salt\gamma-\frac{10|\log\varepsilon|M}{\snext\ell}\right)|m-n| \\
		&\leq -\snext\gamma|m-n|.  \qedhere
	\end{align*}

\end{proof}

\newpage
\section{Proofs of preliminaries}

Below we collect proofs of the foundational results from the introduction.

\subsection{Perturbation theory lemmas}

\begin{proof}[Proof of Lemma \ref{l:PQclose}]
We proceed by contraposition.  Suppose $\rank(\chi) < \rank(P)$.  Then $\dim(\ker(\chi)) + \dim(\Ima(P)) > \dim(V)$ by rank-nullity.  Consequently, $\ker(\chi)\cap \Ima(P)$ contains a unit vector $P v$.  But then
\begin{align*}
\|(I-\chi)P v\| &= \|P v\| = 1,
\end{align*}
so $\|(I-\chi)P\| \geq 1$.
\end{proof}

\begin{proof}[Proof of Lemma \ref{l:approxevect}]
There must be an eigenvalue in $\overline B_\delta(E_*)$, since
\begin{align*}
\delta^2 \geq \|(A-E_*)\phi\|^2 = \|(A-E_*)\chi_\mathbb{R}(A)\phi\|^2 \geq \inf_{\lambda_j \in \, \spec A}|\lambda_j - E_*|^2.
\end{align*}
By assumption we have
\begin{align*}
\delta^2 \geq \|(A-E_*)\phi\|^2 &\geq \|(I-\chi_{ B_{\rho}(E_*)}(A))(A-E_*)\phi\|^2 \\
&\geq \rho^2\|(I-\chi_{ B_{\rho}(E_*)}(A))\phi\|^2 
\end{align*}
which is \eqref{eq:approxevect1}.  To see \eqref{eq:approxevect2}, notice that, since
\begin{align*}
\left\|(I - \chi_{ B_\rho(E_*)}(A))\phi\right\|^2 &= 1 - \|\chi_{ B_\rho(E_*)}(A)\phi\|^2 \leq (\delta/\rho)^2,
\end{align*}
it follows from $\chi_{ B_\rho(E_*)}(A) ^2 = \chi_{ B_\rho(E_*)}(A)$ that
\begin{align*}
\|\phi - \psi\|^2 &= 2(1 - \|\chi_{B_\rho(E_*)}(A)\phi\|) \\
&\leq 2(1 - \|\chi_{B_\rho(E_*)}(A)\phi\|^2) \\
&\leq 2(\delta/\rho)^2,
\end{align*}
which was the claim.
\end{proof}

\begin{proof}[Proof of Lemma \ref{l:feynman}]
The family of eigenpairs $(\mathbf{E},\psi)(\theta)$ in question are the implicit function defined by the vanishing of 
\begin{align*}
F(E,\phi,\theta) := \begin{bmatrix}
(A(\theta) - E)\phi \\
\langle \phi, \phi \rangle - 1
\end{bmatrix}.
\end{align*}
Indeed, since $E_*$ is simple, one has that
\begin{align*}
(D_{(E,\phi)} F) (E_*,\psi_*, \theta_*) = \begin{bmatrix}
A(\theta_*) - E_* & -\psi_* \\
2\psi_*^\top & 0
\end{bmatrix}
\end{align*}
is non-singular: denoting $P = \psi_* \psi_*^\top \oplus 1$ and $Q = I - P$, one computes via the Schur complement that
\begin{align*}
\det((D_{(E,\phi)} F) (E_*,\psi_*, \theta_*)) &= \det\left(\begin{bmatrix}
0 & -1 \\
2 & 0
\end{bmatrix}\right)\det\left(Q(A(\theta_*) - E_*)Q\right) \\
&= 2\prod_{\lambda_j \neq E_*} (\lambda_j - E_*) \neq 0.
\end{align*}
Thus the implicit function $(\mathbf{E},\psi)(\theta)$ is defined in a neighborhood of $\theta_*$ and twice-differentiable at $\theta_*$.

We differentiate the eigenvalue relation $(A-\mathbf{E})\psi = 0$ at $\theta_*$ to get that
\begin{align}\label{eq:evaleqnderiv}
(A' - \partial_\theta \mathbf{E})\psi = -(A-\mathbf{E})\partial_\theta\psi.
\end{align}
Equation \eqref{eq:feynman1} follows by taking inner products of the relation \eqref{eq:evaleqnderiv} with $\psi$; \eqref{eq:feynmanEvec} follows from the uniqueness of the implicit function since \eqref{eq:feynmanEvec} satisfies \eqref{eq:evaleqnderiv}; and \eqref{eq:feynman2} follows by differentiating \eqref{eq:feynman1}.  Finally, \eqref{eq:orthderiv} is immediate from \eqref{eq:feynmanEvec}.
\end{proof}

Finally, we have the simple proof of the Poisson formula:

\begin{proof}[Proof of Lemma \ref{l:poisson}]
	This follows by applying the resolvent $R^\Lambda(E)$ to the observation
	\begin{align*}
		(H^\Lambda - E)\psi &= \varepsilon\psi(a-1)\delta_a + \varepsilon\psi(b+1)\delta_b. \qedhere
	\end{align*}
\end{proof}

\subsection{Cauchy Interlacing Theorem}

The Cauchy Interlacing Theorem \ref{t:cauchinter} follows from the Min-Max Principle:
\begin{proof}[Proof of Theorem \ref{t:cauchinter}]
If $w_j$ is an eigenvector for $B$ with eigenvalue $\beta_j$ and $W_j^m := \Span\{w_j, \dots, w_m\}$, then one has that $P^*W_1^k$ is a $k$-dimensional subspace of $\bbR^n$ and
\begin{align*}
\beta_k &= \max_{y \in W_1^k}\frac{\langle By, y\rangle}{\|y\|^2} = \max_{y \in W_1^k}\frac{\langle PAP^*y, y\rangle}{\|y\|^2} = \max_{y \in W_1^k}\frac{\langle A (P^*y),P^*y\rangle}{\|P^*y\|^2} \\
&\geq \min_{U}\left\{ \max_{x}\left\{ \frac{\langle Ax, x\rangle}{\|x\|^2} \; :\; x \in U \setminus \{0\} \right\} : \dim(U) = k\right\} = \alpha_k.
\end{align*}
On the other hand, one has that $P^*W_k^m$ is an $m-k+1 = n - (n-m+k) + 1$-dimensional subspace, and one likewise has that
\begin{align*}
\beta_k &= \min_{y \in W_k^m}\frac{\langle A (P^*y),P^*y\rangle}{\|P^*y\|^2} \\
&\leq \max_{U}\left\{ \min_{x}\left\{ \frac{\langle Ax, x\rangle}{\|x\|^2} \; :\; x \in U \setminus \{0\} \right\} : \dim(U) = m-k+1 \right\} = \alpha_{n-m+k}. \qedhere
\end{align*}
\end{proof}

\subsection{Eigenvalue separation lemma}

This lemma is classical (cf. \cite{FroSpeWit90, KirSim85}), but we include a proof for the sake of completeness.  Consider the transfer matrix
\begin{align*}
A(V,E) &= \begin{bmatrix}
\frac{V_0 - E}{-\varepsilon} & -1\\
1 & 0
\end{bmatrix}
\end{align*}
with associated Schr\"odinger cocycle
\begin{align*}
M_{[a,b]}(V,E) := \prod_{k=b}^{a} A(S^k V,E)
\end{align*}
where $(S^k V)_m = V_{m+k}$, the product concatenates on the right, and $b \geq a$.

By definition, one has that $\psi = \psi(n)$ solves the formal Schr\"odinger difference equation if and only if
\begin{align*}
M_{[a,b]}(V,E) \begin{bmatrix}
\psi(a) \\
\psi(a-1)
\end{bmatrix} &= \begin{bmatrix}
\psi(b) \\
\psi(b-1)
\end{bmatrix}
\end{align*}
for all $n \in \bbZ$.  We also have the fundamental cocycle identity
\begin{align}\label{eq:cocycle}
M_{[a,b]}(V,E) &= M_{[c+1,b]}(V,E)M_{[a,c]}(V,E), \quad a \leq c < b.
\end{align}

The proof of Lemma \ref{l:evalsep} comes from the simplicity of the eigenvalues of $H^{\snext\Lambda}$ and the orthogonality of the corresponding eigenvectors; namely, forcing a sufficient proportion of the masses of two distinct eigenvectors into the same window pushes the corresponding eigenvalues apart.  We make this quantitative below:

\begin{proof}[Proof of Lemma \ref{l:evalsep}]
If $|\snext{E}_2 - \snext{E}_1| \geq \varepsilon\left(\left(\frac{m_{\Lambda}}{\varepsilon}\right)^2 + 2\right)/|\Lambda|^2$, the claim follows immediately; suppose for the remainder of the proof that
\begin{align*}
|\snext{E}_2 - \snext{E}_1| < \varepsilon\left(\left(\frac{m_{\Lambda}}{\varepsilon}\right)^2 + 2\right)/|\Lambda|^2.
\end{align*}

Denote $[a,b]:=\snext\Lambda$, $[c,d]:={\Lambda}$, and
$\vec{\psi}_k := [\snext{\psi}_k(c), \snext{\psi}_k(c-1)]^\top$.  Since the eigenvectors $\snext{\psi}_k$ are real, we may suppose they are normalized at the left endpoint $c$ of ${\Lambda}$ such that
\begin{align*}
\|\vec{\psi}_k\|^2 = 1, \quad k = 1, 2,
\end{align*}
and
\begin{align*}
\langle \vec{\psi}_1, \vec{\psi}_2 \rangle \geq 0.
\end{align*}
In particular, we note for later use that $\|\snext\psi_k\|^2 \geq \|\vec{\psi}_k\|^2 \geq 1$.  With this normalization, if $\theta \in (0,\pi/2]$ denotes the acute angle between $\vec{\psi}_1$ and $\vec{\psi}_2$, one has
\begin{align*}
1 - \langle \vec\psi_1, \vec\psi_2 \rangle &= 1 - \cos\theta \leq 1 - \cos^2\theta \\ 
&\leq \sin^2\theta \leq \sin\theta \\
&= |\snext\psi_1(c)\snext\psi_2(c-1) - \snext\psi_1(c-1)\snext\psi_2(c)| =: |W(\snext\psi_1,\snext\psi_2)(c)|
\end{align*}
and consequently
\begin{align*}
\frac{1}{2}\|\vec{\psi}_1 - \vec{\psi}_2\|^2 \leq |W(\snext\psi_1,\snext\psi_2)(c)|.
\end{align*}

By Green's identity, one has
\begin{align*}
\frac{\snext{E}_2 - \snext{E}_1}{\varepsilon}\sum_{j = c}^{b} \snext\psi_1(j)\snext\psi_2(j) &= {\frac1\varepsilon}\sum_{j=c}^{b} \snext\psi_1(j) (H^{\snext\Lambda}\snext\psi_2)(j) - (H^{\snext\Lambda}\snext\psi_1)(j)\snext\psi_2(j) \\
&=  W(\snext\psi_1,\snext\psi_2)(c) - W(\snext\psi_1,\snext\psi_2)({b}+1).
\end{align*}
Since $\snext\psi_k$ are eigenfunctions for $H^{[a,b]}$, they satisfy the Dirichlet boundary condition at $b$; in particular, $W(\snext\psi_1,\snext\psi_2)(b+1) = 0$, and we see
\begin{align*}
 |W(\snext\psi_1,\snext\psi_2)(c)| = \left|\frac{\snext{E}_2 - \snext{E}_1}{\varepsilon}\sum_{j=c}^b \snext\psi_1(j) \snext\psi_2(j)\right| \leq |\snext{E}_2 - \snext{E}_1| \|\snext\psi_1\|\|\snext\psi_2\|
\end{align*}
by the Cauchy-Schwarz inequality.  Combining these observations, we see that
\begin{align}\label{eq:evalsepobs1}
\frac{1}{\sqrt{2}}\|\vec{\psi}_1 - \vec{\psi}_2\| \leq \left(\frac{|\snext{E}_2 - \snext{E}_1|}{\varepsilon}\|\snext\psi_1\|\|\snext\psi_2\|\right)^{1/2}.
\end{align}

We will apply observation \eqref{eq:evalsepobs1} with a telescoping estimate on transfer matrices to get an upper estimate on $\|\snext\psi_1 - \snext\psi_2)\|_{{\Lambda}}^2$ in terms of $|\snext{E}_2 - \snext{E}_1|$.  Specifically, we have
\begin{align*}
\|M_{[c,d]}(V,\snext{E}_1) - M_{[c,d]}(V,\snext{E}_2)\| &\leq \sum_{j \in {\Lambda}} \frac{|\snext{E}_2 - \snext{E}_1|}{\varepsilon}\|M_{[c,j-1]}(V,\snext{E}_1)\|\|M_{[j+1,d]}(V,\snext{E}_2)\| \\
&\leq \frac{|\snext{E}_2 - \snext{E}_1|}{\varepsilon} |{\Lambda}| \left(\left(\frac{m_{{\Lambda}}}{\varepsilon}\right)^2 + 2\right)^{(|{\Lambda}|-1)/2 },
\end{align*}
and consequently
\begin{align*}
\|\snext\psi_1 &- \snext\psi_2\|_{{\Lambda}} \leq \sum_{j \in {\Lambda}} |\snext\psi_1(j) - \snext\psi_2(j)| \\
&\leq \sum_{j \in {\Lambda}} \|M_{[c,j]}(V,\snext{E}_1)\vec{\psi}_1 - M_{[c,j]}(V,\snext{E}_2)\vec{\psi}_2\| \\
&\leq \sum_{j \in {\Lambda}}\|M_{[c,j]}(V,\snext{E}_1) - M_{[c,j]}(V,\snext{E}_2)\| + \|M{[c,j]}(V,\snext{E}_2)(\vec{\psi}_1 - \vec{\psi}_2)\| \\
&\leq \frac{|\snext{E}_2-\snext{E}_1|}{\varepsilon}|{\Lambda}|^2\left(\left(\frac{m_{{\Lambda}}}{\varepsilon}\right)^2 + 2\right)^{(|{\Lambda}|-1)/2} + |{\Lambda}|\left(\left(\frac{m_{{\Lambda}}}{\varepsilon}\right)^2 + 2\right)^{|{\Lambda}|/2}\left(\frac{2|\snext{E}_2 - \snext{E}_1|}{\varepsilon}\|\snext\psi_1\|\|\snext\psi_2\|\right)^{1/2}.
\end{align*}

Since $|\snext{E}_2-\snext{E}_1| < \varepsilon\left(\left(\frac{m_{{\Lambda}}}{\varepsilon}\right)^2 + 2\right)/|{\Lambda}|^2$ by our very first assumption, it follows that
\begin{align*}
    \|\snext\psi_1-\snext\psi_2\|_{{\Lambda}} &\leq \left(\frac{|\snext{E}_2 - \snext{E}_1|}{\varepsilon}\right)^{1/2} |{\Lambda}|\left(\left(\frac{m_{{\Lambda}}}{\varepsilon}\right)^2 + 2\right)^{|{\Lambda}|/2}\left(1+\sqrt{2}(\|\snext\psi_1\|\|\snext\psi_2\|)^{1/2}\right)
\end{align*}
and thus 
\begin{align*}
\|\snext\psi_1 - \snext\psi_2\|_{{\Lambda}}^2 &\leq \frac{|\snext{E}_2 - \snext{E}_1|}{\varepsilon}|{\Lambda}|^2\left(\left(\frac{m_{{\Lambda}}}{\varepsilon}\right)^2+2\right)^{|{\Lambda}|}\left( 1 + \sqrt{2}(\|\snext\psi_1\|\|\snext\psi_2\|)^{1/2}\right)^2  \\
&< 6\frac{|\snext{E}_2 - \snext{E}_1|}{\varepsilon}|{\Lambda}|^2\left(\left(\frac{m_{{\Lambda}}}{\varepsilon}\right)^2+2\right)^{|{\Lambda}|} \|\snext\psi_1\|\|\snext\psi_2\|,
\end{align*}
where in the last line we used that $1 \leq (\|\snext\psi_1\|\|\snext\psi_2\|)^{1/2}$ and $(1 + \sqrt{2})^2 < 6$.

On the other hand, since $\snext\psi_1$ and $\snext\psi_2$ are eigenfunctions for different eigenvalues, they are orthogonal, and we have
\begin{align*}
\|\snext\psi_1 - \snext\psi_2\|_{{\Lambda}}^2 &= \|\snext\psi_1 - \snext\psi_2\|^2 - \|\snext\psi_1 - \snext\psi_2\|_{\snext{\Lambda}\setminus{\Lambda}}^2 \\
&\geq \left(\|\snext\psi_1\|^2 + \|\snext\psi_2\|^2\right) - \frac{1}{2}\left(\|\snext\psi_1\|^2 + \|\snext\psi_2\|^2\right) \\
&\geq \frac{1}{2}\left(\|\snext\psi_1\|^2 + \|\snext\psi_2\|^2\right) \\
&\geq \|\snext\psi_1\|\|\snext\psi_2\|
\end{align*}
where the first line above uses orthogonality of vectors with disjoint supports, the second uses orthogonality of distinct eigenvectors and the localization assumption $\|\snext\psi_k\|_{{\Lambda}}^2 \geq \|\snext\psi_k\|^2/2$, and the fourth line uses the inequality of arithmetic and geometric means.

Combining the upper and lower bounds on $\|\snext\psi_1 - \snext\psi_2\|_{{\Lambda}}^2$ above yields the claim.
\end{proof}

\newpage

\end{document}